\theoremstyle{plain}
\newtheorem{theorem}{Theorem}[section]
\newtheorem{lemma}[theorem]{Lemma}
\newtheorem{corollary}[theorem]{Corollary}
\newtheorem{proposition}[theorem]{Proposition}
\newtheorem{assumption}[theorem]{Assumption}
\newtheorem{definition}[theorem]{Definition}
\newtheorem{problem}[theorem]{Problem}
\newtheorem{remark}[theorem]{Remark}
\numberwithin{equation}{section}
\numberwithin{figure}{section}
\numberwithin{table}{section}
\newcommand{\R}{\mathbb{R}}
\newcommand{\C}{\mathbb{C}}
\newcommand{\N}{\mathbb{N}}
\newcommand{\Z}{\mathbb{Z}}
\newcommand{\bigO}{\mathcal{O}}
\newcommand{\abs}[1]{\left|#1\right|}
\newcommand{\norm}[1]{\left\|#1\right\|}
\newcommand{\support}{\operatorname{supp}}
\newcommand{\dom}{\operatorname{dom}}
\newcommand{\laplace}{\Delta}
\def\TT{\mathcal{T}}
\def\SS{\mathcal{S}}
\def\QQ{\mathcal{Q}}
\def\ii{i}
\def\fdiv{\operatorname{div}}
\def\id{\mathrm{I}}
\def\LL{\mathcal{L}}
\def\UU{\mathscr{U}}
\def\VV{\mathscr{V}}
\def\WW{\mathscr{W}}
\def\sg{\mathcal{E}}
\def\UUdy{\UU^h_{y}}
\def\UUdxyt{\UU^h_{x,y,t}}
\def\trace{\operatorname{tr}}
\def\CC{\mathcal{C}}
\def\YY{\mathcal{Y}}
\def\solve{\mathcal{G}}
\def\AA{\mathcal{A}}
\newcommand{\HH}{\mathring{H}^1(y^\alpha,\CC)}
\def\HHy{\mathbb{V}_h^{\YY}}
\def\HHx{\mathbb{V}_h^{\mathcal{X}}}
\def\HHxy{\mathbb{V}_h^{\mathcal{X},\YY}}
\def\HHxyt{\mathbb{V}_h^{\mathcal{X},\YY,\mathcal{T}}}
\def\HHnd{H^1(y^{\alpha},D)}
\newcommand{\ltwoprodX}[2]{\left(#1,#2\right)_{L^2(\Omega)}}
\def\lifting{\mathscr{D}}
\renewcommand{\Re}{\operatorname{Re}}
\renewcommand{\Im}{\operatorname{Im}}
\newcommand{\eremk}{\hbox{}\hfill\rule{0.8ex}{0.8ex}}
\def\zhf{z_{\text{hf}}}
\newcommand{\includeTikzOrEps}[1]{\tikzexternalenable \tikzsetnextfilename{#1}  {\include{figures/#1}} \tikzexternaldisable}
\newcommand{\includeTikzOrEps}[1]{\includegraphics{figures_pdf/#1}}
\title{ $hp$-FEM for the fractional heat equation}
\author{Jens Markus Melenk\thanks{Institute for Analysis und Scientific Computing, Technische Universit\"at Wien,
    Wiedner Hauptstrasse 8-10, 1040 Vienna, Austria. melenk@tuwien.ac.at} \and
  Alexander Rieder\thanks{ Institute for Analysis und Scientific Computing, Technische Universit\"at Wien,
   Wiedner Hauptstrasse 8-10, 1040 Vienna, Austria. alexander.rieder@tuwien.ac.at}  
}
\date{\today}
\begin{document}
\maketitle
\begin{abstract}
  \vspace{-0.5cm}
We consider a time dependent problem generated by a nonlocal operator in space.
Applying a discretization scheme based on $hp$-Finite Elements and a Caffarelli-Silvestre extension
we obtain a semidiscrete semigroup. The discretization in time is carried out by using $hp$-Discontinuous Galerkin
based timestepping.
We prove exponential convergence for such a method in an abstract framework for the discretization in the
original domain $\Omega$.
\end{abstract}

\section{Introduction}
For stationary fractional diffusion, numerical techniques have recently been proposed that 
  provide exponential convergence of the error with respect to the computational effort, \cite{tensor_fem, tensor_fem_on_polygons}.
  The construction is based on $hp$-Finite Elements on appropriate geometric meshes.
  The purpose of the present article is to generalize these techniques 
  to the time dependent setting. 
  We consider the discretization of the time dependent problem~\eqref{eq:model_problem}, generated by a fractional power of an elliptic operator.
  The spatial discretization of the nonlocal operator is based on a reformulation
  using the Caffarelli-Silvestre extension, for which an $hp$-Finite Element discretization
  (FEM) is employed. The discretization in time is then carried out by a Discontinuous Galerkin method in the
  spirit of~\cite{schoetzau_schwab}  of either fixed order or in its $hp$ version.
  Our analysis hinges on two conditions,
  one related to stable liftings of the initial condition and the second one related to the ability to approximate solutions 
  of singularly perturbed problems.
  
  After establishing the abstract framework, we work out the case of $hp$-FEM in
    the special case of 1D or 2D with analytic data and geometry and show that the basic Assumptions~\ref{ass:approx_of_unitial_condtiion} and~\ref{ass:approx_of_HHx}
    are satisfied on appropriate geometric meshes. The reduction of scope to smooth geometries and at most 2D mainly is done to keep the presentation 
  to a reasonable length; we expect that it is possible to establish the assumptions of the abstract framework also for the case  of polygons or
  $\Omega \subseteq {\mathbb R}^d$, $d > 2$. 

Discretization schemes for the same model problem have already appeared in the literature. 
In~\cite{bonito_pasciak_parabolic}, the approximation is done by applying numerical quadrature to the Dunford-Taylor representation of
the solution and using a low-order finite element method in space. 
The idea of treating the extension problem via finite elements is already well established for the case of 
elliptic problems, e.g.~\cite{pde_approach_apriori} for the low-order FEM or \cite{mpsv17} as well as \cite{tensor_fem}
for using $hp$-based discretizations.
The use of an extension problem in order to discretize a time-dependent problem was used in~\cite{pde_frac_parabolic},
focusing on low order finite elements and time-stepping schemes, but allowing also for fractional time derivatives.
In the context of wave equations, such a discretization was recently analyzed in~\cite{bo_waves}.

When dealing with parabolic problems, it is well-known that, if the initial condition does not satisfy certain compatibility conditions,
so called startup singularities form. They need to be accounted for  in the numerical method. We rigorously prove 
that, as long as the meshes are designed in a proper way, our discretization scheme delivers exponential convergence rate
for the spatial discretization and optimal convergence rate in time, i.e., optimal order for fixed order timestepping like implicit Euler and 
exponential convergence for the $hp$-DG based method.

The paper is structured as follows: Section~\ref{sect:model_problem} presents the model problem and the functional analytic setting.
In Section~\ref{sect:space_disc}, we then perform a first discretization step with respect to the spatial variables. This yields
a continuous in time/discrete in space approximation. In order to prove exponential convergence for this discretization,
we take a small detour in Section~\ref{section:elliptic_problem} to analyze an auxiliary elliptic problem. This problem  will allow us to
lift a representation formula from the domain $\Omega$ to the extended cylinder $\Omega \times \R_+$ while allowing to
reuse the techniques developed in~\cite{tensor_fem}. These preparations then allow us to prove exponential convergence for the
space discretization in Section~\ref{sect:semidiscretization_2}.
The discretization in time is then carried out in Section~\ref{sect:time_discretization} yielding a fully discrete scheme.
This scheme was implemented and Section~\ref{sect:numerics} confirms the exponential convergence. 
The appendices provide results that could not readily be cited from the literature:  Appendix~\ref{appendix:oned_singular_perturbations} generalizes
results on $hp$-FEM for singularly perturbed problems to the case of complex perturbation parameters. Appendix~\ref{sect:liftings_and_interpolation}
is concerned with the lifting of piecewise polynomials in $\Omega$ to piecewise polynomials on the cylinder $\Omega\times \R_+$ in a stable way.

We also would like to point out that using the Caffarelli-Silvestre extension is not the only approach to discretize
the nonlocal operator which is able to yield an exponentially convergent scheme. We mention
schemes based on sinc-quadrature and the Balakrishnan or Riesz-Dunford formulations
of the fractional Laplacian (see~\cite{bonito_pasciak_parabolic}).  We expect that it is possible to
combine such a scheme with $hp$-FEM in the space discretization and by combining \cite{bonito_pasciak_parabolic} with
the techniques laid out in this paper it should be possible to show exponential convergence.

We close with a remark on notation. We write $A \lesssim B$ to mean there exists a constant $C>0$, which is independent of the
main quantities of interest, i.e., mesh size or polynomial degree used, etc., such that $A \leq C B$. We write $A \sim B$
to mean $A \lesssim B$ and $B \lesssim A$. The exact dependencies of the implied constant is specified in the context.

\section{Model problem}
\label{sect:model_problem}
Let $\Omega \subset \R^d$ be a bounded Lipschitz domain.
We consider the following model problem for $s \in (0,1)$:
\begin{subequations}
\label{eq:model_problem}
\begin{align}
  \dot{u}(t)+\LL^{s} u(t)&= f(t)  \quad &&\text{in $\Omega$, $\forall t > 0$}\\
  u(\cdot,t) &= 0  \quad &&\text{on $\Gamma$,  $\forall t > 0$}
\end{align}
\end{subequations}
with initial condition $u(0)=u_0$ and right-hand side $f: \Omega \times \R_+ \to \R$.
We assume that the initial condition and right-hand side are analytic but do not require any compatibility or boundary conditions.

The operator $\LL u:=-\fdiv(A \nabla u) + c u$ is a linear, elliptic and self-adjoint differential operator,
where we assume that  $A \in L^{\infty}\big(\Omega,\R^{d\times d}\big)$ is uniformly SPD in $\Omega$ and $c \in L^{\infty}(\Omega)$ satisfies $c\geq 0$.
The fractional power $\LL^s$ is defined using the spectral decomposition
\begin{align}
   \LL^s u:=\sum_{j=0}^{\infty}{\mu_j^{s} (u,\varphi_j)_{L^2(\Omega)} \varphi_j}, \label{eq:def_fractional_laplacian}
\end{align}
where $(\mu_j,\varphi_j)_{j\in\N_0}$ are eigenvalues and eigenfunctions of the operator $\LL$ with homogeneous Dirichlet boundary conditions.

Using the Caffarelli-Silvestre extension one can localize the nonlocal operator $\LL^s$ and rewrite \eqref{eq:model_problem} in the following form
with $\alpha:=1-2s$:
\begin{subequations}
\begin{align}
  -\fdiv\left(y^\alpha A \nabla \UU \right)+  y^\alpha c \UU &= 0 \qquad && \text{on $\CC \times (0,T)$},  \label{eq:extended_problem_pde}\\
  d_{s} \trace{\dot{\UU}}+\partial_{\nu}^\alpha \UU &= d_s f \qquad &&\text{on $\Omega \times \{0\} \times (0,T)$},  \label{eq:extended_problem_bc}\\
  \UU &= 0 \qquad &&\text{ on $\partial_L \CC \times (0,T)$}.
\end{align}
\end{subequations}

Here $\CC$ denotes the cylinder $\Omega \times \R_+$, $d_s:=2^{\alpha} \Gamma(1-s)/\Gamma(s)$.
The lateral boundary is defined as $\partial_L \CC:=\partial \Omega \times \R_+$ and 
$$
\partial_{\nu}^{\alpha} \UU := - \lim_{y\to 0^+}  y^{\alpha} \partial_{y} \UU(\cdot,y), \qquad \text{and } \qquad \trace \UU:=\UU(\cdot,0)
$$
is the conormal derivative and boundary trace at $y=0$ respectively. The connection to $u$ is then given by $\trace{\UU}(t)=u(t)$.

In order to treat this extended problem, we introduce the following weighted Sobolev spaces:
\begin{align}
  L^2(y^\alpha,D)&:=\Big\{w: w \text{ is measurable and }  {\int_{D}}y^\alpha \abs{w}^2\,<\infty \Big\}, \\
  \HHnd&:=\left\{w \in L^2(y^{\alpha},D): \abs{\nabla w} \in L^2(y^\alpha,D) \right\}, \\
  \mathring{H}^1(y^{\alpha},D)&:=\left\{ w \in H^1(y^\alpha,D): u=0 \text{ on } \partial_L \CC \right\}.
\end{align}
The space $\HH$ is equipped with the norm $\displaystyle\norm{\UU}_{\HH}^2:=\int_{\mathcal{C}}{y^\alpha \abs{\nabla \UU}^2 }$.

We also define the bilinear form corresponding to the weak form of~\eqref{eq:extended_problem_pde} as:
\begin{align*}
  \AA(\UU,\VV):=\int_{\CC}{y^\alpha \big(A  \nabla \UU\big)\cdot \nabla \VV  + y^\alpha c \, \UU \VV}.
\end{align*}

Throughout this paper, we will make use of fractional Sobolev  and interpolation spaces. We 
define for two Banach spaces $X_1 \subseteq X_0$ with continuous embedding 
and $\theta \in (0,1)$: 
\begin{align*}
   \norm{u}^2_{[X_0,X_1]_{\theta,2}}
  &:= \int_{t=0}^\infty{ t^{-2\theta} \Big( \inf_{v \in X_1} \|u - v\|_{0} + t \|v\|_1\Big)^2 \frac{dt}{t}}, \\
    \left[X_0,X_1\right]_{\theta,2}&:=\big\{u \in X_0: \norm{u}_{[X_0,X_1]_{\theta,2}} < \infty \big\}.
\end{align*} 
For the endpoints $\theta \in \{0,1\}$ we set $[X_0,X_1]_{0,2}:=X_0$ and $[X_0,X_1]_{1,2}:=X_1$.
Fractional Sobolev spaces with and without zero boundary conditions are defined as
\begin{align*}
  \widetilde{H}^{s}(\Omega):=\left[L^2(\Omega), H_0^1(\Omega)\right]_{s,2}, \quad \text{and}\quad {H}^{s}(\Omega)&:=\left[L^2(\Omega), H^1(\Omega)\right]_{s,2}.
\end{align*}
The boundary condition in~\eqref{eq:model_problem} is understood in the sense of $u(t) \in \widetilde{H}^s(\Omega)$ for all $t >0$. That is, 
for $s<1/2$ no boundary condition is imposed, while for $s>1/2$ it is imposed in the sense of traces. For $s=1/2$ the boundary 
condition is imposed as membership in the Lions-Magenes space, often also denoted $H^{1/2}_{00}(\Omega)$.

Sometimes it is useful to work with a different scale of spaces,  characterized using the eigendecomposition of $\LL$,
as
\begin{align*}
  \mathbb{H}^s(\Omega):=\left\{u \in L^2(\Omega): \sum_{j=0}^{\infty}{\mu_j^s |{\ltwoprodX{u}{\varphi_j}}|^2} < \infty \right\}.
\end{align*}
For $s \in [0,1]$, the spaces coincide, i.e.,
$ \widetilde{H}^{s}(\Omega) = \mathbb{H}^s(\Omega)$ with equivalent norms.  

We consider the discretization in two separate steps. We semidiscretize in space and subsequently discretize in time, i.e.,
\begin{enumerate}
\item discretize in space using tensor product $hp$-FEM in $\Omega$ and the artificial variable $y$,
\item discretize in time by a discontinuous Galerkin method.
\end{enumerate}


\section{Discretization in space -- the semidiscrete scheme}
\label{sect:space_disc}
In this section we investigate the convergence of a semidiscrete semigroup to the solution of~\eqref{eq:model_problem}.
We consider finite dimensional subspaces $\HHx \subseteq H_0^1(\Omega)$ and $\{0 \} \ne \HHy\subseteq H^1(y^{\alpha},\R_+)$,
and set $\HHxy:=\HHx \otimes \HHy \subseteq \HH$ as our approximation space.
We keep most of our analysis as general as possible, but will provide concrete examples on how to
  implement these spaces in Sections~\ref{sect:disc_of_y} and~\ref{sect:disc_of_x}.
  Throughout the paper, we will write 
  $$\mathcal{N}_{\Omega}:=\operatorname{dim}(\HHx) \qquad \text{and} \qquad \mathcal{N}_{\YY}:=\operatorname{dim}(\HHy).$$
While we will give a detailed construction of $\HHy$ later on,
  for now we just assume that there exists $v\in \HHy$ with $v(0)=1$ in order to be able to solve Dirichlet problems.

We define the Galerkin approximation $\LL^s_h: \HHx \to \HHx$ to the operator $\LL^s$ via the relation:
\begin{align}
  \ltwoprodX{\LL_h^s u}{v}&:=\frac{1}{d_s}\AA(\lifting_h u,\lifting_h v),
\end{align}
where $\lifting_h: \HHx \to \HHxy$ denotes the solution to the following ``lifting problem'':
\begin{subequations}
  \label{eq:discrete_lifting}
\begin{align}
  \AA(\lifting_h u, \VV_h)&=0 \qquad \forall \VV_h \in \HHxy \text{ s.t. } \trace{\VV_h}=0, \label{eq:discrete_lifting_pde}\\
  \trace{\lifting_h u}&=u.
\end{align}
\end{subequations}
We also introduce the notation $\lifting$ for the solution to
\begin{subequations}
\label{eq:continuous_lifting}
\begin{align}
  \AA(\lifting u, \VV)&=0 \qquad \forall \VV \in \HH \text{ s.t. } \trace{\VV}=0,   \label{eq:continuous_lifting_pde}\\
  \trace{\lifting u}&=u.
\end{align}
\end{subequations}

\begin{remark}
  \label{remark:continuous_lifting_is_bounded}  
We note that by \cite[Proposition 2.5]{pde_approach_apriori} and the ellipticity of $\AA$, the operator $\lifting$ 
is bounded with respect to the $\widetilde{H}^s(\Omega) \to \HH$-norm.
For $hp$-FEM spaces, it is non-trivial to show that $\lifting_h$ is bounded, especially on anisotropic meshes.
See Appendix~\ref{sect:liftings_and_interpolation} for a related result in a simplified setting.
\eremk
\end{remark}

\begin{theorem}
  \label{thm:llh_generates_sg}
  The operator $-\LL_h^s$ is the generator of an analytic semigroup on $\left(\HHx, \norm{\cdot}_{L^2(\Omega)}\right)$.  
\end{theorem}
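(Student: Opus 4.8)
The plan is to use that $\HHx$ is finite-dimensional, so that $\LL_h^s$ is a bounded linear operator, and to identify it as a self-adjoint, positive-definite operator with respect to $\ltwoprodX{\cdot}{\cdot}$; the assertion then follows from the spectral theorem, and — what actually matters for the later error analysis — with sectoriality and contraction constants that are independent of the discretization parameters $\mathcal{N}_\Omega$, $\mathcal{N}_{\YY}$.

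First I would check that $\lifting_h$ is a well-defined linear map $\HHx \to \HHxy$, so that $\LL_h^s$ makes sense. Given $u \in \HHx$, the function $u \otimes v$, with $v \in \HHy$ any fixed element with $v(0)=1$ (which exists by assumption), lies in $\HHxy$ and has trace $u$; subtracting it reduces \eqref{eq:discrete_lifting} to a variational problem on the subspace $\{\VV_h \in \HHxy : \trace \VV_h = 0\}$, on which $\AA$ is coercive (weighted Poincaré inequality on $\CC$ for functions vanishing on $\partial_L \CC$, together with uniform ellipticity of $A$ and $c \ge 0$). Lax–Milgram gives existence and uniqueness, and linearity is immediate from linearity of $\AA$ and of $\trace$. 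Consequently $\LL_h^s$ is well-defined and linear, via Riesz representation in the finite-dimensional inner product space $(\HHx, \ltwoprodX{\cdot}{\cdot})$, hence bounded. (I emphasize that only solvability, not the uniform boundedness of $\lifting_h$ discussed in Remark~\ref{remark:continuous_lifting_is_bounded}, is needed here.)

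Next, self-adjointness: since $A$ is symmetric (it is uniformly SPD) and $c$ is real-valued, the bilinear form $\AA(\cdot,\cdot)$ is symmetric, so $\ltwoprodX{\LL_h^s u}{v} = \frac{1}{d_s}\AA(\lifting_h u, \lifting_h v)$ is symmetric in $u,v$, i.e. $\LL_h^s$ is self-adjoint on $(\HHx,\ltwoprodX{\cdot}{\cdot})$. For positive definiteness, $\ltwoprodX{\LL_h^s u}{u} = \frac{1}{d_s}\AA(\lifting_h u, \lifting_h u) \gtrsim \norm{\lifting_h u}_{\HH}^2 \ge 0$; and if the left-hand side vanishes, then $\norm{\lifting_h u}_{\HH} = 0$, so $\lifting_h u = 0$ (the gradient functional is a genuine norm on $\HH$ by the weighted Poincaré inequality), whence $u = \trace \lifting_h u = 0$. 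Therefore $\sigma(\LL_h^s) \subseteq [\mu_{\min},\mu_{\max}] \subseteq (0,\infty)$ for some $0 < \mu_{\min} \le \mu_{\max}$ depending on $\HHxy$.

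Finally, for a bounded self-adjoint operator with spectrum contained in $(0,\infty)$ the spectral calculus yields $e^{-z\LL_h^s} = \int e^{-z\mu}\,dE_\mu$ with $\norm{e^{-z\LL_h^s}}_{L^2(\Omega)\to L^2(\Omega)} = \max_{\mu \in \sigma(\LL_h^s)} e^{-\mu \Re z} \le 1$ for $\Re z \ge 0$, and $z \mapsto e^{-z\LL_h^s}$ is holomorphic on the half-plane $\{\Re z > 0\}$; equivalently, $\norm{(\lambda I + \LL_h^s)^{-1}}_{L^2(\Omega)\to L^2(\Omega)} \le \big(\operatorname{dist}(\lambda,(-\infty,0])\big)^{-1}$ for all $\lambda \notin (-\infty,0]$. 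By the standard characterization of analytic semigroup generators (Hille–Yosida for sectorial operators), $-\LL_h^s$ generates a bounded analytic semigroup, analytic on $\{\Re z > 0\}$. I would stress that both the contraction bound and the resolvent/sectoriality constant can be taken to be $1$, hence independent of $\mathcal{N}_\Omega$ and $\mathcal{N}_{\YY}$ — this uniformity, rather than the bare existence of the semigroup (automatic since $\LL_h^s$ is bounded), is the substantive content. There is no serious obstacle; the only mild point requiring care is the well-posedness of the discrete lifting problem, i.e. coercivity of $\AA$ on the zero-trace subspace of $\HHxy$.
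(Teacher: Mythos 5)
Your proposal is correct, but it proves the theorem by a different mechanism than the paper. The paper does not diagonalize: it invokes the resolvent characterization of analytic generators from Pazy and verifies the estimate $\norm{(\lambda \id + \LL_h^s)^{-1}f}_{L^2(\Omega)} \leq \frac{M}{1+\abs{\lambda}}\norm{f}_{L^2(\Omega)}$ for $\Re(\lambda)\geq 0$ by identifying $(\lambda \id+\LL_h^s)^{-1}f$ with the trace of the solution $\UU_\lambda\in\HHxy$ of a coercive variational problem on the extended cylinder, testing with $\UU_\lambda$, and using the trace inequality $\norm{\trace \UU_\lambda}_{L^2(\Omega)}\lesssim\norm{\UU_\lambda}_{\HH}$; this buys a constant $M$ depending only on $\Omega$, $\alpha$, $\LL$ (in particular bounded at $\lambda=0$) and foreshadows the operator $\solve^\lambda$ used throughout Section~\ref{section:elliptic_problem}. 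You instead work abstractly on $\HHx$: Lax--Milgram for the lifting (a point the paper leaves implicit), symmetry and positive definiteness of $\LL_h^s$, and the finite-dimensional spectral theorem, which gives contractivity and analyticity on the right half-plane with constant $1$ — more elementary, and well aligned with how the semigroup bounds are later used via eigendecompositions. One caveat on your uniformity claim: the bound $\norm{(\lambda\id+\LL_h^s)^{-1}}\leq \operatorname{dist}(\lambda,(-\infty,0])^{-1}$ degenerates for small purely imaginary $\lambda$, so it does not by itself reproduce the paper's discretization-uniform $M/(1+\abs{\lambda})$ estimate; for that you need a lower bound on $\sigma(\LL_h^s)$ that is independent of $\HHx$, $\HHy$, which your argument only provides with a space-dependent $\mu_{\min}$. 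Such a uniform bound is exactly what the paper's trace-inequality step supplies (equivalently, combine Lemma~\ref{lemma_llh_is_elliptic} with the embedding $\widetilde{H}^s(\Omega)\subseteq L^2(\Omega)$), so your route closes this gap with one additional, already-available ingredient.
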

\begin{proof}
  The operator $\LL_h^s$ is symmetric due to the symmetry of $\AA$.
  By \cite[Section 2.5, Theorem 5.2]{pazy}, it remains to show the estimate
  \begin{align*}
    \norm{\left(\lambda \id  + \LL_h^s\right)^{-1} f}_{L^2(\Omega)} \leq \frac{M}{1+\abs{\lambda}} \norm{f}_{L^2(\Omega)}
  \end{align*}
  for $\Re(\lambda)\geq 0$ and a constant $M$ that is  independent of $u$ and $\lambda$. 
  It is easy to  see that $\left(\lambda \id  + \LL_h^s\right)^{-1} f=\trace{\UU_\lambda}$ where
  $\UU_{\lambda} \in \HHxy$ solves
  \begin{align*}
    \ltwoprodX{\lambda d_s \trace{\UU_\lambda}}{\trace{\VV_h}} + \AA({\UU_\lambda},{\VV_h})&=\ltwoprodX{d_s f}{\trace{\VV_h}} \qquad \forall \VV_h\in \HHxy.
  \end{align*}
  Existence of the inverse follows from the coercivity of the bilinear form on the left-hand side. The \textsl{a priori} estimate follows by testing 
  with $\VV_h:=\UU_\lambda$ to get:
  \begin{align*}
\Re(\lambda)  d_s \norm{\trace{\UU_\lambda}}^2 + \AA({\UU_\lambda},{\UU_\lambda})   \leq 
\left| \lambda  d_s \norm{\trace{\UU_\lambda}}^2 + \AA({\UU_\lambda},{\UU_\lambda})\right| &\leq d_s \norm{f}_{L^2(\Omega)}\norm{\trace{\UU_\lambda}}_{L^2(\Omega)}.
  \end{align*}
We use the continuity of the trace operator 
  $\norm{u}_{L^2(\Omega)}\lesssim \norm{\UU_\lambda}_{\HH}$ and distinguish between the 
  cases $|\lambda|$ large and $|\lambda|$ small to get the desired estimate.
\end{proof}

\begin{lemma}
  \label{lemma_llh_is_elliptic}
  If we equip the space $\HHx$ with the  norm
  \begin{align}
    \label{eq:def_hhx_norm}
    \norm{u}_{\HHx}:=\norm{\lifting_h u}_{\HH},
  \end{align}
  the operator $\LL_h^s$ is elliptic, i.e.,
  \begin{align*}
    c_1 \norm{u}^2_{\HHx}\leq  \ltwoprodX{\LL^s_h u}{u} \leq c_2 \norm{u}_{\HHx}^2.
  \end{align*}
  We also have the following estimate of the $\widetilde{H}^{s}(\Omega)$-norm:
  \begin{align*}
    c_3 \norm{u}^2_{\widetilde{H}^s(\Omega)}\leq \ltwoprodX{\LL^s_h u}{u}.
  \end{align*}
  The constants $c_i$ are independent of the spaces $\HHx$ and $\HHy$ and depend only on $\Omega$, $\alpha$, and $\LL$.
\end{lemma}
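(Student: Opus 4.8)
The plan is to reduce both inequalities to the ellipticity of the bilinear form $\AA$ on $\HH$ together with the variational (minimization) characterization of the liftings. The starting point is the two-sided bound $\AA(\VV,\VV)\sim\norm{\VV}_{\HH}^2$ for every $\VV\in\HH$. The lower bound is immediate from the uniform positive definiteness of $A$ together with $c\ge 0$ (the zero-order term only helps). For the upper bound one has to control $\int_{\CC}y^\alpha c\,\VV^2\le\norm{c}_{L^\infty(\Omega)}\int_{\CC}y^\alpha\VV^2$, which is bounded by $\norm{\VV}_{\HH}^2$ by applying the Poincar\'e inequality slicewise in $x$; this is legitimate because the weight $y^\alpha$ depends only on $y$ and $\VV(\cdot,y)\in H_0^1(\Omega)$ for a.e.\ $y$, since elements of $\HH$ vanish on the lateral boundary $\partial_L\CC$. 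Applying this estimate to $\VV=\lifting_h u$ and using the definitions $\ltwoprodX{\LL^s_h u}{u}=d_s^{-1}\AA(\lifting_h u,\lifting_h u)$ and $\norm{u}_{\HHx}=\norm{\lifting_h u}_{\HH}$ gives the first claim at once; the constants $c_1,c_2$ then depend only on the ellipticity/boundedness constants of $A$, on $\norm{c}_{L^\infty(\Omega)}$, on the Poincar\'e constant of $\Omega$ and on $d_s$, hence only on $\Omega$, $\alpha$, $\LL$.

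For the bound $c_3\norm{u}_{\widetilde H^s(\Omega)}^2\le\ltwoprodX{\LL^s_h u}{u}$ I would use that $\lifting_h u$ competes with the continuous lifting $\lifting u$, which is well defined since $\HHx\subseteq H_0^1(\Omega)\subseteq\widetilde H^s(\Omega)$ (cf.\ Remark~\ref{remark:continuous_lifting_is_bounded}). Because $\lifting_h u-\lifting u\in\HH$ has vanishing trace, it is an admissible test function in~\eqref{eq:continuous_lifting_pde}, so $\AA(\lifting u,\lifting_h u-\lifting u)=0$; expanding $\AA(\lifting_h u,\lifting_h u)$ and using symmetry and coercivity of $\AA$ yields $\AA(\lifting_h u,\lifting_h u)=\AA(\lifting u,\lifting u)+\AA(\lifting_h u-\lifting u,\lifting_h u-\lifting u)\ge\AA(\lifting u,\lifting u)$. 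It then remains to invoke the energy identity of the Caffarelli--Silvestre extension, $d_s^{-1}\AA(\lifting u,\lifting u)=\norm{u}_{\mathbb{H}^s(\Omega)}^2\sim\norm{u}_{\widetilde H^s(\Omega)}^2$, where the last equivalence is the coincidence of the two scales recorded above. Combining gives $\ltwoprodX{\LL^s_h u}{u}=d_s^{-1}\AA(\lifting_h u,\lifting_h u)\ge d_s^{-1}\AA(\lifting u,\lifting u)\gtrsim\norm{u}_{\widetilde H^s(\Omega)}^2$.

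The only ingredient that is not elementary is the energy identity (equivalently, the trace estimate $\norm{\trace\VV}_{\widetilde H^s(\Omega)}\lesssim\norm{\VV}_{\HH}$) for the continuous extension; this is classical for the Caffarelli--Silvestre extension and belongs to the same circle of ideas as \cite[Proposition~2.5]{pde_approach_apriori}, so I do not expect it to be an obstacle — it can simply be cited. The one place that needs a little care is the slicewise Poincar\'e argument in the upper bound of the first step, which crucially uses the vanishing lateral trace of functions in $\HH$; the rest is just bookkeeping of constants to confirm that none of $c_1,c_2,c_3$ depends on $\HHx$ or $\HHy$.
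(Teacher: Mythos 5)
Your proof is correct. For the two-sided equivalence $c_1\norm{u}_{\HHx}^2\le\ltwoprodX{\LL_h^s u}{u}\le c_2\norm{u}_{\HHx}^2$ you argue exactly as the paper does: coercivity and boundedness of $\AA$ in the $\HH$-norm applied to $\VV=\lifting_h u$ (the paper leaves the slicewise Poincar\'e argument for the zero-order term implicit, but it is the same mechanism, and your observation that it hinges on the vanishing lateral trace is accurate). Where you diverge is the $\widetilde{H}^s(\Omega)$-bound: the paper simply applies the trace estimate of \cite[Proposition~2.5]{pde_approach_apriori} directly to the \emph{discrete} lifting, $\norm{u}^2_{\widetilde{H}^s(\Omega)}\lesssim\norm{\lifting_h u}^2_{\HH}\lesssim\AA(\lifting_h u,\lifting_h u)$, which works because $\trace{\lifting_h u}=u$ and the trace estimate holds for every function in $\HH$; you instead go through the continuous lifting, showing via the orthogonality $\AA(\lifting u,\lifting_h u-\lifting u)=0$ that $\AA(\lifting_h u,\lifting_h u)\ge\AA(\lifting u,\lifting u)$ (the continuous analogue of the minimal-energy property the paper records later in Lemma~\ref{lemma:lifting_has_minimal_energy}), and then invoke the extension energy identity $d_s^{-1}\AA(\lifting u,\lifting u)\sim\norm{u}^2_{\widetilde{H}^s(\Omega)}$. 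Both routes rest on the same cited continuous trace/extension result, so they are equally legitimate; the paper's is shorter and avoids needing well-posedness of $\lifting$ on $\HHx$ and the equivalence $\mathbb{H}^s(\Omega)=\widetilde{H}^s(\Omega)$, while yours has the small side benefit of exhibiting the comparison $\AA(\lifting u,\lifting u)\le\AA(\lifting_h u,\lifting_h u)$, i.e.\ that the discrete energy dominates the continuous one. No gap; the constants you track depend only on $A$, $c$, $C_P$, $d_s$ and $\Omega$, consistent with the claim.
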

\begin{proof}
  By the trace estimate \cite[Proposition 2.5]{pde_approach_apriori}, we get
  \begin{align*}
    \norm{u}^2_{\widetilde{H}^s(\Omega)} &\lesssim \norm{\lifting_h u}^2_{\HH} \lesssim \AA(\lifting_h u,\lifting_h u) =d_s \ltwoprodX{\LL^s_h u}{u}.
  \end{align*}
  On the other hand we get:
  \begin{align*}
    \ltwoprodX{\LL^s_h u}{u} &= d_s^{-1} \AA(\lifting_h u,\lifting_h u) \lesssim \norm{\lifting_h u}_{\HH}^2 =  \norm{u}_{\HHx}^2. \qedhere
  \end{align*}
\end{proof}

The operator $\LL_h^s$ gives rise to the semidiscrete problem posed in $\HHx$:
\begin{subequations}
\label{eq:sd_semigroup}
\begin{align}  
  \dot{u}_h + \LL_h^s u_h &= \Pi_{L^2} f, \\
  u_h(0)&= u_{h,0},
\end{align}
\end{subequations}
where $\Pi_{L^2}: L^2(\Omega) \to \HHx$ denotes the $L^2$-orthogonal projection and $u_{h,0} \in \HHx$ denotes some approximation to the
initial condition.

By Duhamel's principle, $u$ and $u_h$ can be written as
\begin{align*}
  u(t)&=\sg(t) u_0 + \int_{0}^{t}{\sg(\tau)f(t-\tau) \,d\tau} \quad \text{ and }\quad 
  u_h(t)=\sg_h(t) u_0 + \int_{0}^{t}{\sg_h(\tau)f(t-\tau) \,d\tau},
\end{align*}
where $\sg: \R_+ \to \mathscr{B}(L^2(\Omega), L^2(\Omega))$ and
$\sg_h: \R_+ \to \mathscr{B}(\HHx, \HHx)$ are the semigroups generated by $-\LL^s$ and $-\LL^s_h$ respectively.

When considering the discrete flow for initial conditions without compatibility conditions,
the right spaces will be the following:
\begin{definition}
  Let $\beta \in (0,1)$.
  Recall that  the space $\HHx$ is equipped with the norm $\norm{u}_{\HHx}:=\norm{\lifting_h u}_{\HH}$.
  We define the interpolation spaces
  \begin{align*}
    \mathbb{V}^{\mathcal{X}}_{h,\beta}:=
    \left[ \left({\mathbb{V}}_{h}^{\mathcal{X}}, \norm{\cdot}_{L^2(\Omega)}\right), 
    \left({\mathbb{V}}_{h}^{\mathcal{X}},\norm{\cdot}_{\HHx}\right)\right]_{\beta,2}.
  \end{align*}
  We employ the convention $\norm{\cdot}_{\mathbb{V}^{\mathcal{X}}_{h,0}}=\norm{\cdot}_{L^2(\Omega)}$
  and $\norm{\cdot}_{\mathbb{V}^{\mathcal{X}}_{h,1}}=\norm{\cdot}_{\HHx}$ for the endpoints.
\end{definition}

Throughout this paper, we will work with abstract spaces $\HHx$. Exponential convergence of the numerical
method relies on the following 
Assumptions~\ref{ass:approx_of_unitial_condtiion} and \ref{ass:approx_of_HHx}: 
\begin{assumption}
  \label{ass:approx_of_unitial_condtiion}
  There exist constants $\beta$, $b$, $\mu>0$,
  such that for all $u_0$ that are analytic on a fixed neighborhood $\widetilde{\Omega}$ of $\overline{\Omega}$, 
  there exists a function $u_{h,0} \in \HHx$ and constants $C_{stab}, C_{approx}>0$ such that
  \begin{align*}
    \norm{u_{h,0}}_{\mathbb{V}^{\mathcal{X}}_{h,\beta}} \leq C_{stab}< \infty
    \qquad \text{ and } \qquad \norm{u_{0} - u_{h,0}}_{L^2(\Omega)} \leq  C_{approx} \,e^{-b \mathcal{N}_{\Omega}^\mu },
  \end{align*}
  where $\mathcal{N}_{\Omega}:=\operatorname{dim}\big(\HHx\big)$.  
\end{assumption}

When considering the Riesz-Dunford representation of $u$, the contour lies in the set of values for which $\LL - z$ is
elliptic. Therefore we consider the set of complex numbers up to a cone containing the part of the positive real axis 
for which $\LL - z$ is no longer elliptic.
\begin{definition}
\label{def:domain_of_ellipticity}
With the Poincar\'e constant $C_P$ of $\Omega$ and 
  fixed $0 < \varepsilon_0 < z_0 \leq \min\left(\frac{1}{2 C_{\text{P}}},1\right)^2$, we define 
  \begin{align*}
    \mathscr{S}:=\C \setminus \left[\left\{ z_0 + z: \abs{\operatorname{Arg}(z)}\leq \frac{\pi}{8}, \Re(z)\geq 0 \right\} 
    \cup B_{\varepsilon_0}(0)\right].
  \end{align*}
\end{definition}
\begin{remark}
  The set $\mathscr{S}$ is chosen in such a way that it contains the contour $\mathcal{C}$ used in the proof of Theorem~\ref{thm:best_approximation_HHx}.
  Namely, it contains the rays $\{r e^{\ii \pi/4}\,|\, r > 0\}$, $\{r e^{-\ii \pi/4}\,|\, r > 0\}$ as well as the circular arc $\{r_0 e^{\ii \theta}\,|\, \theta \in (-\pi/4,\pi/4)\}$,
  with $\varepsilon_0<r_0<z_0$ connecting the two rays.
  The ball $B_{\varepsilon_{0}}(0)$ is removed in order to avoid problems at $0$ when dividing by $z$.  See Figure~\ref{fig:contour}.
\eremk
\end{remark}

\begin{figure}
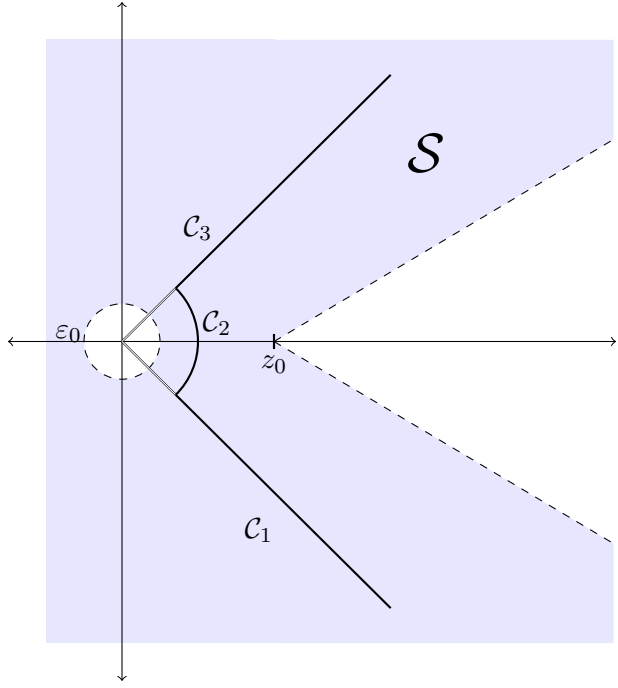

  \center
  \includeTikzOrEps{contour}
  \caption{Geometric configuration of Definition~\ref{def:domain_of_ellipticity}}
  \label{fig:contour}
\end{figure}

  \begin{definition}
    \label{def:uniformly_analytic}
    A function $f: [0,T] \to L^\infty(\Omega)$ is said to be \emph{uniformly analytic} if:
    \begin{enumerate}[(i)]
    \item For all $t\in [0,T]$, $f(t)$ is analytic in a fixed neighborhood  $\widetilde{\Omega}$ of $\overline{\Omega}$,
    \item there exist constants $C_f,\gamma_f >0$, the \emph{analyticity constants of $f$}, such that for all $t\in [0,T]$ and $p \in \N_0$,
      \begin{align*}
        \norm{\nabla^{p} f(t)}_{L^{\infty}(\widetilde{\Omega})}&\leq C_f \gamma_f^p p !\,.
      \end{align*}     
    \end{enumerate}
  \end{definition}

The second assumption we have to make is that for a certain class of singularly perturbed
elliptic problems, the solution can be approximated exponentially well. We formalize this as follows.
\begin{assumption}
  \label{ass:approx_of_HHx}  
  A function space $\HHx$ is said to resolve down to the scale $\varepsilon >0 $ if
  for all $z \in \mathscr{S}$ with $\abs{z}^{-1/2} \geq \varepsilon$  
  and for all functions $f$ that are analytic on a fixed neighborhood $\widetilde{\Omega}$ of $\overline{\Omega}$, 
  the solutions to the elliptic problem
  \begin{align*}
    -z^{-1} \LL u  + u &= f
  \end{align*}
  can be approximated exponentially well from it. That is, 
  there exist constants $C(f)$, $b$ and $\mu>0$ such that
  \begin{align*}
    \inf_{v_h \in \HHx}\left[
    \abs{z}^{-1}\norm{\nabla u -  \nabla v}^2_{L^2(\Omega)} + \norm{u-v}^2_{L^2(\Omega)}\right]&\lesssim C(f) e^{-b \mathcal{N}_{\Omega}^{\mu}},
  \end{align*}
  where $\mathcal{N}_{\Omega}:=\operatorname{dim}(\HHx)$.
  The constant $C(f)$ may depend only on $\widetilde{\Omega}$, the analyticity constants of  $f$,
  on $A$, $c$, $\Omega$, $z_0$ and $\varepsilon_0$, while the constants $b$ and $\mu$ depend only on $A$, $c$, $\widetilde{\Omega}$, $\Omega$,
  $z_0$ and $\varepsilon_0$
  Most notably the constants are independent of $z$, $\varepsilon$, and $\mathcal{N}_{\Omega}$.  
\end{assumption}

For simplicity of notation, we assume that the constants $b$ and $\mu$ in Assumptions~\ref{ass:approx_of_unitial_condtiion} and~\ref{ass:approx_of_HHx}
coincide. All our results will hold for general spaces $\HHx$, as long as they resolve specific scales. We will later provide
a concrete example of constructing such spaces in $1D$ and 2D, see also~\cite{tensor_fem}.

%

The next lemma collects some facts about the time evolution. These results are well-known for the
case of the heat equation, and their proof easily carries over to our setting.
\begin{lemma}
\label{lemma:apriori}
The following statements hold for the continuous and the semidiscrete problems:
  \begin{enumerate}[(i)]
    \item
      \label{it:sg_is_l2_continuous}
      The maps $t \mapsto u(t)$ and $t \mapsto u_h(t)$ are in $C\big([0,\infty),L^2(\Omega)\big)$.
    \item
      \label{it:sg_apriori_stability}
      For all $t>0$ and $\ell \in \N_0$, $\beta \in (0,1)$ and $\gamma \in [0,1]$ such that $2\ell + \gamma - \beta \geq 0$,
      \begin{align*}
        \norm{\sg(t) u_0}_{L^2(\Omega)} &\leq \norm{u_0}_{L^2(\Omega)} 
                                        \quad \text{ and } \quad 
        \norm{{\left[\sg(\cdot) u_0\right]}^{(\ell)}(t)}_{\widetilde{H}^{s\gamma}(\Omega)}
                                        \! \lesssim t^{-\ell + \frac{\beta-\gamma}{2}} \norm{u_0}_{\widetilde{H}^{s\beta}(\Omega)},         
      \end{align*}
      provided that the right-hand side is finite.
      In the discrete setting, these estimates read as
      \begin{align*}
        \norm{\sg_h(t) u_0}_{L^2(\Omega)} &\leq \norm{u_0}_{L^2(\Omega)} 
                                        \quad \text{ and } \quad 
        \norm{{\left[\sg_h(\cdot) u_0\right]}^{(\ell)}(t)}_{\widetilde{H}^{s\gamma}(\Omega)}
                                        \! \lesssim t^{-\ell + \frac{\beta-\gamma}{2}} \norm{u_0}_{\mathbb{V}^{\mathcal{X}}_{h,\beta}},         
      \end{align*}
      provided that the right-hand side is finite.

    \item  \label{it:sg_apriori_stability_inh1}
      Set $w_h:=\int_{0}^{t}{\sg_h(\tau) \Pi_{L^2} f(t-\tau) \,d\tau}$. Then the following estimates hold:
      \begin{align*}
        \norm{w_h(t)}_{L^2(\Omega)}^2&\lesssim t \int_{0}^{t}{\!\norm{\Pi_{L^2}f(\tau)}_{L^2(\Omega)}^2 d\tau}
        \,\; \text{and} \;\,
        \int_{0}^{t}{\!\norm{\dot{w}_h(\tau)}_{L^2(\Omega)}^2 \,d\tau}\lesssim \int_{0}^{t}{\!\norm{\Pi_{L^2}f(\tau)}_{L^2(\Omega)}^2 d\tau}.
      \end{align*}
    \end{enumerate}
\end{lemma}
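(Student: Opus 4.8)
The whole statement is an exercise in the scalar functional calculus of a self-adjoint positive operator, applied twice. For the continuous problem that operator is $\LL^s$: since $A$ is uniformly SPD, $c\ge0$ and we impose Dirichlet conditions, the eigenvalues satisfy $\mu_j\ge\mu_0>0$, the $(\varphi_j)$ form an $L^2(\Omega)$-orthonormal basis, and $\sg(t)=e^{-t\LL^s}$ acts as the Fourier multiplier $e^{-\mu_j^s t}$. For the semidiscrete problem, Theorem~\ref{thm:llh_generates_sg} and Lemma~\ref{lemma_llh_is_elliptic} show that $\LL_h^s$ is self-adjoint and positive definite on the finite-dimensional Hilbert space $(\HHx,\ltwoprodX{\cdot}{\cdot})$; writing $(\mu_{h,j},\varphi_{h,j})$ for its $L^2(\Omega)$-orthonormal eigenpairs ($\mu_{h,j}>0$), the semigroup $\sg_h(t)$ acts as the multiplier $e^{-\mu_{h,j}t}$. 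Throughout one uses $\widetilde{H}^{t}(\Omega)=\mathbb{H}^t(\Omega)$ for $t\in[0,1]$, noting that the exponents $s\beta$ and $s\gamma$ occurring below lie in $[0,1]$, so $\norm{v}_{\widetilde{H}^t(\Omega)}^2\sim\sum_j\mu_j^{t}\abs{\ltwoprodX{v}{\varphi_j}}^2$. Part~(i) is then immediate: $-\LL^s$ and $-\LL_h^s$ generate (analytic, in particular strongly continuous) semigroups, so the homogeneous parts are in $C([0,\infty),L^2(\Omega))$; and since $f$ is uniformly analytic, hence continuous with values in $L^2(\Omega)$, the Duhamel convolutions $\int_0^t\sg(\tau)f(t-\tau)\,d\tau$ and $\int_0^t\sg_h(\tau)\Pi_{L^2}f(t-\tau)\,d\tau$ depend continuously on $t$.

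For~(ii), the two contraction bounds follow from Parseval in the respective eigenbases together with $\abs{e^{-\lambda t}}\le1$ for $\lambda,t\ge0$. For the smoothing bound in the continuous case, expand $[\sg(\cdot)u_0]^{(\ell)}(t)$ in $(\varphi_j)$, giving
\[
  \norm{[\sg(\cdot)u_0]^{(\ell)}(t)}_{\widetilde{H}^{s\gamma}(\Omega)}^2
  \sim\sum_{j}\mu_j^{\,2s\ell+s\gamma}\,e^{-2\mu_j^s t}\,\abs{\ltwoprodX{u_0}{\varphi_j}}^2,
\]
and estimate $\mu_j^{\,2s\ell+s(\gamma-\beta)}e^{-2\mu_j^s t}=t^{-(2\ell+\gamma-\beta)}(\mu_j^s t)^{2\ell+\gamma-\beta}e^{-2\mu_j^s t}\lesssim t^{-(2\ell+\gamma-\beta)}$, valid precisely because $2\ell+\gamma-\beta\ge0$ makes $x\mapsto x^{2\ell+\gamma-\beta}e^{-2x}$ bounded on $[0,\infty)$; the leftover sum is $\sim\norm{u_0}_{\widetilde{H}^{s\beta}(\Omega)}^2$, which gives the claim (everything being vacuous if $u_0\notin\widetilde{H}^{s\beta}(\Omega)$). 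The discrete case is the same computation in the basis $(\varphi_{h,j})$ with $\mu_j^s$ replaced by $\mu_{h,j}$, once the two discrete norms are expressed spectrally: the standard identification of the real interpolation space between $L^2(\Omega)$ and the form domain of $\LL_h^s$ with fractional powers of $\LL_h^s$ gives $\norm{v}_{\mathbb{V}^{\mathcal{X}}_{h,\beta}}^2\sim\sum_j\mu_{h,j}^{\beta}\abs{\ltwoprodX{v}{\varphi_{h,j}}}^2$ for $v\in\HHx$, and since Lemma~\ref{lemma_llh_is_elliptic} yields $\norm{v}_{\widetilde{H}^s(\Omega)}\lesssim\norm{v}_{\HHx}$ on $\HHx$, monotonicity of real interpolation gives $\norm{v}_{\widetilde{H}^{s\gamma}(\Omega)}\lesssim\norm{v}_{\mathbb{V}^{\mathcal{X}}_{h,\gamma}}\sim(\sum_j\mu_{h,j}^{\gamma}\abs{\ltwoprodX{v}{\varphi_{h,j}}}^2)^{1/2}$.

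For~(iii), note that $w_h$ solves $\dot{w}_h+\LL_h^s w_h=\Pi_{L^2}f$ with $w_h(0)=0$, hence $w_h(t)=\int_0^t\sg_h(t-\sigma)\Pi_{L^2}f(\sigma)\,d\sigma$. Using $\norm{\sg_h(\tau)}_{\mathscr{B}(L^2(\Omega),L^2(\Omega))}\le1$ from~(ii) and the Cauchy--Schwarz inequality, $\norm{w_h(t)}_{L^2(\Omega)}\le\int_0^t\norm{\Pi_{L^2}f(\sigma)}_{L^2(\Omega)}\,d\sigma\le(t\int_0^t\norm{\Pi_{L^2}f(\sigma)}_{L^2(\Omega)}^2\,d\sigma)^{1/2}$, which is the first bound. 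For the second, test the evolution equation with $\dot{w}_h$ in $L^2(\Omega)$; by self-adjointness and positivity of $\LL_h^s$ one has $\ltwoprodX{\LL_h^s w_h}{\dot{w}_h}=\tfrac12\tfrac{d}{dt}\ltwoprodX{\LL_h^s w_h}{w_h}$, so Young's inequality gives $\tfrac12\norm{\dot{w}_h}_{L^2(\Omega)}^2+\tfrac12\tfrac{d}{dt}\ltwoprodX{\LL_h^s w_h}{w_h}\le\tfrac12\norm{\Pi_{L^2}f}_{L^2(\Omega)}^2$; integrating over $[0,t]$, using $w_h(0)=0$ and $\ltwoprodX{\LL_h^s w_h(t)}{w_h(t)}\ge0$, one obtains $\int_0^t\norm{\dot{w}_h(\sigma)}_{L^2(\Omega)}^2\,d\sigma\le\int_0^t\norm{\Pi_{L^2}f(\sigma)}_{L^2(\Omega)}^2\,d\sigma$.

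The only step needing genuine care is the discrete smoothing estimate: since the eigenfunctions of $\LL_h^s$ are not restrictions of the $\varphi_j$, one cannot apply Parseval directly in $\widetilde{H}^{s\gamma}(\Omega)$, and the bound must be routed through the discrete interpolation scale $\mathbb{V}^{\mathcal{X}}_{h,\cdot}$ and the estimate $\norm{\cdot}_{\widetilde{H}^s(\Omega)}\lesssim\norm{\cdot}_{\HHx}$ of Lemma~\ref{lemma_llh_is_elliptic}; one should also keep the finiteness caveats and the ranges $s\beta,s\gamma\in[0,1]$ in view so that the interpolation and spectral Sobolev scales coincide. Everything else is the standard heat-semigroup bookkeeping referred to before the statement.
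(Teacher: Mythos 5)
Your proposal is correct and follows essentially the same route as the paper, which handles (i) by the $C_0$-semigroup property, (ii) by the eigendecomposition argument of Thom\'ee's Lemma 3.2 (you simply carry it out explicitly, including the spectral identification of $\norm{\cdot}_{\mathbb{V}^{\mathcal{X}}_{h,\beta}}$ via Lemma~\ref{lemma_llh_is_elliptic} needed for the discrete smoothing bound), and (iii) by the standard energy argument of testing with $\dot{w}_h$.
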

\begin{proof}  
      
    Statement~(\ref{it:sg_is_l2_continuous}) is one of the defining properties of a $C_0$-semigroup; thus it follows from Theorem~\ref{thm:llh_generates_sg}.
    The statements of~(\ref{it:sg_apriori_stability}) follow by considering the eigen decomposition of $u$, see~\cite[Lemma 3.2]{thomee_book}.
    The proof of~(\ref{it:sg_apriori_stability_inh1}) is an energy argument.
\end{proof}

Corresponding to the operator $\LL^s_h$, we define the Ritz approximation $\Pi_h: \dom(\LL^s) \to \HHx$ via
\begin{align}
  \label{eq:def_ritz}
  \ltwoprodX{\LL_h^s \Pi_h u}{v}=\ltwoprodX{\LL^s u}{v} \qquad \forall v \in \HHx.
\end{align}
(Note: unlike in the heat equation case, the operator $\Pi_h$ is not a projection).
Since the bilinear form on the left-hand side is elliptic by Lemma~\ref{lemma_llh_is_elliptic}
and $\ltwoprodX{\LL^s u}{v}$ is a linear functional in $v$, $\Pi_h u$ exists and is well defined.
(Since $\HHx$ is finite dimensional we do not have to worry about the  norms involved.)

\begin{lemma}
  Let $u$ solve~\eqref{eq:model_problem}, and $u_h$ solve~\eqref{eq:sd_semigroup}. Define $\rho:=u - \Pi_h u$ and
  $\theta:= \Pi_h u - u_h$. 
  Then  $\theta$ satisfies the following  semidiscrete equation for all $t>0$:
\begin{align}
  \label{eq:sd_ode_for_theta}
  \dot{\theta}(t) + \LL^s_h \theta(t) &= \dot{\rho}(t), \qquad 
                                        \theta(0)=u_0- u_{h,0}.
\end{align}
\end{lemma}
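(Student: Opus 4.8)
The plan is to subtract the equations governing $\Pi_h u$ and $u_h$, so we need first a differential equation for $\Pi_h u$. Applying $\Pi_h$ pointwise in time to $u(t)$, we compute $\frac{d}{dt}\Pi_h u = \Pi_h \dot u$ (the Ritz map is linear and time-independent, so it commutes with $\partial_t$ — one checks this via the defining relation \eqref{eq:def_ritz}, differentiating both sides in $t$). Next I would apply $\LL_h^s$ to $\Pi_h u$ and use the definition \eqref{eq:def_ritz} of the Ritz projection: for all $v \in \HHx$,
\begin{align*}
  \ltwoprodX{\LL_h^s \Pi_h u(t)}{v} = \ltwoprodX{\LL^s u(t)}{v}.
\end{align*}
From the strong form of the model problem \eqref{eq:model_problem}, $\LL^s u(t) = f(t) - \dot u(t)$, so the right-hand side equals $\ltwoprodX{f(t)}{v} - \ltwoprodX{\dot u(t)}{v}$. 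Since $v \in \HHx$ and $\Pi_{L^2}$ is the $L^2$-orthogonal projection onto $\HHx$, we may replace $f(t)$ by $\Pi_{L^2} f(t)$ and $\dot u(t)$ by $\Pi_{L^2}\dot u(t)$ inside these inner products.

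Now combine with $\frac{d}{dt}\Pi_h u = \Pi_h \dot u$: testing the semidiscrete equation \eqref{eq:sd_semigroup} against $v \in \HHx$ gives $\ltwoprodX{\dot u_h}{v} + \ltwoprodX{\LL_h^s u_h}{v} = \ltwoprodX{\Pi_{L^2} f}{v}$. Subtracting this from the identity for $\Pi_h u$, and writing $\theta = \Pi_h u - u_h$, yields for all $v \in \HHx$
\begin{align*}
  \ltwoprodX{\dot\theta(t)}{v} + \ltwoprodX{\LL_h^s \theta(t)}{v}
  = \ltwoprodX{\tfrac{d}{dt}\Pi_h u(t) - \Pi_{L^2}\dot u(t)}{v}.
\end{align*}
Here I use $\frac{d}{dt}\Pi_h u = \Pi_h \dot u$, so the right-hand side is $\ltwoprodX{\Pi_h \dot u(t) - \Pi_{L^2}\dot u(t)}{v}$. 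The key observation to recover the stated clean form is that $\dot\rho = \dot u - \frac{d}{dt}\Pi_h u = \dot u - \Pi_h \dot u$, and for any $w$ and any $v \in \HHx$ we have $\ltwoprodX{\Pi_h w - \Pi_{L^2} w}{v} = \ltwoprodX{\Pi_h w - w}{v}$ because $\ltwoprodX{w - \Pi_{L^2}w}{v} = 0$. Applying this with $w = \dot u(t)$ gives $\ltwoprodX{\Pi_h \dot u - \Pi_{L^2}\dot u}{v} = \ltwoprodX{\Pi_h\dot u - \dot u}{v} = -\ltwoprodX{\dot\rho(t)}{v}$.

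Wait — that produces $-\dot\rho$, not $+\dot\rho$; so I should double-check the sign convention. Since $\theta = \Pi_h u - u_h$ and $\rho = u - \Pi_h u$, we have $u - u_h = \rho + \theta$. The standard Thomée-style decomposition indeed yields $\dot\theta + \LL_h^s\theta = -\dot\rho$ projected; but the statement here writes $\dot\rho$. The resolution is that the inner-product identity only determines $\dot\theta + \LL_h^s\theta$ up to what is visible against $\HHx$, i.e. up to $\Pi_{L^2}$ of the right-hand side, and $\Pi_{L^2}\dot\rho$ need not equal $\dot\rho$ unless one interprets the equation \eqref{eq:sd_ode_for_theta} itself as an identity in $\HHx$ with the convention that $\dot\rho$ on the right is shorthand for its Galerkin-visible part. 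I would therefore state explicitly in the proof that \eqref{eq:sd_ode_for_theta} is to be read as: for all $v \in \HHx$, $\ltwoprodX{\dot\theta + \LL_h^s\theta}{v} = \ltwoprodX{\dot\rho}{v}$, and verify the sign by carefully tracking the decomposition $u - u_h = \rho + \theta$: differentiating, $\dot u - \dot u_h = \dot\rho + \dot\theta$, and since both $u_h$ and $\Pi_h u$ lie in $\HHx$ the tested residual of $\theta$ equals minus the tested residual of $\rho$ relative to the exact flow. The main obstacle — the only genuinely delicate point — is getting this sign and the role of $\Pi_{L^2}$ right, i.e. being careful that $\dot\rho$ is not itself in $\HHx$ and that the equation holds in the weak/Galerkin sense; the rest is routine subtraction of two variational identities together with the commutation $\frac{d}{dt}\Pi_h = \Pi_h \frac{d}{dt}$ and the continuity/differentiability in $t$ supplied by Lemma~\ref{lemma:apriori}(\ref{it:sg_is_l2_continuous}).
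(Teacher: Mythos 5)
Your computation is precisely the ``straightforward computation'' the paper has in mind (its proof consists of a pointer to \cite[Eq.~(1.27)]{thomee_book}), and it is carried out correctly: using \eqref{eq:def_ritz}, the strong equation for $u$, the semidiscrete equation, and the time-independence of $\Pi_h$, one arrives at
$\ltwoprodX{\dot\theta+\LL_h^s\theta}{v}=\ltwoprodX{\Pi_h\dot u-\Pi_{L^2}\dot u}{v}=-\ltwoprodX{\dot\rho}{v}$ for all $v\in\HHx$, i.e.\ $\dot\theta+\LL_h^s\theta=-\Pi_{L^2}\dot\rho$ as an identity in $\HHx$. Up to that point your argument and the paper's intended one coincide.

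Where your write-up goes wrong is the attempted ``resolution'' of the sign. Passing to the Galerkin-visible part cannot flip a sign: $\ltwoprodX{\Pi_{L^2}\dot\rho}{v}=\ltwoprodX{\dot\rho}{v}$ for every $v\in\HHx$, so reading \eqref{eq:sd_ode_for_theta} weakly against $\HHx$ does not reconcile $-\dot\rho$ with $+\dot\rho$, and your closing sentences leave the matter unresolved. The honest conclusion, which you should state outright, is that with the paper's conventions ($\rho=u-\Pi_h u$, $\theta=\Pi_h u-u_h$, both the negatives of Thom\'ee's) the correct error equation is $\dot\theta+\LL_h^s\theta=-\Pi_{L^2}\dot\rho$; the sign (and the absent projection) in the displayed lemma is a slip in the statement, and it is harmless downstream because in the proof of Proposition~\ref{prop:error_up_to_ritz} only quantities like $\abs{\ltwoprodX{\dot\rho}{\theta}}$ and $\ltwoprodX{\rho}{\dot z_h}$ are estimated, for which the sign is irrelevant. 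Two smaller omissions: you never address the second half of the claim, the initial condition (which, taken literally, should read $\theta(0)=\Pi_h u_0-u_{h,0}$ rather than $u_0-u_{h,0}$), and the commutation $\frac{d}{dt}\Pi_h u=\Pi_h\dot u$ deserves the one-line justification you sketch (a fixed linear map applied to a function differentiable in time, cf.\ Lemma~\ref{lemma:apriori}), which is indeed all that is needed.
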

\begin{proof}
  Straightforward computation, see \cite[Equation (1.27)]{thomee_book}.
\end{proof}

The following proposition holds:
\begin{proposition}
  \label{prop:error_up_to_ritz}
  Let $u$ solve~\eqref{eq:model_problem}, and $u_h$ solve~\eqref{eq:sd_semigroup}. Define $\rho:=u - \Pi_h u$ and
  $\theta:= \Pi_h u - u_h$. Then the following estimates hold for all $t>0$:
  \begin{align}
    \int_{0}^{t}{\norm{\theta(\tau)}^2_{L^2(\Omega)} \,d\tau}
    &\lesssim t \norm{u_0 - u_{h,0}}_{L^2(\Omega)}^2 +  \int_{0}^{t}{\norm{\rho(\tau)}^2_{L^2(\tau)} \,d\tau}, 
    \label{eq:error_up_to_ritz_l2}  \\
    t \norm{\theta(t)}_{L^2(\Omega)}^2 + \int_{0}^{t}{\tau \norm{\theta(\tau)}_{\widetilde{H}^{s}(\Omega)}^2 \,d\tau} 
    &\lesssim
      \begin{multlined}[t]
        t \norm{u_0 - u_{h,0}}_{L^2(\Omega)}^2 
        +\int_{0}^{t}{\tau^2 \norm{\dot{\rho}(\tau)}^2_{L^2(\Omega)} + \norm{\rho}_{L^2(\Omega)}^2 \,d\tau} \\
        + \sup_{\tau \in (0,t)}\left( \tau \norm{\rho(\tau)}_{L^2(\Omega)}^2\right),
        \end{multlined}
  \label{eq:error_up_to_ritz_pointwise_and_hone}
\end{align}
\end{proposition}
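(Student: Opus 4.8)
The plan is to carry out the classical Thom\'ee-type parabolic error analysis, built on the error equation~\eqref{eq:sd_ode_for_theta} for $\theta$, the a priori bounds of Lemma~\ref{lemma:apriori}, and the ellipticity of $\LL^s_h$ from Lemma~\ref{lemma_llh_is_elliptic}. Two structural facts drive the argument. First, $\theta(\tau)\in\HHx$ while $\rho(\tau)\notin\HHx$, so the right-hand side of~\eqref{eq:sd_ode_for_theta} enters only through its $L^2$-projection and, when tested against elements of $\HHx$, the $(\id-\Pi_{L^2})$-part of $\rho$ is annihilated. Second, for $\tau>0$ one has $\theta(\tau)+\rho(\tau)=u(\tau)-u_h(\tau)$, and by Lemma~\ref{lemma:apriori}(\ref{it:sg_is_l2_continuous}) $\norm{u(\tau)-u_h(\tau)}_{L^2(\Omega)}\to\norm{u_0-u_{h,0}}_{L^2(\Omega)}$ as $\tau\to0^+$; this is the route by which $\norm{u_0-u_{h,0}}_{L^2(\Omega)}$ (and, for the second estimate, the start-up term) enters. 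Since $\rho$, $\theta$ need not have $L^2$-limits at $0$, all manipulations are first performed with a cutoff at $\tau=\varepsilon$, then $\varepsilon\to0$.

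For~\eqref{eq:error_up_to_ritz_l2} I would integrate~\eqref{eq:sd_ode_for_theta} from $\varepsilon$ to $\tau$: with $\zeta(\tau):=\int_\varepsilon^\tau\theta$ this yields $\theta(\tau)+\LL^s_h\zeta(\tau)=\theta(\varepsilon)+\Pi_{L^2}\big(\rho(\varepsilon)-\rho(\tau)\big)$. Taking the $L^2(\Omega)$-inner product with $\theta(\tau)=\dot\zeta(\tau)$, integrating over $(\varepsilon,t)$, and dropping the nonnegative term $\tfrac12(\LL^s_h\zeta(t),\zeta(t))_{L^2(\Omega)}$, one gets $\int_\varepsilon^t\norm{\theta}_{L^2(\Omega)}^2\,d\tau\le\int_\varepsilon^t\big(\theta(\varepsilon)+\Pi_{L^2}\rho(\varepsilon),\theta(\tau)\big)_{L^2(\Omega)}\,d\tau-\int_\varepsilon^t\big(\Pi_{L^2}\rho(\tau),\theta(\tau)\big)_{L^2(\Omega)}\,d\tau$; since $\theta(\tau)\in\HHx$ and $\theta(\varepsilon)+\rho(\varepsilon)=u(\varepsilon)-u_h(\varepsilon)$, the first term on the right equals $\big(u(\varepsilon)-u_h(\varepsilon),\zeta(t)\big)_{L^2(\Omega)}$. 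Using $\norm{\zeta(t)}_{L^2(\Omega)}\le t^{1/2}\big(\int_\varepsilon^t\norm{\theta}_{L^2(\Omega)}^2\big)^{1/2}$ and $\norm{\Pi_{L^2}\rho}_{L^2(\Omega)}\le\norm{\rho}_{L^2(\Omega)}$, Cauchy--Schwarz in time, division by $\big(\int_\varepsilon^t\norm{\theta}_{L^2(\Omega)}^2\big)^{1/2}$, squaring, and $\varepsilon\to0$ give~\eqref{eq:error_up_to_ritz_l2}. (Equivalently one may pass to the backward dual problem $-\dot\psi+\LL^s_h\psi=\Pi_{L^2}\phi$, $\psi(t)=0$, and invoke Lemma~\ref{lemma:apriori}(\ref{it:sg_apriori_stability}),(\ref{it:sg_apriori_stability_inh1}) after reversing time.)

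For~\eqref{eq:error_up_to_ritz_pointwise_and_hone} I would run a $\tau$-weighted energy estimate. Testing~\eqref{eq:sd_ode_for_theta} with $\theta\in\HHx$ gives $\tfrac12\tfrac{d}{d\tau}\norm{\theta}_{L^2(\Omega)}^2+(\LL^s_h\theta,\theta)_{L^2(\Omega)}=(\dot\rho,\theta)_{L^2(\Omega)}$, with $|(\dot\rho,\theta)_{L^2(\Omega)}|=|(\Pi_{L^2}\dot\rho,\theta)_{L^2(\Omega)}|$. Multiplying by $\tau$, using $\tau\tfrac{d}{d\tau}\norm{\theta}_{L^2(\Omega)}^2=\tfrac{d}{d\tau}\big(\tau\norm{\theta}_{L^2(\Omega)}^2\big)-\norm{\theta}_{L^2(\Omega)}^2$, integrating over $(\varepsilon,t)$, and applying $c_3\norm{\theta}_{\widetilde H^s(\Omega)}^2\le(\LL^s_h\theta,\theta)_{L^2(\Omega)}$ (Lemma~\ref{lemma_llh_is_elliptic}), one obtains $t\norm{\theta(t)}_{L^2(\Omega)}^2+\int_\varepsilon^t\tau\norm{\theta(\tau)}_{\widetilde H^s(\Omega)}^2\,d\tau\lesssim\varepsilon\norm{\theta(\varepsilon)}_{L^2(\Omega)}^2+\int_\varepsilon^t\norm{\theta}_{L^2(\Omega)}^2\,d\tau+\int_\varepsilon^t\tau\norm{\Pi_{L^2}\dot\rho}_{L^2(\Omega)}\norm{\theta}_{L^2(\Omega)}\,d\tau$. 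In the last integral Young's inequality, $\tau\norm{\Pi_{L^2}\dot\rho}_{L^2(\Omega)}\norm{\theta}_{L^2(\Omega)}\le\tfrac14\tau^2\norm{\dot\rho}_{L^2(\Omega)}^2+\norm{\theta}_{L^2(\Omega)}^2$, produces the term $\int_0^t\tau^2\norm{\dot\rho}_{L^2(\Omega)}^2$ of~\eqref{eq:error_up_to_ritz_pointwise_and_hone} and another $\int_0^t\norm{\theta}_{L^2(\Omega)}^2$; both appearances of $\int_0^t\norm{\theta}_{L^2(\Omega)}^2$ are controlled by the already-proved~\eqref{eq:error_up_to_ritz_l2}, contributing $t\norm{u_0-u_{h,0}}_{L^2(\Omega)}^2+\int_0^t\norm{\rho}_{L^2(\Omega)}^2$. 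The boundary term $\varepsilon\norm{\theta(\varepsilon)}_{L^2(\Omega)}^2$ does not generally vanish (start-up singularity): writing $\theta(\varepsilon)=(u(\varepsilon)-u_h(\varepsilon))-\rho(\varepsilon)$ it is $\lesssim\varepsilon\norm{u(\varepsilon)-u_h(\varepsilon)}_{L^2(\Omega)}^2+\varepsilon\norm{\rho(\varepsilon)}_{L^2(\Omega)}^2$, the first part tending to $0$ and the second bounded by $\sup_{\tau\in(0,t)}\tau\norm{\rho(\tau)}_{L^2(\Omega)}^2$ --- exactly the $\sup$-term in~\eqref{eq:error_up_to_ritz_pointwise_and_hone}. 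Letting $\varepsilon\to0$ yields the estimate.

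The step that requires the most care is the bookkeeping around $\dot\rho$ together with the cutoff at $\tau=0$: equation~\eqref{eq:sd_ode_for_theta} carries one more time derivative of $\rho$ than is allowed in~\eqref{eq:error_up_to_ritz_l2}, so one integrates it once in time and must then verify that the emerging initial term combines --- via $\theta(\varepsilon)+\rho(\varepsilon)=u(\varepsilon)-u_h(\varepsilon)$ and the annihilation of $(\id-\Pi_{L^2})\rho(\varepsilon)$ against $\HHx$ --- into a quantity converging to $\norm{u_0-u_{h,0}}_{L^2(\Omega)}$, leaving no uncontrolled $\norm{\rho(0)}_{L^2(\Omega)}$; and in the weighted estimate one must match the powers of $\tau$ in Young's inequality and recognise that the non-vanishing boundary term at $\tau=0$ is precisely what forces the $\sup_{\tau}\tau\norm{\rho}_{L^2(\Omega)}^2$ contribution. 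All remaining manipulations are the standard Young/Gronwall routine and transfer essentially verbatim from the heat-equation case, cf.~\cite{thomee_book}.
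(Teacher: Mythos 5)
Your proposal is correct, and it splits into two parts of different character. For the weighted estimate \eqref{eq:error_up_to_ritz_pointwise_and_hone} you do essentially what the paper does: test with $\tau\,\theta$, integrate from $\varepsilon$ to $t$, convert $\ltwoprodX{\LL^s_h\theta}{\theta}$ into the $\widetilde H^s$-norm via Lemma~\ref{lemma_llh_is_elliptic}, absorb the $\dot\rho$-term by Young/Cauchy--Schwarz together with the already proved \eqref{eq:error_up_to_ritz_l2}, and control the boundary term $\varepsilon\norm{\theta(\varepsilon)}^2_{L^2(\Omega)}$ by writing $\theta(\varepsilon)=(u(\varepsilon)-u_h(\varepsilon))-\rho(\varepsilon)$, which produces exactly the $\sup_\tau \tau\norm{\rho(\tau)}^2_{L^2(\Omega)}$ term; this matches the paper's appendix argument step for step. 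For \eqref{eq:error_up_to_ritz_l2}, however, you take a genuinely different route: instead of the paper's backward dual problem $-\dot z_h+\LL^s_h z_h=\theta$, $z_h(t_0)=0$ (Luskin--Rannacher duality, which needs the dual stability estimate of Lemma~\ref{lemma:apriori}(\ref{it:sg_apriori_stability_inh1})), you integrate the error equation once in time and test with $\theta=\dot\zeta$, dropping the nonnegative term $\ltwoprodX{\LL^s_h\zeta(t)}{\zeta(t)}$. This is more elementary and self-contained --- it only uses symmetry and positive semidefiniteness of $\LL^s_h$ and Cauchy--Schwarz, and it dispenses with the auxiliary backward problem altogether --- while the duality route is the one that extends more readily to other (e.g.\ negative-norm or pointwise-in-time $L^2$) estimates. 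One small point to be aware of: your identification of the $\varepsilon$-boundary term with $u(\varepsilon)-u_h(\varepsilon)$ hinges on the error equation in the (projected, correctly signed) form $\dot\theta+\LL^s_h\theta=-\Pi_{L^2}\dot\rho$, which is indeed the correct form (cf.\ the cited \cite[Eq.\ (1.27)]{thomee_book}); the sign and projection as displayed in \eqref{eq:sd_ode_for_theta} are immaterial for the paper's proof but do matter for your cancellation, and you used the right version, so there is no gap --- it is just the step that deserves the explicit remark you already gave it.
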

\begin{proof}
  These estimates are well known for the case of the heat equation. Similar results and techniques 
  can be found, for example, in~\cite[Chapter 3]{thomee_book}. The use of the backward parabolic 
  problem goes back at least to~\cite{luskin_rannacher_smoothing}.   
  For completeness, we provide a proof in Appendix~\ref{sect:proof_error_up_to_ritz}.
\end{proof}

The previous results  mean that it is sufficient to analyze the behavior of the Ritz approximation when applied to $u$. 
We start this endeavor by  showing that the Ritz approximation is quasi-optimal.
\begin{lemma}
  \label{lemma:ritz_is_quasioptimal}
  Let $u \in \dom(\LL^s)$, and let $\lifting u$ denotes its lifting to $\HH$ defined in~\eqref{eq:continuous_lifting}. Then the following estimate holds:
  \begin{align*}
    \norm{u - \Pi_h u}_{\widetilde{H}^s(\Omega)} \lesssim \inf_{ \VV_h \in \HHxy}  \norm{ \lifting u - \VV_h}_{\HH}.
  \end{align*}
\end{lemma}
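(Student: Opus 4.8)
The plan is to combine a Céa-type quasi-optimality estimate for the Galerkin lifting $\lifting_h$ with the norm equivalences already established for $\LL_h^s$. First I would observe that, by definition of $\Pi_h$ in \eqref{eq:def_ritz} together with Lemma~\ref{lemma_llh_is_elliptic}, controlling $\norm{u - \Pi_h u}_{\widetilde H^s(\Omega)}$ amounts to controlling $\ltwoprodX{\LL_h^s(\Pi_h u - w_h)}{\Pi_h u - w_h}$ for a suitable comparison element $w_h \in \HHx$, which in turn is $d_s^{-1}\AA(\lifting_h(\Pi_h u - w_h),\lifting_h(\Pi_h u - w_h)) = d_s^{-1}\norm{\lifting_h(\Pi_h u - w_h)}_{\HH}^2$ (using $c\ge 0$; or up to equivalence constants if $c$ is present). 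So the whole statement reduces to estimating $\norm{\lifting_h \Pi_h u - \lifting_h w_h}_{\HH}$.

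The key Galerkin-orthogonality step: I would show that $\lifting_h \Pi_h u$ is the $\AA$-orthogonal projection of $\lifting u$ onto $\HHxy$ (subject to matching trace data). Indeed, for any $\VV_h \in \HHxy$ with $\trace \VV_h = 0$ we have $\AA(\lifting_h \Pi_h u, \VV_h) = 0$ by \eqref{eq:discrete_lifting_pde}, and also $\AA(\lifting u, \VV_h) = 0$ by \eqref{eq:continuous_lifting_pde}; hence $\AA(\lifting u - \lifting_h \Pi_h u, \VV_h) = 0$ for all such $\VV_h$. The trace of $\lifting_h \Pi_h u$ is $\Pi_h u$; the trace of $\lifting u$ is $u$. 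To exploit orthogonality I need a discrete function whose trace matches, so the natural move is: given any $\VV_h \in \HHxy$, write $\trace \VV_h = w_h \in \HHx$ and compare $\lifting_h \Pi_h u$ with $\lifting_h w_h + (\VV_h - \lifting_h w_h)$; the second summand has zero trace. Then
\begin{align*}
  \norm{\lifting_h(\Pi_h u - w_h)}_{\HH}^2 &\lesssim \AA(\lifting_h \Pi_h u - \lifting_h w_h, \lifting_h \Pi_h u - \lifting_h w_h) \\
  &= \AA(\lifting_h \Pi_h u - \lifting_h w_h, \lifting_h \Pi_h u - \VV_h),
\end{align*}
the last equality because $\VV_h - \lifting_h w_h$ has zero trace and both $\lifting_h\Pi_h u$ and $\lifting_h w_h$ are discretely harmonic. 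Now insert $\lifting u$: write $\lifting_h \Pi_h u - \VV_h = (\lifting_h \Pi_h u - \lifting u) + (\lifting u - \VV_h)$; the first term is $\AA$-orthogonal to $\lifting_h \Pi_h u - \lifting_h w_h$ (which has zero trace, being a difference of two discretely harmonic extensions), so it drops out, leaving $\AA(\lifting_h \Pi_h u - \lifting_h w_h, \lifting u - \VV_h)$. Cauchy–Schwarz and coercivity/continuity of $\AA$ then give $\norm{\lifting_h(\Pi_h u - w_h)}_{\HH} \lesssim \norm{\lifting u - \VV_h}_{\HH}$, and taking the infimum over $\VV_h \in \HHxy$ finishes the bound.

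Assembling: $\norm{u - \Pi_h u}_{\widetilde H^s(\Omega)} \lesssim \norm{u - w_h}_{\widetilde H^s(\Omega)} + \norm{w_h - \Pi_h u}_{\widetilde H^s(\Omega)}$, where $w_h = \trace \VV_h$ for the near-optimal $\VV_h$; the second term is $\lesssim \norm{\lifting_h(\Pi_h u - w_h)}_{\HH} \lesssim \inf_{\VV_h}\norm{\lifting u - \VV_h}_{\HH}$ by the above, and the first term is $\lesssim \norm{\lifting(u - w_h)}_{\HH} = \norm{\lifting u - \lifting w_h}_{\HH}$ by the trace estimate of \cite[Proposition 2.5]{pde_approach_apriori}; since $\lifting w_h$ minimizes the $\HH$-norm among extensions of $w_h$ and $\VV_h$ is one such extension, $\norm{\lifting u - \lifting w_h}_{\HH} \le \norm{\lifting u - \VV_h}_{\HH} + \norm{\lifting w_h - \VV_h}_{\HH} \lesssim \norm{\lifting u - \VV_h}_{\HH}$. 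Taking the infimum over $\VV_h$ yields the claim. The main obstacle is bookkeeping the trace-matching: one must be careful that every function to which Galerkin orthogonality is applied genuinely has zero trace, and that the comparison extensions $\lifting_h w_h$, $\lifting w_h$ are handled consistently; the analytic content (coercivity, trace inequality) is entirely off-the-shelf from Lemma~\ref{lemma_llh_is_elliptic} and \cite[Proposition 2.5]{pde_approach_apriori}.
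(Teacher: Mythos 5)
There is a genuine gap, and it sits exactly at the step you flag as the crucial cancellation. You claim that $\lifting_h \Pi_h u - \lifting_h w_h$ has zero trace ``being a difference of two discretely harmonic extensions''. This is false: by definition of the discrete lifting, $\trace\big(\lifting_h \Pi_h u - \lifting_h w_h\big) = \Pi_h u - w_h$, which is nonzero unless $w_h = \Pi_h u$. Consequently the orthogonality you invoke, $\AA\big(\lifting_h \Pi_h u - \lifting u,\; \lifting_h \Pi_h u - \lifting_h w_h\big) = 0$, does not follow from the zero-trace orthogonalities \eqref{eq:discrete_lifting_pde} and \eqref{eq:continuous_lifting_pde} alone, and the chain of estimates collapses at that point. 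The structural symptom is that your argument never actually uses the defining relation \eqref{eq:def_ritz} of $\Pi_h$ in a load-bearing way: every property you exploit (discrete and continuous harmonicity of the liftings, minimal energy, trace estimate) holds with $\Pi_h u$ replaced by an arbitrary element of $\HHx$, so the argument would ``prove'' quasi-optimality for any discrete function, which is impossible.

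The missing ingredient is the full Galerkin orthogonality $\AA\big(\lifting u - \lifting_h \Pi_h u, \VV_h\big)=0$ for \emph{all} $\VV_h \in \HHxy$, not only zero-trace ones, and this is where \eqref{eq:def_ritz} must enter. The paper's proof obtains it by reducing to $\VV_h = \lifting_h \trace\VV_h$ (legitimate, since both $\lifting u$ and $\lifting_h \Pi_h u$ are $\AA$-orthogonal to zero-trace discrete functions) and then computing, for $v := \trace \VV_h \in \HHx$,
\begin{align*}
  \AA\big(\lifting_h \Pi_h u, \lifting_h v\big)
  = d_s \ltwoprodX{\LL_h^s \Pi_h u}{v}
  = d_s \ltwoprodX{\LL^s u}{v}
  = \AA\big(\lifting u, \lifting v\big)
  = \AA\big(\lifting u, \lifting_h v\big),
\end{align*}
the second equality being \eqref{eq:def_ritz} and the last one \eqref{eq:continuous_lifting}, since $\lifting_h v - \lifting v$ has zero trace. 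With this orthogonality in hand, your cancellation does hold (take $v = \Pi_h u - w_h$), but then the detour through $w_h$, the minimal-energy comparison of $\lifting w_h$ with $\VV_h$, and the final triangle inequality are unnecessary: the standard C\'ea argument applied directly to $\lifting u - \lifting_h \Pi_h u$, combined with the trace estimate and the ellipticity of $\AA$, gives the lemma in two lines, which is precisely the paper's route.
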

\begin{proof}
  We set $u_h:=\Pi_h u$,  and show Galerkin orthogonality 
  $\AA(\lifting u - \lifting_h u_h, \VV_h)=0$ for all $\VV_h \in \HHxy$.
  We first note that $\AA(\lifting u,\VV_h)$ and $\AA(\lifting_h u_h ,\VV_h)$ depend only on the trace of $\VV_h$:
  By the definition of the liftings
    (see~\eqref{eq:discrete_lifting_pde} and~\eqref{eq:continuous_lifting_pde} respectively), we have for $\WW_h \in \HHxy$ with $\trace{\WW_h}=\trace{\VV_h}$:
    \begin{align*}
      \AA(\lifting u,\VV_h-\WW_h)&= 0 \qquad \text{ and } \qquad  \AA(\lifting_h u_h, \VV_h-\WW_h)=0.
    \end{align*}    
    Therefore, we get by inserting the definition of $u_h=\Pi_h u$ and~\eqref{eq:def_ritz}:
    \begin{align*}
      \AA(\lifting u - \lifting_h u_h, \VV_h)
      &=\AA(\lifting u - \lifting_h u_h, \lifting_h \trace{\VV_h}) \\ 
      &=\AA(\lifting u, \lifting_h \trace{\VV_h})  - \AA(\lifting u, \lifting \trace{\VV_h})
      =0,
    \end{align*}
 since $\trace{\left( \lifting_h \trace{\VV_h} - \lifting \trace{\VV_h}\right)} = 0$ and 
 (\ref{eq:continuous_lifting}) holds. 
  The approximation result then follows easily from the boundedness of the trace operator and the ellipticity of $\AA$.
\end{proof}

The combination of Proposition~\ref{prop:error_up_to_ritz} and Lemma~\ref{lemma:ritz_is_quasioptimal}
shows that we need to study the best approximation of $\UU(t)=\lifting[u(t)]$ in the space $\HHxy$. 
This will be done in the next sections. 

\subsection{A related elliptic problem}
\label{section:elliptic_problem}
In this section, we analyze a family of elliptic problems that will allow us to pass from the 
function $u\in \widetilde{H}^s(\Omega)$ to $\mathscr{U} \in \HH$.

  Instead of using the more intuitive lifting $\lifting_h$, we use one in the form of
  a Neumann problem. This is done so as to be able to reuse the techniques developed in~\cite{tensor_fem}
  instead of having to analyze a Dirichlet problem from scratch.

\begin{definition}
  Let $\lambda > 0$ be fixed. 
  For $f \in L^2(\Omega)$, we define the solution operator $\solve^\lambda f$ by:
\begin{subequations}
\label{eq:G_lambda}
  \begin{align}
    -\fdiv(y^\alpha A \nabla \solve^\lambda f ) + y^\alpha c \solve^\lambda f &= 0  \quad &&\text{in $\CC$}, \\ 
    d_s \lambda \trace{\solve^\lambda f} + \partial_\nu^{\alpha}{\solve^\lambda f} &= d_s f  \quad&& \text{on $\Omega \times \{0\}$},\\
    \solve^\lambda f &= 0 \quad &&\text{on } \partial_L \CC.
  \end{align}
\end{subequations}
\end{definition}

\begin{lemma}
  \label{lemma:elliptic_solve_stability}
  The following stability estimate holds:
  \begin{align}
    \label{eq:elliptic_solve_stability}
    \norm{\solve^\lambda f}_{\HH}&\lesssim \lambda^{-1/2} \norm{f}_{L^2(\Omega)}.
  \end{align}
  The implied constant depends only on $c$, $A$, and $\Omega$ but is independent of $\lambda$ and $f$.
\end{lemma}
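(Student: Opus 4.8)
The plan is to recast \eqref{eq:G_lambda} in weak form and run a standard energy argument. Multiplying the PDE by a test function $\VV \in \HH$, integrating over $\CC$, and using the Robin-type boundary condition to handle the conormal derivative shows that $\UU := \solve^\lambda f$ is characterized by
\[
\AA(\UU,\VV) + d_s\lambda\,\ltwoprodX{\trace\UU}{\trace\VV} = d_s\,\ltwoprodX{f}{\trace\VV}\qquad\forall\,\VV\in\HH.
\]
The bilinear form $B(\UU,\VV):=\AA(\UU,\VV)+d_s\lambda\,\ltwoprodX{\trace\UU}{\trace\VV}$ is continuous on $\HH\times\HH$ (using the trace estimate $\norm{\trace\VV}_{L^2(\Omega)}\lesssim\norm{\VV}_{\HH}$ from \cite[Proposition 2.5]{pde_approach_apriori}) and coercive: $\AA$ is coercive, $\AA(\VV,\VV)\gtrsim\norm{\VV}_{\HH}^2$, since $A$ is uniformly SPD and $c\geq0$, and the trace term is nonnegative because $\lambda>0$. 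The right-hand side is a bounded linear functional on $\HH$ by the same trace estimate, so Lax--Milgram gives existence and uniqueness of $\UU$.

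For the quantitative estimate I would test with $\VV=\UU$ to obtain
\[
\AA(\UU,\UU) + d_s\lambda\,\norm{\trace\UU}_{L^2(\Omega)}^2 = d_s\,\ltwoprodX{f}{\trace\UU}\leq d_s\,\norm{f}_{L^2(\Omega)}\,\norm{\trace\UU}_{L^2(\Omega)}.
\]
Applying Young's inequality on the right in the form $d_s\norm{f}_{L^2(\Omega)}\norm{\trace\UU}_{L^2(\Omega)}\leq d_s\lambda\norm{\trace\UU}_{L^2(\Omega)}^2 + \tfrac{d_s}{4\lambda}\norm{f}_{L^2(\Omega)}^2$ absorbs the boundary term, leaving $\AA(\UU,\UU)\leq\tfrac{d_s}{4\lambda}\norm{f}_{L^2(\Omega)}^2$. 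Coercivity of $\AA$ then yields $\norm{\UU}_{\HH}^2\lesssim\lambda^{-1}\norm{f}_{L^2(\Omega)}^2$, which is \eqref{eq:elliptic_solve_stability}; the implied constant depends only on the ellipticity constant of $A$ and on the trace constant, hence on $A$, $c$, $\Omega$, but not on $\lambda$ or $f$.

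There is essentially no hard step here: the only ingredients beyond elementary manipulations are the continuity of the trace operator $\HH\to L^2(\Omega)$ and the coercivity of $\AA$, both available from the cited reference and the standing assumptions on $A$ and $c$. If one prefers to avoid Lax--Milgram, the same test-function computation proves the estimate directly for any solution that exists, so the energy argument simultaneously furnishes uniqueness.
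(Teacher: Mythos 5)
Your proof is correct and follows essentially the same energy argument as the paper: test the weak form of \eqref{eq:G_lambda} with $\UU$ itself and exploit the sign of the boundary term; the only cosmetic difference is that you absorb $d_s\lambda\norm{\trace\UU}_{L^2(\Omega)}^2$ via Young's inequality, whereas the paper bounds the right-hand side by $\lambda^{-1/2}\norm{f}_{L^2(\Omega)}$ times the square root of the full energy and then divides.
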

\begin{proof}
  We note that
  $$\norm{\solve^\lambda f}_{\HH}^2 \lesssim \AA(\solve^\lambda f,\solve^\lambda f)
  \lesssim \AA(\solve^\lambda f,\solve^\lambda f) + d_s \lambda \ltwoprodX{\trace{\solve^\lambda f}}{\trace{\solve^\lambda f}}.$$
  
  Inserting the definition of $\solve^\lambda$ gives:
  \begin{align*}
    \AA(\solve^\lambda f,\solve^\lambda f) &+ d_s \lambda \ltwoprodX{\trace{\solve^\lambda f}}{\trace{\solve^\lambda f}}
    =d_s\ltwoprodX{f}{\trace{\solve^\lambda f}} \\ 
    &\begin{aligned}[t]      
      &\lesssim \lambda^{-1/2} \norm{f}_{L^2(\Omega)}
      \left[ \AA(\solve^\lambda f,\solve^\lambda f) + \lambda d_s \ltwoprodX{\trace{\solve^\lambda f}}{\trace{\solve^\lambda f}}\right]^{1/2}.
    \end{aligned}\qedhere
  \end{align*}
\end{proof}

\begin{remark}
  This ``damping property'' of the factor $\lambda^{-1/2}$ in~\eqref{eq:elliptic_solve_stability}
  is the main motivation for considering such operators, compared to the more intuitive
  $\lambda=0$ case, which is the operator analyzed in~\cite{tensor_fem}.
  It will allow us to better control the behavior of $\UU$ for small times $t$ by choosing $\lambda \sim 1/t$, see Section~\ref{sect:semidiscretization_2}.
  It is also the operator which needs to be inverted when discretizing using a implicit Euler timestepping scheme, where $\lambda^{-1}$
  is the timestep size, see Section~\ref{sect:time_discretization}. We also point out the 
  strong relation of the operator $\solve^\lambda$ to the resolvent $(\lambda + \LL)^{-1}$, see the proof of Theorem~\ref{thm:llh_generates_sg}.
\eremk
\end{remark}

\subsubsection{Discretization of the extended variable $y$}
\label{sect:disc_of_y}
\paragraph{$hp$-fem in 1d:}
In this section, we introduce the basics of $hp$-Finite Elements in 1D. This will provide us with the discretization scheme
for the extended variable $y$. Additionally, it will serve as a model construction for satisfying Assumptions~\ref{ass:approx_of_unitial_condtiion} and
\ref{ass:approx_of_HHx}.

We introduce the notion of a geometrically refined mesh. 
For a grading factor $0<\sigma<1$  and $L \in \N$ layers, the geometric mesh on the domain $(-1,1)$
refined towards $-1$, denoted by  $\TT^L_{(\mathbf{-1},1)}:=(x_i)_{i=0}^{L+1}$ is given by
\begin{align*}
  x_0:=-1, \; x_i:=-1 + \sigma^{L - i +1}, \; i=1,\dots L, \; x_{L+1}:=1.
\end{align*}

Analogously we define the geometric mesh refined towards $1$ and denote it by $\TT^L_{(-1,\mathbf{1})}$,
and the mesh geometrically refined towards both endpoints $\TT_{(\mathbf{-1},\mathbf{1})}^L$ with nodes
at 
\begin{align*}
  x_0&:=-1, \; x_i:=-1 + \sigma^{L - i +1}, \; i=1,\dots L, \;\\
  x_i&:=1-\sigma^{i - L}, \; i=L+1,\dots 2L, \;\quad  x_{2L+1}:=1.
\end{align*}

In general, triangulations on $(a,b)$, for example denoted by $\TT_{(\mathbf{a},b)}^L$ are obtained by an affine mapping of $\TT_{(\mathbf{-1},1)}^L$ etc.

Let $\TT$ be a triangulation of a domain $\Omega$. 
For a polynomial degree distribution $\mathbf{r} \in \N_0^{\abs{\TT}}$, we define the space of piecewise polynomials
\begin{align*}
  \SS^{\mathbf{r},1}(\TT):=\left\{ u \in C(\Omega): \, u|_{K_i} \text{ is a polynomial of degree $\mathbf{r}_i$ } \;\forall K_i \in \TT\right\}.
\end{align*}
For the discontinuous case, we define:
\begin{align*}
  \SS^{\mathbf{r},0}(\TT):=\left\{ u: \Omega \to \R, \,  u|_{K_i} \text{ is a polynomial of degree $\mathbf{r}_i$ } \;\forall K_i \in \TT \right\}.
\end{align*}

To simplify the notation, we write $\SS^{p,1}(\TT):=\SS^{(p,\dots,p),1}(\TT)$ for the case of constant polynomial degree $p$,
and analogously for $\SS^{p,0}(\TT)$.

We also sometimes need to impose Dirichlet conditions on the boundary. We write 
$$\SS^{\mathbf{r},1}_{0}(\TT):=\{ u \in \SS^{\mathbf{r},1}(\TT): u|_{\partial \Omega} =0\}.$$

\paragraph{The space $\HHy$:}
We now give the precise construction for the space $\HHy$. It is based on an $hp$-FEM on a graded mesh.
The details are laid out in the next definition.
\begin{definition}
\label{def:geo-mesh-y}
  Fix $\YY > 0$. Let $\TT_{(\mathbf{0},\YY)}^L$ be a geometric mesh on $(0, \YY)$, refined towards $0$ with $L$ layers  and a grading factor $\sigma \in (0,1)$, i.e., 
given by the nodes $\{0,\mathcal{Y}\,\sigma^i\,|\, i=0,\ldots,L\}$. Assume that $\YY \sim L$.
  Let $\HHy:=\SS^{\mathbf{r},1}(\TT^L_{(\mathbf{0},\mathcal{Y})}) \cap \{u: u(\YY)=0\}$ be the space of piecewise polynomials with degree distribution
  vector $\mathbf{r}$ which vanish at the endpoint $\YY$.
\end{definition}

Using the eigenpairs $(\varphi_j,\mu_j)_{j=0}^\infty$ from~\eqref{eq:def_fractional_laplacian},
we have the following representation of $\UU:=\solve^{\lambda} f$:
\begin{align*}
  \UU(x,y)&=\sum_{j=0}^{\infty}{u_j \varphi_j(x) \psi_j(y)} \qquad \text{ with }\qquad 
  u_j:=\left(\lambda + \mu_j^s\right)^{-1} \ltwoprodX{f}{\varphi_j}.           
\end{align*}
Here $\psi_j$ are the functions from \cite[Formula (4.2)]{tensor_fem}. They satisfy the differential equation:
\begin{align*}
  \psi_j'' +  \frac{\alpha}{y} \psi_j' - \mu_j \psi_j = 0 \qquad& \text{in } (0, \infty), \\
  \psi_j(0)=1,  \qquad   &\lim_{y\to \infty}\psi_j(y) = 0.
\end{align*}

\begin{lemma}
  \label{lemma:apriri_of_fourier_coefficients}
  The coefficients  $u_j$ satisfy the follwing \textsl{a priori} estimate:
  \begin{align*}
     \abs{u_j}^2 &\lesssim \lambda^{-2}  \, |\!\ltwoprodX{f}{\varphi_j}\!|^2 
    \quad \text{ and } \quad 
     \mu_j^s \abs{u_j}^2\lesssim \lambda^{-1} \, |\!\ltwoprodX{f}{\varphi_j}\!|^2.
  \end{align*}
\end{lemma}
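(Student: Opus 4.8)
The plan is to read off both bounds directly from the closed form $u_j = (\lambda + \mu_j^s)^{-1} \ltwoprodX{f}{\varphi_j}$, using only elementary inequalities valid for $\lambda > 0$ and $\mu_j \geq 0$. This is a very short computation and I do not expect any genuine obstacle; the "hard part'' is merely keeping track of which $s$-power appears where.

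\begin{proof}[Proof proposal]
First I would record the defining identity
\begin{align*}
  u_j = \frac{1}{\lambda + \mu_j^s}\,\ltwoprodX{f}{\varphi_j},
\end{align*}
which follows from the series representation of $\UU = \solve^\lambda f$ together with the ODE characterization of $\psi_j$ (note $\psi_j(0) = 1$, so the boundary condition $d_s\lambda\trace{\solve^\lambda f} + \partial_\nu^\alpha \solve^\lambda f = d_s f$ forces $(\lambda + \mu_j^s)u_j = \ltwoprodX{f}{\varphi_j}$ after matching Fourier coefficients, since $\partial_\nu^\alpha$ acting on $\varphi_j\psi_j$ produces the factor $\mu_j^s$ by the computation in \cite{tensor_fem}). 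For the first estimate, since $\mu_j \geq 0$ and $\lambda > 0$ we have $\lambda + \mu_j^s \geq \lambda$, hence $|u_j| \leq \lambda^{-1}\,|\!\ltwoprodX{f}{\varphi_j}\!|$, and squaring gives $\abs{u_j}^2 \lesssim \lambda^{-2}\,|\!\ltwoprodX{f}{\varphi_j}\!|^2$ (indeed with implied constant $1$).

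For the second estimate I would use the weighted Young-type bound $\mu_j^s \leq (\lambda + \mu_j^s)^2/(4\lambda)$, equivalently $\mu_j^s/(\lambda + \mu_j^s)^2 \leq 1/(4\lambda)$, which is just the AM--GM inequality $\lambda + \mu_j^s \geq 2\sqrt{\lambda\mu_j^s}$. Multiplying by $|\!\ltwoprodX{f}{\varphi_j}\!|^2$ yields
\begin{align*}
  \mu_j^s \abs{u_j}^2 = \frac{\mu_j^s}{(\lambda + \mu_j^s)^2}\,|\!\ltwoprodX{f}{\varphi_j}\!|^2 \leq \frac{1}{4\lambda}\,|\!\ltwoprodX{f}{\varphi_j}\!|^2 \lesssim \lambda^{-1}\,|\!\ltwoprodX{f}{\varphi_j}\!|^2,
\end{align*}
which is the claim. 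Both implied constants are absolute, so in particular independent of $\lambda$, $j$, and $f$, as required.
\end{proof}

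If one prefers to avoid invoking the explicit coefficient formula, an alternative is to test the weak formulation of \eqref{eq:G_lambda} with $\varphi_j\psi_j$ and extract the same algebraic relation, but the direct route above via the eigenexpansion is cleanest and is exactly what the surrounding text has set up.
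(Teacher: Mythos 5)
Your proposal is correct and follows essentially the same route as the paper: both arguments read the estimates off the algebraic relation $(\lambda+\mu_j^s)\,u_j=\ltwoprodX{f}{\varphi_j}$, which is just the definition of $u_j$ given before the lemma. The only cosmetic difference is that the paper drops the nonnegative term $\mu_j^s\abs{u_j}^2$ to get $\lambda\abs{u_j}\leq|\!\ltwoprodX{f}{\varphi_j}\!|$ and then reinserts it, whereas you invoke AM--GM to bound $\mu_j^s/(\lambda+\mu_j^s)^2\leq 1/(4\lambda)$ directly; both yield the stated bounds with absolute constants.
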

\begin{proof}
  From the definition, we get by multiplying with $u_j$:
  \begin{align*}
    \lambda \abs{u_j}^2 + \mu_i^s \abs{u_j}^2 &=\ltwoprodX{f}{\varphi_j} u_j
                                    \leq |\!\ltwoprodX{f}{\varphi_j} \!| \abs{u_j},
  \end{align*}
  which implies $\lambda \abs{u_j} \leq |\!\ltwoprodX{f}{\varphi_j}\!|$. Inserting this knowledge
  gives:
  \begin{align*}
    \mu_i^s \abs{u_j}^2 &\leq \lambda^{-1} |\!\ltwoprodX{f}{\varphi_j} \!| \lambda \abs{u_j}   \leq \lambda^{-1} |\!\ltwoprodX{f}{\varphi_j}^2. \qedhere
  \end{align*}
\end{proof}

\begin{lemma}
  \label{lemma:approx_in_y}  
  Let $\Pi_{\YY}$ denote the Galerkin projection onto the space $H_0^1(\Omega)\otimes \HHy$
for the problem (\ref{eq:G_lambda}).
Then the following estimate holds
  for all $f \in L^2(\Omega)$:
  \begin{align*}
    \norm{\solve^\lambda f - \Pi_{\YY} \solve^\lambda f }_{\HH}\lesssim \lambda^{-1/2} e^{-b p } \norm{f}_{L^2(\Omega)}.
  \end{align*}
\end{lemma}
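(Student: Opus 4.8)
The plan is to exploit the tensor-product structure of the problem: since $\solve^\lambda f$ has the Fourier expansion $\UU(x,y)=\sum_j u_j\varphi_j(x)\psi_j(y)$ in the $\LL$-eigenbasis, and the Galerkin projection $\Pi_\YY$ only touches the $y$-variable, the error decouples mode by mode. Concretely, I would first observe that $\AA(\cdot,\cdot)+d_s\lambda(\trace\cdot,\trace\cdot)_{L^2(\Omega)}$ is the natural energy form here, and that with respect to it the best approximation from $H^1_0(\Omega)\otimes\HHy$ is realized (up to the constant from the trace term and the ellipticity of $A$, $c\ge0$) by replacing each $\psi_j$ by a fixed one-dimensional $hp$-approximant $\widehat\psi_j\in\HHy$ with $\widehat\psi_j(0)=1=\psi_j(0)$. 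Hence
\begin{align*}
  \norm{\solve^\lambda f-\Pi_\YY\solve^\lambda f}_{\HH}^2
  \lesssim \sum_{j=0}^\infty \abs{u_j}^2\Big(\mu_j\norm{\psi_j-\widehat\psi_j}_{L^2(y^\alpha)}^2+\norm{\psi_j'-\widehat\psi_j'}_{L^2(y^\alpha)}^2\Big),
\end{align*}
where the right-hand side is the weighted $\mathring H^1(y^\alpha,\R_+)$-seminorm of the one-dimensional error for the $j$-th mode.

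The second step is to quote the one-dimensional approximation result from \cite{tensor_fem}: on the geometric mesh $\TT^L_{(\mathbf 0,\YY)}$ of Definition~\ref{def:geo-mesh-y} with $\YY\sim L\sim p$, there is $\widehat\psi_j\in\HHy$ (the $hp$-interpolant of the known solution profile $\psi_j$, which is analytic on $(0,\infty)$ with the explicit control on derivatives and exponential decay recorded in \cite[Section~4]{tensor_fem}) satisfying
\begin{align*}
  \mu_j\norm{\psi_j-\widehat\psi_j}_{L^2(y^\alpha)}^2+\norm{\psi_j'-\widehat\psi_j'}_{L^2(y^\alpha)}^2\lesssim e^{-2bp}\,\big(\mu_j^{s}+1\big),
\end{align*}
with $b$ and the implied constant independent of $j$ — this is exactly the estimate that underlies the exponential convergence for the elliptic extension problem in \cite{tensor_fem}, the point being that the dependence on $\mu_j$ enters only through the scaling $\mu_j^s$ (after exploiting that the layer width $\sim\mu_j^{-1/2}$ is resolved once it is no finer than $\sigma^L$, and is simply ignored otherwise, using the decay of $\psi_j$). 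Plugging this in and using Lemma~\ref{lemma:apriri_of_fourier_coefficients}, namely $\abs{u_j}^2\lesssim\lambda^{-2}|(f,\varphi_j)_{L^2(\Omega)}|^2$ and $\mu_j^s\abs{u_j}^2\lesssim\lambda^{-1}|(f,\varphi_j)_{L^2(\Omega)}|^2$, the series is bounded by
\begin{align*}
  e^{-2bp}\sum_{j=0}^\infty\big(\mu_j^s\abs{u_j}^2+\abs{u_j}^2\big)
  \lesssim e^{-2bp}\,\lambda^{-1}\sum_{j=0}^\infty|(f,\varphi_j)_{L^2(\Omega)}|^2
  = e^{-2bp}\,\lambda^{-1}\norm{f}_{L^2(\Omega)}^2,
\end{align*}
where I used $\lambda^{-2}\le C\lambda^{-1}$ only after noting $\lambda$ is bounded below — or, more carefully, keep the $\mu_j^s$-term which supplies the full $\lambda^{-1}$ and bound the low modes ($\mu_j^s\lesssim1$) separately; either way one lands on $\lambda^{-1}\norm{f}_{L^2(\Omega)}^2$. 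Taking square roots gives the claim.

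I expect the main obstacle to be making the per-mode one-dimensional estimate genuinely uniform in $j$, i.e.\ tracking how the boundary-layer length scale $\mu_j^{-1/2}$ interacts with the fixed geometric mesh: when $\mu_j$ is huge the layer is finer than the smallest element $\YY\sigma^L$, and one must argue that the contribution of the unresolved layer is controlled by the decay of $\psi_j$ (so it is absorbed into $\mu_j^s$-weighted terms rather than spoiling the bound), while when $\mu_j$ is moderate the geometric mesh resolves it with the usual $hp$ exponential rate. This is precisely the analysis carried out in \cite{tensor_fem} for the $\lambda=0$ extension, and the only new point is bookkeeping the harmless extra factor; the condition $\YY\sim L\sim p$ is what guarantees the truncation-to-$(0,\YY)$ error is also $O(e^{-bp})$. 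A minor secondary point is justifying that the Galerkin projection in the full energy norm (including the $\lambda$-weighted trace term) is still quasi-optimal with a $\lambda$-independent constant, which follows since $\widehat\psi_j(0)=\psi_j(0)$ makes the trace term vanish on the chosen comparison function.
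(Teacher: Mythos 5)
Your reduction steps are fine (quasi-optimality in the $\lambda$-augmented energy norm, decoupling of that energy over the eigenmodes, the truncation at $\YY$ via the decay of $\psi_j$, and the final summation using Lemma~\ref{lemma:apriri_of_fourier_coefficients}), but the proof hinges on a per-mode estimate that is neither available in \cite{tensor_fem} nor true. You claim, uniformly in $j$, that on the fixed geometric mesh with degree $p\sim L$ one can find $\widehat\psi_j\in\HHy$ with
\begin{align*}
  \mu_j\norm{\psi_j-\widehat\psi_j}_{L^2(y^\alpha)}^2+\norm{\psi_j'-\widehat\psi_j'}_{L^2(y^\alpha)}^2\lesssim e^{-2bp}\big(\mu_j^{s}+1\big).
\end{align*}
This is a statement of \emph{relative} accuracy $e^{-bp}$ for every single boundary layer $\psi_j$, whose length scale is $\mu_j^{-1/2}$. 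For modes with $\sqrt{\mu_j}\gg p^2/h_{\min}$ (layer far below the smallest element $h_{\min}=\YY\sigma^L$) essentially the whole energy $\sim\mu_j^{s}$ of $\psi_j$ sits inside the first element, and a degree-$p$ polynomial there can only capture an $O(1)$ fraction of it: already for $s=1/2$, where $\psi_j(y)=e^{-\sqrt{\mu_j}\,y}$, a rescaling plus a Legendre-coefficient computation shows that the relative $H^1$ error on $(0,h_{\min})$ stays bounded away from zero unless $p^2\gtrsim\sqrt{\mu_j}\,h_{\min}$; no exponential decay in $p$ sets in before that threshold. Your fallback --- ``absorb the unresolved layer into the $\mu_j^{s}$-weighted term'' --- does not rescue the bound: bounding those modes by $\mu_j^{s}\abs{u_j}^2\lesssim\lambda^{-1}\abs{(f,\varphi_j)}^2$ and summing gives $\lambda^{-1}\sum_{j \text{ high}}\abs{(f,\varphi_j)}^2$ with no factor $e^{-bp}$, and for $f$ merely in $L^2(\Omega)$ the high-frequency content is not small (take $f=\varphi_J$ with $\mu_J$ huge). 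The exponential decay of $\psi_j$ only controls the region beyond the layer (truncation at $\YY$ and the outer elements); it does not make the first-element error small relative to $\mu_j^{s}$.

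The paper avoids exactly this trap by never decoupling into modes at the approximation stage. It uses the mode expansion and Lemma~\ref{lemma:apriri_of_fourier_coefficients} only to prove weighted analytic regularity of the aggregate function, namely $\norm{\partial_y^{\ell+1}\solve^\lambda f}_{L^2(\omega_{\alpha+2\ell},\CC)}\lesssim\lambda^{-1/2}\kappa^{\ell+1}(\ell+1)!\,\norm{f}_{L^2(\Omega)}$ (and the analogous bound for $\nabla_x\partial_y^{\ell+1}$), where the growing weight $y^{\alpha+2\ell}$ converts the $\mu_j^{(\ell+1)/2}$ growth of $\psi_j^{(\ell+1)}$ into precisely the $\mu_j^{s}$ factor that the coefficient bound can absorb into $\lambda^{-1}$; the summation over $j$ happens \emph{inside} these weighted norms. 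Exponential convergence then comes from the elementwise $hp$-interpolation operator $\Pi^{\mathbf r}_{y,\{\YY\}}$ of \cite[Section~5.5.1]{tensor_fem}, whose error analysis uses only these weighted derivative bounds (together with the exponential-decay argument for the truncation at $\YY$, as in \cite[Lemma~3.3]{pde_approach_apriori}). If you want to salvage a mode-by-mode argument you would have to restrict to resolved modes and treat the unresolved ones by a separate mechanism (e.g.\ extra smoothness of $f$ or a frequency cutoff), which is not available under the hypotheses of the lemma; as written, the central estimate you quote is the gap.
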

\begin{proof}
  We follow the argument of \cite[Secs.~4, 5]{tensor_fem}. By Galerkin orthogonality, we are only concerned with proving an estimate for the best approximation to $\solve^\lambda f$.
  The functions $\psi_j$ all decay exponentially for $y \to \infty$.  We can bound
  \begin{align*}
    \norm{\solve^\lambda f (y) - \VV^{\text{cutoff}}}_{\HH}\lesssim C e^{-\sqrt{\lambda_1} \YY/4} \sqrt{\sum_{j=0}{\mu_j^s \abs{u_j}^2}}
    &\lesssim C e^{-\sqrt{\lambda_1} \YY/4} \lambda^{-1/2} \norm{f}_{L^2(\Omega)}, 
\end{align*}
 where 
  $\lambda_1 > 0$ denotes the smallest eigenvalue of the operator $\LL$ on $\Omega$, see \cite[Lemma 3.3]{pde_approach_apriori} for details;
  the proof can be taken verbatim, just replacing the definition of the coefficients $u_j$.
  It is thus sufficient to study the approximation on the finite cylinder $\Omega \times (0, \YY)$.

  We define the weights $\omega_{\beta,\gamma}:=y^\beta e^{\gamma y}$, and the weighted $L^2$-norms
  \begin{align*}
    \norm{v}_{L^2(\omega_{\beta,\gamma },\CC)}^2
    := \int_{0}^{\infty}{\int_{\Omega}{\omega_{\beta,\gamma}(y)\abs{v(x,y)}^2 \,dx \, dy}}.
  \end{align*}
  We note that the function $\solve^\lambda u$ satisfies the following \textsl{a priori} estimates:
  \begin{align*}
    \norm{\partial_y^{\ell +1} \solve^\lambda f}_{L^2(\omega_{\alpha +2 \ell },\CC)}
    &\lesssim \lambda^{-1/2} \kappa^{\ell +1 }(\ell +1 )! \norm{f}_{L^2(\Omega)} \quad \forall \ell \in \N_0,\\
    \norm{ \nabla_{x} \partial_y^{\ell +1} \solve^\lambda f}_{L^2(\omega_{\alpha + 2 (\ell +1 )},\CC)}
    &\lesssim \lambda^{-1/2} \kappa^{\ell +1 }(\ell +1 )! \norm{f}_{L^2(\Omega)} \quad \forall \ell \in \N_0.
  \end{align*}
  Again, this follows~\cite[Theorem 4.7]{tensor_fem} verbatim, only plugging in the stronger estimate for the coefficients $u_j$ to 
  gain the extra factor $\lambda^{-1/2}$. 
  This in turn implies that $\solve^{\lambda} f$ is in some Banach-space valued countably normed spaces.
  Invoking the interpolation operator $\Pi_{y,\{\YY\}}^{\mathbf{r}}$ from \cite[Section 5.5.1]{tensor_fem} then shows the
  stated result.
\end{proof}

\subsubsection{Discretization in $x$}
\label{sect:disc_of_x}
In this section, we study the discretization error due to the choice of space $\HHx$.
We will show that the requirement that $\HHx$ resolves appropriate scales (see Assumption~\ref{ass:approx_of_HHx}) suffices
to show exponential convergence.

Before we prove an approximation result for $\solve^\lambda$, we need the following result on 
the solution of singularly perturbed problems, generalizing the theory developed in, e.g., 
\cite{melenk97,melenk_book} (for real singular perturbation parameters)
to the case where the right hand side is itself the solution to a singularly perturbed problem: 
\begin{lemma}
  \label{lemma:approx_of_iterated_solve}
  Let $\varepsilon > 0$ and $z \in \mathscr{S}$ with $\Re(z) \geq 0$.
  Assume that the space $\HHx$ resolves
  the scales $\varepsilon$ and $\abs{z}^{-1/2}$, as defined in Assumption~\ref{ass:approx_of_HHx}. 
  Let $u_{z} \in H_0^1(\Omega)$ denote the solution to $(\LL-z) u_z = z\,f$, where $f\in L^2(\Omega)$ is analytic on $\overline{\Omega}$.
  Let $u \in H_0^1(\Omega)$ solve
  \begin{align}
    \label{eq:iterated_solve_eqn}
    \varepsilon^2 \LL  u  + u = u_{z}.
  \end{align}

  Then the following best approximation result holds:
  \begin{align*}
    \inf_{v_h \in \HHx}\left[\varepsilon^2 \norm{\nabla u - \nabla v_h}^2_{L^2(\Omega)} + \norm{u-v_h}^2_{L^2(\Omega)}\right]\leq C e^{-b \mathcal{N}_{\Omega}^\mu}.
  \end{align*}
  The constant $C$ depends on $\mathscr{S}$, the constants of analyticity of $f$,
  and the  constants from Assumption~\ref{ass:approx_of_HHx} but not on $\varepsilon$ or $z$.
  $b$ and $\mu$ are as in Assumption~\ref{ass:approx_of_HHx}.
\end{lemma}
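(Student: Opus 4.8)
The plan is to reduce the ``iterated'' singular perturbation to two \emph{ordinary} ones with analytic data by means of a partial fraction identity in the spectral variable of $\LL$, and then to invoke Assumption~\ref{ass:approx_of_HHx} for each. Since $u$ solves $\varepsilon^2\LL u + u = u_z$ and $u_z = z(\LL-z)^{-1}f$ (the resolvent exists because $z\in\mathscr{S}$), we have $u = z(\varepsilon^2\LL+1)^{-1}(\LL-z)^{-1}f$. The elementary identity
\[
  \frac{z}{(\varepsilon^2 t+1)(t-z)} = \frac{1}{1+\varepsilon^2 z}\left(\frac{z}{t-z} - \frac{\varepsilon^2 z}{\varepsilon^2 t+1}\right),
\]
applied through the spectral calculus of $\LL$, yields
\[
  u = \frac{1}{1+\varepsilon^2 z}\,u_z \;-\; \frac{\varepsilon^2 z}{1+\varepsilon^2 z}\,v,
  \qquad \text{where } v\in H_0^1(\Omega) \text{ solves } \varepsilon^2\LL v + v = f .
\]

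First I would record the scalar bounds that make this particular grouping work. Since $\Re(z)\ge 0$, one has $|1+\varepsilon^2 z|^2 = (1+\varepsilon^2\Re z)^2 + \varepsilon^4(\Im z)^2 \ge \max\!\big(1,\varepsilon^4|z|^2\big)$, so $|1+\varepsilon^2 z|^{-1}\le 1$, $\big|\tfrac{\varepsilon^2 z}{1+\varepsilon^2 z}\big|\le 1$ and $\tfrac{\varepsilon^2|z|}{|1+\varepsilon^2 z|^2}\le 1$; thus \emph{all} the potentially harmful $\varepsilon$- and $z$-dependence is absorbed into uniformly bounded prefactors. This is the delicate point: the naive grouping of the partial fraction leaves a factor $\tfrac{z}{1+\varepsilon^2 z}\sim\varepsilon^{-2}$ in front of $v$ and would ruin the claimed $\varepsilon$-independence.

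Next I would approximate the two constituents separately using Assumption~\ref{ass:approx_of_HHx}. The function $u_z$ solves $-z^{-1}\LL u_z + u_z = -f$ with analytic right-hand side; since $z\in\mathscr{S}$ and $\HHx$ resolves the scale $|z|^{-1/2}$, there is $w_h\in\HHx$ with $|z|^{-1}\norm{\nabla(u_z-w_h)}_{L^2(\Omega)}^2 + \norm{u_z-w_h}_{L^2(\Omega)}^2 \lesssim C(f)\,e^{-b\mathcal{N}_\Omega^\mu}$. The function $v$ solves the problem of Assumption~\ref{ass:approx_of_HHx} with parameter $z':=-\varepsilon^{-2}$, for which $-\varepsilon^{-2}\in\mathscr{S}$ and $|{-\varepsilon^{-2}}|^{-1/2}=\varepsilon$; since $\HHx$ resolves the scale $\varepsilon$, there is $v_h\in\HHx$ with $\varepsilon^2\norm{\nabla(v-v_h)}_{L^2(\Omega)}^2 + \norm{v-v_h}_{L^2(\Omega)}^2 \lesssim C(f)\,e^{-b\mathcal{N}_\Omega^\mu}$.

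Finally I would set $V_h := \tfrac{1}{1+\varepsilon^2 z}\,w_h - \tfrac{\varepsilon^2 z}{1+\varepsilon^2 z}\,v_h\in\HHx$, so that $u - V_h = \tfrac{1}{1+\varepsilon^2 z}(u_z-w_h) - \tfrac{\varepsilon^2 z}{1+\varepsilon^2 z}(v-v_h)$, and estimate term by term. Using the bounds above together with $\tfrac{\varepsilon^2}{|1+\varepsilon^2 z|^2} = \tfrac{\varepsilon^2|z|}{|1+\varepsilon^2 z|^2}\,|z|^{-1}\le |z|^{-1}$, one obtains
\begin{align*}
  \varepsilon^2\norm{\nabla(u-V_h)}_{L^2(\Omega)}^2 + \norm{u-V_h}_{L^2(\Omega)}^2
  &\lesssim |z|^{-1}\norm{\nabla(u_z-w_h)}_{L^2(\Omega)}^2 + \norm{u_z-w_h}_{L^2(\Omega)}^2 \\
  &\quad + \varepsilon^2\norm{\nabla(v-v_h)}_{L^2(\Omega)}^2 + \norm{v-v_h}_{L^2(\Omega)}^2 ,
\end{align*}
and the two approximation estimates finish the argument, with a constant depending only on $\mathscr{S}$, the analyticity constants of $f$, and the constants of Assumption~\ref{ass:approx_of_HHx}. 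The two points requiring care are (i) the correct grouping of the partial fraction, as discussed above, and (ii) the verification that $-\varepsilon^{-2}\in\mathscr{S}$ (which holds once $\varepsilon$ lies below the fixed threshold $\varepsilon_0^{-1/2}$, the regime of interest), so that the $v$-problem is covered by Assumption~\ref{ass:approx_of_HHx}.
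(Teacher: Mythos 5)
Your proposal is correct and is essentially the paper's own argument: the partial-fraction identity produces exactly the decomposition $u=\tfrac{1}{1+\varepsilon^2 z}u_z-\tfrac{\varepsilon^2 z}{1+\varepsilon^2 z}v$ that the paper obtains via the ansatz $u=\nu u_z-w$ (with $w=\tfrac{\varepsilon^2 z}{1+\varepsilon^2 z}v$), and both pieces are then treated by Assumption~\ref{ass:approx_of_HHx} using the same uniform bounds on the scalar prefactors for $\Re(z)\ge 0$. Your single uniform bound $\varepsilon^2\abs{z}/\abs{1+\varepsilon^2 z}^2\le 1$ merely streamlines the paper's case distinction between $\varepsilon<\abs{z}^{-1/2}$ and $\varepsilon>\abs{z}^{-1/2}$ when converting the $\abs{z}^{-1/2}$-weighted estimate for $u_z$ into the $\varepsilon$-weighted one.
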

\begin{proof}
  We make the ansatz $u=\nu u_z - w$, for $\nu \in \C$ and some function $w \in H_0^1(\Omega)$.
  Plugging  this decomposition  into \eqref{eq:iterated_solve_eqn} and using the PDE for $u_z$,
  we get $\nu=\frac{1}{1+\varepsilon^2 z}$ and that 
  $w$ solves $$\varepsilon^2 \LL  w + w = \frac{\varepsilon^2 z}{1+\varepsilon^2 z}  \,f.$$
  Since we assumed $\Re(z)\geq 0$, the coefficient $\nu$ is 
  bounded independently of $\varepsilon$ and $z$.
  We also compute 
  $$\abs{1+\varepsilon^2 z}^2=(1+\varepsilon^2 \Re(z))^2 + \varepsilon^4\Im(z)^2 > \varepsilon^4\abs{z}^2,$$
  which shows that $\frac{\varepsilon^2 z}{1+\varepsilon^2 z}$ is also uniformly bounded.
  
  Since we assumed that the mesh resolves the scale $\varepsilon$, we can apply Assumption~\ref{ass:approx_of_HHx} to
   $w$ to get the estimate:
  \begin{align*}
    \inf_{v_h \in \HHx}\left[\varepsilon^2 \norm{\nabla w - \nabla v_h}^2_{L^2(\Omega)} + \norm{w-v_h}^2_{L^2(\Omega)}\right]\leq C e^{-b \mathcal{N}_{\Omega}^\mu}.
  \end{align*}
  
  We also assumed that the mesh resolves the scale $\abs{z}^{-1/2}$. Thus we get an exponential approximation property
  for $u_z$ in the $\abs{z}^{-1/2}$ weighted norm.  
  In order to get the estimate in the $\varepsilon$-weighted norm, we note that for $\varepsilon < \abs{z}^{-1/2}$ we get the estimate trivially.
  For $\varepsilon > \abs{z}^{-1/2}$ we note that
  \begin{align*}
    \varepsilon^2 \norm{\nu\nabla u_z}_{L^2(\Omega)}^2
    &= \abs{\frac{\varepsilon^2 z}{1+\varepsilon^2 z}} \abs{z}^{-1} \norm{\nabla u_z}_{L^2(\Omega)}^2
   \lesssim \abs{z}^{-1}\norm{\nabla u_z}_{L^2(\Omega)}^2.
  \end{align*}
  This means we can approximate $\nu\, u_z$ in the $\varepsilon$-weighted norm at an exponential rate, which concludes the proof.
\end{proof}

We now employ the decoupling strategy of \cite{tensor_fem}.
Let $(v_i)_{i=0}^{\mathcal{M}} \subseteq \HHy$ denote a basis with the following properties:
\begin{align}
\label{eq:EWP}
  d_s \lambda v_i(0) v_j(0)  + \int_{0}^{\YY}{y^\alpha \,v_i' v_j' } = \delta_{ij} 
  \qquad &\text{ and } \qquad
           \int_{0}^{\YY}{y^\alpha \,v_i v_j } = \kappa_i \delta_{ij},
\end{align}
for coefficients $\kappa_i>0$. Since the bilinear forms are SPD, such a basis exists.
On $\Omega$, we define the bilinear forms
\begin{align}
  \label{eq:def_of_a_kappa}
  a_{\kappa_i}(U,V):=\kappa_i  \left[\ltwoprodX{\nabla U}{ \nabla V} +  c\ltwoprodX{U}{ V}\right] + \ltwoprodX{U}{V},
\end{align}
and note that the following norm equivalence holds on $H_0^1(\Omega)\otimes \HHy$ for all 
$\VV:=\sum_{i=0}^{\mathcal{M}}{\VV_i v_i}$:
\begin{align}
  \label{eq:norm_equivalence}
  \lambda \norm{\trace{\VV}}_{L^2(\Omega)}^2 +\norm{\VV}_{\HH}^2
  &\sim\sum_{j=0}^{\mathcal{M}}{a_{\kappa_i}(\VV_i,\VV_i)}.
\end{align}
  \eqref{eq:norm_equivalence} shows that estimates
in the $\HH$ norm can also be obtained from bounds on each component in the corresponding $\kappa_i$-weighted $H^1$-norm.

The bilinear forms $a_{\kappa_i}$ correspond to singularly perturbed problems for small $\kappa_i$. 
We want to apply Assumption~\ref{ass:approx_of_HHx}.
For this we need bounds on the $\kappa_i$ as well as on $v_i(0)$.
\begin{lemma}
\label{lemma:EWP-estimates}
  Let $h_{\operatorname{min}}> 0 $ denote the smallest element size  in $\TT^L_{(\mathbf{0},\mathcal{Y})}$ and $p$ the maximal polynomial degree used
  for $\HHy$.
  Then the eigenpairs $(v_i,\kappa_i)_{i=0}^{\mathcal{M}}$ of (\ref{eq:EWP}) satisfy: 
  \begin{align} 
    \frac{h_{\operatorname{min}}^2}{p^4(1+\lambda \YY^{1-\alpha})}  &\leq \kappa_i \leq C \YY^2(1- \alpha^2)^{-1} \label{eq:estimate_kappa}, \\    
    \abs{v_i(0)} &\leq  \lambda^{-1/2}. \label{eq:estimate_traces_of_vi}
  \end{align}
    
\end{lemma}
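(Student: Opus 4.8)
The plan is to extract both bounds from the Rayleigh-quotient characterization of the generalized eigenvalue problem \eqref{eq:EWP}. Writing $a(v,w):=d_s\lambda v(0)w(0)+\int_0^\YY y^\alpha v'w'$ for the first form and $m(v,w):=\int_0^\YY y^\alpha vw$ for the second, the eigenvalues are $\kappa_i = m(v_i,v_i)/a(v_i,v_i)$ when $v_i$ is normalized so that $a(v_i,v_i)=1$; more usefully, $\kappa_i$ ranges over the values of the Rayleigh quotient $R(v):=m(v,v)/a(v,v)$ over $v\in\HHy\setminus\{0\}$, with $\kappa_{\max}=\max_v R(v)$ and $\kappa_{\min}=\min_v R(v)$. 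So the two inequalities in \eqref{eq:estimate_kappa} amount to a two-sided bound on this Rayleigh quotient, uniform over the finite-dimensional space.

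For the \textbf{upper bound} $\kappa_i \le C\YY^2(1-\alpha^2)^{-1}$: I would drop the nonnegative boundary term, so $a(v,v)\ge \int_0^\YY y^\alpha (v')^2$, and then prove a weighted Poincaré–Friedrichs inequality $\int_0^\YY y^\alpha v^2 \lesssim \YY^2(1-\alpha^2)^{-1}\int_0^\YY y^\alpha (v')^2$ for all $v\in H^1(y^\alpha,(0,\YY))$ with $v(\YY)=0$. This is the standard one-dimensional estimate: write $v(y)=-\int_y^\YY v'(t)\,dt$, apply Cauchy–Schwarz with the weight split as $y^\alpha = (y^{\alpha/2})(y^{\alpha/2})$, and integrate; the factor $(1-\alpha^2)^{-1}$ comes from integrating $\int_0^\YY y^{-\alpha}\,dy$ and $\int_0^\YY y^\alpha\,dy$ (using $\alpha\in(-1,1)$ since $s\in(0,1)$), and the $\YY^2$ from the length of the interval. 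This gives the bound independently of $\lambda$, the mesh, and the polynomial degree, as claimed.

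For the \textbf{lower bound} $\kappa_i \ge h_{\min}^2/(p^4(1+\lambda\YY^{1-\alpha}))$: here I need to bound $a(v,v)$ from above by $m(v,v)$ up to the stated factor, i.e.\ an \emph{inverse} inequality, which is where the mesh size $h_{\min}$ and degree $p$ must enter. For the Dirichlet-energy part I would work element by element on $\TT^L_{(\mathbf 0,\YY)}$: on each element $K=(a_K,b_K)$ a polynomial inverse estimate gives $\int_K y^\alpha (v')^2 \lesssim (p^4/h_K^2)\int_K y^\alpha v^2$, with the weight handled by noting $y^\alpha$ is comparable to a constant on $K$ up to the factor $(b_K/a_K)^{|\alpha|}$ which is bounded because the geometric mesh has bounded ratios (the first element touching $0$ needs the usual separate treatment, using that $v$ is a single polynomial there and a scaling argument on $(0,h_{\min})$). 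Summing over elements yields $\int_0^\YY y^\alpha (v')^2 \lesssim (p^4/h_{\min}^2)\,m(v,v)$. For the boundary term, I would bound $|v(0)|^2 \lesssim \YY^{-1-\alpha}\,m(v,v)$ on the first element by a scaled trace/inverse inequality (a polynomial on $(0,h)$ satisfies $|v(0)|^2\lesssim h^{-1-\alpha}\int_0^h y^\alpha v^2$ after scaling, and $h_{\min}\gtrsim$ the appropriate power — more simply one uses the first element size); combined with the $\lambda d_s$ factor this contributes $\lambda\YY^{-1-\alpha}m(v,v)$, and collecting both terms and inverting gives the stated lower bound with the $(1+\lambda\YY^{1-\alpha})$ in the denominator.

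Finally, the \textbf{trace bound} \eqref{eq:estimate_traces_of_vi}: since $v_i$ is normalized so that $a(v_i,v_i)=1$, we immediately have $d_s\lambda |v_i(0)|^2 \le a(v_i,v_i)=1$, hence $|v_i(0)|\le (d_s\lambda)^{-1/2}\lesssim \lambda^{-1/2}$; this is essentially free from the normalization. The main obstacle I anticipate is the lower bound on $\kappa_i$: getting the weight $y^\alpha$ through the polynomial inverse inequality on the geometrically graded mesh cleanly — in particular controlling the element abutting $y=0$ where the weight degenerates — and tracking the exact powers of $h_{\min}$, $p$, $\YY$, and $\lambda$ so that they match the stated formula. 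The Poincaré direction and the trace bound are routine by comparison.
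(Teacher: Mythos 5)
Your plan is essentially the paper's proof: with the normalization $a(v_i,v_i)=1$ one has $\kappa_i=\int_0^{\YY}y^\alpha\abs{v_i}^2$, the upper bound follows from the weighted Poincar\'e inequality (using $v_i(\YY)=0$), the lower bound from a weighted polynomial inverse estimate on the geometric mesh, and the trace bound is immediate from discarding the nonnegative gradient term in the normalization; the paper simply cites \cite[Lemmas~B.1--B.3]{tensor_fem} for the three weighted inequalities you propose to prove by hand. The one point where you deviate is the boundary term in the lower bound: the paper bounds $\lambda d_s\abs{v_i(0)}^2$ by the \emph{global} trace estimate $\abs{v_i(0)}^2\le \YY^{1-\alpha}(1-\alpha^2)^{-1}\norm{v_i'}^2_{L^2(y^\alpha,(0,\YY))}$ --- which is exactly where the factor $\YY^{1-\alpha}$ in $(1+\lambda\YY^{1-\alpha})$ comes from --- and only afterwards applies the inverse estimate. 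Your local trace inequality on the element touching $y=0$ yields $\abs{v(0)}^2\lesssim h_{\operatorname{min}}^{-1-\alpha}\int_0^{\YY}y^\alpha\abs{v}^2$, not the $\YY^{-1-\alpha}$ you wrote (since $\alpha=1-2s\in(-1,1)$ gives $1+\alpha>0$ and $h_{\operatorname{min}}\le\YY$, the small element size produces the larger constant). The stated bound is still recoverable from your route, because $\lambda h_{\operatorname{min}}^{-1-\alpha}\le \lambda\,\YY^{1-\alpha}h_{\operatorname{min}}^{-2}\le p^4\lambda\,\YY^{1-\alpha}h_{\operatorname{min}}^{-2}$, but you should correct the intermediate claim and make this absorption explicit. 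Everything else (Rayleigh-quotient framing, comparability of $y^\alpha$ on interior elements of the geometric mesh, separate weighted inverse estimate on the first element, and $\abs{v_i(0)}\lesssim\lambda^{-1/2}$ from the normalization) is sound and matches the ingredients the paper cites.
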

\begin{proof}
  By definition we have  $1=d_s \lambda v_i(0)^2  + \int_{0}^{\YY}{y^\alpha |v_i'|^2 }= \kappa_i^{-1} \int_{0}^{\YY}{y^{\alpha}|v_i|^2}$,
  or $\kappa_i= \int_{0}^{\YY}{y^{\alpha}|v_i|^2}$.
  By the proof of \cite[Lemma B.2]{tensor_fem} we can estimate 
  $$\kappa_i = \norm{v_i}^2_{L^2(y^\alpha,(0,\YY))} \lesssim \YY^2(1-\alpha^2)^{-1}  \norm{v_i'}^2_{L^2(y^\alpha,(0,\YY))} \lesssim \YY^2(1 -\alpha^2)^{-1}.$$
\cite[Lemma~{B.1}]{tensor_fem} provides $|v_i(0)|^2 \leq \YY^{1-\alpha}/(1-\alpha^2) \norm{v_i'}^2_{L^2(y^\alpha,(0,\YY))}$.
This and the inverse estimate from \cite[Lemma~{B.3}]{tensor_fem} yield
%
  \begin{align*}
\kappa_i^{-1} \norm{v_i}^2_{L^2(y^\alpha,(0,\YY))} &= \lambda d_s \abs{v_i(0)}^2 + \norm{v_i'}_{L^2(y^\alpha,(0,\YY))}^2
    \lesssim (1+\lambda \YY^{1-\alpha})\norm{v_i'}_{L^2(y^\alpha,(0,\YY))}^2  \\
    &\lesssim h_{\operatorname{min}}^{-2} p^4  (1 + \lambda \YY^{1-\alpha}) \norm{v_i}_{L^2(y^\alpha,(0,\YY))}^2.
  \end{align*}
  
  To see~\eqref{eq:estimate_traces_of_vi}, we calculate:
  \begin{align*}
    \abs{v_i(0)}^2&\leq \lambda^{-1} d_s^{-1} \left[\lambda d_s \abs{v_i(0)}^2 + \norm{v_i'}_{L^2(y^\alpha,(0,\YY))}^2\right] = \lambda^{-1}.  \qedhere
  \end{align*}
\end{proof}

\begin{lemma}
  \label{lemma:best_approximation_of_elliptic_part}
  Let $u\in L^2(\Omega)$ be either holomorphic in $\overline{\Omega}$ or the solution to the singularly perturbed problem
  $-z^{-1} \LL u + u = f$ with $f$ holomorphic  on $\overline{\Omega}$ and $z\in \mathscr{S}$ with $\Re(z)\geq 0$. 
  Assume that $\HHx$ resolves the scales ${\abs{z}}^{-1/2}$ and $\sqrt{\kappa_i}$ for all $i=0,\dots,\mathcal{M}$.

  Then the following best approximation result holds:
  \begin{align*}
    \inf_{\VV_h \in \HHxy}\norm{\solve^\lambda u - \VV_h}_{\HH} \lesssim \lambda^{-1/2} \left( e^{- b \mathcal{N}_{\Omega}^{\mu}} + e^{- b \sqrt{\mathcal{N}_{\YY}}}\right).
  \end{align*}
  where $\mu$ is the exponent for $\HHx$ in Assumption~\ref{thm:hp_fem_resolves_scales}. The constant $b$ depends
    on the domain of holomorphy of $u$ or $f$.
\end{lemma}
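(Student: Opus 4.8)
The plan is to split the best-approximation error into a contribution from the discretization of the extended variable $y$ and a contribution from the discretization in $x$, and to treat the latter by the decoupling strategy of \cite{tensor_fem}. Writing $\UU:=\solve^\lambda u$ and inserting the Galerkin projection $\Pi_{\YY}\UU \in H_0^1(\Omega)\otimes\HHy$, I would estimate
\begin{align*}
  \inf_{\VV_h\in\HHxy}\norm{\UU-\VV_h}_{\HH}
  \le \norm{\UU-\Pi_{\YY}\UU}_{\HH}
      + \inf_{\VV_h\in\HHxy}\norm{\Pi_{\YY}\UU-\VV_h}_{\HH}.
\end{align*}
The first term is controlled directly by Lemma~\ref{lemma:approx_in_y}, which gives $\norm{\UU-\Pi_{\YY}\UU}_{\HH}\lesssim\lambda^{-1/2}e^{-bp}\norm{u}_{L^2(\Omega)}$ with $p$ the maximal polynomial degree in $\HHy$; for the space of Definition~\ref{def:geo-mesh-y} this is of the form $\lambda^{-1/2}e^{-b\sqrt{\mathcal{N}_{\YY}}}\norm{u}_{L^2(\Omega)}$. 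Finally $\norm{u}_{L^2(\Omega)}$ is absorbed into the constant: in the first case it is bounded by the analyticity constants of $u$, and in the second case by those of $f$, using $u=-z(\LL-z)^{-1}f$ together with the uniform bounded invertibility of $\LL-z$ on $L^2(\Omega)$ for $z\in\mathscr{S}$ with $\Re(z)\ge0$ (the defining property of the domain of ellipticity $\mathscr{S}$).

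For the second term I would use the decoupled basis $(v_i)_{i=0}^{\mathcal{M}}$ from \eqref{eq:EWP}. Since this is a basis of $\HHy$, every element of $\HHxy$ has the form $\sum_i V_i v_i$ with $V_i\in\HHx$, and we may expand $\Pi_{\YY}\UU=\sum_i\UU_i v_i$ with $\UU_i\in H_0^1(\Omega)$. Applying the norm equivalence \eqref{eq:norm_equivalence} to $\Pi_{\YY}\UU-\sum_i V_i v_i$ reduces the approximation to the decoupled problems:
\begin{align*}
  \inf_{\VV_h\in\HHxy}\norm{\Pi_{\YY}\UU-\VV_h}_{\HH}^2
  \lesssim \sum_{i=0}^{\mathcal{M}}\inf_{V_i\in\HHx} a_{\kappa_i}(\UU_i-V_i,\UU_i-V_i).
\end{align*}
Testing the Galerkin equations for $\Pi_{\YY}\UU$ with $V v_j$, $V\in H_0^1(\Omega)$, and invoking the orthogonality relations \eqref{eq:EWP}, one checks that the cross terms containing $v_i(0)v_j(0)$ produced by $\AA$ and by the $\lambda$-weighted trace term cancel, so that each component solves
\begin{align*}
  a_{\kappa_j}(\UU_j,V)=d_s\, v_j(0)\,\ltwoprodX{u}{V}\qquad\forall V\in H_0^1(\Omega),
\end{align*}
that is, the singularly perturbed problem $\kappa_j\LL\UU_j+\UU_j=d_s v_j(0)\,u$ with perturbation parameter $\varepsilon_j:=\sqrt{\kappa_j}$.

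By hypothesis $\HHx$ resolves the scales $\sqrt{\kappa_j}$ and $\abs{z}^{-1/2}$, so these problems fall into the scope of the approximation results already established: in the first case ($u$ holomorphic on $\overline{\Omega}$) the right-hand side is analytic and the estimate follows from the $hp$-approximation property of $\HHx$ for such singularly perturbed problems encoded in Assumption~\ref{ass:approx_of_HHx}; in the second case ($u=-z(\LL-z)^{-1}f$) the problem has exactly the iterated structure of Lemma~\ref{lemma:approx_of_iterated_solve} with $\varepsilon=\sqrt{\kappa_j}$. Using $c\ge0$ and the Poincar\'e inequality to pass from $\kappa_j\norm{\nabla\,\cdot\,}_{L^2(\Omega)}^2+\norm{\,\cdot\,}_{L^2(\Omega)}^2$ to $a_{\kappa_j}(\cdot,\cdot)$, and the linearity of the data-to-solution map (the squared error scales like $\abs{v_j(0)}^2$), one obtains $\inf_{V_j\in\HHx}a_{\kappa_j}(\UU_j-V_j,\UU_j-V_j)\lesssim\abs{v_j(0)}^2\,C\,e^{-b\mathcal{N}_{\Omega}^{\mu}}$ with $C$ depending only on $\mathscr{S}$, the analyticity constants of $u$ (resp.\ $f$), and the constants of Assumption~\ref{ass:approx_of_HHx}. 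To sum over $j$ one uses that the evaluation functional $\ell(v):=v(0)$ satisfies $\abs{\ell(v)}^2\le (d_s\lambda)^{-1}\bigl(d_s\lambda\abs{v(0)}^2+\int_0^{\YY}y^\alpha\abs{v'}^2\bigr)$ and that $(v_j)$ is orthonormal for the inner product appearing on the right, so Parseval gives $\sum_j\abs{v_j(0)}^2=\norm{\ell}^2\le (d_s\lambda)^{-1}$, independently of $\mathcal{M}$. Combining everything yields $\inf_{\VV_h\in\HHxy}\norm{\solve^\lambda u-\VV_h}_{\HH}^2\lesssim\lambda^{-1}\bigl(e^{-2bp}+e^{-b\mathcal{N}_{\Omega}^{\mu}}\bigr)$, and taking square roots and renaming $b$ proves the claim.

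The main obstacle is the decoupling step: one must verify carefully that, with the basis from \eqref{eq:EWP}, the components $\UU_j$ satisfy precisely the singularly perturbed problems $\kappa_j\LL\UU_j+\UU_j=d_s v_j(0)u$ (in particular that all $v_i(0)v_j(0)$ cross terms cancel), and that the assumption that $\HHx$ resolves the scales $\sqrt{\kappa_j}$ transfers these problems into the framework of Assumption~\ref{ass:approx_of_HHx} and Lemma~\ref{lemma:approx_of_iterated_solve} uniformly in $j$, $\lambda$ and $z$. The uniform bound $\sum_j\abs{v_j(0)}^2\le (d_s\lambda)^{-1}$ from the Riesz-representation argument is exactly what produces the clean prefactor $\lambda^{-1/2}$ rather than $\sqrt{\mathcal{N}_{\YY}}\,\lambda^{-1/2}$.
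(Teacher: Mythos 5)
Your proposal is correct and follows essentially the same route as the paper: reduce to the semidiscrete function $\Pi_{\YY}\solve^\lambda u$ via Lemma~\ref{lemma:approx_in_y}, decouple with the basis from~\eqref{eq:EWP} so that each component solves $a_{\kappa_j}(U_j,V)=d_s v_j(0)\ltwoprodX{u}{V}$, apply Assumption~\ref{ass:approx_of_HHx} (holomorphic case) or Lemma~\ref{lemma:approx_of_iterated_solve} (resolvent case), and recombine through the norm equivalence~\eqref{eq:norm_equivalence}. The only point where you deviate is the summation over components: the paper uses the termwise bound $\abs{v_i(0)}^2\lesssim\lambda^{-1}$ from Lemma~\ref{lemma:EWP-estimates}, which strictly speaking leaves a factor $\mathcal{M}+1$ to be absorbed, whereas your Riesz-representer/Parseval argument $\sum_j\abs{v_j(0)}^2\leq (d_s\lambda)^{-1}$ yields the prefactor $\lambda^{-1/2}$ uniformly in $\mathcal{M}$ and is in fact a slight sharpening of the paper's argument.
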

\begin{proof}  
  By Lemma~\ref{lemma:approx_in_y}, it is sufficient to consider a semidiscrete functions 
$\UUdy:=\Pi_{\mathcal{Y}}\solve^\lambda u \in H_0^1(\Omega)\otimes \HHy$ and their
  approximation in $\HHxy$.   
  Using the basis $(v_j)_{j=0}^{\mathcal{M}}$, the function $\UUdy=:\sum_{i=0}^{\mathcal{M}}{U_i v_i}$ 
  from Lemma~\ref{lemma:approx_in_y} solves:
  \begin{align*}
    a_{\kappa_i}(U_i,V)&=d_s v_i(0) \ltwoprodX{u}{V} \qquad \forall V \in H_0^1(\Omega).
  \end{align*}

  This is just the weak formulation of the singularly perturbed problems from Assumption~\ref{ass:approx_of_HHx},
  with $\varepsilon=\sqrt{\kappa_i}$. Note that $|v_i(0)| \lesssim \lambda^{-1/2}$ by (\ref{eq:estimate_traces_of_vi}).
Since we assumed that the scales are resolved, 
  we can either apply Assumption~\ref{ass:approx_of_HHx} directly (if $u$ is holomorphic on $\overline{\Omega}$) or 
  apply Lemma~\ref{lemma:approx_of_iterated_solve} (if $u$ solves $-z^{-1} \LL u + u =f$) to get the following estimate for 
  the best approximations $\Pi U_i \in \HHx$:
  \begin{align*}
    \kappa_i \norm{ \nabla [U_i - \Pi U_i]}_{L^2(\Omega)}^2 + \norm{U_i - \Pi U_i}^2_{L^2(\Omega)}   
    &\lesssim C(u)\lambda^{-1} e^{-b \mathcal{N}_{\Omega}^{\mu}},
  \end{align*}
  the norm equivalence~\eqref{eq:norm_equivalence} then concludes the proof.
\end{proof}

\subsection{Returning to the semidiscretization}
\label{sect:semidiscretization_2}

We are now in a position to show exponential convergence for the best approximation (and thus also the Ritz approximation) 
of the exact solution $\UU$. We first consider positive times $t$ bounded away from $0$.  In this regime,
our finite element mesh is assumed to resolve the pertinent scales. The smaller times, for which the scales
are not resolved, are treated separately later on.
\begin{theorem}
\label{thm:best_approximation_HHx}
Let $t\geq t_0 >0$ be fixed. Let $u_0$ be analytic on a fixed neighborhood $\widetilde{\Omega} \supset \overline{\Omega}$
(but we do not assume boundary conditions, i.e., $u_0 \notin \widetilde{H}^s(\Omega)$ is allowed),
and assume homogeneous right-hand side, i.e.,  $f=0$.
Also assume that for a chosen ``high frequency'' cutoff $\zhf> z_0 >0$, the space $\HHx$ resolves  down to the scale
\begin{align}
    \label{eq:scales_that_need_resolving}
     \varepsilon_{\min}=\min{\left(\sqrt{t_0}\frac{h_{\operatorname{min}}}{p^2} c_\YY,\abs{\zhf}^{-1/2}\right)},
\qquad c_\YY = \sqrt{\frac{1}{t_0 + \YY^{1-\alpha}}}, 
  \end{align}
  where $h_{\min}$ and $p$ are the minimum element size and maximal polynomial degree of $\HHy$.
Then, for each $\ell \in \N_0$,  there exists a function $\VV_h(t) \in \HHxy$ such that the following estimate holds:
\begin{align}
  \norm{ \UU^{(\ell)}(t) - \VV_h(t)}_{\HH}
  &\lesssim t^{-1/2-\ell} \max\left(1,-\log(t)^{1-\min(\ell,1)}\right) \left(
    e^{- b_1 \mathcal{N}_{\Omega}^{\mu}}  + e^{- b_2 \sqrt{\mathcal{N}_{\mathcal{Y}}}} + e^{-\frac{\gamma}{2} {\zhf^s} \, t_0} \right).
                                     \label{eq:best_approximation_HHx_0} 
\end{align}
The implied constant depends on $\Omega$,$\widetilde{\Omega}$, $s$, the constants of analyticity of $u_0$, $z_0$, and the constants from Assumption~\ref{ass:approx_of_HHx}, but is
independent of $t$, $t_0$ and $\zhf$.  The rate $b_2$ also depends on the mesh grading for $y$.
  The rate $b_1$ depends in addition on the constants from Assumption~\ref{ass:approx_of_HHx}. The constant 
$\gamma$ can be chosen to depend on $s$ only. 
\end{theorem}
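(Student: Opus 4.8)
The plan is to combine a Dunford--Taylor representation of the semigroup with the abstract best‑approximation result Lemma~\ref{lemma:best_approximation_of_elliptic_part}, applied to the operator $\solve^\lambda$ with the crucial choice $\lambda:=1/t$. First I would record a contour representation of the time derivatives of the lifted solution. Since $f=0$ we have $u(t)=\sg(t)u_0=\sum_{j} e^{-\mu_j^s t}\ltwoprodX{u_0}{\varphi_j}\varphi_j$, and writing $e^{-\mu_j^s t}=\tfrac1{2\pi\ii}\int_{\mathcal{C}}e^{-z^s t}(z-\mu_j)^{-1}\,dz$ for the contour $\mathcal{C}\subset\mathscr{S}$ of the Remark following Definition~\ref{def:domain_of_ellipticity} (the principal branch $z^s$ is holomorphic on $\mathcal{C}$ since $|\operatorname{Arg}z|\le\pi/4$ there; the integral converges absolutely because $|e^{-z^s t}|=e^{-t\Re(z^s)}\le e^{-c_s t|z|^s}$ with $c_s:=\cos(\pi s/4)>0$; and $\mathcal{C}$ encircles all eigenvalues $\mu_j>z_0>r_0$), differentiation under the integral gives for $t>0$, $\ell\in\N_0$,
\begin{align*}
  u^{(\ell)}(t)=\frac1{2\pi\ii}\int_{\mathcal{C}}(-z^s)^\ell e^{-z^s t}\,z^{-1}\,u_z\,dz,\qquad u_z:=z(z-\LL)^{-1}u_0,
\end{align*}
where $u_z\in H_0^1(\Omega)$ solves $-z^{-1}\LL u_z+u_z=u_0$, i.e.\ exactly the singularly perturbed problem treated in Lemma~\ref{lemma:best_approximation_of_elliptic_part}. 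Using $\lifting=\solve^\lambda\circ(\lambda+\LL^s)$ on $\dom(\LL^s)$ (both sides map $\varphi_j\mapsto\varphi_j\psi_j$), the smoothness of $u(t)$ for $t>0$, and $\LL^s u^{(\ell)}(t)=-u^{(\ell+1)}(t)$, I would then pull the bounded operator $\solve^\lambda$ inside to obtain
\begin{align*}
  \UU^{(\ell)}(t)=\lambda\,\solve^\lambda u^{(\ell)}(t)-\solve^\lambda u^{(\ell+1)}(t)=\frac1{2\pi\ii}\int_{\mathcal{C}}(-z^s)^\ell(\lambda+z^s)\,e^{-z^s t}\,z^{-1}\,\solve^\lambda u_z\,dz.
\end{align*}

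Next I would construct $\VV_h(t)$. Split $\mathcal{C}=\mathcal{C}_{\mathrm{lf}}\cup\mathcal{C}_{\mathrm{hf}}$ at $|z|=\zhf$ (so $\mathcal{C}_{\mathrm{hf}}$ lies on the two rays), let $P_h:\HH\to\HHxy$ be the $\HH$-orthogonal projection, and set
\begin{align*}
  \VV_h(t):=\frac1{2\pi\ii}\int_{\mathcal{C}_{\mathrm{lf}}}(-z^s)^\ell(\lambda+z^s)\,e^{-z^s t}\,z^{-1}\,P_h\solve^\lambda u_z\,dz\in\HHxy,
\end{align*}
which is well defined since $z\mapsto u_z$, hence $z\mapsto P_h\solve^\lambda u_z$, is holomorphic near the compact arc $\mathcal{C}_{\mathrm{lf}}$ with values in the finite-dimensional space $\HHxy$. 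Then $\UU^{(\ell)}(t)-\VV_h(t)=E_{\mathrm{lf}}+E_{\mathrm{hf}}$, with $E_{\mathrm{lf}}$ the $\mathcal{C}_{\mathrm{lf}}$-integral of the scalar weight against $\solve^\lambda u_z-P_h\solve^\lambda u_z$, and $E_{\mathrm{hf}}$ the $\mathcal{C}_{\mathrm{hf}}$-integral against $\solve^\lambda u_z$. For $E_{\mathrm{lf}}$: for $z\in\mathcal{C}_{\mathrm{lf}}$ one has $|z|^{-1/2}\ge\zhf^{-1/2}\ge\varepsilon_{\min}$, and with $\lambda=1/t\le 1/t_0$ Lemma~\ref{lemma:EWP-estimates} gives $\sqrt{\kappa_i}\ge h_{\operatorname{min}}p^{-2}(1+\lambda\YY^{1-\alpha})^{-1/2}\ge\sqrt{t_0}\,\tfrac{h_{\operatorname{min}}}{p^2}c_\YY\ge\varepsilon_{\min}$; since $\Re z>0$ on $\mathcal{C}$ and $u_0$ is analytic, Lemma~\ref{lemma:best_approximation_of_elliptic_part} applies uniformly in $z\in\mathcal{C}_{\mathrm{lf}}$ and yields $\norm{\solve^\lambda u_z-P_h\solve^\lambda u_z}_{\HH}\lesssim\lambda^{-1/2}(e^{-b_1\mathcal{N}_{\Omega}^\mu}+e^{-b_2\sqrt{\mathcal{N}_{\YY}}})=t^{1/2}(e^{-b_1\mathcal{N}_{\Omega}^\mu}+e^{-b_2\sqrt{\mathcal{N}_{\YY}}})$. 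Bounding $\norm{E_{\mathrm{lf}}}_{\HH}$ by the line integral of the modulus of the weight times this error, parametrising the ray parts by $r=|z|\ge r_0$ (the arc $|z|=r_0$ being bounded trivially), and using $|\lambda+z^s|\le\lambda+r^s$, $|e^{-z^s t}|\le e^{-c_s r^s t}$, leads to
\begin{align*}
  \norm{E_{\mathrm{lf}}}_{\HH}\lesssim t^{1/2}\big(e^{-b_1\mathcal{N}_{\Omega}^\mu}+e^{-b_2\sqrt{\mathcal{N}_{\YY}}}\big)\int_{r_0}^{\infty}e^{-c_s r^s t}\,r^{s\ell-1}(\lambda+r^s)\,dr.
\end{align*}
With $\lambda=1/t$ the substitution $\tau=c_s r^s t$ evaluates the integral to $C_\ell\,t^{-\ell-1}$ for $\ell\ge 1$ and, for $\ell=0$, to $\tfrac1{st}\big(\int_{c_s r_0^s t}^{\infty}e^{-\tau}\tau^{-1}\,d\tau+c_s^{-1}e^{-c_s r_0^s t}\big)$, which is $\lesssim t^{-1}\max(1,-\log t)$ since $r_0$ is a fixed positive constant and $\int_x^\infty e^{-\tau}\tau^{-1}\,d\tau\le\log(1+x^{-1})$; in both cases $\norm{E_{\mathrm{lf}}}_{\HH}\lesssim t^{-1/2-\ell}\max(1,-\log(t)^{1-\min(\ell,1)})(e^{-b_1\mathcal{N}_{\Omega}^\mu}+e^{-b_2\sqrt{\mathcal{N}_{\YY}}})$.

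For $E_{\mathrm{hf}}$ no approximation is used; Lemma~\ref{lemma:elliptic_solve_stability} gives $\norm{\solve^\lambda u_z}_{\HH}\lesssim\lambda^{-1/2}\norm{u_z}_{L^2(\Omega)}$, and since $\mathcal{C}$ stays at angle $\pm\pi/4$ off the positive real axis (resp.\ inside $B_{r_0}(0)$ with $r_0<\mu_0$) one has the uniform resolvent bound $\norm{u_z}_{L^2(\Omega)}=\norm{z(z-\LL)^{-1}u_0}_{L^2(\Omega)}\lesssim\norm{u_0}_{L^2(\Omega)}$. Using $|z|\ge\zhf$ on $\mathcal{C}_{\mathrm{hf}}$ and $t\ge t_0$ to split $|e^{-z^s t}|\le e^{-c_s|z|^s t}\le e^{-\frac{c_s}{2}\zhf^s t_0}e^{-\frac{c_s}{2}|z|^s t}$ gives
\begin{align*}
  \norm{E_{\mathrm{hf}}}_{\HH}\lesssim t^{1/2}\norm{u_0}_{L^2(\Omega)}\,e^{-\frac{c_s}{2}\zhf^s t_0}\int_{\zhf}^{\infty}e^{-\frac{c_s}{2}r^s t}\,r^{s\ell-1}(\lambda+r^s)\,dr,
\end{align*}
and the last integral is estimated exactly as before by $\lesssim t^{-\ell-1}$ ($\ell\ge 1$) resp.\ $\lesssim t^{-1}\max(1,-\log t)$ ($\ell=0$, using $r\ge\zhf>z_0$). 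Hence $\norm{E_{\mathrm{hf}}}_{\HH}\lesssim t^{-1/2-\ell}\max(1,-\log(t)^{1-\min(\ell,1)})\,e^{-\frac{\gamma}{2}\zhf^s t_0}$ with $\gamma:=c_s=\cos(\pi s/4)$, which depends on $s$ only. Adding $E_{\mathrm{lf}}$ and $E_{\mathrm{hf}}$ and absorbing the fixed quantities $r_0,\varepsilon_0,z_0,s$ into the implied constant gives \eqref{eq:best_approximation_HHx_0}.

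I expect the main obstacle to be the representation formula of the first step: one must see that $\UU^{(\ell)}(t)$, obtained by lifting via $\UU(t)=\lifting[u(t)]=\solve^\lambda[(\lambda+\LL^s)u(t)]$, can be written as a contour integral whose integrand is $\solve^\lambda$ applied to the \emph{iterated} singularly perturbed solutions $u_z=z(z-\LL)^{-1}u_0$, so that Lemma~\ref{lemma:best_approximation_of_elliptic_part} (internally Lemma~\ref{lemma:approx_of_iterated_solve}) becomes applicable; and one must see that the damping factor $\lambda^{-1/2}$ of $\solve^\lambda$ together with the choice $\lambda=1/t$ is calibrated precisely so that (i) the scale $\sqrt{\kappa_i}$ is still resolved for every $t\ge t_0$ — which is what forces the factor $c_\YY$ in the definition of $\varepsilon_{\min}$, via Lemma~\ref{lemma:EWP-estimates} — and (ii) the contour integral of the scalar weight produces exactly the powers $t^{-1/2-\ell}$, with the logarithmic loss appearing only for $\ell=0$. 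The remaining points — absolute convergence of the contour integrals and of their $t$-derivatives, the holomorphic dependence $z\mapsto P_h\solve^\lambda u_z$, and the elementary asymptotics of $\int e^{-c r^s t}r^{s\ell-1}(\lambda+r^s)\,dr$ (including that the arc $|z|=r_0$ of $\mathcal{C}_{\mathrm{lf}}$ contributes only same-order terms) — are routine.
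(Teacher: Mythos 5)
Your proposal is correct and follows essentially the same route as the paper: the Riesz--Dunford contour representation combined with the identity $\UU^{(\ell)}(t)=\tfrac1t\solve^{1/t}u^{(\ell)}(t)-\solve^{1/t}u^{(\ell+1)}(t)$ (the paper obtains it by adding $\tfrac1t d_s\trace\UU$ to the extended boundary condition), splitting the contour at $\abs{z}=\zhf$, applying Lemma~\ref{lemma:best_approximation_of_elliptic_part} on the resolved part and the stability bound~\eqref{eq:elliptic_solve_stability} with the uniform resolvent estimate on the unresolved part, and evaluating the same scalar integrals with the logarithmic loss only for $\ell=0$. The only cosmetic differences are your use of the $\HH$-orthogonal projection instead of the Galerkin approximation for $\widehat\VV_h(z)$ and your explicit verification via Lemma~\ref{lemma:EWP-estimates} that $\sqrt{\kappa_i}\geq\varepsilon_{\min}$, which the paper leaves implicit.
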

\begin{proof}
  Since we assumed homogeneous right hand side, we only need to investigate $\UU=\sg(t)u_0$.
  We use the representation of $\sg(t) u_0$ via the Riesz-Dunford calculus 
  (following what is done in \cite[Section 2]{bonito_pasciak_parabolic}) to write:
  \begin{align*}
    \sg(t)u_0 &= \frac{1}{2\pi \ii}\int_{\mathcal{C}}{e^{-t z^{s}} \left( z - \LL \right)^{-1} u_0 \,dz},  \qquad& 
    \left(\sg(t)u_0\right)^{(\ell)} &= \frac{(-1)^\ell}{2\pi \ii}\int_{\mathcal{C}}{ z^{\ell s} e^{-t z^{s}} \left( z - \LL \right)^{-1} u_0 \,dz},
  \end{align*}
  where $\mathcal{C}$ is the following contour consisting of three segments:
  \begin{align*}
    \begin{cases}
      \mathcal{C}_1:=\big\{z(r)=r e^{-\ii \frac{\pi}{4}} & |\quad r\in (r_0, \infty) \big\}\\
      \mathcal{C}_2:=\big\{z(\theta):= r_0 e^{ \ii \theta}\ & |\quad \theta \in (-\pi/4,\pi/4) \big\}\\
      \mathcal{C}_3:=\big\{z(r):=r e^{\ii \frac{\pi}{4} } & |\quad  r\in (r_0, \infty)\big\}\\
    \end{cases}    
  \end{align*}
  and $z^{s}:=e^{ s\log(z)}$ with the logarithm defined with the branch cut along the negative real axis.  
  The parameter $r_0 \in (0, z_0)$  is fixed such that the whole path lies in the domain of ellipticity $\mathscr{S}$,
  as defined in Definition~\ref{def:domain_of_ellipticity}; see Figure~\ref{fig:contour}.

  By adding the term $\frac{1}{ t}  d_s \trace{\UU}$ to both sides of~\eqref{eq:extended_problem_bc}, we
  get that $\UU$ solves
  \begin{alignat*}{3}
    -\fdiv\left(y^\alpha A \nabla \UU \right)+  y^\alpha c \UU &= 0  \qquad &&\text{on $\CC\times \R_+$,}  \\
    \frac{d_s}{t} \trace{\UU} + \partial_{\nu}^\alpha \UU &=  \frac{d_s}{t} \trace{\UU} - d_{s} \trace{\dot{\UU}}  \qquad &&\text{on $\omega \times \{0\} \times (0,T)$},\\
    \UU &= 0 \qquad &&\text{ on $\partial_L \CC$}.
  \end{alignat*}
  Using the operator $\solve^{1/t}$,  we can therefore write the function $\UU$ as 
  \begin{align*}
    \UU&=-\solve^{1/t} \trace{\dot{\UU}} +  \frac{1}{t}\solve^{1/t}{\trace{\UU}},
  \end{align*}
  or using the Riesz-Dunford calculus:
  \begin{align*}
    \UU(t)&=\frac{1}{2\pi \ii}\int_{\mathcal{C}}{e^{-t z^{s}} z^{s} \solve^{1/t} \left[ z -  \LL \right]^{-1}  u_0 \;dz}
            + \frac{1}{t} \frac{1}{2\pi \ii}\int_{\mathcal{C}}{e^{-t z^{s}}  \solve^{1/t} \left[ z - \LL \right]^{-1} u_0 \;dz}.
  \end{align*}
  For the derivatives, a similar formula holds:
  \begin{multline*}
    \frac{d^{\ell}}{dt^\ell}{\UU}(t)=\frac{(-1)^{\ell}}{2\pi \ii}\int_{\mathcal{C}}{e^{-t z^{s}} z^{(\ell+1)s} \solve^{1/t} \left[ z - \LL \right]^{-1} u_0 \;dz}
                       + \frac{1}{t} \frac{(-1)^{\ell}}{2\pi \ii}\int_{\mathcal{C}}{e^{-t z^{s}} z^{\ell s} \solve^{1/t} \left[  z-   \LL \right]^{-1} u_0 \;dz}.
  \end{multline*}  
  Hence, we have to study integrals of the form 
  \begin{align} 
    I_m:=\frac{1}{2\pi \ii}\int_{\mathcal{C}}{e^{-t z^{s}} z^{m\,s} \solve^{1/t} \left[ z - \LL \right]^{-1} u_0 \,dz}, 
\qquad m \in \N_0, 
  \end{align}
and their best approximation, paying attention to the dependence on $t$.

  If $\abs{z} \in (\varepsilon_0, \zhf)$, we obtain from Lemma~\ref{lemma:best_approximation_of_elliptic_part} (since
  we require Assumption~\ref{ass:approx_of_HHx} to hold down to scale $\zhf^{-1/2}$)
  \begin{align}
    \label{eq:best_approximation_of_lifted_resolvent}
    \norm{ \solve^{1/t} \left( \id - z^{-1} \LL \right)^{-1} u_0 - \widehat{\VV}_h(z)}_{\HH}
    &\lesssim t^{1/2}  \big( e^{-b_1 \mathcal{N}_{\Omega}^\mu} + e^{-b_2 \sqrt{\mathcal{N}_{\mathcal{Y}}}}\big)
  \end{align}
  for some function $\widehat{\VV}_h(z) \in \HHxy$. If we pick $\widehat{\VV}_h(z)$ as the Galerkin approximation,
    we get continuous dependence on $z$.
  On $\mathcal{C}_2$, we can therefore estimate:
  \begin{align*}
    \norm{\int_{\mathcal{C}_2}{e^{-t z^{s}} z^{m\,s-1} \solve^{1/t} 
    \left[\left( \id - z^{-1} \LL \right)^{-1} u_0 - \widehat{\VV}(z)\right] \,dz}}
    &\leq C t^{1/2} \big( e^{-b_1 \mathcal{N}_{\Omega}^\mu} + e^{-b_2 \sqrt{\mathcal{N}_\mathcal{Y}}}\big).
  \end{align*}
  The more interesting case are the paths $\mathcal{C}_1$ and $\mathcal{C}_2$. We focus on $\mathcal{C}_1$,
  and consider two cases, namely, $\abs{z} \leq \zhf$ and $\abs{z}> \zhf$. 
  In the first case, the mesh resolves the scales down to $\zhf^{-1/2} \leq \abs{z}^{-1/2} $, and we
  can apply Lemma~\ref{lemma:best_approximation_of_elliptic_part}.
  Setting $\gamma:=\cos(\pi\,s/4)$ we estimate:
  \begin{align*}
    I_{m}^1&:=
    \norm{\int_{\mathcal{C}_1 \cap \abs{z} \leq \zhf}{e^{-t z^{s}} z^{m\,s-1}
    \left( \solve^{1/t} \left( z - \LL \right)^{-1} (z u_0) - \widehat{\VV}_h(z) \right)} \,dz}_{\HH} \\
           &\lesssim  t^{1/2} \big( e^{-b_1 \mathcal{N}_{\Omega}^\mu} + e^{-b_2 \sqrt{\mathcal{N}_\mathcal{Y}}}\big)
             \int_{r_0}^{z_{hf}}{e^{- \gamma t r^{s}} r^{m\,s -1} dr}.
  \end{align*}
  Making the substitution $ \gamma t\,r^{s}=:y$, we get:
  \begin{align*}
    \int_{r_0}^{z_{hf}}{e^{-t r^{s}} r^{m\,s-1} dr}    
    &=s^{-1}t^{-m}  \gamma^{-m} \int_{\gamma t r_0^{s}}^{  \gamma t z_{hf}^s}{e^{-y} y^{m-1} \, dr}.
  \end{align*}
  
  We need to consider the case $m=0$ separately, as the integrand then has a singularity at $r=0$. 
  Splitting the integration we get:
  \begin{align*}
    s^{-1}t^{0} \int_{\gamma t r_0^{s}}^{  \gamma t z_{hf}^s}{e^{-y} y^{m-1} \, dr} 
    &\lesssim \int_{\gamma t r_0^{s}}^{1}{e^{-y} y^{-1} \, dr}  +  \int_{1}^{\infty}{e^{-y} y^{-1} \, dr}
    \lesssim \int_{\gamma t r_0^{s}}^{1}{ y^{-1} \, dr}  +  \int_{1}^{\infty}{e^{-y} \, dr}\\
    &\lesssim -\log(\gamma t r_0^{s}) + e^{-1}  \sim 1 - \log(t r_0^s).
  \end{align*}
  
  For $m>0$, we do not get the logarithmic growth for small times, since:
  \begin{align*}
    \gamma^{-m} s^{-1} t^{-m} \int_{y_0}^{y_1}{e^{-y} y^{m-1} \, dr} 
    &\lesssim t^{-m} \int_{0}^{\infty}{e^{-y} y^{m-1}\, dr}
    =t^{-m} \Gamma(m).
  \end{align*}
  Overall, this gives the estimate:
  \begin{align*}
    I_{m}^1 \lesssim  t^{1/2 -m }\max\big(1,-\log(t)^{1-\min(m,1)}\big) \big(e^{-b_1 \mathcal{N}_{\Omega}^\mu}+ e^{-b_2 \sqrt{\mathcal{N}_{\mathcal{Y}}}}\big).
  \end{align*}
  In the case $\abs{z} > \zhf$, we set $\widehat{\VV}_h:=0$ and use the stability estimate~\eqref{eq:elliptic_solve_stability}
  together with the uniform stability of the operator $(z-\LL)^{-1} z$ (see Lemma~\ref{lemma:sing_is_elliptic}).
  For $m>0$,  we estimate:
  \begin{align*}
    \!\!\Big\|\int_{\mathcal{C}_1 \cap \abs{z} > \zhf}{\hspace{-8mm}e^{-t z^{s}} z^{m s-1}\solve^{1/t}\left[z - \LL\right]^{-1} (z  u_0) \,  dz}\Big\|_{\HH} 
      \!\lesssim\! \norm{u_0}_{L^2(\Omega)}  t^{1/2} e^{-\frac{\gamma}{2}  t \,\zhf^{s}}\int_{r > \zhf}{e^{- \gamma t r^{s}/2} r^{m \,s -1} \,dr}  \\
      \lesssim \norm{u_0}_{L^2(\Omega)}   t^{1/2} e^{-\frac{\gamma}{2} t \, \zhf^{s}} \;
      t^{-m}\int_{0}^{\infty}{  e^{-y} y^{m-1} \,dr} 
      \;\lesssim \;\norm{u_0}_{L^2(\Omega)}  e^{-\frac{\gamma}{2}  t \, \zhf^s} \;t^{1/2-m} \;  
      \Gamma(m).    
  \end{align*}
  For $m=0$, the same calculation can be done, but picking up an extra logarithmic term from the integral 
  where $y=z_{hf}^s t \lesssim 1$.
  
  The same argument can be repeated for $\mathcal{C}_3$. The stated estimates then follow easily by setting
  $m=0$ and $m=1$ to estimate $\UU$ (this term involves the logarithmic contributions)
  and $m=\ell$ and $m=\ell+1$ to estimate higher derivatives.
\end{proof}

For small $t<t_0$, we cannot hope to retain exponential convergence, as it would require our mesh to resolve infinitely small scales.
Instead, we rely on on our ability to control the behavior of  the solution near $t=0$ using some smoothness of $u_0$.
\begin{lemma}
  \label{lemma:HHx_approx_small_t0}
  Let $u_0 \in H^{\theta}(\Omega)$ for $0<\theta <1/2$, and assume homogeneous right hand-side, i.e., $f=0$.  
  For all $\ell \in \N_0$,  the following estimate holds for $t >0$:
  \begin{align}
    \norm{ \UU^{(\ell)}(t) }_{\HH}&\lesssim
                                      t^{-\ell -1/2+ \min(\frac{\theta}{2s},1)} \norm{u_0}_{H^{\theta}(\Omega)}.
  \end{align}
  The constant depends on $\Omega$, $\theta$,  $s$ and the coefficients $A$, $c$.
\end{lemma}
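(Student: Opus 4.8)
The plan rests on the observation that, because $f=0$, for every fixed $t>0$ the function $\UU(t)$ is nothing but the continuous lifting of the semigroup solution $u(t)=\sg(t)u_0$: indeed $\UU(t)$ satisfies the interior equation~\eqref{eq:extended_problem_pde} together with the homogeneous lateral condition, and $\trace\UU(t)=u(t)$, which is precisely the defining property~\eqref{eq:continuous_lifting} of $\lifting$. Since $\lifting$ is a bounded linear map that does not depend on $t$, it commutes with differentiation in time and with $-\LL^s$, so that $\UU^{(\ell)}(t)=\lifting\big[(-\LL^s)^\ell\sg(t)u_0\big]$. By Remark~\ref{remark:continuous_lifting_is_bounded} one has $\norm{\lifting v}_{\HH}\lesssim\norm{v}_{\widetilde{H}^s(\Omega)}$ (and in fact $\norm{\lifting v}_{\HH}^2=d_s\norm{v}_{\widetilde{H}^s(\Omega)}^2$), so it suffices to bound $\norm{(-\LL^s)^\ell\sg(t)u_0}_{\widetilde{H}^s(\Omega)}$.

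For this I would pass to the eigenbasis $(\mu_j,\varphi_j)$ of $\LL$. Using $\widetilde{H}^s(\Omega)=\mathbb{H}^s(\Omega)$ and, on the data side, $H^\theta(\Omega)=\widetilde{H}^\theta(\Omega)=\mathbb{H}^\theta(\Omega)$ (valid since $\theta<1/2$), the quantity to estimate, squared, equals $\sum_{j}\mu_j^{s(1+2\ell)}e^{-2t\mu_j^s}\abs{(u_0,\varphi_j)_{L^2(\Omega)}}^2$, and pulling out the weight $\mu_j^\theta$ that governs $\norm{u_0}_{H^\theta(\Omega)}^2$ reduces everything to the scalar bound
\[
  \sup_{\lambda\ge\mu_0^s}\ \lambda^{(1+2\ell)-\theta/s}\,e^{-2t\lambda}\ \lesssim\ t^{-\big((1+2\ell)-2\min(\theta/(2s),1)\big)}.
\]
This is handled by the elementary inequality $x^ae^{-x}\le C_a$ for $a\ge0$: when $(1+2\ell)-\theta/s\ge0$ one gets $\lesssim t^{-((1+2\ell)-\theta/s)}$ directly, while when it is negative the map $\lambda\mapsto\lambda^{(1+2\ell)-\theta/s}e^{-2t\lambda}$ is decreasing, so one evaluates at $\lambda=\mu_0^s$ and uses the strictly positive smallest eigenvalue $\mu_0>0$ of $\LL$ (which holds since $c\ge0$ and $\Omega$ is bounded, via Poincar\'e) to obtain a $t$-uniform constant; for $t$ bounded away from $0$ one additionally trades $e^{-2t\mu_0^s}$ against an arbitrary negative power of $t$. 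Taking square roots and assembling the two regimes gives the claimed exponent, the cap $\min(\cdot,1)$ reflecting the over-smoothing regime $\theta\ge 2s$ in which extra regularity of $u_0$ no longer buys temporal decay.

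The genuinely routine parts are the scalar inequality and the identification $\UU(t)=\lifting[u(t)]$; the main point requiring care is the bookkeeping of the exponent, i.e.\ separating the regime where additional Sobolev smoothness of $u_0$ converts into $t$-decay from the one where it only yields a $t$-independent bound, and ensuring the spectral gap is used so that the large-$\mu_j$ tail does not spoil the estimate. All implied constants then depend only on $\Omega$, $\theta$, $s$, $A$ and $c$. As a consistency check, specialising to $\ell=0$ with $u_0\in\widetilde{H}^s(\Omega)$ the same computation recovers the expected $\norm{\UU(t)}_\HH\lesssim\norm{u_0}_{\widetilde{H}^s(\Omega)}$.
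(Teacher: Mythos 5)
Your proposal follows essentially the same route as the paper's proof: identify $\UU(t)=\lifting[u(t)]$, use the boundedness of the continuous lifting from Remark~\ref{remark:continuous_lifting_is_bounded}, and then bound $\norm{u^{(\ell)}(t)}_{\widetilde{H}^{s}(\Omega)}$ by parabolic smoothing together with $H^{\theta}(\Omega)=\widetilde{H}^{\theta}(\Omega)$ for $\theta<1/2$. The only real difference is that the paper delegates the smoothing step to Lemma~\ref{lemma:apriori}~(\ref{it:sg_apriori_stability}) (whose proof is the same eigendecomposition, cited from Thom\'ee) and simplifies by assuming $\theta\le 2s$, whereas you carry out the spectral computation explicitly and also treat $\theta>2s$ via the spectral gap; that part of your argument is correct. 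One caveat: in the regime where your scalar exponent $(1+2\ell)-\theta/s$ is negative, in particular $\ell=0$ and $\theta>s$, your argument yields only a $t$-uniform bound for small $t$, which does not reach the stated positive power $t^{-1/2+\min(\theta/(2s),1)}$ — but no argument can, since $\norm{\UU(t)}_{\HH}\gtrsim\norm{u(t)}_{\widetilde{H}^{s}(\Omega)}\to\norm{u_0}_{\mathbb{H}^{s}(\Omega)}>0$ as $t\to 0^{+}$, so the estimate as literally stated is too strong in that corner. The paper's proof has exactly the same limitation: it invokes Lemma~\ref{lemma:apriori}~(\ref{it:sg_apriori_stability}) with $\gamma=1$, $\beta=\theta/s$, whose hypotheses ($\beta\in(0,1)$ and $2\ell+\gamma-\beta\ge 0$) force a nonpositive $t$-exponent. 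So your proof is as complete as the paper's; for a clean statement you should cap the exponent at zero (e.g.\ replace $\min(\tfrac{\theta}{2s},1)$ by $\min(\tfrac{\theta}{2s},\ell+\tfrac12,1)$) or restrict to $\theta\le s(2\ell+1)$, after which your two-regime scalar bound covers everything.
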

\begin{proof}
  For simplicity we assume additionally $\theta\leq 2s$.
  We  note that for $\theta \in (0,1/2)$, the spaces $\widetilde{H}^{\theta}(\Omega)$ and $H^{\theta}(\Omega)$ coincide
  with equivalent norms
  (see~\cite[Section 1.11.6]{triebel3} or~\cite[Theorem~{3.33}, Theorem~{B.9}, Theorem~{3.40}]{McLean2000}).

  Hence, we get $u_0 \in \widetilde{H}^{\theta}(\Omega)$.
  By Lemma~\ref{lemma:apriori}, this implies for $\ell\in \N_0$:
  \begin{align}
    \label{eq:HHx_approx_small_t0_proof1}
    \norm{u^{(\ell)}(t)}_{\widetilde{H}^{s}(\Omega)}\lesssim t^{-\ell+\frac{\theta}{2s}-1/2}\norm{u_0}_{H^{\theta}(\Omega)}.
  \end{align}

  We write $\UU(t)=\lifting u(t)$ using the lifting operator from~\eqref{eq:continuous_lifting}.
  Since the lifting on the continuous level is bounded (see Remark~\ref{remark:continuous_lifting_is_bounded}), we can estimate:
  \begin{align*}
    \norm{\UU^{(\ell)}(t)}_{\HH}
    &= \norm{\lifting u^{(\ell)}(t)}_{\HH}
      \lesssim \norm{u^{(\ell)}(t)}_{\widetilde{H}^{s}(\Omega)}
      \lesssim t^{-\ell + \frac{\theta}{2s} - 1/2}\norm{u_0}_{{H}^{\theta}(\Omega)}.
\qedhere
  \end{align*}
\end{proof}

As a final step before showing convergence of the semidiscrete approximation, we remove the restriction
to homogeneous right-hand sides $f$. This is a simple consequence of the previous results and Duhamel's principle.
\begin{corollary}
\label{cor:best_approx_inhomogeneous_rhs}
Let $t_0 >0$ and $\delta >0$ be fixed. Let $u_0$ be analytic on $\overline{\Omega}$
and assume that $f$ is $\ell$ times continuously differentiable with respect to $t$
such that the functions $f^{(j)}$, $j=0,\ldots,\ell$ are 
uniformly analytic in the sense of Definition~\ref{def:uniformly_analytic}.

Assume that $\HHx$ resolves scales down to  \eqref{eq:scales_that_need_resolving}.
Then, for each $\ell \in \N_0$,  there exists a function $\VV_h(t) \in \HHxy$ such that the following estimates holds
for all $t \in (0,T)$:
\begin{multline}
  \norm{ \UU^{(\ell)}(t) - \VV_h(t)}_{\HH}
  \lesssim  t^{-\ell -1/2} \max\left(1,-\log(t)\right)
    \left(  e^{-b_1 \mathcal{N}_{\Omega}^\mu} + e^{-b_2 \sqrt{\mathcal{N}_\mathcal{Y}}} + e^{-\frac{1}{2} {\zhf^s} \, t_0} \right) \\
    + t_0^{-\ell - 1/2 + \min(\frac{1}{4s}-\delta,1)}.   
    \label{eq:best_approximation_HHx_0_with_rhs}
\end{multline} 
The implied constant depends on the end time $T$, $\Omega$, the data $u_0$, the constants of analyticity of $f^{(j)}$, $\delta$,
and the implied constants in Lemma~\ref{lemma:best_approximation_of_elliptic_part},
e.g., the mesh grading factor. It is independent of $t$, $t_0$,$\zhf$, $\mathcal{N}_{\Omega}$ or $\mathcal{N}_{\mathcal{Y}}$.
For $\ell=0$ and $\ell=1$ we can explicitly give $C(T)\lesssim \max(1,T)$.
\end{corollary}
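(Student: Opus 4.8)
The plan is to reduce Corollary~\ref{cor:best_approx_inhomogeneous_rhs} to the already-established homogeneous result of Theorem~\ref{thm:best_approximation_HHx} via Duhamel's principle. Write $\UU(t) = \lifting[u(t)]$ where, by Duhamel, $u(t) = \sg(t)u_0 + \int_0^t \sg(\tau)f(t-\tau)\,d\tau =: u^{\mathrm{hom}}(t) + u^{\mathrm{inh}}(t)$. The homogeneous part $\lifting[u^{\mathrm{hom}}(t)]$ is handled verbatim by Theorem~\ref{thm:best_approximation_HHx}, producing the first line of~\eqref{eq:best_approximation_HHx_0_with_rhs} with the $t^{-\ell-1/2}\max(1,-\log t)$ prefactor and the triple of exponential terms (replacing the $\gamma/2$ in the exponent by $1/2$ at the cost of a slightly smaller admissible $\gamma$, or simply absorbing constants). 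So the real work is the inhomogeneous contribution $\UU^{\mathrm{inh},(\ell)}(t) = \frac{d^\ell}{dt^\ell}\lifting\!\big[\int_0^t \sg(\tau)f(t-\tau)\,d\tau\big]$.

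First I would commute the lifting with the time integral and differentiate under the integral sign. Since $\lifting$ is linear and bounded $\widetilde H^s(\Omega)\to\HH$ (Remark~\ref{remark:continuous_lifting_is_bounded}), and since $\frac{d^\ell}{dt^\ell}\int_0^t \sg(\tau)f(t-\tau)\,d\tau = \sum_{j=0}^{\ell-1}\sg(t)\,\partial_t^{(\ell-1-j)}f\big|_{\tau\text{-shifted}} + \int_0^t \sg(\tau)f^{(\ell)}(t-\tau)\,d\tau$ (the usual Leibniz rule for convolutions, with boundary terms coming from differentiating the upper limit), I can split $\UU^{\mathrm{inh},(\ell)}$ into (a) a finite sum of terms of the form $\lifting[\sg(t-\sigma)g]$ for the various derivatives $g = f^{(k)}(\cdot)$ evaluated at appropriate times, and (b) the term $\int_0^t \lifting[\sg(\tau)f^{(\ell)}(t-\tau)]\,d\tau$. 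Terms of type (a) are essentially homogeneous-data problems with analytic initial data $g$; since each $f^{(k)}$ is uniformly analytic, Theorem~\ref{thm:best_approximation_HHx} applies to each, and the resulting prefactors are at worst $t^{-1/2-\ell}$ up to logs — these are subsumed in the first line of the claimed bound.

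For the integral term (b) I would split the $\tau$-integral at $t_0$: on $\tau \in (t_0, t)$ the scales are resolved, so I apply Theorem~\ref{thm:best_approximation_HHx} (with starting time $t_0$) to the integrand $\lifting[\sg(\tau) f^{(\ell)}(t-\tau)]$, uniformly in the parameter $t-\tau$ using the analyticity constants of $f^{(\ell)}$, and integrate $\int_{t_0}^t \tau^{-1/2}\max(1,-\log\tau)\,d\tau \lesssim \max(1,T)$, giving the $C(T)\lesssim\max(1,T)$ dependence advertised for $\ell=0,1$; for $\tau\in(0,t_0)$ I use instead the unconditional smoothing estimate of Lemma~\ref{lemma:HHx_approx_small_t0}, taking $\theta$ slightly below $1/2$ (hence the $\min(\tfrac{1}{4s}-\delta,1)$ and the role of $\delta$: one needs $\theta<1/2$ strictly so that $H^\theta=\widetilde H^\theta$ and $u_0,f^{(\ell)}(t-\cdot)\in H^\theta(\Omega)$ without boundary conditions), bounding $\int_0^{t_0}(t-\tau)^{?}\cdot\tau^{\,\theta/(2s)-1/2}\,d\tau$ and noting the leftover power of $t_0$ gives the second line of~\eqref{eq:best_approximation_HHx_0_with_rhs}. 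Uniform analyticity of $f$ guarantees the constants multiplying the exponentials do not blow up as the time argument varies over $[0,T]$.

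The main obstacle — and the only genuinely delicate point — is the low-$\tau$ regime near $\tau = 0$ in term (b) combined with differentiating the Duhamel integral: one must be careful that $\sg(\tau)f^{(\ell)}(t-\tau)$ for $\tau$ small is only controlled in the weak norm $\widetilde H^{s\theta}$ with $\theta<1/2$ (no compatibility between $f$ and the boundary), so only Lemma~\ref{lemma:HHx_approx_small_t0}-type bounds are available there and they are \emph{not} exponentially small — hence the unavoidable additive term $t_0^{-\ell-1/2+\min(\frac{1}{4s}-\delta,1)}$. One then optimizes by choosing $t_0$ and $\zhf$ as functions of $\mathcal N_\Omega,\mathcal N_{\mathcal Y}$; but since the statement keeps $t_0$ and $\zhf$ as free parameters, this optimization is deferred and the proof only needs the two-regime split plus uniformity in the analyticity constants of the $f^{(j)}$.
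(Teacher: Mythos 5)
Your proposal is correct and follows essentially the same route as the paper: Duhamel's principle with the boundary terms from differentiating the convolution handled as homogeneous problems with analytic data via Theorem~\ref{thm:best_approximation_HHx}, the integrand $\sg(\tau)f^{(\ell)}(t-\tau)$ treated as a homogeneous flow with analytic initial condition (Theorem~\ref{thm:best_approximation_HHx} on the resolved regime, Lemma~\ref{lemma:HHx_approx_small_t0} with $\theta<1/2$ on $(0,t_0)$, yielding the additive $t_0$-term), and the extra integration in $\tau$ producing the $\max(1,T)$ factor. Your explicit split of the $\tau$-integral at $t_0$ and the Leibniz form of the boundary terms are just more detailed renderings of the paper's inductive formula $\UU^{(\ell)}(t)=\lifting\big[(\sg(t)u_0)^{(\ell)}+\sum_{j=0}^{\ell-1}(\tfrac{d}{dt})^{\ell-j-1}[\sg(t)f^{(j)}(0)]+\int_0^t\sg(t-\tau)f^{(\ell)}(\tau)\,d\tau\big]$.
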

\begin{proof}
  For $f=0$, this is just a collection of Lemma~\ref{thm:best_approximation_HHx} and~\ref{lemma:HHx_approx_small_t0}.
  For $f\neq 0$ we write
  \begin{align*}
    \UU(t)&=\lifting\left[\sg(t) u_0 + \int_{0}^{t}{\sg(\tau) f(t-\tau) \,d\tau}\right], \\
    \dot{\UU}(t)&=\lifting\left[ (\sg(t) u_0)' + \sg(t) f(0) + \int_{0}^{t}{\sg(t-\tau) \dot{f}(\tau) \,d\tau}\right]
  \end{align*}
  (see \cite[Section 4.2, Corollary 2.5]{pazy} for the derivative of Duhamel's formula).
  The terms involving only $\sg(t)$ are already covered by the results for the homogeneous problem.
  For fixed $\tau \in (0,t)$, the integrand in the last  term corresponds to solving the homogeneous problem with
  initial condition $f(t-\tau)$ (or $\dot{f}(t-\tau)$ in the case of $\dot{\UU}$).
  This means we can also apply Lemmas~\ref{thm:best_approximation_HHx} and~\ref{lemma:HHx_approx_small_t0},
  only picking up an extra power of $t$ due to the additional integration in $\tau$. This gives the stated estimate for $\ell=0$ and $\ell=1$.

  For higher derivatives, we proceed by induction and see that we can  write $\UU^{(\ell)}$  as
  \begin{align*}
    {\UU}^{(\ell)}(t)&=\lifting\left[ (\sg(t) u_0)^{(\ell)} + \sum_{j=0}^{\ell-1}{\left(\frac{d}{dt}\right)^{{\ell-j-1}}[\sg(t) f^{(j)}(0)]}
                       + \int_{0}^{t}{\sg(t-\tau) f^{(\ell)}(\tau) \,d\tau}\right].
  \end{align*}
  All the terms can be estimated as before, where  we estimate $t^{-j} \leq C(T) t^{-\ell}$ and only keep the dominant terms.
\end{proof}

\begin{theorem}
  \label{thm:error_est_cont_to_sd}
  Assume that $u_0$ is analytic and $f$ is uniformly analytic on a fixed neighborhood $\widetilde{\Omega} \supset \overline{\Omega}$
 (in the sense of Definition~\ref{def:uniformly_analytic}).
  Let $\HHy$ be given by Definition~\ref{def:geo-mesh-y}. 
  Fix $t_0>0$, $\delta >0$, and set $\zhf:=t_0^{-1/s} L^{1/s}$, where $L$ is the number of layers used for constructing the geometric 
mesh $\HHy$.
  Let the space $\HHx$ resolve the scales  down to \eqref{eq:scales_that_need_resolving},  
  and let Assumption~\ref{ass:approx_of_unitial_condtiion} hold for the initial condition.
  Then the following estimate holds:
  \begin{multline*}
    \int_{0}^{t}{\norm{ u(\tau) - u_h(\tau)}^2_{L^2(\Omega)} \,d\tau} \\
    \lesssim \max(1,t^2) \left(t_0^{\min(\frac{1}{2s}-\delta,1)}  + \abs{\log(t_0)}^2 \max(\log(t/t_0),0)\left[e^{-b_1 \mathcal{N}_{\Omega}^\mu}
        + e^{-b_2  \sqrt{\mathcal{N}_{\Omega}}} \right]\right).     
  \end{multline*}
\end{theorem}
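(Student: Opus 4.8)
\emph{Strategy and reduction.} The plan is a Thom\'ee-type argument: first split off the Ritz projection and then reduce the whole quantity to the best-approximation estimates for the extended solution $\UU(t)=\lifting[u(t)]$ obtained in Section~\ref{sect:semidiscretization_2}. So I would write $u-u_h=\rho+\theta$ with $\rho:=u-\Pi_h u$ and $\theta:=\Pi_h u-u_h$, giving $\int_0^t\|u(\tau)-u_h(\tau)\|_{L^2(\Omega)}^2\,d\tau\lesssim\int_0^t\|\rho\|_{L^2(\Omega)}^2\,d\tau+\int_0^t\|\theta\|_{L^2(\Omega)}^2\,d\tau$. Proposition~\ref{prop:error_up_to_ritz} (estimate~\eqref{eq:error_up_to_ritz_l2}), $\theta(0)=u_0-u_{h,0}$, and Assumption~\ref{ass:approx_of_unitial_condtiion} then yield
\begin{align*}
\int_0^t\|\theta(\tau)\|_{L^2(\Omega)}^2\,d\tau
&\lesssim t\,\|u_0-u_{h,0}\|_{L^2(\Omega)}^2+\int_0^t\|\rho(\tau)\|_{L^2(\Omega)}^2\,d\tau\\
&\lesssim t\,e^{-2b\mathcal{N}_\Omega^\mu}+\int_0^t\|\rho(\tau)\|_{L^2(\Omega)}^2\,d\tau .
\end{align*}
A short case distinction (whether $t\le e\,t_0$, using that we may assume $t_0<1$) shows the first term is already dominated by the right-hand side of the claim, so everything reduces to bounding $\int_0^t\|\rho(\tau)\|_{L^2(\Omega)}^2\,d\tau$.

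\emph{From $\rho$ to a best approximation; the range $\tau>t_0$.} For $\tau>0$ one has $u(\tau)\in\dom(\LL^s)$ and $\lifting[u(\tau)]=\UU(\tau)$, so by $\widetilde{H}^{s}(\Omega)\hookrightarrow L^2(\Omega)$ and Lemma~\ref{lemma:ritz_is_quasioptimal}, $\|\rho(\tau)\|_{L^2(\Omega)}\lesssim\|\rho(\tau)\|_{\widetilde{H}^{s}(\Omega)}\lesssim\inf_{\VV_h\in\HHxy}\|\UU(\tau)-\VV_h\|_{\HH}$. Next I would split $[0,t]=[0,\min(t,t_0)]\cup(\min(t,t_0),t]$. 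The choice $\zhf=t_0^{-1/s}L^{1/s}$ gives $\zhf^s t_0=L$, so the term $e^{-\frac12\zhf^s t_0}=e^{-L/2}$ in Corollary~\ref{cor:best_approx_inhomogeneous_rhs}/Theorem~\ref{thm:best_approximation_HHx} is absorbed into $e^{-b_2\sqrt{\mathcal{N}_\YY}}$ after shrinking $b_2$, because $\sqrt{\mathcal{N}_\YY}\lesssim L$ for the mesh of Definition~\ref{def:geo-mesh-y} (this $y$-discretization exponential is, through the coupling of the mesh parameters, the one written in terms of $\mathcal{N}_\Omega$ in the statement). On $(\min(t,t_0),t]$ the space $\HHx$ resolves the scales~\eqref{eq:scales_that_need_resolving}, so Corollary~\ref{cor:best_approx_inhomogeneous_rhs} with $\ell=0$ gives a bound whose leading part is
\[
\|\rho(\tau)\|_{L^2(\Omega)}^2\lesssim\max(1,t^2)\,\tau^{-1}\,\max(1,-\log\tau)^2\,\big(e^{-b_1\mathcal{N}_\Omega^\mu}+e^{-b_2\sqrt{\mathcal{N}_\YY}}\big)^2,
\]
the remaining Duhamel-generated terms being $\lesssim\max(1,t^2)\big(|\log t_0|^2(e^{-b_1\mathcal{N}_\Omega^\mu}+e^{-b_2\sqrt{\mathcal{N}_\YY}})^2+t_0^{1+2\min(\frac{\theta}{2s},1)}\big)$ (with $\theta$ as in the next step), which are absorbed into the claim exactly as the first-step $\theta$-term. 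Using $\max(1,-\log\tau)^2\le\max(1,|\log t_0|^2)$ on $(t_0,t]$ and $\int_{t_0}^t\tau^{-1}\,d\tau=\log(t/t_0)$, the leading part integrates to $\lesssim\max(1,t^2)\,|\log t_0|^2\max(\log(t/t_0),0)\,(e^{-b_1\mathcal{N}_\Omega^\mu}+e^{-b_2\sqrt{\mathcal{N}_\YY}})$, the second summand of the claim; the power $\max(1,t^2)$ (rather than $\max(1,t)$) is what the Duhamel integral inside Corollary~\ref{cor:best_approx_inhomogeneous_rhs} produces from the right-hand side $f$.

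\emph{The range $\tau<t_0$ and assembly.} Here the mesh no longer resolves the required scales; I would take $\VV_h=0$ and use the small-time ingredients underlying Corollary~\ref{cor:best_approx_inhomogeneous_rhs}: for the $u_0$-contribution, Lemma~\ref{lemma:HHx_approx_small_t0} with a Sobolev exponent $\theta\in(0,1/2)$ (legitimate since $u_0\in H^{\theta}(\Omega)=\widetilde{H}^{\theta}(\Omega)$), and for the $f$-contribution, boundedness of the continuous lifting (Remark~\ref{remark:continuous_lifting_is_bounded}) together with $\|\sg(\sigma)g\|_{\widetilde{H}^{s}(\Omega)}\lesssim\sigma^{-1/2}\|g\|_{L^2(\Omega)}$ from Lemma~\ref{lemma:apriori} and Duhamel's formula. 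This gives $\|\rho(\tau)\|_{L^2(\Omega)}^2\lesssim\tau^{-1+2\min(\frac{\theta}{2s},1)}$, integrable at $0$, hence $\int_0^{\min(t,t_0)}\|\rho(\tau)\|_{L^2(\Omega)}^2\,d\tau\lesssim t_0^{2\min(\frac{\theta}{2s},1)}$. Choosing $\theta<1/2$ close enough to $1/2$ (how close depends on the prescribed $\delta$ and on $s$) so that $2\min(\frac{\theta}{2s},1)\ge\min(\frac{1}{2s}-\delta,1)$, this is $\lesssim t_0^{\min(\frac{1}{2s}-\delta,1)}\le\max(1,t^2)\,t_0^{\min(\frac{1}{2s}-\delta,1)}$, the first summand of the claim. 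Adding this contribution, the $\tau>t_0$-contribution, and the $\theta$-term from the first step gives the asserted estimate.

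\emph{Main obstacle.} The step I expect to be the main obstacle is the small-time regime. The estimate of Corollary~\ref{cor:best_approx_inhomogeneous_rhs} may not be used as literally stated on all of $(0,t)$ — its $\tau^{-1/2}\max(1,-\log\tau)$ part is not square-integrable at $\tau=0$ — so one has to use the sharper, range-dependent bounds from its proof (Theorem~\ref{thm:best_approximation_HHx} for $\tau\ge t_0$, Lemma~\ref{lemma:HHx_approx_small_t0} for $\tau<t_0$) and then perform the slightly fussy bookkeeping so that, after squaring and integrating, the $t_0$-power is exactly $\min(\frac{1}{2s}-\delta,1)$ and all remaining terms (including the Duhamel-generated ones of the previous step and the one near $\tau=t_0$) stay absorbed into the two summands; this is what forces the non-naive choice of $\theta$ relative to $\delta$. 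The rest — the Ritz/Thom\'ee splitting, the liftings, the Duhamel manipulations, and the handling of the constants — is routine and already prepared in the preceding sections.
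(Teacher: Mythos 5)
Your proposal is correct and follows essentially the same route as the paper, whose proof is only a one-line collection of Proposition~\ref{prop:error_up_to_ritz}, Assumption~\ref{ass:approx_of_unitial_condtiion} and Corollary~\ref{cor:best_approx_inhomogeneous_rhs} (itself built from Theorem~\ref{thm:best_approximation_HHx} and Lemma~\ref{lemma:HHx_approx_small_t0}); your splitting of the time integral at $t_0$, the use of Lemma~\ref{lemma:ritz_is_quasioptimal} to pass to the best approximation of $\UU$, and the bookkeeping producing $t_0^{\min(\frac{1}{2s}-\delta,1)}$ and $\abs{\log t_0}^2\max(\log(t/t_0),0)$ are exactly the details the paper leaves implicit, including the correct observation that the $e^{-\frac{\gamma}{2}\zhf^s t_0}=e^{-\gamma L/2}$ term is absorbed into the $y$-discretization exponential.
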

\begin{proof}
  We just collect all the previous results, most notably Proposition~\ref{prop:error_up_to_ritz} and Corollary~\ref{cor:best_approx_inhomogeneous_rhs}.
  Since we only need the best approximation estimate on $\UU$ and  $\dot{\UU}$, we keep the dependence on the time $t$ explicit.
  The error due to the different initial conditions is exponentially small by assumption.
\end{proof}

We can also obtain estimates in the energy norm or pointwise in time: 
\begin{theorem}
  \label{thm:error_est_cont_to_sd_energy}
  Assume that $u_0$ is analytic, $f$ and $\dot{f}$ are uniformly analytic  on a neighborhood $\widetilde{\Omega} \supset \overline{\Omega}$,
  and that $u_{h,0} \in \mathbb{V}^{\mathcal{X}}_{h,\beta}$ is as in Assumption~\ref{ass:approx_of_unitial_condtiion}.

  Let $L$ denote the number of layers used for $\HHy$, set $t_0:=e^{-L}$, and  $\zhf:=t_0^{-1/s} L^{1/s}$
  and assume that the  space $\HHx$ resolves the scales down  to~\eqref{eq:scales_that_need_resolving}.
  
  Set $M:=\min(L,\operatorname{dim}(\HHx)^\mu)$ with $\mu>0$ from Assumptions~\ref{ass:approx_of_unitial_condtiion} and~\ref{ass:approx_of_HHx}.

  Then there exists a constant $b$, independent of $L$, $p$ and the specific choice of $\HHx$,
  i.e. depending only on the constants from Assumptions~\ref{ass:approx_of_unitial_condtiion} and~\ref{ass:approx_of_HHx} such that the following estimate holds:
  \begin{align*}
    \norm{u(t) - u_h(t)}_{L^2(\Omega)}^2+\int_{0}^{t}{\norm{ u(\tau) - u_h(\tau)}^2_{\widetilde{H}^{s}(\Omega)} \,d\tau} 
    &\lesssim \max(1,t^2 \log(t)) e^{-b M}.
  \end{align*}
\end{theorem}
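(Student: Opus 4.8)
The plan is to keep the Ritz splitting $u-u_h=\rho+\theta$ with $\rho:=u-\Pi_h u$ and $\theta:=\Pi_h u-u_h$, to reduce the bound for $\rho$ to a best‑approximation statement for the lifted solution $\UU=\lifting u$, and to treat $\theta$ through the semidiscrete ODE \eqref{eq:sd_ode_for_theta} it satisfies. Every estimate will be split at the time $t_0=e^{-L}$: on $(t_0,t)$ the mesh resolves the scales in \eqref{eq:scales_that_need_resolving}, so Theorem~\ref{thm:best_approximation_HHx} applies; on $(0,t_0)$ the scales are not resolved and one instead relies on the smoothing estimate Lemma~\ref{lemma:HHx_approx_small_t0}. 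The choice $\zhf=t_0^{-1/s}L^{1/s}$ makes the ``high‑frequency'' exponent equal to $\zhf^s t_0=L$, and $|\log t_0|=L$; the role of $t_0=e^{-L}$ is precisely to force the unresolved‑scale loss and the frequency‑cutoff error both to collapse to $e^{-cL}$.

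For $\rho$ I would use quasi‑optimality of the Ritz projection (Lemma~\ref{lemma:ritz_is_quasioptimal}) together with $\widetilde H^s(\Omega)\hookrightarrow L^2(\Omega)$ and the fact that $\lifting$ and $\Pi_h$ commute with $\tfrac{d}{dt}$, giving $\norm{\rho(\tau)}_{\widetilde H^s}\lesssim\inf_{\VV_h\in\HHxy}\norm{\UU(\tau)-\VV_h}_\HH$ and the analogous bound for $\dot\rho$ with $\dot\UU$. On $(t_0,t)$, Theorem~\ref{thm:best_approximation_HHx} (and Corollary~\ref{cor:best_approx_inhomogeneous_rhs} for the right‑hand side, admissible because $f,\dot f$ are uniformly analytic) yields $\norm{\rho(\tau)}_{\widetilde H^s}\lesssim\tau^{-1/2}\max(1,-\log\tau)\,E$ and $\norm{\dot\rho(\tau)}_{\widetilde H^s}\lesssim\tau^{-3/2}E$, with $E:=e^{-b_1\mathcal N_\Omega^\mu}+e^{-b_2\sqrt{\mathcal N_\YY}}+e^{-\frac12\zhf^s t_0}$, the $f$‑contributions being strictly less singular since they arise from time convolutions. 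On $(0,t_0)$, since $u_0$ is analytic it lies in $\widetilde H^\theta(\Omega)=H^\theta(\Omega)$ for every $\theta<\min(\tfrac12,2s)$, and Lemma~\ref{lemma:HHx_approx_small_t0} (with Lemma~\ref{lemma:apriori} and Duhamel for $f$) gives $\norm{\rho(\tau)}_{\widetilde H^s}\le\norm{\UU(\tau)}_\HH\lesssim\tau^{-1/2+\theta/(2s)}$ and $\norm{\dot\rho(\tau)}_{\widetilde H^s}\lesssim\tau^{-3/2+\theta/(2s)}$; this is exactly the gain in the time exponent that makes $\int_0^{t_0}\norm{\rho}^2_{\widetilde H^s}\,d\tau$ and $\int_0^{t_0}\tau^2\norm{\dot\rho}^2_{L^2}\,d\tau$ finite, both $\lesssim t_0^{\theta/s}$.

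For $\theta$ I would feed these bounds into Proposition~\ref{prop:error_up_to_ritz} to control the unweighted $L^2$‑in‑time norm of $\theta$ and the $\tau$‑weighted quantities, and — for the pointwise $L^2$‑term and the unweighted $\widetilde H^s$‑in‑time term that actually appear in the statement — test \eqref{eq:sd_ode_for_theta} directly with $\theta$, using the ellipticity of $\LL_h^s$ (Lemma~\ref{lemma_llh_is_elliptic}) and pairing $\dot\rho$ against $\theta$ in a weak norm (the dual of $\widetilde H^s$), so that, after the split at $t_0$ and using the extra smoothing available there, the $\tau^{-3/2}$‑singularity of $\dot\rho$ becomes integrable. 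The initial error is handled by Assumption~\ref{ass:approx_of_unitial_condtiion}: $\norm{\theta(0)}^2_{L^2}=\norm{u_0-u_{h,0}}^2_{L^2}\lesssim e^{-2b\mathcal N_\Omega^\mu}$. Evaluating $\int_{t_0}^t\tau^{-1}(\log\tau)^{\mathcal O(1)}\,d\tau$ produces powers of $\max(\log(t/t_0),0)\le L+(\log t)_+$, so altogether $\norm{\theta(t)}^2_{L^2}+\int_0^t\norm{\theta(\tau)}^2_{\widetilde H^s}\,d\tau\lesssim e^{-2b\mathcal N_\Omega^\mu}+\max(1,t^2)\big(t_0^{\theta/s}+(L+(\log t)_+)^{\mathcal O(1)}E^2\big)$, and adding back the $\rho$‑bounds gives the same for $\norm{u-u_h}^2_{L^2}+\int_0^t\norm{u-u_h}^2_{\widetilde H^s}$.

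It remains to insert $t_0=e^{-L}$ and collapse: then $e^{-\frac12\zhf^s t_0}=e^{-L/2}$, $t_0^{\theta/s}=e^{-L\theta/s}\le e^{-L\min(\frac1{2s}-\delta,1)}$ for $\theta$ close enough to $\min(\tfrac12,2s)$, and every $|\log t_0|$ equals $L$; so each error term is a fixed power of $L$ times one of $e^{-2b_1\mathcal N_\Omega^\mu}$, $e^{-2b_2\sqrt{\mathcal N_\YY}}$, $e^{-cL}$ with $c=c(s)>0$. For the mesh of Definition~\ref{def:geo-mesh-y} one has $\sqrt{\mathcal N_\YY}\gtrsim L$, and since the hypothesis that $\HHx$ resolves $\varepsilon_{\min}$ forces $\mathcal N_\Omega\to\infty$ as $L\to\infty$, the polynomial‑in‑$L$ prefactors are swallowed by the exponentials; with $M=\min(L,\mathcal N_\Omega^\mu)$ one gets $e^{-b_1\mathcal N_\Omega^\mu}+e^{-cL}\lesssim e^{-bM}$, and the leftover $t$‑dependence is exactly $\max(1,t^2\log t)$ (the $t^2$ from the double time integration in the Duhamel terms, the $\log t$ from $\max(\log(t/t_0),0)$). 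I expect the genuinely delicate steps to be the small‑time regime — the $\tau^{-1/2}$, $\tau^{-3/2}$ blow‑up of $\rho,\dot\rho$ against the fact that Proposition~\ref{prop:error_up_to_ritz} carries weights the statement does not, which is what forces the weak‑norm energy argument — and the verification that the $t_0=e^{-L}$‑induced factors $t_0^{-1/2}\sim e^{L/2}$ that occur at the junction $\tau\approx t_0$ are really beaten by the decay of $E$, i.e. that the discretization parameters are correctly \emph{balanced}.
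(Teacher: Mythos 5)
Your large-time treatment (split at a small time, apply Theorem~\ref{thm:best_approximation_HHx}/Corollary~\ref{cor:best_approx_inhomogeneous_rhs} to $\rho,\dot\rho$, feed into the weighted estimate \eqref{eq:error_up_to_ritz_pointwise_and_hone}, strip the weight by dividing by the splitting time, and balance exponentials) matches the paper. The genuine gap is in the small-time regime, and it is exactly the part you flagged as ``delicate''. You try to push everything through the Ritz splitting: for $t<t_0$ you bound $\rho,\dot\rho$ via Lemma~\ref{lemma:HHx_approx_small_t0} and then propose to control $\theta$ by testing \eqref{eq:sd_ode_for_theta} with $\theta$ and pairing $\dot\rho$ against $\theta$ in the dual of $\widetilde H^s(\Omega)$. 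This does not close with the estimates available. First, such an energy argument needs $\int_0^{t_0}\norm{\dot\rho(\tau)}^2_{(\widetilde H^s(\Omega))'}\,d\tau$ to be finite (and exponentially small), but the only bounds the paper provides for $\dot\rho=\dot u-\Pi_h\dot u$ come from quasi-optimality in $\widetilde H^s$ and give $\norm{\dot\rho(\tau)}\lesssim\tau^{-3/2+\theta/(2s)}$, which is not square integrable at $0$; gaining the missing power of $\tau$ in the negative norm would require an Aubin--Nitsche-type duality estimate for the nonlocal Ritz map $\Pi_h$, which is nowhere established (note $\Pi_h$ is not even a projection). Second, the pointwise-in-time bound must hold for \emph{all} $t$, including $t<t_0$, where your route bounds $\norm{\rho(t)}_{L^2(\Omega)}$ only through $\norm{\lifting u(t)}_{\HH}\lesssim t^{-1/2+\theta/(2s)}$; since $\theta<\min(1/2,2s)$, this exponent is negative for $s$ away from small values (e.g.\ $s$ close to $1$), so the bound blows up instead of staying $\lesssim e^{-bM}$.

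The paper's proof avoids this by abandoning the Ritz splitting altogether for small times: for $t\le t_1$ it bounds $u$ and $u_h$ \emph{separately}, using the smoothing estimates of Lemma~\ref{lemma:apriori}~(ii) for both the continuous and the \emph{discrete} semigroup, the latter in terms of $\norm{u_{h,0}}_{\mathbb{V}^{\mathcal{X}}_{h,\beta}}$. This is precisely where the stability half of Assumption~\ref{ass:approx_of_unitial_condtiion} enters, which your argument never uses (you invoke only the approximation half for $\norm{u_0-u_{h,0}}_{L^2}$). Concretely, the energy integral on $(0,t_1)$ is bounded by $t_1^{\beta}\big(\norm{u_0}^2_{\widetilde H^{\beta s}}+\norm{u_{h,0}}^2_{\mathbb{V}^{\mathcal{X}}_{h,\beta}}\big)$, and the pointwise error by writing $u-u_h=(u_0-u_{h,0})+\int_0^t(\dot u-\dot u_h)$, giving a $t_1^{\beta/2}$ contribution; for $t\ge t_1$ the weighted estimate is divided by $t_1$, and finally $t_1$ is chosen as a \emph{free} parameter $t_1\sim e^{-b'M/2}$ (not tied to $t_0=e^{-L}$) to balance $t_1^{-1}e^{-b'M}$ against $t_1^{\beta}$, yielding the rate $b\sim b'\beta/2$. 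Without the discrete smoothing step (and hence without the $\mathbb{V}^{\mathcal{X}}_{h,\beta}$-stability of $u_{h,0}$), the small-time and pointwise parts of the claimed estimate are not reachable by your outline.
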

\begin{proof}    
  Without loss of generality, we may assume $\beta s < 1/2$.
  Fix $t_1>0$ to be chosen later.  We consider two regimes, $t \in (0,t_1)$ and $t\geq t_1$.   For 
  $t \leq t_1$, we use the stability estimates of Lemma~\ref{lemma:apriori}~(\ref{it:sg_apriori_stability}),
  together with the insight that $u_0 \in \widetilde{H}^{\beta s}(\Omega)$ for $\beta s < 1/2$ 
  which was already used in Lemma~\ref{lemma:HHx_approx_small_t0}.
  
  We start with the energy norm estimate and use Lemma~\ref{lemma:apriori} to get:
  \begin{align*}
    \int_{0}^{t}{\!\norm{u(\tau)\!-\!u_h(\tau)}_{\widetilde{H}^s(\Omega)}^2 \! d\tau}
    &\lesssim \int_{0}^{t}{\norm{u(\tau)}_{\widetilde{H}^s(\Omega)}^2 + \norm{u_h(\tau)}_{\widetilde{H}^s(\Omega)}^2\,d\tau}  \\
    &\!\!\!\lesssim \int_{0}^{t}{\tau^{-1 + \beta} \big( \norm{u_0}_{\widetilde{H}^{\beta s}(\Omega)}^2 + \norm{u_{h,0}}_{\mathbb{V}^{\mathcal{X}}_{h,\beta}}^2\big)\,d\tau} 
    \lesssim t_1^{\beta} \big( \norm{u_0}_{\widetilde{H}^{\beta s}(\Omega)}^2 + \norm{u_{h,0}}_{\mathbb{V}^{\mathcal{X}}_{h,\beta}}^2\big).
  \end{align*}
  For the pointwise estimate, we write
  $u(t)=u_0+\int_{0}^{t}{\dot{u}(\tau) d\tau}$ and  $u_h(t)=u_{h,0}+\int_{0}^{t}{\dot{u}(\tau) d\tau}$ to get: 
  \begin{align*}
    \norm{u(\tau)-u_h(\tau)}_{L^2(\Omega)}
    &\lesssim \norm{u_0 - u_{h,0}}_{L^2(\Omega)} +  \int_{0}^{t}{\norm{\dot{u}(\tau)}_{L^2(\Omega)} + \norm{\dot{u}_h(\tau)}_{L^2(\Omega)}\,d\tau} \\    
    &\lesssim \norm{u_0 - u_{h,0}}_{L^2(\Omega)} +
      t_1^{\beta/2} \big( \norm{u_0}_{\widetilde{H}^{\beta s}(\Omega)} + \norm{u_{h,0}}_{\mathbb{V}^{\mathcal{X}}_{h,\beta}}\big).
  \end{align*}

For larger times $t>t_1$, we can establish the following bound by using \eqref{eq:error_up_to_ritz_pointwise_and_hone} and
plugging in the results on the best approximation from Corollary~\ref{cor:best_approx_inhomogeneous_rhs}.
  \begin{align*}
    t \norm{ u(t) - u_h(t)}^2_{L^2(\Omega)} \!+\! \int_{t_1}^{t}{\tau \norm{ u(\tau) - u_h(\tau)}^2_{\widetilde{H}^{s}(\Omega)} d\tau} 
    \lesssim \max(1,t^2\log(t))  e^{-b' M} \!\! + \!  t \norm{u_0 - u_{h,0}}_{L^2(\Omega)}^2.
  \end{align*}  
  Or, since $\tau > t_1$:
  \begin{align*}
    \norm{ u(t) \! - \! u_h(t)}^2_{L^2(\Omega)} \! + \!\int_{t_1}^{t}{\!\norm{ u(\tau) \! - \! u_h(\tau)}^2_{\widetilde{H}^{s}(\Omega)} d\tau} 
    \lesssim t_1^{-1} \max(1,t^2\log(t))  \big( e^{-b' M} \! + \!\norm{u_0 - u_{h,0}}_{L^2(\Omega)}^2 \! \big).
  \end{align*}
  Setting $t_1 \sim e^{-\frac{b'}{2} M}$ we get the stated exponential convergence with rate $b:=-b'\beta/2$
  after  using Assumption~\ref{ass:approx_of_unitial_condtiion} to estimate the error due to approximating the initial condition.
\end{proof}

\subsection{Example of a space $\HHx$: $hp$-FEM in 1D and 2D}
\label{sect:one_dimensional}
In this section, we give an exemplary construction for $\HHx$ given a simpler model problem in one or two space dimension using $hp$-Finite Elements
meeting our requirements.
In other words, $\HHx$ satisfies Assumptions~\ref{ass:approx_of_unitial_condtiion}~and~\ref{ass:approx_of_HHx}.

\begin{assumption}
  The domain $\Omega \subseteq \R^d$ for $d=1,2$ has analytic boundary.
  Also,  the coefficient functions $A$ and $c$ are analytic on a neighborhood $\widetilde{\Omega} \supset \overline{\Omega}$.
\end{assumption}

In 1D, we have already introduced $hp$-FEM spaces. For analytic 2D geometries they are given in the following 
Definition~\ref{def:reference-mesh}. We follow~\cite{MS98} and \cite{tensor_fem}; see also \cite[Definition~{2.4.1}]{melenk_book}.
We first introduce the (shape regular) reference mesh.
\begin{definition}[reference mesh]
\label{def:reference-mesh}
  Denote by $\widehat{S}:=(0,1)^2$ the reference square, and let $\TT_{\Omega}:=\big\{K_i\big\}_{i=0}^{\abs{\TT_{\Omega}}}$  be a mesh of curved quadrilaterals
  with bijective element maps $F_{K}: \overline{\widehat{S}} \to \overline{K}$ satisfying
  \begin{enumerate}[(M1)]
  \item The elements $K_i$ partition $\Omega$, i.e., $\bigcup_{K_i \in \TT} \overline{K} = \overline{\Omega}$;
  \item for $i\neq j$, $\overline{K}_i \cap \overline{K}_j$ is either empty, a vertex or an entire edge;
  \item the element maps $F_{K}:\widehat{S} \to K$ are analytic diffeomorphisms;
  \item the common edge of two neighboring elements $K_i$, $K_j$ has the same parametrization from both sides,
    i.e., if $\gamma_{ij}$ is the common edge with endpoints $P_1$, $P_2$, then for $P \in \gamma_{i,j}$ we have
    $$
    \operatorname{dist}(F_{K_i}^{-1}P,F_{K_i}^{-1} P_{\ell}))=\operatorname{dist}(F_{K_j}^{-1}P,F_{K_j}^{-1} P_{\ell}) \qquad \text{for } \ell=1,2.
    $$
  \end{enumerate}  
\end{definition}

\begin{definition}[anisotropic geometric mesh]
  \label{def:anisotropic_geometric_mesh}
  Given a reference mesh $\TT_{\Omega}$. Let $K_{i}$, $i=0,\dots, n < \abs{\TT_{\Omega}}$ be the elements at the boundary,
  and assume that the left edge $e:=\{0\}\times (0,1)$ is mapped to $\partial \Omega$, i.e., $F_{K_i}(e) \subseteq \partial \Omega$
  and $F_{K_i}(\partial S \setminus \overline{e}) = \emptyset$ for  $i=0,\dots, n$.
  Assume that the remaining elements satisfy $\overline{K_i} \cap \partial \Omega = \emptyset$, $i=n+1,\dots \abs{\TT_{\Omega}}$.

  For $L \in \N$ and a mesh grading factor $\sigma \in (0,1)$, we subdivide the reference square
  \begin{align*}
    \widehat{S}^0:=(0,\sigma^{L}) \times (0,1),\; \quad \widehat{S}^{\ell}:=(\sigma^{\ell},\sigma^{\ell-1})\times (0,1),\; \ell=1,\dots. L.
  \end{align*}

  The anisotropic geometric mesh $\TT_{\Omega}^{L}$ is then given by the push-forwards of the refinements plus the unrefined interior elements:
  $$
  \TT_{\Omega}^{L}:=\Big\{ F_{K_i}(\widehat{S}^{\ell}), \; \ell=0,\dots,L,\;\; i=0,\dots,n\big\} 
   \cup \bigcup_{i=n+1}^{\abs{\TT_{\Omega}}}{\{ K_{i}\} }.
  $$  
\end{definition}

\begin{definition}
  In one dimension, for $\Omega=(a,b)$, the anisotropic mesh is defined as in Section~\ref{sect:disc_of_y} by  $\TT_{\Omega}^{L}:=\TT_{(\mathbf{a}, \mathbf{b})}^{L}$.
  The reference mesh is given by the single element $\TT_{\Omega}:=\big\{(a,b)\big\}$.
\end{definition}

We are now able to define the space $\HHx$ using these meshes.
\begin{definition}[$\HHx$ via $hp$-FEM]
  Let $\TT_{\Omega}^L$ be an anisotropic geometric mesh refined towards $\partial \Omega$ and fix $p\in \N$.
  We write
    $
    \QQ^p:=\operatorname{span}_{0 \leq i_1, \,\dots,i_d\leq p }\big\{x_1^{i_1} \, \dots \,x_{d}^{i_d}\big\}
    $ for the space of tensor product polynomials and set
  \begin{align}
    \label{eq:def_hhx_for_fem}
    \HHx:=S^{p,1}_0(\TT^{L}_{\Omega}):=\Big\{ u \in H_0^{1}(\Omega): \; u\circ F_K \,\in\, \QQ^p \quad \forall K \in \TT_{\Omega}^{L} \big\}.
  \end{align}
\end{definition}

This choice of approximation space will prove suitable to satisfy Assumptions~\ref{ass:approx_of_unitial_condtiion} and~\ref{ass:approx_of_HHx}.
We start with the fact that we can resolve certain scales: 
\begin{theorem}
  \label{thm:hp_fem_resolves_scales}
  Let $\TT^{L}_{\Omega}$ be an anisotropic mesh on $\Omega$ that is geometrically refined towards $\partial \Omega$ with  grading factor $\sigma \in (0,1)$
  and $L$ layers.
  
  Then $\HHx$ defined in~\eqref{eq:def_hhx_for_fem} resolves the scales  down  to $\sigma^{L}$, i.e.,
  there exist constants $C, b > 0$, such that
  for  $z \in \mathscr{S}$ with ${\abs{z}}^{-1/2}>\sigma^{L}$ 
  and  every $f$ which is analytic on a neighborhood $\widetilde{\Omega}$ of $\Omega$, the solution $u_z$ to
  $(\LL - z)  u = z f$  can be approximated by $v_h \in \HHx$ satisfying
  \begin{align*}
    \abs{z}^{-1} \norm{\nabla u- \nabla v_h}_{L^2(\Omega)}^2 +\norm{u - v_h}^2_{L^2(\Omega)}
    &\leq C e^{- b' L} \sim e^{-b \mathcal{N}_{\Omega}^{\frac{1}{d+1}}}.
  \end{align*}
  The constant $b$ depends only on $\sigma$ and $\widetilde{\Omega}$. The constant $C$ also depends
  on the constants of analyticity of $f$.
\end{theorem}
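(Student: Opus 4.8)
The plan is to reduce the statement to the known theory of $hp$-FEM for singularly perturbed problems on geometric boundary-layer meshes, as developed in~\cite{melenk97,melenk_book} and used in~\cite{tensor_fem}. First I would rewrite the equation $(\LL-z)u_z = z f$ in the scaled form $-z^{-1}\LL u_z + u_z = f$, so that the natural singular-perturbation parameter is $\varepsilon := |z|^{-1/2}$; since $z \in \mathscr{S}$ with $\Re(z)\ge 0$ (or more generally on the relevant part of $\mathscr{S}$), the coefficient $z^{-1}$ is a complex number of modulus $\varepsilon^2$ lying in a fixed sector bounded away from the negative real axis, so the operator $-z^{-1}\LL + \id$ is uniformly elliptic (with an $\varepsilon$-independent inf-sup/coercivity constant after the usual rotation trick). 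The target estimate, $\varepsilon^2\|\nabla(u_z-v_h)\|^2_{L^2(\Omega)} + \|u_z-v_h\|^2_{L^2(\Omega)} \le C e^{-b'L}$, is exactly the form of the energy-norm estimate for boundary-layer resolution, and the assumed relation $\varepsilon > \sigma^L$ (i.e. $|z|^{-1/2} > \sigma^L$) is precisely the condition that the finest layer of the geometric mesh is at least as fine as the boundary-layer width.

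Next I would establish the required regularity of $u_z$: because $f$ is analytic on a neighbourhood $\widetilde\Omega$ of $\overline\Omega$, because $A$ and $c$ are analytic on $\widetilde\Omega$, and because $\partial\Omega$ is analytic, the solution $u_z$ belongs to the appropriate countably-normed (analytic-class) space with boundary-layer behaviour, with analyticity constants that are \emph{uniform in $\varepsilon$} (equivalently in $z$ over the relevant part of $\mathscr{S}$). This is the analogue of the regularity results underlying~\cite[Secs.~4,5]{tensor_fem} and~\cite{melenk_book}: one splits $u_z$ into a smooth part, a boundary-layer part decaying on the scale $\varepsilon$, and an exponentially small remainder, each with explicit control of derivatives. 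The only new point compared with the cited real-parameter theory is that the perturbation parameter is complex; but since it lies in a fixed sector away from $\R_{<0}$, the classical elliptic regularity/analyticity estimates go through with the same constants up to a fixed factor — this is exactly the kind of generalization the paper carries out in Appendix~\ref{appendix:oned_singular_perturbations}, which I would invoke for the 1D case and use as the building block (via tensorization on the anisotropic patches near $\partial\Omega$) for the 2D case.

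With regularity in hand, the approximation estimate follows from the standard $hp$-interpolation operator on the anisotropic geometric mesh $\TT^L_\Omega$: on the $O(L)$ thin layers near $\partial\Omega$ one uses the anisotropic tensor-product approximation (resolving the boundary layer on the geometrically smallest element of width $\sim\sigma^L$), and on the fixed interior elements one uses ordinary $hp$-approximation of the analytic smooth part. Choosing the polynomial degree $p$ proportional to $L$ yields the bound $Ce^{-b'L}$ in the $\varepsilon$-weighted energy norm, uniformly in $\varepsilon$ as long as $\varepsilon>\sigma^L$; combining this with Céa's lemma (using the $\varepsilon$-uniform coercivity) gives the Galerkin error in the same norm. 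Finally, counting degrees of freedom gives $\mathcal N_\Omega = \dim\HHx \sim L p^d \sim L^{d+1}$, hence $L \sim \mathcal N_\Omega^{1/(d+1)}$ and $e^{-b'L} \sim e^{-b\,\mathcal N_\Omega^{1/(d+1)}}$, which is the stated rate with $\mu = 1/(d+1)$. I expect the main obstacle to be the careful bookkeeping of the $z$-uniformity of the analyticity constants for $u_z$ on the complex sector — making sure the boundary-layer regularity estimates of~\cite{melenk_book} (stated for real $\varepsilon$) survive the passage to complex $z^{-1}$ with constants independent of $z$ — together with the anisotropic tensor-product interpolation argument that lifts the 1D result of Appendix~\ref{appendix:oned_singular_perturbations} to the curved-quadrilateral patches of Definition~\ref{def:anisotropic_geometric_mesh}.
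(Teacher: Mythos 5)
Your plan follows essentially the same route as the paper: rewrite $(\LL-z)u=zf$ as a reaction--diffusion problem that is singularly perturbed on the scale $\varepsilon=\abs{z}^{-1/2}$, verify that the complex zeroth-order coefficient stays in a sector away from the negative real axis so that a rotated coercivity argument applies, invoke the complex-parameter regularity/boundary-layer decomposition (smooth part $+$ boundary layer $+$ exponentially small remainder, uniform in $z$), and then use $hp$-approximation on the anisotropic geometric mesh with $p\sim L$, finishing with the count $\mathcal{N}_{\Omega}\sim L^{d+1}$. This is exactly what the paper does, except that it writes the problem as $-\varepsilon^{2}\fdiv(A\nabla u)+\zeta u=\varepsilon^{2}zf$ with \emph{real} diffusion coefficient $\varepsilon^{2}=\abs{z}^{-1}$ and complex reaction coefficient $\zeta=\varepsilon^{2}(c-z)$, so that Assumption~\ref{ass:sing-perturb-complex} and Lemma~\ref{lemma:approx_for_sing_pert} of Appendix~\ref{appendix:oned_singular_perturbations} apply directly; note also that the appendix already treats $d=1,2$, so no hand-made tensorization of a 1D result over the boundary patches is needed --- the 2D decomposition and the approximation on admissible boundary-layer meshes are imported from \cite{melenk_book,MS98}.

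There is one step in your argument that fails as stated: the claim that for all relevant $z\in\mathscr{S}$ the coefficient multiplying $\LL$ (equivalently $\zeta$) lies in a fixed sector bounded away from the negative real axis. The set $\mathscr{S}$ only removes the cone $\{z_0+re^{\ii\phi}:\abs{\phi}\le\pi/8\}$ and the ball $B_{\varepsilon_0}(0)$, so $z$ may be a positive real number in $(\varepsilon_0,z_0)$; then (for small $c$) $\zeta=\varepsilon^{2}(c-z)$ is a negative real and the sector/rotation argument, and with it the complex singular-perturbation machinery, breaks down. The paper resolves this with a case split: for $\abs{z}\le 2z_0$ one has $\varepsilon=\abs{z}^{-1/2}\gtrsim 1$, so the problem is not singularly perturbed at all, and ellipticity of $\LL-z$ in this regime is guaranteed by the choice $z_0\le\min\bigl(\tfrac{1}{2C_{\text{P}}},1\bigr)^{2}$ in Definition~\ref{def:domain_of_ellipticity}, so standard $hp$-FEM approximation of analytic solutions gives the exponential rate; only for $\abs{z}>2z_0$ does one argue (by slightly decreasing the opening angle to absorb the shift by $z_0$) that $\operatorname{Arg}(z)$, and hence $\operatorname{Arg}(\zeta)$, is quantitatively bounded away from the bad direction, after which Lemma~\ref{lemma:approx_for_sing_pert} applies. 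Adding this case distinction makes your plan complete; the remaining bookkeeping of $z$-uniform analyticity constants that you worry about is precisely what Lemmas~\ref{lemma:sing_apriori} and~\ref{lemma:oned_sing_decomposition} provide.
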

\begin{proof}
  We consider two cases. For $\abs{z}<2 z_0$, the problems are not actually singularly perturbed and standard
  results for $hp$-FEM can be applied.
  We thus only focus on the case $\abs{z} > 2 z_0$.

  From the definition, we get 
  $$
  (\LL - z)  u= -\fdiv{(A \nabla u)} + (c-z) u.
  $$
  Defining $\varepsilon:=\abs{z}^{-1/2}$ and $\zeta:=\varepsilon^2 (c-z)$, the problem can be rewritten as
  $$
  - \varepsilon^2 \fdiv{(A\nabla u)} + \zeta u = \varepsilon^2 z f=:\widetilde{f}.
  $$
  We make the following observations:
  \begin{enumerate}[(i)]
  \item $\abs{\varepsilon^2 z} \sim 1$ and therefore $\|\widetilde{f}\| \sim \|f\|$ for any norm,
  \item since we assumed $\abs{z} > 2 z_0$, by slightly decreasing the opening angle, we may ignore the shift in the definition of $\mathcal{S}$
    and assume that $\abs{\pi - \operatorname{Arg}(\zeta)} \geq \delta > 0$; see Figure~\ref{fig:proof_of_sing_decomp}.  
  \end{enumerate}
  This means that we can apply the results from Appendix~\ref{appendix:oned_singular_perturbations}, most notably Theorem~\ref{lemma:approx_for_sing_pert}.  
\end{proof}

\begin{figure}
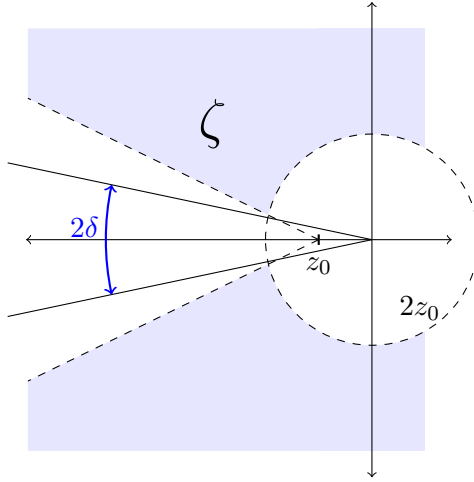

\begin{center}
  \includeTikzOrEps{singular_domain}
\end{center}
  \caption{The geometric situation in the proof of  Theorem~\ref{thm:hp_fem_resolves_scales} (for $c=0$).} 
  \label{fig:proof_of_sing_decomp}
\end{figure}

The $hp$-FEM spaces can also approximate the initial conditions at an exponential rate. 
But more importantly, they can do so in a way that is stable with respect to the
non-standard $\mathbb{V}^{\mathcal{X}}_{h,\beta}$ norm. We start with a simple lemma.
    
\begin{lemma}
  \label{lemma:lifting_has_minimal_energy}
  Given $u \in \HHx$, for any function $\VV \in \HHxy$ with $\trace{\VV}=u$, we can estimate
  $$\norm{u}_{\HHx} \leq \norm{\VV}_{\HH}.$$
  In other words, $\VV=\lifting_h  u$ has ``minimal energy''.
\end{lemma}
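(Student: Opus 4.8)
The plan is the classical Galerkin-orthogonality (C\'ea-type) argument: exploit that $\lifting_h u$, as defined by the discrete lifting problem~\eqref{eq:discrete_lifting}, is the \emph{energy-minimal} discrete extension of $u$ with respect to $\AA(\cdot,\cdot)$, and then pass from the $\AA$-energy to the $\norm{\cdot}_\HH$-norm.

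Concretely, I would fix an arbitrary admissible competitor $\VV \in \HHxy$ with $\trace\VV = u$ and introduce $\WW := \VV - \lifting_h u$. Since $\HHxy$ is a vector space and the trace is linear, $\WW \in \HHxy$ with $\trace\WW = u - u = 0$; hence $\WW$ is a legitimate test function in~\eqref{eq:discrete_lifting_pde}, which gives the orthogonality relation $\AA(\lifting_h u, \WW) = 0$. Combining this with the symmetry of $\AA$ and expanding,
\begin{align*}
  \AA(\VV, \VV)
  &= \AA(\lifting_h u, \lifting_h u) + 2\AA(\lifting_h u, \WW) + \AA(\WW, \WW) \\
  &= \AA(\lifting_h u, \lifting_h u) + \AA(\WW, \WW) \;\geq\; \AA(\lifting_h u, \lifting_h u),
\end{align*}
where the final inequality uses $\AA(\WW,\WW) \geq 0$, valid since $A$ is uniformly SPD and $c \geq 0$. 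Thus $\lifting_h u$ minimizes $\AA(\cdot,\cdot)$ over all discrete extensions of $u$, which is the content of the ``minimal energy'' statement.

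It then remains to translate this into the claimed inequality for $\norm{\cdot}_\HH = \norm{\cdot}_\HHx$. By the uniform ellipticity of $A$ and $c \geq 0$, together with the Poincar\'e inequality in the $x$-variable on $\CC$ (valid for functions vanishing on $\partial_L\CC$, since $\Omega$ is bounded), the bilinear form $\AA$ is coercive and continuous on $\HH$, so $\AA(\cdot,\cdot)$ and $\norm{\cdot}_\HH^2$ are equivalent; in the simplified setting where $A$ is the identity and $c=0$, relevant to Appendix~\ref{sect:liftings_and_interpolation}, they coincide and then $\norm{u}_\HHx = \norm{\lifting_h u}_\HH \leq \norm{\VV}_\HH$ holds with constant one. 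I do not anticipate a genuine obstacle here; the only point deserving care is verifying that $\VV - \lifting_h u$ actually lies in the test space of~\eqref{eq:discrete_lifting_pde}, i.e., that it is a discrete function \emph{and} has vanishing trace — once that is observed, everything reduces to the Pythagoras identity above.
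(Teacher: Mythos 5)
Your proof is correct and takes essentially the same route as the paper's: the identical Galerkin-orthogonality/Pythagoras expansion, with $\WW=\VV-\lifting_h u$ (the paper uses the opposite sign) tested in~\eqref{eq:discrete_lifting_pde} to conclude $\AA(\VV,\VV)\geq\AA(\lifting_h u,\lifting_h u)$. Your closing caveat about passing from the $\AA$-energy to $\norm{\cdot}_{\HH}$ (constant one only when $\AA$ coincides with the squared norm, otherwise only up to the ellipticity constants) is a fair observation that the paper glosses over, and it is harmless where the lemma is used, since only estimates up to constants are needed there.
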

\begin{proof}
  We compute for $\WW \in \HHxy$ with $\trace{\WW}=0$:
  \begin{align*}
    \AA(\lifting_h u - \WW,\lifting_h u - \WW) 
      &= \AA(\lifting_h u,\lifting_h u) - 2 \AA(\lifting_h u,\WW) + \AA(\WW,\WW) \\
      &= \AA(\lifting_h u,\lifting_h u)  + \AA(\WW,\WW)
      \;\geq\;  \AA(\lifting_h u,\lifting_h u),
  \end{align*}
  where we used $\AA(\lifting_h u, \WW)=0$ for $\WW \in \HHxy$ with $\trace{\WW}=0$ by the definition of the lifting. Setting 
  $\WW:=\lifting_h u - \VV$ then shows the estimate
  $\norm{u}_{\HHx} \leq \norm{\VV}_{\HH}$.
\end{proof}

Working with anisotropic meshes for the discrete liftings imposes additional difficulties. Instead, we split the lifting process into
two steps, first we lift using the shape-regular reference triangulation (but ignoring boundary conditions), and then we
use a cutoff procedure to correct the boundary conditions on the anisotropic geometric mesh. This cutoff operator is constructed in the following Lemma.
\begin{lemma}
  \label{lemma:cutoff_operator}
  Let $\TT^L_{\Omega}$ denote an anisotropic geometric mesh with reference mesh $\TT_{\Omega}$.
  Given $\ell \in \N_0$, $\ell\leq L$, there exists a bounded linear operator $\mathscr{C}_{\ell}: \SS^{p,1}(\TT_{\Omega}) \to \SS^{p,1}_0(\TT^L_{\Omega})$ such that
  \begin{align}
    \label{eq:stability_cutoff_operator}
    \norm{\mathscr{C}_{\ell} v}_{H^1(\Omega)} &\lesssim \sigma^{-\ell/2} \norm{v}_{L^2(\Omega)} + \norm{\nabla v}_{H^1(\Omega)},
  \end{align}
  and for all $0\leq \beta < 1/2$
  $$
  \norm{v - \mathscr{C}_{\ell} v}_{L^2(\Omega)} \lesssim \sigma^{{\beta \ell}} \norm{v}_{H^{\beta}(\Omega)}.
  $$
  
\end{lemma}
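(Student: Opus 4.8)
The plan is to construct $\mathscr{C}_\ell$ elementwise. We describe the two-dimensional case; the one-dimensional case is the obvious (simpler) analogue. On every interior element of the reference mesh $\TT_\Omega$ (those whose closure does not meet $\partial\Omega$) we leave $v$ unchanged. On a boundary element $K$, Definition~\ref{def:anisotropic_geometric_mesh} provides reference coordinates $\xi=(\xi_1,\xi_2)\in\widehat S=(0,1)^2$ with the edge $\{\xi_1=0\}$ mapped onto $\partial\Omega$, so that $\xi_1$ plays the role of distance to $\partial\Omega$ and the subdivision of $K$ is a subdivision of the $\xi_1$-interval at the nodes $\sigma^L,\dots,\sigma^1$. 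Let $\phi_\ell\colon[0,1]\to[0,1]$ be the continuous piecewise linear function with
\[
  \phi_\ell(\xi_1)=1-\sigma^{-\ell}\xi_1 \text{ on } [0,\sigma^\ell], \qquad \phi_\ell(\xi_1)=0 \text{ on }[\sigma^\ell,1].
\]
(For $\ell=0$ one simply takes $\mathscr{C}_0v:=0$; both asserted estimates are then trivial since $\sigma^0=1$ and $H^\beta(\Omega)\hookrightarrow L^2(\Omega)$. So assume $1\le\ell\le L$, in which case $\sigma^\ell$ is a mesh node.) On $K$ we set
\[
  (\mathscr{C}_\ell v)\circ F_K(\xi_1,\xi_2):=(v\circ F_K)(\xi_1,\xi_2)-\phi_\ell(\xi_1)\,(v\circ F_K)(0,\xi_2).
\]
Since $\phi_\ell$ is affine on $[0,\sigma^\ell]$ and zero on $[\sigma^\ell,1]$, on each slice it is a polynomial of degree $\le1$ in $\xi_1$; multiplying by the trace polynomial $(v\circ F_K)(0,\cdot)$, which has degree $\le p$ in $\xi_2$, keeps $(\mathscr{C}_\ell v)\circ F_K$ in $\QQ^p$ on every element of $\TT_\Omega^L$, using $p\ge1$. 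Moreover $\phi_\ell(0)=1$, so $(\mathscr{C}_\ell v)\circ F_K(0,\cdot)=0$, i.e.\ $\mathscr{C}_\ell v$ vanishes on $\partial\Omega$; and $\mathscr{C}_\ell$ is clearly linear.

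The step I expect to be the main obstacle — though it is more bookkeeping than difficulty — is showing that these local definitions glue to a globally continuous function, so that $\mathscr{C}_\ell v\in\SS^{p,1}_0(\TT^L_\Omega)\subset H^1_0(\Omega)$. Continuity across two slices of one boundary element is immediate, since $v\circ F_K$ and $\phi_\ell$ are continuous. Continuity across the interface between a boundary element and an interior element holds because that interface lies in the slice $\widehat S^1=(\sigma,1)\times(0,1)$, and $(\sigma,1)\subseteq[\sigma^\ell,1]$ for $\ell\ge1$, so $\phi_\ell\equiv0$ there and $\mathscr{C}_\ell v=v$. The delicate case is the interface $\gamma$ shared by two neighbouring boundary elements $K$, $K'$: here one uses axiom (M4) together with the fact that in both elements the first reference coordinate is the distance (in the unit square) to the edge mapped onto $\partial\Omega$; hence a point of $\gamma$ carries the same value of $\xi_1$ — and thus the same value of $\phi_\ell$ — whether viewed from $K$ or from $K'$. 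On $\gamma$ the untouched part $v\circ F_K$ agrees with $v\circ F_{K'}$ since $v\in C(\overline\Omega)$, and the subtracted term restricts on $\gamma$ to $\phi_\ell(\xi_1)$ times the value of $v$ at the endpoint of $\gamma$ lying on $\partial\Omega$, which is again independent of the side. Keeping the orientations of the reference maps straight while running this argument is the only genuinely fiddly point.

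With $\mathscr{C}_\ell$ in hand, both estimates follow from one-dimensional scaling arguments once one notes that $v-\mathscr{C}_\ell v$ is supported in the strip $\Sigma_\ell$ of width $O(\sigma^\ell)$ adjacent to $\partial\Omega$ and equals $\phi_\ell(\xi_1)\,w_K(\xi_2)$ there, where $w_K:=(v\circ F_K)(0,\cdot)$. Because $p$ is fixed, the trace $\QQ^p\ni P\mapsto P(0,\cdot)$ and the $\xi_2$-derivative are bounded maps between the finite dimensional polynomial spaces, and each $F_K$ is a bi-Lipschitz analytic diffeomorphism of the closed reference square; hence $\norm{w_K}_{L^2(0,1)}\lesssim\norm{v}_{L^2(K)}$ and $\norm{w_K'}_{L^2(0,1)}\lesssim\norm{\nabla v}_{L^2(K)}$. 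Using $\norm{\phi_\ell}_{L^2(0,\sigma^\ell)}^2=\sigma^\ell/3$, $\norm{\phi_\ell'}_{L^2(0,\sigma^\ell)}^2=\sigma^{-\ell}$, $\norm{\phi_\ell}_{L^\infty}\le1$ and summing over the finitely many boundary elements yields
\[
  \norm{v-\mathscr{C}_\ell v}_{L^2(\Omega)}\lesssim\sigma^{\ell/2}\norm{v}_{L^2(\Omega)},
  \qquad
  \norm{\nabla(v-\mathscr{C}_\ell v)}_{L^2(\Omega)}\lesssim\sigma^{-\ell/2}\norm{v}_{L^2(\Omega)}+\norm{\nabla v}_{L^2(\Omega)},
\]
whence $\norm{\mathscr{C}_\ell v}_{H^1(\Omega)}\lesssim\sigma^{-\ell/2}\norm{v}_{L^2(\Omega)}+\norm{\nabla v}_{L^2(\Omega)}$, which implies \eqref{eq:stability_cutoff_operator}. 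For the approximation bound, for $0\le\beta<1/2$ one has $\sigma^{\ell/2}\le\sigma^{\beta\ell}$ and $\norm{v}_{L^2(\Omega)}\le\norm{v}_{H^\beta(\Omega)}$, so the first estimate upgrades to $\norm{v-\mathscr{C}_\ell v}_{L^2(\Omega)}\lesssim\sigma^{\beta\ell}\norm{v}_{H^\beta(\Omega)}$, as claimed. All implied constants depend only on the reference mesh $\TT_\Omega$, on $\sigma$, $p$ and $\Omega$, and in particular not on $\ell$, $L$ or $v$.
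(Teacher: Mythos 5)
Your construction is genuinely different from the paper's: you subtract the (normally extended) boundary trace multiplied by a piecewise linear cutoff, whereas the paper multiplies $v$ by a piecewise affine cutoff $\chi$ with $\norm{\nabla \chi}_{L^\infty(\Omega)}\lesssim \sigma^{-\ell}$, re-interpolates $\chi v$ with a $p$-uniform Gauss--Lobatto operator (stability constants of the form $q/p$ with $q=p+1$, cf.\ \cite{bernardi-maday97}), and obtains the approximation bound from the strip estimate $\norm{(1-\chi)v}_{L^2(\Omega)}\lesssim \sigma^{\beta\ell}\norm{v}_{H^\beta(\Omega)}$ of \cite{lmwz10}. Your gluing/conformity discussion (continuity across boundary--boundary interfaces via (M4), across boundary--interior interfaces because $\phi_\ell$ vanishes there, vanishing trace on $\partial\Omega$, membership in $\QQ^p$ slice by slice) is sound.

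The genuine gap is the step ``Because $p$ is fixed, the trace $\QQ^p\ni P\mapsto P(0,\cdot)$ \dots [is a] bounded map''. The bounds $\norm{w_K}_{L^2(0,1)}\lesssim\norm{v}_{L^2(K)}$ and $\norm{w_K'}_{L^2(0,1)}\lesssim\norm{\nabla v}_{L^2(K)}$ are polynomial trace inequalities whose constants necessarily grow with $p$: the sharp constant in $\norm{P(0,\cdot)}_{L^2(0,1)}\le C_p\norm{P}_{L^2(\widehat S)}$ for $P\in\QQ^p$ is $C_p\sim p+1$ (take for $P$ the extremal univariate Legendre combination in $\xi_1$), and no $p$-robust bound of an edge trace by the volume $L^2$-norm exists. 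In this paper $p$ is \emph{not} fixed: by the stated convention, $\lesssim$ hides constants independent of the polynomial degree, and in the intended use of the lemma (Lemma~\ref{lemma:inital_condition_in_right_space}, hence Assumption~\ref{ass:approx_of_unitial_condtiion} and the convergence theorems built on it) one takes $p\sim L\to\infty$. Your argument therefore only yields $\norm{\mathscr{C}_\ell v}_{H^1(\Omega)}\lesssim p\big(\sigma^{-\ell/2}\norm{v}_{L^2(\Omega)}+\norm{\nabla v}_{L^2(\Omega)}\big)$ and $\norm{v-\mathscr{C}_\ell v}_{L^2(\Omega)}\lesssim p\,\sigma^{\ell/2}\norm{v}_{L^2(\Omega)}$, which does not prove \eqref{eq:stability_cutoff_operator} or the approximation estimate as stated; carried downstream it would replace the uniform bound $C_{stab}$ of Assumption~\ref{ass:approx_of_unitial_condtiion} by one growing like a power of $L$ (a loss one might absorb into the exponential rates, but that is a different, weaker statement requiring reworking of the subsequent proofs). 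This loss is intrinsic to the trace-subtraction ansatz; the paper's route avoids it precisely because it never takes polynomial traces -- the factor $\sigma^{\beta\ell}$ comes from the purely function-space estimate on the strip of width $\sim\sigma^{\ell}$ combined with $p$-robust interpolation stability, and this is also where the restriction $\beta<1/2$ enters, rather than through the comparison $\sigma^{\ell/2}\le\sigma^{\beta\ell}$ you use.
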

\begin{proof}
  We fix a layer of thickness $\sigma^\ell$ around the boundary and  
  pick a piecewise affine function $\chi$ such that $\chi = 1$ on all elements outside of this layer. This can be easily done working on the reference patches.  
  This leads to a function which only has non-vanishing gradient on this layer, and there satisfies the estimate
  $$
  \norm{\nabla \chi}_{L^{\infty}(\Omega)}\lesssim \sigma^{-\ell}.
  $$
  Working on the reference element, we note that the Gauss Lobatto in one direction satisfies the following stability estimate, also on anisotropic elements $K$:
  \begin{align*}
    \norm{i_{p} \otimes \id \, u}_{L^2(K)} &\lesssim  \bigg(\frac{q}{p}\bigg)^2 \norm{u}_{L^{2}(K)} \quad  \text{ and }\quad
    \norm{i_{p} \otimes \id \,u}_{H^1(K)} \lesssim  \frac{q}{p} \norm{u}_{H^1(K)}  \qquad  \forall u \in \QQ_{q}(K).
  \end{align*}
  (see \cite[Rem.~{13.5} and (13.27)]{bernardi-maday97} for the 1D case.
  2D then follows by a tensor product argument, see also~\cite[Eqn. (15)]{HS03}).
  Combining such element-wise Gauss-Lobatto interpolants on each element, we get a global stable interpolation operator $\Pi_p$.
  
  We then define $\mathscr{C}_{\ell} v:=\Pi_{p}(\chi v)$. We note that since $\chi \,v$ is a piecewise polynomial of degree $p+1$, we get the stability estimates:
  \begin{align*}
    \norm{\mathscr{C}_{\ell} v}_{L^2(\Omega)} & \lesssim \norm{\chi \, v}_{L^2(\Omega)} \qquad \text{and}\qquad \norm{\mathscr{C}_{\ell} v}_{H^{1}(\Omega)}  \lesssim \norm{\chi \, v}_{H^{1}(\Omega)}.
  \end{align*}
  From the estimate on $\nabla \chi$ we then immediately get \eqref{eq:stability_cutoff_operator}.
  
  The approximation estimate follows from the fact that $\Pi_p$ reproduces $v$ and is $L^2$ stable and the following
  estimate taken from~\cite[Lemma 2.1]{lmwz10}. Since $1-\chi$-vanishes outside of the strip size $\sigma^{\ell}$:
  \begin{align*}
  \norm{(1-\chi) u}_{L^2(\Omega)}& \lesssim \sigma^{\beta \ell} \norm{u}_{H^{\beta}(\Omega)}. \qedhere
  \end{align*}
\end{proof}

\begin{lemma}
  \label{lemma:inital_condition_in_right_space}
  Assume that the triangulation $\TT^{L}_{(\mathbf{0},\YY)}$ used for the discretization in $y$ satisfies
  $\sigma^L \lesssim  p_x^{-2}$, where $p_x$ denotes the (maximal) polynomial degree used for $\HHx$.

  Let $u_0$ be analytic in a neighborhood $\widetilde{\Omega} \supset \overline{\Omega}$, and let $0\leq \beta <1/2$
  and $\varepsilon >0$.
  Then there exists a function $u_{h,0} \in \HHx$ :
  \begin{align}
    \label{eq:inital_condition_in_right_space}
    \norm{u_{h,0}}_{\mathbb{V}^{\mathcal{X}}_{h,\beta}} \lesssim \norm{u_0}_{H^{\max(s,\beta+\varepsilon)}(\Omega)} 
    \qquad \text{ and } \qquad \norm{u_{h,0} - u_0}_{L^2(\Omega)} \lesssim e^{-b' p_x }.
  \end{align}

  In other words, if the number of refinement layers $L \sim p_x$, then
  $\HHx$ satisfies Assumption~\ref{ass:approx_of_unitial_condtiion}   with $\mu:=1/(d+1)$.
\end{lemma}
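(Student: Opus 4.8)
The plan is to construct $u_{h,0}$ in two stages --- an $hp$-approximation of $u_0$ on the shape-regular reference mesh, followed by the boundary-layer cutoff of Lemma~\ref{lemma:cutoff_operator} --- and then to read off the $L^2$-approximation estimate from standard $hp$-theory and the $\mathbb{V}^{\mathcal{X}}_{h,\beta}$-stability from a $K$-functional argument that exploits the whole family of cutoff operators $\{\mathscr{C}_\ell\}_{0\le\ell\le L}$.

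First I would fix an $hp$-quasi-interpolant $\widetilde u\in\SS^{p_x,1}(\TT_\Omega)$ of $u_0$ on the \emph{reference} mesh, \emph{without} imposing boundary conditions; since $u_0$ is analytic on $\widetilde\Omega$ and the element maps are analytic, $\norm{u_0-\widetilde u}_{H^1(\Omega)}\lesssim e^{-bp_x}$ and $\norm{\widetilde u}_{H^{t}(\Omega)}\lesssim\norm{u_0}_{H^{t}(\Omega)}$ for every fixed $t$. Then set $u_{h,0}:=\mathscr{C}_L\widetilde u\in\HHx$ (the cutoff down to the thinnest layer); note $u_{h,0}$ does not depend on $\beta$. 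For any $\beta_0<\tfrac12$, the triangle inequality and the approximation property of $\mathscr{C}_L$ give $\norm{u_0-u_{h,0}}_{L^2(\Omega)}\le\norm{u_0-\widetilde u}_{L^2(\Omega)}+\norm{\widetilde u-\mathscr{C}_L\widetilde u}_{L^2(\Omega)}\lesssim e^{-bp_x}+\sigma^{\beta_0L}\norm{\widetilde u}_{H^{\beta_0}(\Omega)}$, which in the regime $L\sim p_x$ of the final statement is $\lesssim e^{-b'p_x}$ (the standing hypothesis $\sigma^L\lesssim p_x^{-2}$ by itself only yields algebraic decay; it is the combination with $L\sim p_x$ that upgrades it to exponential).

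For the stability estimate --- the core of the proof --- recall that by the paper's definition $\norm{u_{h,0}}_{\mathbb{V}^{\mathcal{X}}_{h,\beta}}^2=\int_0^\infty\tau^{-2\beta}K(\tau)^2\,\tfrac{d\tau}{\tau}$ with $K(\tau):=\inf_{w\in\HHx}\big(\norm{u_{h,0}-w}_{L^2(\Omega)}+\tau\norm{w}_{\HHx}\big)$, so it suffices to bound $K$. As competitors I would take $w_\ell:=\mathscr{C}_\ell\widetilde u\in\HHx$: on one side $\norm{u_{h,0}-w_\ell}_{L^2(\Omega)}=\norm{\mathscr{C}_L\widetilde u-\mathscr{C}_\ell\widetilde u}_{L^2(\Omega)}\lesssim\sigma^{\beta_0\ell}\norm{\widetilde u}_{H^{\beta_0}(\Omega)}$, and on the other, by Lemma~\ref{lemma:lifting_has_minimal_energy}, $\norm{w_\ell}_{\HHx}=\norm{\lifting_h w_\ell}_{\HH}\le\norm{(\mathscr{C}_\ell\otimes\id)\widetilde\VV}_{\HH}$, where $\widetilde\VV\in\SS^{p_x,1}(\TT_\Omega)\otimes\HHy$ is a stable lift of $\widetilde u$ over the shape-regular reference mesh supplied by (the techniques of) Appendix~\ref{sect:liftings_and_interpolation}, with $\norm{\widetilde\VV}_{\HH}\lesssim\norm{\widetilde u}_{H^{s}(\Omega)}$ and --- since such a lift decays in $y$ like the exact Caffarelli--Silvestre extension --- $\norm{\widetilde\VV}_{L^2(y^\alpha,\CC)}\lesssim\norm{\widetilde u}_{L^2(\Omega)}$. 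Applying the $L^2(\Omega)$- and $H^1(\Omega)$-stability of $\mathscr{C}_\ell$ fibrewise in $y$, and using the inverse estimate on the \emph{fixed} reference mesh together with $\sigma^L\lesssim p_x^{-2}$ to absorb the $p_x$-powers hidden in the ``$\norm{\nabla\cdot}_{H^1}$'' term of \eqref{eq:stability_cutoff_operator}, yields $\norm{w_\ell}_{\HHx}\lesssim\sigma^{-\ell/2}\norm{\widetilde u}_{L^2(\Omega)}+\norm{\widetilde u}_{H^{s}(\Omega)}$. Balancing $\sigma^{\beta_0\ell}$ against $\tau\sigma^{-\ell/2}$ (i.e. $\sigma^\ell\sim\tau^{1/(\beta_0+1/2)}$) then gives $K(\tau)\lesssim\tau^{2\beta_0/(2\beta_0+1)}\norm{\widetilde u}_{H^{\beta_0}(\Omega)}+\tau\norm{\widetilde u}_{H^{s}(\Omega)}$ on the $\tau$-range where the optimal $\ell$ lies in $\{0,\dots,L\}$, while the crude bounds $K(\tau)\le\tau\norm{u_{h,0}}_{\HHx}$ (for very small $\tau$) and $K(\tau)\le\norm{u_{h,0}}_{L^2(\Omega)}$ (for large $\tau$) handle the rest. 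Choosing $\beta_0:=\min(\beta+\varepsilon,\tfrac12-\delta_0)$ with $\delta_0=\delta_0(\beta)>0$ small enough that $\beta_0>\tfrac{\beta}{2(1-\beta)}$ --- possible exactly because $\beta<\tfrac12$ --- makes every contribution to $\int_0^\infty\tau^{-2\beta}K(\tau)^2\,\tfrac{d\tau}{\tau}$ converge uniformly in $L$ and $p_x$, with value $\lesssim\norm{\widetilde u}_{H^{\beta_0}(\Omega)}^2+\norm{\widetilde u}_{H^{s}(\Omega)}^2\lesssim\norm{u_0}_{H^{\max(s,\beta+\varepsilon)}(\Omega)}^2$.

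Finally, combining the two estimates and noting $\mathcal{N}_\Omega=\dim\HHx\sim L\,p_x^{d}$: with $L\sim p_x$ one gets $\mathcal{N}_\Omega\sim p_x^{d+1}$, hence $e^{-b'p_x}\sim e^{-b\mathcal{N}_\Omega^{1/(d+1)}}$, and $\norm{u_0}_{H^{\max(s,\beta+\varepsilon)}(\Omega)}<\infty$ because $u_0$ is analytic; this is exactly Assumption~\ref{ass:approx_of_unitial_condtiion} with $\mu=1/(d+1)$. The main obstacle is the stability step, for three intertwined reasons: one must (i) invoke a \emph{stable discrete lifting} over the reference mesh, which is nontrivial for tensor-product $hp$ spaces and is the point of Appendix~\ref{sect:liftings_and_interpolation}; (ii) convert the $\sigma^{-\ell/2}$ blow-up of $\mathscr{C}_\ell$ into a \emph{convergent} $K$-functional integral with all constants independent of $L$ and $p_x$, which is where the compatibility condition $\sigma^L\lesssim p_x^{-2}$ enters; and (iii) verify that the decay exponent $\tfrac{2\beta_0}{2\beta_0+1}$ actually exceeds $\beta$, which forces $\beta<\tfrac12$ and explains the appearance of $\max(s,\beta+\varepsilon)$ on the right-hand side.
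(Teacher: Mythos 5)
Your construction --- reference-mesh $hp$ approximation of $u_0$ without boundary conditions, the tensor-product lifting of Appendix~\ref{sect:liftings_and_interpolation}, the cutoff family $\mathscr{C}_\ell$ as competitors in the interpolation ($K$-functional) norm together with the minimum-energy property of $\lifting_h$, and geometric-series summation exploiting $\beta<1/2$ and the $\varepsilon$ margin --- is essentially the paper's own proof, the only real difference being how you balance $\ell$ against $\tau$ (the paper simply pairs the segment $t\in(\sigma^{\ell+1},\sigma^{\ell})$ with $\mathscr{C}_\ell$, which makes both series converge directly). Two harmless quibbles: the hypothesis $\sigma^L\lesssim p_x^{-2}$ is actually consumed by the stability of the discrete lifting in Lemma~\ref{lemma:lifting_of_polynomials}, not by an inverse estimate in the cutoff step, and your side claim $\norm{\widetilde{\VV}}_{L^2(y^\alpha,\CC)}\lesssim\norm{\widetilde u}_{L^2(\Omega)}$ is neither supplied by the Appendix nor needed, since the cruder bound $\norm{w_\ell}_{\HHx}\lesssim\sigma^{-\ell/2}\norm{\widetilde u}_{H^{s}(\Omega)}$ already yields a convergent integral for $\beta<1/2$.
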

\begin{proof}
Since $u_0$ is analytic, we do not need to approximate any boundary layers or singularities.
What we do need to take care of is the fact that our FEM space has homogeneous boundary conditions, while $u_0$ does not.

We will construct the lifting in two steps: First, we approximate and lift in a space without boundary conditions and
then we will perform a cutoff procedure.

For $\varepsilon \in (0, 1/2-\beta)$,
Let $\widehat{u}_{0} \in \SS^{p,1}(\TT_{\Omega})$  be the $H^{\max(s,\beta+\varepsilon)}(\Omega)$-best approximation of $u_0$. (Note that we work on the shape-regular grid $\TT_{\Omega}$ and do not impose boundary conditions.)
By standard results, we have $\norm{u_0-\widehat{u}_0}_{L^2(\Omega)}\lesssim e^{-b \N_{\Omega}^\mu}$ and by Lemma~\ref{lemma:lifting_of_polynomials}
we can lift this function to $\widehat{\UU}_0 \in \HHnd$ such that $\widehat{\UU}_0(\cdot,y) \in \SS^{p,1}(\TT_{\Omega})$ and $\widehat{\UU}_{0}(x,\cdot) \in \SS^{1,1}(\TT_{\YY})$
for all $y \in (0,\YY)$ and $x \in \Omega$.

Using the cutoff operator from Lemma~\ref{lemma:cutoff_operator} we then define $u_{h,0}:=\mathscr{C}_{L} (\widehat{u}_0)$ and
the piecewise constant function $\VV(t): \R_+ \to \HHxy$ 
\begin{align*}  
  \VV(t,y):=\begin{cases}
    \mathscr{C}_{L} \big( \widehat{\UU}_0(\cdot,y) \big) & t \in (0, \sigma^L), \\
    \mathscr{C}_{\ell} \big( \widehat{\UU}_0(\cdot,y) \big) & t \in (\sigma^{\ell+1},\sigma^{\ell}), \; \ell=0,\dots,L-1, \\
    0 & t>1.
  \end{cases}
\end{align*}
Since $\mathscr{C}_{\ell}$ is $L^2$ and $H^1$ stable and is applied on each $y$-slice, we get stability in the $\HHnd$-norm,
i.e. $\norm{\VV(t)}_{\HHnd}\lesssim \sigma^{-\ell/2} \big\|{\widehat{\UU}_0}\big\|_{\HHnd}$.
We also get the approximation of the trace at $y=0$ for $t \in (\sigma^{\ell+1},\sigma^{\ell})$:
\begin{align}
  \label{eq:discrete_lifting:approximation_property_of_traces}
  \norm{u_{h,0} - \trace{\VV}(t)}_{L^2(\Omega)}
  &\lesssim \norm{\mathscr{C}_L \widehat{u}_0 - \mathscr{C}_{\ell} \widehat{u}_0 }_{L^2(\Omega)}
   \lesssim \sigma^{(\beta+\varepsilon) \ell}  \norm{\widehat{u}_0}_{H^{\beta+\varepsilon}(\Omega)}.
\end{align}
We then need to estimate the $\mathbb{V}^{\mathcal{X}}_{h,\beta}$-norm. By construction of the cutoff function,  we get that $\VV(t) \in \HHx$ and calculate:
\begin{align*}
  \norm{u_{h,0}}_{\mathbb{V}^{\mathcal{X}}_{h,\beta}}^2 
  &\lesssim \int_{0}^{\infty}{ t^{-2\beta -1} \big( \norm{u_{h,0} - \trace{\VV(t)}}_{L^2(\Omega)}^2 + t^2 \norm{\VV(t)}^2_{\HH}\big)\,dt} \\
    &= \int_{\sigma^{L}}^{\infty}{ t^{-2\beta -1 } \norm{u_{h,0} - \trace{\VV(t)}}^2_{L^2(\Omega)}\,dt} + \int_{0}^{1}{ t^{-2\beta +1 } \norm{\VV(t)}_{\HH}^2\,dt}
\end{align*}
where we used that we can replace the specific lifting $\lifting_h$ with the function $\VV$ as it has the ``minimum energy'' property via Lemma~\ref{lemma:lifting_has_minimal_energy}.

From the stability estimates on each segment $(\sigma^{\ell+1},\sigma^{\ell})$, we get:
\begin{align*}
  \int_{0}^{1}{ t^{-2\beta +1 } \norm{\VV(t)}_{\HH}^2\,dt}
  &\lesssim
    \bigg(\sigma^{-L}\int_{0}^{\sigma^{L}}{ t^{-2\beta +1}\, dt}  
    + \sum_{\ell=0}^{L-1} { \sigma^{-\ell}  \int_{\sigma^{\ell+1}}^{\sigma^{\ell}}{  t^{-2\beta +1} \,dt}} \bigg)\norm{\widehat{\UU}_0}_{\HH}^2\\
  &\lesssim  \bigg(\sigma^{-L+ (-2\beta +2)L }+  \sum_{\ell=0}^{L-1} {\sigma^{-\ell +(-2\beta +2)\ell }} \bigg)  \norm{\widehat{\UU}_0}_{\HH}^2 \\
    &\lesssim \norm{\widehat{u}_0}_{H^{s}(\Omega)}^2
    \lesssim \norm{{u}_0}_{H^{\max(s,\beta+\varepsilon)}(\Omega)}^2.
\end{align*}
Where  we used $\beta < 1/2$, a geometric series, and the stability of the lifting of $\widehat{u}_0$ and the best approximation.
From the approximation property~\eqref{eq:discrete_lifting:approximation_property_of_traces} we get:
\begin{align*}
  \int_{\sigma^L}^{1}{ t^{-2\beta -1 } \norm{u_{h,0} - \trace{\VV(t)}}_{L^2(\Omega)}^2\,dt}
  &\lesssim    
    \bigg(\sum_{\ell=0}^{L-1} { \sigma^{2\ell (\beta+\varepsilon)}  \int_{\sigma^{\ell+1}}^{\sigma^{\ell}}{  t^{-2\beta -1} \,dt}} \bigg) \norm{\widehat{u}_0}_{H^{\beta+\varepsilon}(\Omega)}^2\\
  &\lesssim \bigg(\sum_{\ell=0}^{L-1} { \sigma^{2\ell (\beta+\varepsilon)} \sigma^{-2\beta\ell} }\bigg)  \norm{\widehat{u}_0}_{H^{\beta+\varepsilon}(\Omega)}^2
    \lesssim \norm{\widehat{u}_0}_{H^{\beta+\varepsilon}(\Omega)}^2.
\end{align*}
The estimate $\int_{1}^{\infty}{t^{-2\beta-1}\norm{u_{h,0}}^2_{L^2(\Omega)}\,dt}\lesssim \norm{u_{h,0}}_{L^2(\Omega)}^2\lesssim \norm{\widehat{u}_{0}}_{L^2(\Omega)}^2$ is trivial.

The approximation estimate from~\eqref{eq:inital_condition_in_right_space} then follows from the approximation property of $\mathscr{C}_{L}( \widehat{u}_{0})$ from Lemma~\ref{lemma:cutoff_operator} and the best approximation property of $\widehat{u}_{0}$.

\end{proof}

\begin{remark}
  For constructing the lifting, Lemma~\ref{lemma:inital_condition_in_right_space} relies on the as of yet unpublished work~\cite{our_interpolation_space_paper}.
  In the simpler, one dimensional case, the space $\SS^{p,1}(\TT_\Omega)$ coincides with the space $\QQ^p$ of global polynomials since the reference mesh only consists of a single  element.
  This allows us to replace \cite{our_interpolation_space_paper} with results from \cite{bernardi:hal-00153795} in this case.
\eremk
\end{remark}

We can now give a more constructive characterization of how the triangulation of $\Omega$ must be chosen
when working in 1D or 2D  to get exponential convergence of the semidiscretization.
\begin{corollary}
  \label{cor:constructive_error_est_cont_to_sd}
  Let $\Omega\subset \R^d$, $d=1,2$ have analytic boundary.
  Assume that $u_0$ is analytic and $f$ is uniformly analytic in a neighborhood $\widetilde{\Omega} \supset \overline{\Omega}$.    
  For $M \in \N$ and $\sigma \in (0,1)$, use an anisotropic geometric mesh with $M$ layers to discretize in $x$, i.e., $\HHx:=\SS^{p,1}_{0}(\TT^{M}_{\Omega})$.
  For discretizing in $y$, use $L$ layers and a degree vector $\mathbf{r}$ with linear slope $\mathfrak{s}$,
  i.e., $\HHy:=\SS^{\mathbf{r},1}(\TT^{L}_{(\mathbf 0,\mathcal{Y})})$.
  Assume that $\sigma^M \leq c_\YY \YY (\mathfrak{s} L)^{-2} \sigma^{3L/2}$, $\sigma^L < p^2$
  and $u_{h,0}$ is as in Assumption~\ref{ass:approx_of_unitial_condtiion}.

  Then there exist constants $b_1,b_2 >0$ independent of $L$, $M$, and $p$ such that the following estimate holds:
  \begin{align*}
    \int_{0}^{t}{\norm{ u(\tau) - u_h(\tau)}^2_{L^2(\Omega)} \,d\tau}
    &\lesssim \max(1,t^2\log(t)) \left(e^{-b_1 p} + e^{-b_2 L} \right).
  \end{align*}
  Most notably for $M \sim \frac{3}{2} L $ and $p \sim L$, we get exponential convergence:
  \begin{align*}
    \int_{0}^{t}{\norm{ u(\tau) - u_h(\tau)}^2_{L^2(\Omega)} \,d\tau}
    &\lesssim \max\big(1,t^2{\log(t)}\big) e^{-b' \dim(\HHxy)^\frac{1}{d+3}}.
  \end{align*}
\end{corollary}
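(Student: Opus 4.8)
The plan is to deduce the statement from Theorem~\ref{thm:error_est_cont_to_sd}, which already carries the analytic heavy lifting; what remains is (i) to check that its two abstract hypotheses --- Assumption~\ref{ass:approx_of_unitial_condtiion} and the scale-resolution requirement~\eqref{eq:scales_that_need_resolving} --- hold for the concrete $hp$-FEM spaces with the stated mesh parameters, and (ii) to simplify its bound to the clean exponential form. First I would fix $t_0 := \sigma^L$, take $\zhf := t_0^{-1/s} L^{1/s} = \sigma^{-L/s} L^{1/s}$ as in the theorem, and pick $\delta$ with $0 < \delta < \tfrac{1}{2s}$. The choice $t_0 = \sigma^L$ is engineered so that $\zhf^{s} t_0 = L$ --- hence the high-frequency term $e^{-\frac{\gamma}{2}\zhf^{s} t_0}$ of Corollary~\ref{cor:best_approx_inhomogeneous_rhs} equals $e^{-\frac{\gamma}{2}L}$ --- and so that $t_0^{\min(\frac{1}{2s}-\delta,1)} = e^{-c_0 L}$ with $c_0 := \min(\tfrac{1}{2s}-\delta,1)\,\abs{\log\sigma} > 0$, while $\abs{\log t_0} = L\abs{\log\sigma}$.

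To check Assumption~\ref{ass:approx_of_unitial_condtiion}, I would invoke Lemma~\ref{lemma:inital_condition_in_right_space}: $u_0$ is analytic on $\widetilde{\Omega}$ and the assumed relation between $\sigma^L$ and $p$ is exactly its hypothesis, so it produces $u_{h,0}\in\HHx$ with $\norm{u_{h,0}}_{\mathbb{V}^{\mathcal{X}}_{h,\beta}}$ uniformly bounded and $\norm{u_{h,0}-u_0}_{L^2(\Omega)}\lesssim e^{-b' p}$, i.e.\ Assumption~\ref{ass:approx_of_unitial_condtiion} with $\mu = 1/(d+1)$ (using $\mathcal{N}_{\Omega} = \dim\HHx \sim M p^{d}$); in $d=1$ one uses the polynomial-lifting variant mentioned after that lemma. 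For the scale-resolution requirement, Theorem~\ref{thm:hp_fem_resolves_scales} gives that $\HHx=\SS^{p,1}_{0}(\TT^{M}_{\Omega})$ resolves down to $\sigma^M$, so it suffices to verify $\sigma^M\le\varepsilon_{\min}$ in~\eqref{eq:scales_that_need_resolving}. Reading off $h_{\min}=\YY\sigma^{L}$, maximal $y$-degree $p_y\sim\mathfrak{s}L$, and $c_{\YY}=(t_0+\YY^{1-\alpha})^{-1/2}$ for the mesh of Definition~\ref{def:geo-mesh-y}, and substituting $t_0=\sigma^L$, the first entry of $\varepsilon_{\min}$ becomes $c_{\YY}\,\YY\,(\mathfrak{s}L)^{-2}\sigma^{3L/2}$ up to constants; hence the corollary's hypothesis $\sigma^M\le c_{\YY}\YY(\mathfrak{s}L)^{-2}\sigma^{3L/2}$ is precisely the required bound, and the bound $\sigma^M\le\abs{\zhf}^{-1/2}$ against the other entry is checked in the same way (and is not the binding one for the parameter ranges in play, possibly after enlarging $M$ by an $s$-dependent constant factor). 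Thus Theorem~\ref{thm:error_est_cont_to_sd} applies.

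Inserting these choices into the conclusion of Theorem~\ref{thm:error_est_cont_to_sd} yields
\[
  \int_0^t\norm{u-u_h}_{L^2(\Omega)}^2\,d\tau \;\lesssim\; \max(1,t^2)\Big(e^{-c_0 L} + L^2\max\!\big(\log(t/t_0),0\big)\big[e^{-b_1\mathcal{N}_{\Omega}^{\mu}}+e^{-b_2\sqrt{\mathcal{N}_{\YY}}}\big]\Big).
\]
Using $\max(\log(t/t_0),0)\le\max(\log t,0)+L\abs{\log\sigma}\lesssim L\max(1,\log t)$, the polynomial $L^3$ prefactor is absorbed into the exponentials after slightly shrinking $b_1,b_2$; every remaining exponential is $\lesssim e^{-b_1 p}+e^{-b_2 L}$ (the $x$-rate $e^{-b_1\mathcal{N}_{\Omega}^{\mu}}$ with $\mathcal{N}_{\Omega}\sim M p^{d}$, $\mu=1/(d+1)$; the $y$-rate $e^{-b_2\sqrt{\mathcal{N}_{\YY}}}$ with $\mathcal{N}_{\YY}=\dim\HHy\sim\mathfrak{s}L^{2}$; and $e^{-c_0 L}$), and $\max(1,t^2)\max(1,\log t)\sim\max(1,t^2\log t)$; this gives the first estimate. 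For the sharpened form, choosing $M\sim\tfrac32 L$, $p\sim L$, $\mathfrak{s}=\bigO(1)$ makes $\dim\HHxy=\dim\HHx\cdot\dim\HHy\sim(Mp^{d})(\mathfrak{s}L^{2})\sim L^{d+3}$, so $L\sim\dim(\HHxy)^{1/(d+3)}$ and $e^{-b_1 p}+e^{-b_2 L}\lesssim e^{-b'\dim(\HHxy)^{1/(d+3)}}$.

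\textbf{Main obstacle.} The only genuinely delicate part is the bookkeeping in the second paragraph: reading off $h_{\min}$, $p_y$, $c_{\YY}$ for the graded $y$-mesh, confirming that with $t_0=\sigma^L$ the corollary's algebraic condition on $\sigma^M$ is precisely~\eqref{eq:scales_that_need_resolving}, and pinning down the dimension counts $\dim\HHx\sim M p^{d}$, $\dim\HHy\sim\mathfrak{s}L^{2}$ together with the exponent $\mu=1/(d+1)$. Everything else is substitution and absorbing algebraic factors into exponentials.
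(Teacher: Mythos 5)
Your proposal follows essentially the same route as the paper's own proof: choose $t_0=\sigma^L$ and $\zhf=t_0^{-1/s}L^{1/s}$, verify Assumption~\ref{ass:approx_of_unitial_condtiion} via Lemma~\ref{lemma:inital_condition_in_right_space} and the scale resolution via Theorem~\ref{thm:hp_fem_resolves_scales}, apply Theorem~\ref{thm:error_est_cont_to_sd}, and then use $\dim(\HHx)\sim L^{d+1}$, $\dim(\HHy)\sim L^2$ while absorbing the $\abs{\log t_0}$-factors into the exponentials by shrinking the rates. Your bookkeeping is correct (indeed, you are slightly more careful than the paper in flagging the $\abs{\zhf}^{-1/2}$ entry of $\varepsilon_{\min}$, which the paper's proof passes over silently), so nothing further is needed.
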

\begin{proof}
  We choose $t_0:=\sigma^L$ and $\zhf=\sigma^{L/s} L^{1/s}$ in Theorem~\ref{thm:error_est_cont_to_sd}.
  Assumption~\ref{ass:approx_of_unitial_condtiion} is met via Lemma~\ref{lemma:inital_condition_in_right_space}.
  The assumptions on $\HHx$ also imply that the necessary scales are resolved, and we get:
  \begin{multline*}
    \int_{0}^{t}{\norm{ u(\tau) - u_h(\tau)}^2_{L^2(\Omega) }\,d\tau} \\
    \lesssim \max(1,t^2)\sigma^{\left(\frac{1}{2s}-\delta\right) M}  
    + \max(1,t^2)\abs{\log(t_0)}^2\max\big(\log(t/t_0),0\big)\left[e^{-b_1 p} + e^{-b_2 L} + e^{-L}\right].     
  \end{multline*}
  The explicit estimate then follows from the fact that $\dim(\HHx) \sim L^{d+1}$ in this
  particular construction and $ \dim(\HHy) \sim L^2$. We absorb the logarithmic terms $\log(\sigma^L)\sim L$ into the exponential
  by slightly reducing the rate $b'$.
  The condition $\sigma^L \leq p^{-2}$ is easily verified for such meshes.
\end{proof}

For the pointwise and energy errors, the corresponding concrete version reads:  
\begin{corollary}
  \label{cor:error_est_cont_to_sd_energy_1d}
  Assume that $u_0$ is analytic and $f$, $\dot{f}$ are uniformly analytic in a neighborhood $\widetilde{\Omega} \supset \overline{\Omega}$,    
  and that the meshes and spaces are as in Corollary~\ref{cor:constructive_error_est_cont_to_sd}.
  Let $u_{h,0} \in \mathbb{V}^{\mathcal{X}}_{h,\beta}$ be as in Assumption~\ref{ass:approx_of_unitial_condtiion} for $\beta >0$.

  Then there exists a constant $b$, independent of $L$, $M$ and $p$ such that the following estimate holds:
  \begin{align*}
    \norm{u(t) - u_h(t)}_{L^2(\Omega)}^2+\int_{0}^{t}{\norm{ u(\tau) - u_h(\tau)}^2_{\widetilde{H}^{s}(\Omega)} \,d\tau} 
    &\lesssim \max(1,t^2 \log(t)) e^{-b L}.
  \end{align*}
  
  Or in terms of degrees of freedom, we get
  \begin{align*}
    \norm{u(t) - u_h(t)}_{L^2(\Omega)}^2 + \int_{0}^{t}{\norm{ u(\tau) - u_h(\tau)}^2_{\widetilde{H}^{s}(\Omega)} \,d\tau}
    &\lesssim \max\big(1,t^2\log(t)\big) e^{-b' \dim(\HHxy)^{\frac{1}{d+3}}} .
  \end{align*}  
\end{corollary}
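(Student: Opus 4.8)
The strategy is to obtain the estimate as a specialization of Theorem~\ref{thm:error_est_cont_to_sd_energy} to the concrete $hp$-FEM spaces, verifying its hypotheses exactly as was done for the $L^2$-in-time bound in Corollary~\ref{cor:constructive_error_est_cont_to_sd}. First I would fix $t_0$ proportional to $\sigma^{L}$ (equivalently $t_0 = e^{-cL}$ for a fixed $c>0$) and $\zhf := t_0^{-1/s}L^{1/s}$, as prescribed in Theorem~\ref{thm:error_est_cont_to_sd_energy}. Two of the three hypotheses of that theorem are immediate in the present setting: $u_0$ is analytic and $f,\dot f$ are uniformly analytic by assumption, and $\HHy$ has exactly the form demanded by Definition~\ref{def:geo-mesh-y}. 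For the third, a function $u_{h,0}\in\mathbb{V}^{\mathcal{X}}_{h,\beta}$ with the approximation and stability properties of Assumption~\ref{ass:approx_of_unitial_condtiion} and exponent $\mu = 1/(d+1)$ is provided by Lemma~\ref{lemma:inital_condition_in_right_space}, whose hypothesis $\sigma^{L}\lesssim p^{-2}$ is among the assumptions carried over from Corollary~\ref{cor:constructive_error_est_cont_to_sd}.

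It then remains to check that $\HHx=\SS^{p,1}_0(\TT^{M}_{\Omega})$ resolves the scales down to \eqref{eq:scales_that_need_resolving}. By Theorem~\ref{thm:hp_fem_resolves_scales}, an anisotropic geometric mesh with $M$ layers and grading factor $\sigma$ resolves every scale down to $\sigma^{M}$, so it suffices that $\sigma^{M}\le\varepsilon_{\min}$. Since the geometric mesh on $(0,\YY)$ has smallest element of size $h_{\min}\sim\YY\sigma^{L}$ and maximal degree $p\sim\mathfrak{s}L$, and $t_0\sim\sigma^{L}$, one computes
\begin{align*}
  \sqrt{t_0}\,\frac{h_{\min}}{p^{2}}\,c_\YY \;\sim\; \sigma^{L/2}\,\frac{\YY\sigma^{L}}{(\mathfrak{s}L)^{2}}\,c_\YY \;=\; c_\YY\,\YY\,(\mathfrak{s}L)^{-2}\,\sigma^{3L/2},
\end{align*}
which is precisely the quantity bounding $\sigma^{M}$ in the hypotheses; the second entry $\abs{\zhf}^{-1/2}$ of the minimum only relaxes the requirement. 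Hence all hypotheses of Theorem~\ref{thm:error_est_cont_to_sd_energy} hold, and it yields
\begin{align*}
  \norm{u(t)-u_h(t)}_{L^2(\Omega)}^{2} + \int_{0}^{t}{\norm{u(\tau)-u_h(\tau)}_{\widetilde{H}^{s}(\Omega)}^{2}\,d\tau} \;\lesssim\; \max(1,t^{2}\log t)\,e^{-b\widetilde{M}},
\end{align*}
with $\widetilde{M} := \min\big(L,\dim(\HHx)^{1/(d+1)}\big)$.

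To convert this into the stated form I would evaluate the dimensions. In this construction $\dim(\HHx)\sim M p^{d}\sim L^{d+1}$ (using $M\sim\tfrac32 L$ and $p\sim L$), so $\dim(\HHx)^{1/(d+1)}\sim L$ and therefore $\widetilde{M}\sim L$, which gives the first displayed estimate $\lesssim\max(1,t^{2}\log t)\,e^{-bL}$. For the degrees-of-freedom version I would use $\dim(\HHxy)=\dim(\HHx)\,\dim(\HHy)\sim L^{d+1}\cdot L^{2}=L^{d+3}$, hence $L\sim\dim(\HHxy)^{1/(d+3)}$, which yields $\lesssim\max(1,t^{2}\log t)\,e^{-b'\dim(\HHxy)^{1/(d+3)}}$ after relabelling the rate.

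I expect the only genuine obstacle to be the bookkeeping in the scale-resolution check — matching the factor $\sqrt{t_0}=\sigma^{L/2}$, the degree slope $\mathfrak{s}$, and $c_\YY$ against the hypothesis $\sigma^{M}\le c_\YY\YY(\mathfrak{s}L)^{-2}\sigma^{3L/2}$ — together with absorbing the logarithmic factor $\abs{\log t_0}\sim L$ (inherited through Corollary~\ref{cor:best_approx_inhomogeneous_rhs} into Theorem~\ref{thm:error_est_cont_to_sd_energy}) into the exponential at the cost of an arbitrarily small reduction of the rate. Everything else is a direct invocation of results already established, so no new estimates are needed.
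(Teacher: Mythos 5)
Your proposal is correct and takes essentially the same route as the paper: the paper's proof is a one-line appeal to Theorem~\ref{thm:error_est_cont_to_sd_energy} (the given meshes and spaces satisfy its hypotheses, with Assumption~\ref{ass:approx_of_unitial_condtiion} supplied by Lemma~\ref{lemma:inital_condition_in_right_space}) followed by the dimension count $\dim(\HHxy)\sim L^{d+3}$, which is exactly what you carry out in detail. One small caveat: your remark that the second entry $\abs{\zhf}^{-1/2}$ in \eqref{eq:scales_that_need_resolving} ``only relaxes the requirement'' has the direction backwards --- taking the minimum can only make the scale to be resolved smaller, hence the requirement stronger --- but this is the same point the paper itself glosses over when it asserts in Corollary~\ref{cor:constructive_error_est_cont_to_sd} that the necessary scales are resolved, so it does not distinguish your argument from the paper's.
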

\begin{proof}
  Follows from the fact that using the given parameters, the space $\HHx$ satisfies the assumptions of Theorem~\ref{thm:error_est_cont_to_sd_energy}.
  The estimate in terms of degrees of freedom follows easily.
\end{proof}

\begin{remark}
  In this section, we focused on the case of smooth geometries in 1D and 2D. We would like to point out that we do not see any structural
  obstacles towards generalizing to the case of curvilinear polygons or smooth 3d geometries. The main ingredient is the necessary generalization
  of Appendix~\ref{appendix:oned_singular_perturbations}.
\eremk
\end{remark}

\section{Discretization in $t$ -- the fully discrete scheme}
\label{sect:time_discretization}
In this section, we consider the discretization with respect to the time variable $t$. This can be done using 
mostly standard techniques. We focus on the case of using a discontinuous Galerkin type method.
When applied in its $hp$-version, it will allow us to get an exponentially convergent fully discrete scheme, and thus it nicely
complements our previous investigations. We follow the presentation in~\cite{schoetzau_schwab}.

Let $\TT_{(0,T)}:=\{(t_{j-1},t_{j}) \}_{j=1}^{M}$ be a partition of the time interval $[0,T]$ into subintervals with $0\leq t_j<t_{j+1}\leq T$.
We set $k_j:=t_{j}-t_{j-1}$ and define the one-sided limits
\begin{align*}
  u_j^+&:=\lim_{h \to 0, h> 0} u(t_j +h)  \qquad  \text{for $0\leq j\leq M-1$}, \\
  u_j^-&:=\lim_{h \to 0, h> 0} u(t_j - h) \qquad \text{for $1\leq j\leq M$}
\end{align*}
as well as the jump $[u]_j:=u_j^+-u_j^-$.
We define the DG-bilinear and linear forms:
\begin{align*}
  B(\UU,\VV)&:=\begin{multlined}[t][13cm]\sum_{j=1}^{M}{\int_{t_{j-1}}^{t_j}{ \ltwoprodX{\dot{\trace{\UU}(t)}}{\trace{\VV}(t)}} + d_s^{-1}\AA(\UU(t),\VV(t)) \,dt} \\
            + \sum_{j=2}^{M}{\ltwoprodX{[\trace{\UU}]_{j-1}}{\trace{\VV}_{j-1}^+} + \ltwoprodX{\trace \UU_0^+}{\trace{\VV}_0^+}}, \end{multlined}\\
  F(\VV)&:=\sum_{j=1}^{M}{\int_{t_{j-1}}^{t_j}{\ltwoprodX{f(t)}{\trace{\VV}(t)}  \,dt} + \ltwoprodX{u_0}{\trace{\VV}_0^+}}. \\
\end{align*}

Then the DG-approximation is given as the solution to the following problem:
\begin{problem}
  \label{prob:dg_formulation}
  Choose $\mathbf{r}_t \subseteq \N_0$ a polynomial degree distribution, and consider the space $\SS^{\mathbf{r}_t,0}(\TT_{(0,T)})$
  of discontinuous piecewise polynomials. Set $\HHxyt:=\SS^{\mathbf{r}_t,0}(\TT_{(0,T)}) \otimes \HHxy$. 
  Find $\UUdxyt \in \HHxyt$ such that
  \begin{align}
    \label{eq:dg_formulation}
    B(\UUdxyt, \VV_h) &= F(\VV_h) \qquad \qquad \forall \VV_h \in \HHxyt.
  \end{align}
\end{problem}
\begin{remark}
    Note that we used the initial condition $u_0$ instead of the discrete initial condition $u_{h,0}$. This is due to the fact
    that we need assumptions on $u_{h,0}$ which make it non-computable in practice. When we talk about
    ``equivalence to time discretization of the semidiscrete problem'' we always mean ``up to changing the initial condition'',
    which incurs an additional (but easily treatable) error term.
\eremk
  \end{remark}

\begin{lemma}
  \label{lemma:dg_formulation_using_lh}
  Problem~\ref{prob:dg_formulation} is equivalent to solving the ``standard'' DG-formulation for the semidiscrete semigroup~\eqref{eq:sd_semigroup}, i.e., 
  if we define
  \begin{align*}
    \widetilde{B}(U,V)&\begin{multlined}[t][13cm]:=\sum_{j=1}^{M}{\int_{t_{j-1}}^{t_j}{ \ltwoprodX{\dot{U}(t)}{V(t)}} + \ltwoprodX{\LL^s_h U(t)}{V(t)} \,dt} \\
                      + \sum_{j=2}^{M}{\ltwoprodX{[U]_{j-1}}{V_{j-1}^+}} + \ltwoprodX{U_0^+}{V_0^+}, \end{multlined}\\
    \widetilde{F}(V)&:=\sum_{j=1}^{M}{\int_{t_{j-1}}^{t_j}{\ltwoprodX{f(t)}{V(t)}  \,dt} + \ltwoprodX{u_0}{V_0^+}}. 
  \end{align*}
  Then $u_{h,k}:=\trace(\UUdxyt) \in \SS^{\mathbf{r},0}(\TT_{(0,T)})\otimes \HHx$ solves
  \begin{align}
    \label{eq:dg_formulation_using_lh}
    \widetilde{B}(u_{h,k},v_h)&=\widetilde{F}(v_h) \qquad \forall v_h \in \SS^{\mathbf{r},0}(\TT_{(0,T)}) \otimes \HHx.
  \end{align}
  On the other hand, we can recover the extended function by $\UUdxyt:=\lifting_h u_{h,k}$.
\end{lemma}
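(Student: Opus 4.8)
The plan is to follow the same pattern as the (sketched) implicit--Euler equivalence and the resolvent identity in the proof of Theorem~\ref{thm:llh_generates_sg}: first recognise $\UUdxyt$ as the discrete lifting of its own trace by testing against trace--free functions, then rewrite $B$ and $F$ entirely in terms of traces.

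First I would test \eqref{eq:dg_formulation} with functions of the form $\VV_h(x,y,t)=\phi(t)\,\WW_h(x,y)$, where $\phi \in \SS^{\mathbf{r}_t,0}(\TT_{(0,T)})$ is arbitrary and $\WW_h \in \HHxy$ satisfies $\trace{\WW_h}=0$. For such $\VV_h$ every term of $B$ that involves a trace of $\VV_h$ vanishes, so \eqref{eq:dg_formulation} reduces to $\int_0^T \phi(t)\,d_s^{-1}\AA(\UUdxyt(t),\WW_h)\,dt=0$. On each subinterval $(t_{j-1},t_j)$ the map $t\mapsto \AA(\UUdxyt(t),\WW_h)$ is itself a polynomial of the prescribed degree (because $\UUdxyt$ is), so choosing $\phi$ equal to this polynomial on that subinterval and $0$ elsewhere forces $\AA(\UUdxyt(t),\WW_h)=0$ for all $t$ and all $\WW_h$ with vanishing trace. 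Together with $\trace{\UUdxyt}(t)=u_{h,k}(t)\in\HHx$, this is exactly the defining system \eqref{eq:discrete_lifting} of $\lifting_h$ applied slice by slice in $t$; hence $\UUdxyt(t)=\lifting_h u_{h,k}(t)$ for every $t$, i.e. $\UUdxyt=\lifting_h u_{h,k}$. Since $\lifting_h$ is a fixed linear operator it commutes with evaluation at the nodes $t_j$, with the one--sided limits, and with $\partial_t$, so in particular $u_{h,k}\in \SS^{\mathbf{r}_t,0}(\TT_{(0,T)})\otimes\HHx$.

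Next I would insert $\UUdxyt=\lifting_h u_{h,k}$ into $B$ and $F$. Every term of $B$ except the $\AA$--term already depends on $\VV_h$ only through $\trace{\VV_h}$, so it coincides with the corresponding term of $\widetilde B$ evaluated at $(u_{h,k},\trace{\VV_h})$. For the $\AA$--term, the function $\VV_h(t)-\lifting_h\trace{\VV_h}(t)$ has vanishing trace, so by the defining property of $\lifting_h$ and then the definition of $\LL^s_h$,
$$d_s^{-1}\AA(\lifting_h u_{h,k}(t),\VV_h(t))=d_s^{-1}\AA(\lifting_h u_{h,k}(t),\lifting_h\trace{\VV_h}(t))=\ltwoprodX{\LL^s_h u_{h,k}(t)}{\trace{\VV_h}(t)},$$
which is the $\AA$--term of $\widetilde B$. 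Likewise $F(\VV_h)=\widetilde F(\trace{\VV_h})$. Hence $B(\lifting_h u_{h,k},\VV_h)=\widetilde B(u_{h,k},\trace{\VV_h})$ and $F(\VV_h)=\widetilde F(\trace{\VV_h})$ for every $\VV_h\in\HHxyt$. Because there is $v\in\HHy$ with $v(0)=1$, the trace map is onto $\SS^{\mathbf{r}_t,0}(\TT_{(0,T)})\otimes\HHx$, so \eqref{eq:dg_formulation} is equivalent to \eqref{eq:dg_formulation_using_lh}. For the converse I would start from a solution $u_{h,k}$ of \eqref{eq:dg_formulation_using_lh}, set $\UUdxyt:=\lifting_h u_{h,k}$ (which lies in $\HHxyt$ since $\lifting_h$ acts on the $\HHx$--valued coefficients of the temporal polynomial without changing its degree distribution), and run the displayed computation backwards to obtain $B(\UUdxyt,\VV_h)=\widetilde B(u_{h,k},\trace{\VV_h})=\widetilde F(\trace{\VV_h})=F(\VV_h)$ for all $\VV_h\in\HHxyt$; its trace is $u_{h,k}$ by construction.

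The only point that requires genuine care — and which I expect to be mildly delicate rather than a true obstacle — is the temporal bookkeeping: one must check that $\lifting_h$ maps $\SS^{\mathbf{r}_t,0}(\TT_{(0,T)})\otimes\HHx$ into $\HHxyt$ and commutes with all temporal operations (nodal evaluations, jumps, $\partial_t$) appearing in $B$. This is immediate once $u_{h,k}$ is written in a temporal basis, but it is precisely what legitimises the slice--by--slice identification $\UUdxyt(t)=\lifting_h u_{h,k}(t)$ used above. Everything else is the by now familiar ``test with trace--free functions, then reduce to traces'' argument.
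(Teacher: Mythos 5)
Your argument is correct and follows essentially the same route as the paper's proof: test with products of temporal polynomials and trace-free spatial functions to conclude $\AA(\UUdxyt(t),\WW_h)=0$, identify $\UUdxyt=\lifting_h u_{h,k}$ slice by slice, and then convert the $\AA$-term into $\ltwoprodX{\LL^s_h u_{h,k}}{\trace{\VV_h}}$ via the defining property of $\lifting_h$ and the definition of $\LL^s_h$. You merely reorganize the two directions and add some bookkeeping the paper leaves implicit (surjectivity of the trace onto $\SS^{\mathbf{r}_t,0}(\TT_{(0,T)})\otimes\HHx$, which uses the assumed existence of $v\in\HHy$ with $v(0)=1$, and commutation of $\lifting_h$ with the temporal operations), so no substantive difference.
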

\begin{proof}
  We first show that $\lifting_h u_{h,k}$ solves Problem~\ref{prob:dg_formulation}. 

  Comparing the two formulations, the only interesting term is $\AA(\lifting_h u_{h,k},\VV)$.
  We note that we can write:
  \begin{align*}
    \AA(\lifting_h u_{h,k},\VV_h)&=\AA(\lifting_h u_{h,k},\VV_h-\lifting_h \trace \VV_h) +  \AA(\lifting_h u_{h,k},\lifting_h \trace \VV_h)\\
    &=\AA(\lifting_h u_{h,k},\lifting_h \trace \VV_h) = d_s\ltwoprodX{ \LL^s_h u_{h,k}}{\trace \VV_h},
  \end{align*}
  where  we used that  $\AA(\lifting_h u_{h,k},\WW_h)=0$ vanishes for functions with $\trace{\WW_h}=0$ by the definition of the lifting.
  Thus all the terms in the formulation directly correspond to each other.

  We now show the other direction. Let $\UUdxyt$  be a solution to Problem~\ref{prob:dg_formulation}.
  We pick a function $q$, such that $q(t)=0$ outside of a single interval $(t_{j-1},t_j)$ on which $q(t)$ is a polynomial.
  We then  test~\eqref{eq:dg_formulation} with functions of the form $\VV_h(t):=q(t) \VV_0$, where $\VV_0 \in \HHxy$ satisfies $\trace{\VV_0}=0$.
  This means that $\VV_h(t) \in  \HHxyt$ and we get, since all the terms involving $\trace{\VV_h}$ vanish:
  \begin{align*}
    \int_{t_{j-1}}^{t_j}{\AA(\UUdxyt(t),\VV_0) q(t) \,dt}&=0.
  \end{align*}
  Since $\UUdxyt(t)$ is a polynomial of degree $r_j$ in $t$, $\AA(\UUdxyt(t),\VV_h)$ also is such a polynomial.
  Since the integral vanishes when tested with all similar polynomials, we get that $\AA(\UU(t),\VV_h)=0$ for
  all $t \in (t_{j-1},t_j)$ and all admissible $\VV_0$.
  This means we can write $\UUdxyt=\lifting_h \trace{\UUdxyt}$ and we can proceed as before to match all the terms
  in the formulation to their counterpart.
\end{proof}

\begin{theorem}[$h$-version]
  \label{thm:dg_fd_to_sd_1}
  Let $u_h$ denote the semidiscrete solution to \eqref{eq:sd_semigroup}.
  Suppose that Assumption~\ref{ass:approx_of_unitial_condtiion} is fulfilled with $\beta >0$.
  Let $\mathbf{r}_t=r\equiv \text{const}$ be a fixed parameter.
  Choose $\TT_{(0,T)}$ as a graded mesh with the grading function $h(t):=t^{\beta\left(2 r +3\right)}$.
  Let $N:=\dim(\SS^{\mathbf{r}_t,0}(\TT_{(0,T)}))$.

  Assume $u_0$ is analytic in $\overline{\Omega}$ and that the right-hand side $f$ satisfies 
  \begin{align*}
    \norm{f^{(\ell)}(t)}_{L^2(\Omega)}\leq C d^{\ell} \Gamma(\ell +1)   \qquad \forall t\in [0,T], \ell \in \N_0,
  \end{align*}
  with constants $C$ and $d$ independent of $\ell$ and $t$.

  Then the following error estimate holds:
  \begin{align*}
    \sqrt{\int_{0}^{T}{\norm{u_h(t) - u_{h,k}(t)}_{\widetilde{H}^{s}(\Omega)}^2}}
    &\lesssim N^{-(r+1)} +  e^{-b \mathcal{N}_{\Omega}^\mu}.
  \end{align*}
  The implied constant depends on $\Omega$, $u_0$, $f$, $r$, the terminal time $T$, and the constants from Assumption~\ref{ass:approx_of_unitial_condtiion}.
\end{theorem}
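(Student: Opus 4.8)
The plan is to reduce the fully discrete error to a pure \emph{time}-discretization error for the finite-dimensional parabolic problem~\eqref{eq:sd_semigroup} and then to invoke the $h$-version a priori theory of~\cite{schoetzau_schwab} on the graded mesh; the only nonstandard ingredient is an $\mathcal{N}_\Omega$-\emph{uniform} control of the startup behaviour of the semidiscrete solution $u_h$. By Lemma~\ref{lemma:dg_formulation_using_lh}, the trace $u_{h,k}=\trace(\UUdxyt)$ of the solution of Problem~\ref{prob:dg_formulation} solves the standard DG-in-time scheme $\widetilde B(u_{h,k},v_h)=\widetilde F(v_h)$ for~\eqref{eq:sd_semigroup}, with the only caveat that $\widetilde F$ carries the exact datum $u_0$ rather than $u_{h,0}$. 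I would therefore split $u_{h,k}=\widetilde u_{h,k}+e_{h,k}$, where $\widetilde u_{h,k}$ is the DG solution with datum $u_{h,0}$ and $e_{h,k}$ is the DG solution of the homogeneous problem with datum $u_0-u_{h,0}$. Since $\LL^s_h$ is positive and self-adjoint (Lemma~\ref{lemma_llh_is_elliptic}), the form $\widetilde B$ is coercive on $\SS^{\mathbf r_t,0}(\TT_{(0,T)})\otimes\HHx$ in the natural mesh-dependent DG norm, and the standard DG stability bound (cf.~\cite{schoetzau_schwab} and~\cite[Ch.~12]{thomee_book}) will give
\[
\Big(\int_0^T\norm{e_{h,k}(t)}_{\widetilde H^{s}(\Omega)}^2\,dt\Big)^{1/2}\lesssim \norm{u_0-u_{h,0}}_{L^2(\Omega)}\lesssim e^{-b\mathcal{N}_\Omega^{\mu}}
\]
by Assumption~\ref{ass:approx_of_unitial_condtiion}. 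It then remains to estimate $\int_0^T\norm{u_h-\widetilde u_{h,k}}_{\widetilde H^{s}(\Omega)}^2\,dt$, a genuine time-discretization error for a finite-dimensional parabolic ODE.

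\textbf{Uniform time regularity of $u_h$.} By Duhamel, $u_h(t)=\sg_h(t)u_{h,0}+\int_0^t\sg_h(\tau)\Pi_{L^2}f(t-\tau)\,d\tau$. For the homogeneous part I would use the discrete smoothing estimate of Lemma~\ref{lemma:apriori}~(\ref{it:sg_apriori_stability}), applied in the $\HHx$-energy norm (which the proof via the eigendecomposition of $\LL^s_h$ delivers exactly as the $\widetilde H^s(\Omega)$-version stated there, since $\norm{v}_{\HHx}\sim\norm{(\LL^s_h)^{1/2}v}_{L^2(\Omega)}$): for every $\ell\in\N_0$,
\[
\norm{[\sg_h(\cdot)u_{h,0}]^{(\ell)}(t)}_{\HHx}\lesssim t^{-\ell+\frac{\beta-1}{2}}\,\norm{u_{h,0}}_{\mathbb{V}^{\mathcal{X}}_{h,\beta}}\le C_{stab}\,t^{-\ell+\frac{\beta-1}{2}},
\]
with a constant \emph{independent of} $\mathcal{N}_\Omega$ (the $hp$-semigroup smoothing is uniform because the supremum over the spectrum of $\LL^s_h$ is bounded by the supremum over $(0,\infty)$, and Assumption~\ref{ass:approx_of_unitial_condtiion} bounds $\norm{u_{h,0}}_{\mathbb{V}^{\mathcal{X}}_{h,\beta}}$ by $C_{stab}$). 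For the inhomogeneous part, differentiating Duhamel's formula and using $\norm{\sg_h^{(m)}(t)g}_{\HHx}\lesssim_m t^{-m-1/2}\norm{g}_{L^2(\Omega)}$ together with the factorial bound on $\norm{f^{(\ell)}(t)}_{L^2(\Omega)}$ shows its $\ell$-th derivative is $\bigO(t^{-\ell+1/2})$ in $\HHx$ (cf.~\cite[Ch.~12]{thomee_book}); as $\beta<1$, this is dominated by the homogeneous contribution. Hence $u_h$ lies, with $\mathcal{N}_\Omega$-independent bounds, in the weighted-in-time Sobolev space underlying the graded-mesh analysis of~\cite{schoetzau_schwab}, with a startup of strength $t^{(\beta-1)/2}$ (exponent $>-\tfrac12$, so $u_h\in L^2(0,T;\HHx)$).

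\textbf{DG approximation on the graded mesh, and conclusion.} With $\mathbf r_t\equiv r$ fixed and the mesh graded by $h(t)=t^{\beta(2r+3)}$ --- precisely the grading prescribed in~\cite{schoetzau_schwab} to resolve a layer of strength $t^{(\beta-1)/2}$ by piecewise polynomials of degree $r$ at the optimal rate --- DG quasi-optimality and the weighted approximation estimates of that reference will give
\[
\Big(\int_0^T\norm{u_h(t)-\widetilde u_{h,k}(t)}_{\widetilde H^{s}(\Omega)}^2\,dt\Big)^{1/2}\lesssim \Big(\int_0^T\norm{u_h(t)-\widetilde u_{h,k}(t)}_{\HHx}^2\,dt\Big)^{1/2}\lesssim N^{-(r+1)},
\]
where $\norm{\cdot}_{\widetilde H^{s}(\Omega)}\lesssim\norm{\cdot}_{\HHx}$ (Lemma~\ref{lemma_llh_is_elliptic}) was used; by the previous step the implied constant depends only on $r$, $T$, the analyticity data $C,d$ of $f$, and $C_{stab}$ (hence on $u_0$), not on $\mathcal{N}_\Omega$. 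Adding the estimate for $e_{h,k}$ from the reduction step then yields the stated bound.

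\textbf{Main obstacle.} All of this except the regularity step is in essence a transcription of~\cite{schoetzau_schwab}; the crux is that the startup singularity of the \emph{semidiscrete} solution must be controlled \emph{uniformly in the spatial discretization} and in a scale compatible with the abstract DG theory. That is exactly what the discrete version of Lemma~\ref{lemma:apriori}~(\ref{it:sg_apriori_stability}) provides, once one observes --- via Assumption~\ref{ass:approx_of_unitial_condtiion} --- that $\norm{u_{h,0}}_{\mathbb{V}^{\mathcal{X}}_{h,\beta}}$ is bounded independently of $\mathcal{N}_\Omega$; the $\mathcal{N}_\Omega$-uniformity of the DG stability used in the reduction step is automatic, since it only uses positivity and self-adjointness of $\LL^s_h$.
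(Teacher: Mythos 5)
Your proposal is correct and follows essentially the same route as the paper: reduce via Lemma~\ref{lemma:dg_formulation_using_lh} to the standard DG-in-time scheme for $\LL_h^s$, split off the $u_0$-versus-$u_{h,0}$ mismatch using DG coercivity and Assumption~\ref{ass:approx_of_unitial_condtiion}, and obtain the rate $N^{-(r+1)}$ from the graded-mesh theory of \cite{schoetzau_schwab}, with Lemma~\ref{lemma_llh_is_elliptic} supplying the ellipticity/symmetry of $a(u,v)=\ltwoprodX{\LL_h^s u}{v}$ and the passage from the $\HHx$- to the $\widetilde{H}^s(\Omega)$-norm. The only difference is that you re-derive the $\mathcal{N}_{\Omega}$-uniform startup regularity $t^{(\beta-1)/2}$ of $u_h$ by hand, whereas the paper lets \cite[Theorem~5.10]{schoetzau_schwab} absorb this step directly through the hypothesis $u_{h,0}\in\mathbb{V}^{\mathcal{X}}_{h,\beta}$.
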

\begin{proof}
  We note that $u_{h,0} \in \mathcal{V}_{h,\beta}^{\mathcal{X}}$ by Assumption
  and also that the solution to DG-formulation depends continuously on the initial condition.
  This last statement can be easily seen from the coercivity of $\widetilde{B}$ as shown in \cite[Lemma 2.7]{schoetzau_schwab}.
  Thus, up to an additional error term $C(T)\norm{\Pi_{L^2} u_0 - u_{h,0}}_{L^2(\Omega)}^2$ we may use $u_{h,0}$ as our initial condition.
  (This error term is exponentially small by Assumption~\ref{ass:approx_of_unitial_condtiion}).

  We want to apply the results from~\cite{schoetzau_schwab} and translate our setting into their requirements.
  They require separable Hilbert spaces $X \subseteq H$ with continuous, dense and compact embedding and a bilinear form $a(\cdot,\cdot): X \times X \to \C$,
  such that
  \begin{align*}
    \abs{a(u,v)}&\lesssim \norm{u}_{X}\norm{v}_{X},  \qquad \Re(a(u,u))\geq c \norm{u}_{X}^2, \qquad \text{and}\quad  a(u,v)=\overline{a(v,u)}                  
  \end{align*}
  for all $u,v \in X$.
  We set $H:=\left(\HHx,\norm{\cdot}_{L^2(\Omega)}\right)$, $X:=\left(\HHx,\norm{\cdot}_{\HHx}\right)$ 
  and $a(u,v):=\ltwoprodX{\LL^s_h u}{v}$ (extending the real valued bilinear form to a complex one in the canonical way).
  By Lemma~\ref{lemma_llh_is_elliptic} this bilinear form satisfies the boundedness and ellipticity conditions. The symmetry follows from
  the definition and the symmetry of $\AA(\cdot,\cdot)$.
  
  The stated result then is a consequence  of~\cite[Theorem 5.10]{schoetzau_schwab}.
  The main ingredient is the fact that the initial condition is in the interpolation space
  $\mathcal{V}^{\mathcal{X}}_{h,\beta}$ by Assumption~\ref{ass:approx_of_unitial_condtiion}. 
  Note that \cite[Theorem 5.10]{schoetzau_schwab} gives an estimate in the $\HHx$-norm.
  In order to get to the more natural $\widetilde{H}^{s}(\Omega)$-norm, we use Lemma~\ref{lemma_llh_is_elliptic}.
\end{proof}

\begin{remark}
    For $r:=1$, the scheme in Theorem~\ref{thm:dg_fd_to_sd_1} is equivalent to the more common implicit Euler discretization,
    except that the right hand side is slightly modified. See~\cite[Page 205]{thomee_book} for details.    
\eremk
  \end{remark}

\begin{theorem}[$hp$-version]
\label{thm:dg_fd_to_sd_2}
  Let $u_h$ denote the semidiscrete solution to \eqref{eq:sd_semigroup}.
  Consider $\TT_{(0,T)}:=\TT^M_{(\mathbf{0},t_1)} \cup  \TT_{(t_1,T)}$ to be a mesh on $(0,T)$ that is geometrically refined towards $0$ and
  has constant size for larger times $(t_1,T)$.
  We choose $\mathbf{r}_t$ such that it is linearly increasing on the geometrically refined part
  and constant afterwards.
  Let $N:=\dim(\SS^{\mathbf{r}_t,0}(\TT_{(0,T)}))$.

  Assume that $u_0$ is analytic in $\overline{\Omega}$ and that the right-hand side $f$ satisfies 
  \begin{align*}
    \norm{f^{(\ell)}(t)}_{L^2(\Omega)}\leq C d^{\ell} \Gamma(\ell +1)   \qquad \forall t\in [0,T], \ell \in \N_0,
  \end{align*}
  with constants $C$ and $d$ independent of $\ell$ and $t$.
  Suppose that Assumption~\ref{ass:approx_of_unitial_condtiion} is satisfied.

  Then the following error estimate holds:
  \begin{align*}
    \sqrt{\int_{0}^{T}{\norm{u_h(t) - u_{h,k}(t)}_{\widetilde{H}^s(\Omega)}^2}}
    &\lesssim  e^{-b N^{1/2}} +  e^{-b \mathcal{N}_{\Omega}^{\mu}}.
  \end{align*}
  The implied constant depends on $\Omega$, $u_0$, $f$,  $\mu$, the mesh grading and the terminal time $T$ as well as the constants from
  Assumption~\ref{ass:approx_of_unitial_condtiion}.
\end{theorem}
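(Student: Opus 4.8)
The plan is to mirror the proof of Theorem~\ref{thm:dg_fd_to_sd_1}, replacing the $h$-version approximation estimate of~\cite{schoetzau_schwab} by its $hp$-version. First I would use Lemma~\ref{lemma:dg_formulation_using_lh} to pass from Problem~\ref{prob:dg_formulation} to the equivalent formulation~\eqref{eq:dg_formulation_using_lh} for the semidiscrete semigroup, so that $u_{h,k}=\trace(\UUdxyt)$ solves $\widetilde{B}(u_{h,k},v_h)=\widetilde{F}(v_h)$ on $\SS^{\mathbf{r}_t,0}(\TT_{(0,T)})\otimes\HHx$. As in Theorem~\ref{thm:dg_fd_to_sd_1}, the DG solution depends continuously on the initial datum by coercivity of $\widetilde{B}$ (\cite[Lemma 2.7]{schoetzau_schwab}); hence, up to the additive term $C(T)\norm{\Pi_{L^2}u_0-u_{h,0}}_{L^2(\Omega)}^2$, which is bounded by $e^{-b\mathcal{N}_\Omega^\mu}$ via Assumption~\ref{ass:approx_of_unitial_condtiion}, I may take $u_{h,0}$ as the initial condition. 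I then set $H:=(\HHx,\norm{\cdot}_{L^2(\Omega)})$, $X:=(\HHx,\norm{\cdot}_{\HHx})$ and $a(u,v):=\ltwoprodX{\LL^s_h u}{v}$, which is bounded, coercive and symmetric on $X$ by Lemma~\ref{lemma_llh_is_elliptic}, so that the abstract framework of~\cite{schoetzau_schwab} applies.

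The substantive step is to verify that the semidiscrete trajectory $t\mapsto u_h(t)$ possesses the weighted analytic regularity in time required for exponential convergence of $hp$-DG on a mesh geometrically refined towards $t=0$ with linearly growing polynomial degrees, \emph{with constants independent of $\HHx$}. Two contributions enter via Duhamel's principle $u_h=\sg_h(t)u_{h,0}+\int_0^t\sg_h(\tau)\Pi_{L^2}f(t-\tau)\,d\tau$. For the homogeneous part, Theorem~\ref{thm:llh_generates_sg} says $-\LL^s_h$ generates an analytic semigroup on $(\HHx,\norm{\cdot}_{L^2(\Omega)})$ with $h$-independent bound; combining the resulting Cauchy-type estimates $\norm{(\LL^s_h)^{\ell}\sg_h(t)v}_{L^2(\Omega)}\lesssim \ell!\,(Ct)^{-\ell}\norm{v}_{L^2(\Omega)}$ with the interpolation-space membership $u_{h,0}\in\mathbb{V}^{\mathcal{X}}_{h,\beta}$ (Assumption~\ref{ass:approx_of_unitial_condtiion}) and $\norm{v}_{\HHx}^2\sim\ltwoprodX{\LL^s_h v}{v}$ yields $\norm{[\sg_h(\cdot)u_{h,0}]^{(\ell)}(t)}_{\HHx}\lesssim \ell!\,(Ct)^{-\ell}\,t^{(\beta-1)/2}\norm{u_{h,0}}_{\mathbb{V}^{\mathcal{X}}_{h,\beta}}$, the discrete analogue of Lemma~\ref{lemma:apriori}~(\ref{it:sg_apriori_stability}) with $\gamma=1$; the algebraic weight $t^{(\beta-1)/2}$ encodes the startup singularity. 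For the inhomogeneous part, the hypothesis $\norm{f^{(\ell)}(t)}_{L^2(\Omega)}\le C d^\ell\Gamma(\ell+1)$ together with analyticity of $\sg_h$ gives, after repeated integration by parts in the convolution, bounds of the form $\norm{w_h^{(\ell)}(t)}_{\HHx}\lesssim \ell!\,(Ct)^{-\ell}\,t^{1/2}$; the extra factor $t$ from the Duhamel convolution makes this term strictly less singular at $t=0$ than the $u_{h,0}$ term for every $\beta\in(0,1)$, so it does not worsen the singularity exponent (this is the discrete counterpart of Lemma~\ref{lemma:apriori}~(\ref{it:sg_apriori_stability_inh1})). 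Thus $u_h$ lies in the class of functions analytic on $(0,T]$ with a single algebraic singularity at $t=0$ of exponent controlled by $\beta$, which is exactly what~\cite{schoetzau_schwab} analyzes for incompatible data.

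With this regularity in hand, the $hp$-version exponential-convergence estimate of~\cite{schoetzau_schwab} (the $hp$-counterpart of the $h$-version result~\cite[Theorem 5.10]{schoetzau_schwab} used above, now on the geometric mesh $\TT^M_{(\mathbf{0},t_1)}\cup\TT_{(t_1,T)}$ with linearly increasing $\mathbf{r}_t$) gives $\big(\int_0^T\norm{u_h(t)-u_{h,k}(t)}_{\HHx}^2\,dt\big)^{1/2}\lesssim e^{-bM}$; since $N=\dim(\SS^{\mathbf{r}_t,0}(\TT_{(0,T)}))\sim M^2$ for a linearly increasing degree vector on $M$ geometric layers, this is $\lesssim e^{-b'N^{1/2}}$. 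Passing from the $\HHx$-norm to the $\widetilde{H}^s(\Omega)$-norm by the lower bound $\norm{v}_{\widetilde{H}^s(\Omega)}^2\lesssim\ltwoprodX{\LL^s_h v}{v}\sim\norm{v}_{\HHx}^2$ of Lemma~\ref{lemma_llh_is_elliptic}, and adding the $e^{-b\mathcal{N}_\Omega^\mu}$ term from the initial-condition replacement, yields the stated estimate. The main obstacle is the regularity step of the second paragraph: one must establish the weighted-analytic time regularity of $u_h$ with constants uniform in the discretization parameters, which is precisely why Assumption~\ref{ass:approx_of_unitial_condtiion} is phrased through the $h$-independent interpolation spaces $\mathbb{V}^{\mathcal{X}}_{h,\beta}$ and why the ellipticity constants in Lemma~\ref{lemma_llh_is_elliptic} and the semigroup bound in Theorem~\ref{thm:llh_generates_sg} were tracked to be independent of $\HHx$ and $\HHy$.
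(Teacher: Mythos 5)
Your proposal is correct and follows essentially the same route as the paper: reduce via Lemma~\ref{lemma:dg_formulation_using_lh}, swap $u_0$ for $u_{h,0}$ at an exponentially small cost using Assumption~\ref{ass:approx_of_unitial_condtiion} and the coercivity of $\widetilde{B}$, verify the abstract hypotheses with $H=(\HHx,\norm{\cdot}_{L^2(\Omega)})$, $X=(\HHx,\norm{\cdot}_{\HHx})$, $a(u,v)=\ltwoprodX{\LL^s_h u}{v}$ via Lemma~\ref{lemma_llh_is_elliptic}, invoke the $hp$-DG result of \cite[Section 5.1.2]{schoetzau_schwab}, use $N\sim M^2$, and pass to the $\widetilde{H}^s(\Omega)$-norm. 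Your middle paragraph re-deriving the weighted analytic time regularity of $u_h$ is sound but not needed as a separate step, since the cited abstract $hp$-DG theory already produces this regularity from the self-adjoint coercive operator, the interpolation-space membership $u_{h,0}\in\mathbb{V}^{\mathcal{X}}_{h,\beta}$, and the time-analyticity of $f$.
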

\begin{proof}
  The proof is  analogous to Theorem~\ref{thm:dg_fd_to_sd_1}, except we now invoke \cite[Section 5.1.2]{schoetzau_schwab}.
\end{proof}

For the model problem of smooth geometries, we can give explicit bounds for the full discretization.
\begin{corollary}
  \label{corr:hpdg_full_result}
  Assume that we are in the simplified setting of Section~\ref{sect:one_dimensional}
   and let the spaces for $\HHxy$ be designed as in Corollary~\ref{cor:error_est_cont_to_sd_energy_1d}.
  Denote the number of layers used in $\HHy$ as $M$.
  Assume that $u_0$ is analytic and $f$, $\dot{f}$ are  uniformly analytic in  a neighborhood $\widetilde{\Omega} \supset \overline{\Omega}$.

  Let $\TT_{(0,T)}:=\TT^M_{(\mathbf{0},t_1)} \cup \TT_{(t_1,T)}$
   be a mesh on $(0,T)$ which is geometrically refined towards $0$ with $M$ layers and
  has constant size for larger times $(t_1,T)$.
  We chose $\mathbf{r}_t$ such that it is linearly increasing on the geometrically refined part
  and constant afterwards.
  We take $M \sim L$, where $L$ is the number of levels used for $\HHy$.
  
  In addition, assume that the right-hand side $f$ satisfies 
  \begin{align*}
    \norm{f^{(\ell)}(t)}_{L^2(\Omega)}\leq C d^{\ell} \Gamma(\ell +1)   \qquad \forall t\in [0,T], \ell \in \N_0,
  \end{align*}
  with constants $C$ and $d$ independent of $\ell$ and $t$.

  Then there exist constants $C>0$, $b>0$ such that the following error estimate holds:
  \begin{align*}
    \sqrt{\int_{0}^{T}{\norm{u(t) - u_{h,k}(t)}_{\widetilde{H}^{s}(\Omega)}^2}}
    &\lesssim  e^{- b \left[\dim(\HHxyt)\right]^{\frac{1}{d+5}}}
  \end{align*}
  The implied constant depends on $u_0$, $f$, end time $T$, the domain $\Omega$, $\widetilde{\Omega}$, the mesh grading $\sigma$ as well as on $s$.
\end{corollary}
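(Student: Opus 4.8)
The plan is to combine the semidiscretization error bound of Corollary~\ref{cor:error_est_cont_to_sd_energy_1d} with the time-stepping error bound of Theorem~\ref{thm:dg_fd_to_sd_2} via a triangle inequality, and then to tie all the discretization parameters to a single scale $L$ so that the total number of unknowns $\dim(\HHxyt)$ grows like $L^{d+5}$. Concretely, I would start from
\begin{align*}
  \sqrt{\int_{0}^{T}{\norm{u - u_{h,k}}_{\widetilde{H}^{s}(\Omega)}^2}}
  \leq \sqrt{\int_{0}^{T}{\norm{u - u_{h}}_{\widetilde{H}^{s}(\Omega)}^2}}
  + \sqrt{\int_{0}^{T}{\norm{u_{h} - u_{h,k}}_{\widetilde{H}^{s}(\Omega)}^2}},
\end{align*}
where $u_h$ is the semidiscrete solution of~\eqref{eq:sd_semigroup}.

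First I would fix the spatial discretization exactly as in Corollary~\ref{cor:error_est_cont_to_sd_energy_1d}, that is, as in Corollary~\ref{cor:constructive_error_est_cont_to_sd}: an anisotropic geometric mesh for $\HHx$ with $\sim\tfrac{3}{2}L$ layers and polynomial degree $p\sim L$, and the mesh of Definition~\ref{def:geo-mesh-y} for $\HHy$ with $L$ layers and linearly increasing degree. Then $\mathcal{N}_{\Omega}=\dim(\HHx)\sim L^{d+1}$, $\dim(\HHy)\sim L^{2}$, hence $\dim(\HHxy)\sim L^{d+3}$, and Lemma~\ref{lemma:inital_condition_in_right_space} supplies a discrete initial datum $u_{h,0}$ satisfying Assumption~\ref{ass:approx_of_unitial_condtiion} with $\mu=1/(d+1)$. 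For the time mesh I take $\TT_{(0,T)}=\TT^M_{(\mathbf 0,t_1)}\cup\TT_{(t_1,T)}$ with $M\sim L$ geometric layers towards $0$, constant mesh size afterwards, and $\mathbf{r}_t$ linearly increasing on the refined part; this gives $N=\dim(\SS^{\mathbf{r}_t,0}(\TT_{(0,T)}))\sim M^{2}\sim L^{2}$ and therefore $\dim(\HHxyt)=N\cdot\dim(\HHxy)\sim L^{d+5}$, i.e.\ $L\sim\dim(\HHxyt)^{1/(d+5)}$.

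Next I would bound the two contributions. The first term is estimated directly by Corollary~\ref{cor:error_est_cont_to_sd_energy_1d}, which gives $\lesssim \max(1,T^{2}\log T)^{1/2}\,e^{-bL}$; since $T$ is fixed, the prefactor is absorbed into the implied constant. For the second term I invoke Theorem~\ref{thm:dg_fd_to_sd_2}, whose hypotheses hold by the previous paragraph (Assumption~\ref{ass:approx_of_unitial_condtiion}), by the assumed bound $\norm{f^{(\ell)}(t)}_{L^2(\Omega)}\le C d^{\ell}\Gamma(\ell+1)$, and by analyticity of $u_0$; it yields $\lesssim e^{-bN^{1/2}}+e^{-b\mathcal{N}_{\Omega}^{\mu}}$. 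With $N\sim L^{2}$ and $\mathcal{N}_{\Omega}^{\mu}\sim (L^{d+1})^{1/(d+1)}=L$, both exponents are $\sim L$, so the second term is $\lesssim e^{-bL}$ as well. Adding the estimates and slightly decreasing $b$ to swallow the polynomial and logarithmic prefactors gives $\lesssim e^{-b'L}=e^{-b'[\dim(\HHxyt)]^{1/(d+5)}}$, which is the assertion.

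There is essentially no new analysis here; the step that requires care is the bookkeeping above, namely verifying that the single parameter $L$ can meet all constraints simultaneously --- those of Corollaries~\ref{cor:constructive_error_est_cont_to_sd} and~\ref{cor:error_est_cont_to_sd_energy_1d} (such as $\sigma^{M}\le c_\YY\YY(\mathfrak{s}L)^{-2}\sigma^{3L/2}$ and $\sigma^{L}<p^{2}$, together with the choices $t_0=e^{-L}$ and $\zhf=t_0^{-1/s}L^{1/s}$) and those of Theorem~\ref{thm:dg_fd_to_sd_2} (the geometric time grading with $M\sim L$ layers) --- while keeping $p\sim L$ so that $\mathcal{N}_{\Omega}^{\mu}\sim L$ matches $N^{1/2}\sim L$. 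One should also recall that replacing the non-computable initial value $u_{h,0}$ by $u_0$ in Problem~\ref{prob:dg_formulation} only adds a term $\lesssim C(T)\norm{\Pi_{L^2}u_0-u_{h,0}}_{L^2(\Omega)}^2$, which is exponentially small by Assumption~\ref{ass:approx_of_unitial_condtiion}; this contribution is already accounted for in Theorems~\ref{thm:dg_fd_to_sd_1} and~\ref{thm:dg_fd_to_sd_2}.
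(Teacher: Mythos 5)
Your proposal is correct and follows essentially the same route as the paper: the paper's proof also combines the semidiscretization estimate with the $hp$-DG time-stepping result of Theorem~\ref{thm:dg_fd_to_sd_2} and the dimension count $\dim(\HHxyt)\sim\dim(\HHxy)\cdot\dim(\SS^{\mathbf{r}_t,0}(\TT_{(0,T)}))\sim M^{d+3}M^2$. Your additional bookkeeping (triangle inequality, verifying the mesh constraints, and using the energy-norm Corollary~\ref{cor:error_est_cont_to_sd_energy_1d} for the first term) is exactly the implicit content of the paper's terse argument.
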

\begin{proof}
  Follows from Theorem~\ref{thm:dg_fd_to_sd_2}, Theorem~\ref{cor:constructive_error_est_cont_to_sd} and
  the fact that
  \begin{align*}
    \dim(\HHxyt)&\sim \dim(\HHxy) \cdot \dim\left(\SS^{\mathbf{r}_t,0}(\mathcal{T}_{(0,T)})\right) \sim  M^{d+3} \, M^2. \qedhere
    \end{align*}
\end{proof}

\subsection{Practical aspects}
\label{sect:practical_aspects}

In order to efficiently implement the scheme presented, we combine the Schur-form based approach described in~\cite{schoetzau_schwab}
  with the ideas of \cite{tensor_fem} for dealing with the extended variable.

For each time-inteval, the Schur decomposition in time leads to a sequence of problems of the form
\begin{align*}
  \sum_{j=0}^{r}{T_{ij} w_j } + \frac{k}{2} \LL_h^s w_j &= \text{r.h.s.}, \qquad {i=0,\dots r}
\end{align*}
where $T \in \C^{r\times r}$ is an upper triangular matrix. These problems can be solved using a backward-substitution,
where in each step an operator of the form $\frac{k}{\lambda_j} \LL_h^s + \id$ has to be inverted. Structurally this is
very similar to the operator $\solve^{\lambda}$, except that the parameter $\lambda:=\lambda_j/k$ is complex valued.
Proceeding like in \cite{tensor_fem} would require simultaneous diagonalization of the matrices
$$
A_{ij}:=\frac{\lambda_j}{k} v_j(0) \overline{v_i(0)} + \ltwoprodX{v_j'}{v_i'} 
\quad \text{ and } B_{ij}:=\ltwoprodX{v_j}{v_i}.
$$
Since the matrix $A$
is not  hermitean if $\Im(\lambda_j)\neq 0$, it is unclear whether this diagonalization can be done (in practice it appears to be the case). Instead
we employ the generalized Schur-form (or QZ-decomposition; see~\cite[Section 7.72]{golub_van_loan}). It gives unitary matrices
$Q$ and $Z$, such that $Q^H A Z=:T$ and $Q^H B Z=:S$ are both upper triangular. Inserting this decomposition into the definition of
$\frac{k}{\lambda_j}\LL^s_h+\id$ and using a backward-substitution leads to a sequence of problems of the form
$$\kappa_\ell \LL w_{\ell} + w_{\ell} = \text{r.h.s.}$$ for $w \in H_0^1(\Omega)$ with $\kappa_\ell \in \C$.

Overall, Problem~\ref{prob:dg_formulation} can be solved by solving
$\operatorname{dim}(\SS^{\mathbf{r}_t}(\TT_{(0,T)})) \times \operatorname{dim}(\SS^{\mathfrak{r}}(\TT^M_{(\mathbf{0},\YY)}))$ 
scalar problems posed on $\Omega$. For the case of the geometric setting of Section~\ref{sect:one_dimensional} using the
  method described in Corollary~\ref{corr:hpdg_full_result}, this means
that $\bigO(M^4)$ problems of size $\bigO(M^2)$ need to be solved.

\section{Numerical Results}
\label{sect:numerics}
In this section we test the theoretical findings of the previous sections by implementing them
using the finite element package NGSolve~\cite{ngsolve,ngsolve2} for the discretization in $\Omega$.
\subsection{Smooth solution}
\label{sect:numerics:smooth}
In order to verify our implementation, we  consider an example that has a known exact solution.
We work with the simplified model problem of Section~\ref{sect:one_dimensional}.
Namely, working in 1D, we set $\Omega:=(0,1)$, $A=I$ and $c=1$.
  The initial condition is chosen as $u_0(x):=\sin(2\pi\,x)$. As an eigenfunction of the
  Dirichlet-Laplacian this leads to the exact solution $u(x,t):=e^{-t (2\pi)^s } \sin(2\pi\,x)$. We use $s=0.5$
  and plot our findings, applying the $hp$-DG method. As seen in Figure~\ref{fig:conv_smooth}, we get the predicted exponential convergence
  with respect to the number of refinement layers.

\subsection{Singular solution}
In order to verify that our method handles startup singularities robustly, we stay in the
geometric setting of Section~\ref{sect:numerics:smooth}, but consider
the initial condition $u_0\equiv 1$ and set $s:=0.75$. We use the trivial right-hand side $f \equiv 0$.
Since the initial condition does not satisfy any compatibility condition,
we expect startup singularities. As the exact solution is unknown, we precompute a numerical solution with high accuracy using the
hp-DG method described in Corollary~\ref{corr:hpdg_full_result} with $M=13$ layers. We integrate up to the terminal time $T=1$.
Due to the predicted exponential convergence, we expect a good match of the estimated error to the (unknown) true error.

We compare different time discretization schemes. For the implicit Euler based schemes we chose a fixed polynomial degree
for discretizing $x$ and $y$ to be $p=8$. For the $hp-DG$ scheme we chose the same polynomial degree in each variable.
As an indicator for comparing the numerical cost, we use the number of systems $N$ we need  to solve involving the nonlocal
operator $\LL_h^s$. For the implicit Euler, this is proportional to the number of timesteps. For the $hp-DG$ approach
it is proportional to the number of layers $M$ squared, i.e. $N\sim M^2$.
In Figure~\ref{fig:conv_u1} we compare the spacetime $L^2$-error to the number of such systems that need solving. We see that,
as predicted, the implicit Euler method with a graded stepsize recovers the full convergence rate $\bigO(N^{-1})$ whereas
a uniform approach only yields a reduced rate. It is important to point out that practical considerations may still favor using a 
uniform grid, as in this case the corresponding matrices can be factorized only once. This yields much faster solution times in each step.
Since the reduction of order is small, the uniform approach often outperforms the graded mesh in our experience.

The best performance, as expected, is observed by the $hp-DG$ based method. It provides rapid exponential convergence of order $\bigO(e^{-b\sqrt{N}})$,
confirming Theorem~\ref{thm:dg_fd_to_sd_2} and Corollary~\ref{corr:hpdg_full_result}.

\begin{figure}
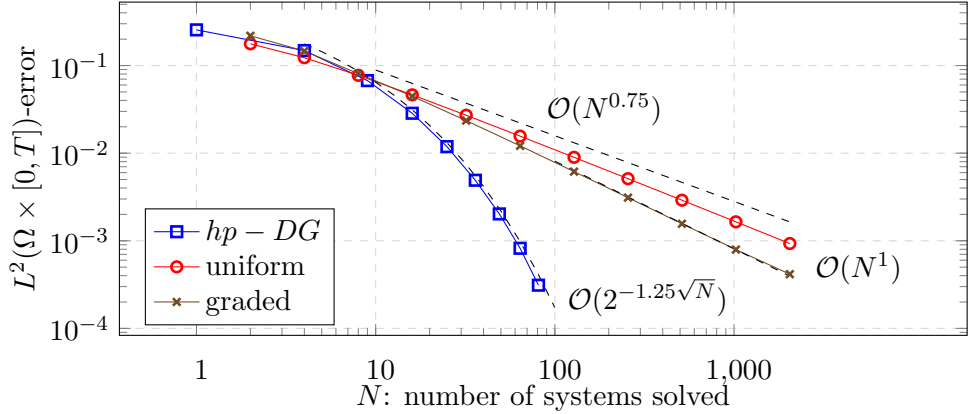

  \begin{center}
    \includeTikzOrEps{conv_u1}
  \end{center}
  \caption{Convergence rate in the case of non-matching initial condition}
  \label{fig:conv_u1}
\end{figure}

\subsection{A 2d example}
Although our theory in the 2D case is restricted to domains with an analytic boundary, we show numerically that the case of polygons can be successfully treated as well.
We chose $\Omega:=(0,1)^2$, $u_0\equiv 1$, $f\equiv 0$, $A:=\operatorname{I}$, $c=0$ and $s:=1/4$. Since no known analytic solution is avaliable,
we computed the approximation using $M=10$ levels of refinement in time and used it as our reference solution. All computations were done up to the
terminal time $T=1$ and using the $hp$-DG method. For the time discretization and
discretization in $y$, we used a geometric grid with $M$ layers. In $\Omega$ we used a geometrically refined grid of $3M/2 $ layers in accordance to
Corollary~\ref{thm:error_est_cont_to_sd}.

In Figure~\ref{fig:conv_2d}, we see that also in this case we get the exponential convergence with respect to the number of layers in the $hp$-refinement. This suggests that
our methods could also be extended to cover this case.

\begin{figure}
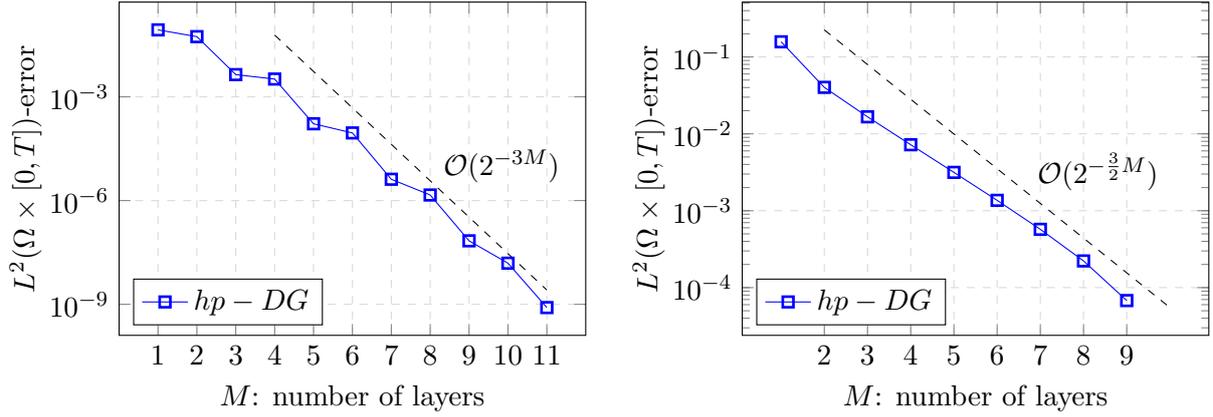

  \begin{center}
    \begin{subfigure}{0.485\textwidth}
      \includeTikzOrEps{conv_ureg}
      \caption{Convergence of the $hp$-DG method for a smooth solution}
      \label{fig:conv_smooth}
    \end{subfigure}
    \hfill
    \begin{subfigure}{0.485\textwidth}
      \includeTikzOrEps{conv_2d}
      \caption{Convergence rate in the case of non-matching initial condition in 2D}
      \label{fig:conv_2d}
    \end{subfigure}
  \end{center}
  \caption{Convergence for the $2d$ and smooth cases} 
\end{figure}


\appendix

\section{Exponential convergence of $hp$-FEM for singularly perturbed problems with complex coefficients}
\label{appendix:oned_singular_perturbations}
In this appendix we provide the details for the regularity and approximability by high order FEM on suitably designed meshes for singularly perturbed problems with a complex perturbation parameter. We consider  on a domain 
$\Omega \subset \R^d$, $d \in \{1,2\}$ the problem of finding 
$u_\varepsilon \in H_0^1(\Omega)$ such that
\begin{align}
  \label{eq:sing_pert_model}  
    L_\varepsilon u_\varepsilon := -\varepsilon^{2} \fdiv\big(A(x,y) \nabla u_{\varepsilon}\big) + \zeta(x,y) u_{\varepsilon} &= f.
  \end{align}  
Concerning the data of this problem, we make the following assumption:
\begin{assumption}
\label{ass:sing-perturb-complex}
The domain $\Omega \subset \R^d$, $d \in \{1,2\}$ has an analytic boundary
and $\widetilde\Omega$ is a domain with $\overline{\Omega} \subset \widetilde\Omega$. 

The parameter $\varepsilon$ satisfies $\varepsilon > 0$, 
and the matrix valued function 
$A \in L^{\infty}(\Omega,\R^{d\times d})$ is analytic on 
$\widetilde{\Omega}$, pointwise and uniformly SPD. 
The function $f \in L^2(\Omega)$ is analytic on $\widetilde\Omega$. The
function $\zeta \in L^\infty(\Omega)$ is such that: 
\begin{enumerate}[(i)]
\item $\zeta(x,y) \in \widetilde{\mathcal{S}}:=\big\{ z \in \C: \abs{\pi - \operatorname{Arg}(z)} \geq \delta > 0 \big\}$,
\item $\Im(\zeta) \in \R$ is constant,
\item $\abs{\zeta(x,y)} \geq  \zeta_0 >0$ in $\Omega$,
\item $\zeta$ is analytic in $\widetilde \Omega$.
\end{enumerate}
\end{assumption}
Associated with the operator $L_\varepsilon$ is the  sesquilinear form 
$$a_{\varepsilon}(u,v):=\varepsilon^2 \int_{\Omega}{ A\nabla u \overline{\nabla v}} + \int_{\Omega} { \zeta\,u \,\overline{v}},$$
and the energy norm $\norm{\cdot}^2_{\varepsilon}:=\varepsilon^2\norm{\nabla \cdot}_{L^2(\Omega)}^2 + \norm{\cdot}_{L^2(\Omega)}^2$.

\begin{lemma}
  \label{lemma:sing_is_elliptic}
Let Assumption~\ref{ass:sing-perturb-complex} be valid. Then 
  the bilinear form $a_\varepsilon(\cdot,\cdot)$ is bounded and elliptic in the energy norm, i.e., 
  there exists $\theta(\zeta) \in (-\pi,\pi)$ such that
  \begin{align*}
    \norm{u}_{\varepsilon}^2 \lesssim \Re\left( e^{\ii \theta(\zeta)} a_{\varepsilon}(u,u) \right)  \qquad \text{ and } \qquad
    \abs{a_\varepsilon(u,v)}\lesssim \norm{u}_{ \varepsilon} \norm{v}_{\varepsilon}.
  \end{align*}
  The implied constants do not depend on $\varepsilon$ or $u$.
  This implies for the solution $u_\varepsilon$ to~\eqref{eq:sing_pert_model}:
  \begin{align}
    \label{eq:apriori_uz}
    \norm{u_\varepsilon}_{\varepsilon} \leq C \norm{f}_{L^2(\Omega)}.
  \end{align}
\end{lemma}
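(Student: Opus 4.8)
The plan is: boundedness is a one-line Cauchy--Schwarz estimate, the substance is the rotated coercivity, which I would reduce to an elementary plane-geometry statement about the range of $\zeta$, and the a~priori bound then follows by testing the equation with its solution. For boundedness, directly $\abs{a_\varepsilon(u,v)}\le\varepsilon^2\|A\|_{L^\infty}\|\nabla u\|_{L^2(\Omega)}\|\nabla v\|_{L^2(\Omega)}+\|\zeta\|_{L^\infty}\|u\|_{L^2(\Omega)}\|v\|_{L^2(\Omega)}\lesssim\|u\|_\varepsilon\|v\|_\varepsilon$. For coercivity, I first split off the imaginary part: since $\tau:=\Im\zeta$ is a real constant (Assumption~\ref{ass:sing-perturb-complex}(ii)) and $A$ is real and symmetric, so that $\int_\Omega A\nabla u\cdot\overline{\nabla u}$ is real and $\ge\lambda_{\min}(A)\|\nabla u\|_{L^2(\Omega)}^2$, we have
$$a_\varepsilon(u,u)=R+i\,\tau\|u\|_{L^2(\Omega)}^2,\qquad R:=\varepsilon^2\!\int_\Omega A\nabla u\cdot\overline{\nabla u}+\int_\Omega(\Re\zeta)\abs{u}^2\in\R .$$
Thus the only term that can be ``bad'' is $\int_\Omega(\Re\zeta)\abs{u}^2$, and everything reduces to choosing an angle $\theta$ that rotates the pair $\bigl(R,\tau\|u\|_{L^2(\Omega)}^2\bigr)$ into the right half-plane with real part $\gtrsim\|u\|_\varepsilon^2$.

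Next I would extract the geometry of the coefficient. Assume without loss of generality $\delta<\pi/2$ (this only enlarges $\widetilde{\mathcal{S}}$). Writing $\zeta(x)=\abs{\zeta(x)}e^{i\psi(x)}$ and using $\abs{\zeta}\ge\zeta_0$ together with $\abs{\pi-\psi(x)}\ge\delta$, two elementary trigonometric facts hold at every point: (a) $\Re\zeta(x)\ge-\abs{\tau}\cot\delta$, and (b) if $\Re\zeta(x)\le0$ then $\abs{\tau}\ge\zeta_0\sin\delta$. In the case $\abs{\tau}<\zeta_0\sin\delta$, fact (b) forces $\Re\zeta>0$ everywhere, and $\abs{\zeta}\ge\zeta_0$ then gives $\Re\zeta(x)=\sqrt{\abs{\zeta(x)}^2-\tau^2}\ge\zeta_0\cos\delta>0$; choosing $\theta(\zeta):=0$ yields $\Re a_\varepsilon(u,u)=R\ge\lambda_{\min}(A)\varepsilon^2\|\nabla u\|_{L^2(\Omega)}^2+\zeta_0\cos\delta\,\|u\|_{L^2(\Omega)}^2\gtrsim\|u\|_\varepsilon^2$.

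In the remaining case $\abs{\tau}\ge\zeta_0\sin\delta$, I would take $\theta(\zeta):=-\operatorname{sign}(\tau)\bigl(\tfrac\pi2-\tfrac\delta2\bigr)\in(-\tfrac\pi2,\tfrac\pi2)$. Using $\cos\theta=\sin\tfrac\delta2>0$, the lower bound on $R$, and fact (a),
$$\Re\bigl(e^{i\theta}a_\varepsilon(u,u)\bigr)=R\cos\theta-\tau\sin\theta\,\|u\|_{L^2(\Omega)}^2\ge\sin\tfrac\delta2\,\lambda_{\min}(A)\,\varepsilon^2\|\nabla u\|_{L^2(\Omega)}^2+\abs{\tau}\Bigl(\cos\tfrac\delta2-\cot\delta\sin\tfrac\delta2\Bigr)\|u\|_{L^2(\Omega)}^2 .$$
The bracket equals $\sin(\delta/2)/\sin\delta>0$, and $\abs{\tau}\ge\zeta_0\sin\delta$, so the reaction coefficient is $\ge\zeta_0\sin(\delta/2)$; hence $\Re(e^{i\theta}a_\varepsilon(u,u))\gtrsim\|u\|_\varepsilon^2$ with a constant depending only on $\delta$, $\zeta_0$, and the ellipticity constant of $A$ --- crucially not on $\varepsilon$.

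Finally, the rotated form $e^{i\theta(\zeta)}a_\varepsilon$ is bounded and coercive on $H_0^1(\Omega)$ in the energy norm, so Lax--Milgram gives a unique $u_\varepsilon$ solving~\eqref{eq:sing_pert_model}; testing with $v=u_\varepsilon$ and using coercivity, $\|u_\varepsilon\|_\varepsilon^2\lesssim\Re(e^{i\theta}a_\varepsilon(u_\varepsilon,u_\varepsilon))=\Re(e^{i\theta}\int_\Omega f\,\overline{u_\varepsilon})\le\|f\|_{L^2(\Omega)}\|u_\varepsilon\|_{L^2(\Omega)}\le\|f\|_{L^2(\Omega)}\|u_\varepsilon\|_\varepsilon$, which is~\eqref{eq:apriori_uz}. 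The main obstacle is exactly the uniformity in the coercivity step: any $\abs{\theta}<\pi/2$ handles the nonnegative gradient contribution, but the reaction part $\int_\Omega\zeta\abs{u}^2$ may lie in the left half-plane, and the rotated real part must stay $\gtrsim\|u\|_{L^2(\Omega)}^2$ with a constant that does not collapse as $\varepsilon\to0$, as $\zeta$ approaches the excluded cone, or as $\abs{\zeta}\downarrow\zeta_0$; the case split on $\abs{\tau}$ versus $\zeta_0\sin\delta$ is what makes a single admissible angle available in each regime.
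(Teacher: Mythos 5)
Your proof is correct and takes essentially the same route as the paper: boundedness by Cauchy--Schwarz, coercivity by rotating the form through a unimodular factor whose angle is fixed by the sign of the constant $\Im(\zeta)$, and \eqref{eq:apriori_uz} by Lax--Milgram and testing with $u_\varepsilon$. The only (cosmetic) difference is that the paper verifies pointwise that the single rotation $\alpha=e^{\mp \ii(\pi-\delta)/2}$ --- your case-2 angle --- already gives $\Re(\alpha)>0$ and $\Re(\alpha\,\zeta(x))\geq \zeta_0\sin(\delta/2)$ uniformly in $\Omega$, so the case distinction $\abs{\tau}\lessgtr\zeta_0\sin\delta$, forced on you by working only with the aggregated quantity $R$ and the global bound $\Re\zeta\geq-\abs{\tau}\cot\delta$, is not needed.
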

\begin{proof}
  For $\alpha \in \C$ with $|\alpha| = 1$, we compute:  
  \begin{align*}
     \Re\left( \alpha a_{\varepsilon}(u,u) \right)
    &= \Re( \alpha) \varepsilon^2 \norm{\nabla u}_{L^2(\Omega)}^2 + \int_{\Omega}{\Re(\alpha \zeta) \abs{u}^{2}}.
  \end{align*}
  Thus it remains to show that we can choose $\alpha$ such that $\Re(\alpha) > 0$ and $\Re(\alpha \zeta) > 0$
  uniformly in $\Omega$. If $\Im(\zeta)\geq 0$, we can pick $\alpha:=e^{-\ii \frac{\pi-\delta}{2}}$, otherwise $\alpha:=e^{\ii \frac{\pi-\delta}{2}}$
  does the trick.    
  The estimate~\eqref{eq:apriori_uz} follows from the Lax-Milgram lemma.
\end{proof}

The previous lemma ensures existence and uniqueness of solutions $u_\varepsilon$.
In the next one we further prove that $u_\varepsilon$ is analytic with explicit bounds
on the derivative with respect to the parameter $\varepsilon$.
\begin{lemma}
\label{lemma:sing_apriori}
Let Assumption~\ref{ass:sing-perturb-complex} be valid. 
Let $u_\varepsilon \in H^1_0(\Omega)$ solve \eqref{eq:sing_pert_model}
Then $u_\varepsilon$ is analytic on $\overline{\Omega}$ and satisfies:
\begin{subequations}
  \label{eq:sing_apriori}
\begin{align}
  \norm{u_\varepsilon}_{\varepsilon} &\leq C \qquad \text{and} \qquad
  \norm{\nabla^{p+2} u_{\varepsilon}}_{L^2(\Omega)}\leq C K^{p} \max(p+1,\varepsilon^{-1})^{p+2} \quad \forall p \in \N_0. 
\end{align}
\end{subequations}
\end{lemma}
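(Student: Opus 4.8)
The goal is an analytic-regularity (countably-normed) estimate for the solution $u_\varepsilon$ of the complex singularly perturbed problem~\eqref{eq:sing_pert_model}, uniform in $\varepsilon$. The natural strategy is to adapt the real-parameter theory of~\cite{melenk97,melenk_book} (see also~\cite{MS98}), where such bounds are proved via interior analytic regularity plus a boundary-layer analysis. Since the only structural change here is that $\zeta$ takes complex values in the sector $\widetilde{\mathcal S}$ and that $A$ is complex-coefficient-free but $\zeta$ complex, I would check that every step of that machinery is insensitive to the argument of $\zeta$, using the uniform G\aa rding/ellipticity estimate of Lemma~\ref{lemma:sing_is_elliptic} as the sole place where positivity of coefficients entered.

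\textbf{Step 1 (a priori $H^1$ bound).} The first estimate $\norm{u_\varepsilon}_\varepsilon \le C$ is already contained in~\eqref{eq:apriori_uz} of Lemma~\ref{lemma:sing_is_elliptic}, using that $f$ is analytic on $\widetilde\Omega$, hence in $L^2(\Omega)$ with controlled norm.

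\textbf{Step 2 (interior and tangential analytic regularity).} I would establish analytic-type estimates for derivatives of $u_\varepsilon$ on compact subsets of $\Omega$ and, near $\partial\Omega$, for the derivatives \emph{tangential} to the boundary. This uses a dyadic covering argument with balls of radius proportional to the distance to $\partial\Omega$ (and to $\varepsilon$): on each ball one applies local elliptic Caccioppoli/Nirenberg-type difference quotient estimates for the operator $-\varepsilon^2\fdiv(A\nabla\cdot)+\zeta$, rescaled so that the ball has unit size. The key point is that, after the rescaling $x\mapsto \varepsilon^{-1} x$ on balls of size $\lesssim\varepsilon$ near the boundary and ordinary balls of size $\lesssim\operatorname{dist}(x,\partial\Omega)$ in the interior, the rescaled operator has analytic coefficients with bounds uniform in $\varepsilon$, and Lemma~\ref{lemma:sing_is_elliptic} gives coercivity with an $\varepsilon$-independent constant. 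Iterating the local shift theorem over the number of derivatives, with careful bookkeeping of the combinatorial constants (this is the classical ``analytic regularity'' induction), produces the weighted bound
\[
  \norm{\nabla^{p+2} u_\varepsilon}_{L^2(\Omega)} \le C K^p \max(p+1,\varepsilon^{-1})^{p+2}, \qquad p\in\N_0.
\]
The factor $\max(p+1,\varepsilon^{-1})$ reflects the two competing length scales: $1/(p+1)$ from the analyticity radius and $\varepsilon$ from the singular perturbation.

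\textbf{Step 3 (normal derivatives via the equation).} Once tangential derivatives are controlled, normal derivatives of all orders are recovered by repeatedly differentiating the PDE $-\varepsilon^2\fdiv(A\nabla u_\varepsilon) = f - \zeta u_\varepsilon$ and solving for the highest normal derivative, paying one factor of $\varepsilon^{-2}$ each time two normal derivatives are traded for lower-order (tangential and already-estimated) terms plus derivatives of the analytic data $A,\zeta,f$. This is routine once Step 2 is in place.

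\textbf{Main obstacle.} The hard part is Step 2: propagating the analytic (factorial-growth) bounds uniformly in $\varepsilon$ through the dyadic decomposition. One must track that the constant $K$ does not degrade with $\varepsilon$, which requires the local estimates to be genuinely scale-invariant after rescaling and the coercivity constant in Lemma~\ref{lemma:sing_is_elliptic} to be $\varepsilon$-independent — the latter is exactly why the complex rotation $\alpha$ in that lemma was isolated. The combinatorial summation over dyadic layers (geometric in the layer index, factorial in the derivative count) is the technically delicate bookkeeping, but it follows the template of~\cite[Chapter 4]{melenk_book} essentially verbatim, the only new input being that all coercivity and boundedness constants are now the $\varepsilon$-uniform ones from Lemma~\ref{lemma:sing_is_elliptic} rather than their real-coefficient analogues.
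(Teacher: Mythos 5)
Your proposal is correct and follows essentially the same route as the paper: the paper simply invokes \cite[Theorem~2.3.1]{melenk_book} (resp.\ Proposition~2.2.1 in 1D) and observes that the only place where positivity of $\zeta$ enters is the coercivity, which is replaced by Lemma~\ref{lemma:sing_is_elliptic}, exactly the structural point your sketch isolates. Your Steps 1--3 are a reasonable reconstruction of the internal mechanics of that cited result, so there is no substantive difference in approach.
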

\begin{proof}
  The statement is the restriction of \cite[Theorem~{2.3.1}]{melenk_book} (for 2D, in 1D the relevant result is Proposition~{2.2.1}) to the case of smooth domains.
  While this reference only considers $\zeta > 0$, the proof carries over almost verbatim. The only
  modification needed is the coercivity estimate from Lemma~\ref{lemma:sing_is_elliptic}.
\end{proof}

While Lemma~\ref{lemma:sing_apriori} will provide exponential convergence in the asymptotic case of
sufficiently large polynomial degree, the more practically relevant regime is treated using the following lemma:
\def\ubl{u_{\varepsilon}^{\textrm{BL}}}
\begin{lemma}  
  \label{lemma:oned_sing_decomposition}
Let Assumption~\ref{ass:sing-perturb-complex} be valid. 
Let $u_\varepsilon \in H^1_0(\Omega)$ solve \eqref{eq:sing_pert_model} 
for $\varepsilon \in (0,1]$. 
  Then there exists 
a smooth cut-off function $\chi$
supported by a tubular neighborhood of $\partial\Omega$ with $\chi \equiv 1$ 
in a neighborhood of $\partial\Omega$ and constants
 $C$, $\gamma$, $b > 0$ 
independent of $\varepsilon  \in (0,1] $ such that  $u_\varepsilon$ can be decomposed as
  \begin{align*}
    u_\varepsilon &= w_\varepsilon + \chi \ubl + r_{\varepsilon}
  \end{align*}
  with the following properties:
  \begin{enumerate}[(i)]
  \item \label{it:estimate_smooth_part}
    The smooth part $w_\varepsilon$ is analytic in $\Omega$ and satisfies
    $\norm{\nabla^{p} w_\varepsilon}_{L^\infty(\Omega)} \leq C \gamma^p p!$ for all $p \in \N_0$.    
  \item
    \label{it:estimate_remainder}
    The remainder $r_{\varepsilon} \in H_0^1(\Omega)$ satisfies
    $\norm{r_{\varepsilon}}_{\varepsilon} + \norm{r_{\varepsilon}}_{H^1(\Omega)} + \norm{\nabla^2 r_{\varepsilon}}_{L^2(\Omega)}\lesssim C  e^{- b/\varepsilon }$.
  \item
    \label{it:estimate_bl}
    Using boundary fitted coordinates $(\rho,\theta)$, 
    where $\rho = \operatorname{dist}(\cdot,\partial\Omega)$ and 
    $\theta$ is a parametrization of $\partial\Omega$,  the boundary layer 
    $\ubl$ can be estimated
    \begin{align*}
      \sup_{\theta} \abs{\partial_{\rho}^{n} \partial_{\theta}^{m}\ubl(\rho,\theta)}\leq C \varepsilon^{-n} \gamma^{n+m} m! e^{-\alpha \rho/\varepsilon},\;\;\rho \geq 0.
    \end{align*}
  \end{enumerate}
\end{lemma}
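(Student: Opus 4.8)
The plan is to reduce this to the established theory for singularly perturbed problems with real, positive reaction coefficient, i.e.\ to the boundary-layer decomposition results in \cite{melenk_book} (for $d=2$, Theorem~2.4.3 and the surrounding analysis; for $d=1$, Section~2.2), and then check that the only place where positivity of $\zeta$ was used — the coercivity of $a_\varepsilon$ — has already been replaced by Lemma~\ref{lemma:sing_is_elliptic}. Concretely, I would first recall that by Lemma~\ref{lemma:sing_apriori} the solution $u_\varepsilon$ is analytic on $\overline{\Omega}$ with the explicit $\varepsilon$-dependent derivative bounds $\norm{\nabla^{p+2}u_\varepsilon}_{L^2(\Omega)}\lesssim K^p \max(p+1,\varepsilon^{-1})^{p+2}$. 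These bounds are exactly the hypothesis needed to run the asymptotic-expansion-plus-boundary-corrector construction: one writes a formal outer expansion $u_\varepsilon \sim \sum_{j\ge0}\varepsilon^{2j} U_j$, where $\zeta U_0 = f$, $\zeta U_j = \fdiv(A\nabla U_{j-1})$, truncates it at $j \sim 1/\varepsilon$ to obtain the analytic part $w_\varepsilon$ (its derivative bounds $\norm{\nabla^p w_\varepsilon}_{L^\infty}\le C\gamma^p p!$ follow from analyticity of $f$, $A$, $1/\zeta$ on $\widetilde\Omega$, using $\abs{\zeta}\ge\zeta_0$ and analyticity of $\zeta$, item (iv) of Assumption~\ref{ass:sing-perturb-complex}), then constructs a boundary layer $\ubl$ by solving, in boundary-fitted coordinates $(\rho,\theta)$, a one-dimensional ODE in the stretched variable $\rho/\varepsilon$ whose coefficient involves $\zeta$ and the principal part of $A$ restricted to $\partial\Omega$.

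The key point where complex $\zeta$ enters is the decay estimate (iii) for $\ubl$. The boundary-layer ODE is, to leading order, $-a(\theta)\,\partial^2_{\hat\rho}V + \zeta V = \text{(lower order)}$ with $\hat\rho=\rho/\varepsilon$; its characteristic roots are $\pm\sqrt{\zeta/a(\theta)}$. Since Assumption~\ref{ass:sing-perturb-complex}(i) forces $\zeta\in\widetilde{\mathcal S}$, i.e.\ $\zeta$ stays away from the negative real axis by a fixed angle $\delta$, and $a(\theta)>0$, the quantity $\zeta/a(\theta)$ also avoids the negative real axis uniformly, so the principal branch of its square root has real part bounded below: $\Re\sqrt{\zeta/a(\theta)}\ge \alpha>0$ uniformly in $\theta$, with $\alpha$ depending only on $\delta$, $\zeta_0$ and the ellipticity of $A$. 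This yields the exponential decay $e^{-\alpha\rho/\varepsilon}$ and the stated derivative bounds (the factors $\varepsilon^{-n}$ from differentiating in the stretched variable, the $\gamma^{n+m}m!$ from analyticity in $\theta$). The constancy of $\Im(\zeta)$, Assumption~\ref{ass:sing-perturb-complex}(ii), is what makes the stretched-variable ODE have (piecewise) constant-in-$\rho$ coefficients to leading order, exactly as in the real case, so that the corrector can be written down explicitly.

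Finally, the remainder $r_\varepsilon := u_\varepsilon - w_\varepsilon - \chi\ubl$ lies in $H_0^1(\Omega)$ by construction (the cutoff $\chi$ restores the homogeneous boundary condition), and it satisfies $L_\varepsilon r_\varepsilon = g_\varepsilon$ where $g_\varepsilon$ collects the truncation error of the outer expansion (of size $\varepsilon^{2\lceil 1/\varepsilon\rceil}(C/\varepsilon)^{\lceil 1/\varepsilon\rceil}\lesssim e^{-b/\varepsilon}$ by the standard Stirling-type estimate $\varepsilon^{2N}N! \lesssim e^{-b/\varepsilon}$ at $N\sim 1/\varepsilon$), the commutator $[\chi, L_\varepsilon]\ubl$ (exponentially small because $\nabla\chi$ is supported where $\rho$ is bounded away from $0$, hence $e^{-\alpha\rho/\varepsilon}$ is exponentially small there), and the higher-order terms of the boundary-layer equation (exponentially small for the same reason). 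Then the coercivity estimate of Lemma~\ref{lemma:sing_is_elliptic} gives $\norm{r_\varepsilon}_\varepsilon \lesssim \norm{g_\varepsilon}_{L^2(\Omega)}\lesssim e^{-b/\varepsilon}$; the bounds $\norm{r_\varepsilon}_{H^1(\Omega)}$ and $\norm{\nabla^2 r_\varepsilon}_{L^2(\Omega)}\lesssim e^{-b/\varepsilon}$ follow either by elliptic regularity applied to $L_\varepsilon r_\varepsilon = g_\varepsilon$ (noting $\varepsilon\le 1$, so the $\varepsilon$-weights only help) or, more cheaply, by subtracting the corresponding bounds for $u_\varepsilon$, $w_\varepsilon$, $\chi\ubl$ individually — but the cleanest route is the $H^2$-regularity bound for $L_\varepsilon$ on analytic domains, again citing \cite{melenk_book} with the coercivity replaced by Lemma~\ref{lemma:sing_is_elliptic}. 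I expect the main obstacle to be purely bookkeeping: verifying that every estimate in the real-$\zeta$ proof of \cite{melenk_book} that used $\zeta>0$ either only used $\abs{\zeta}\ge\zeta_0$ together with $\arg\zeta$ bounded away from $\pi$, or can be rephrased that way; the genuinely new analytic input is only the uniform lower bound $\Re\sqrt{\zeta/a}\ge\alpha>0$, which is elementary given Assumption~\ref{ass:sing-perturb-complex}(i).
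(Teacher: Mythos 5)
Your proposal is correct and follows essentially the same route as the paper: adapt the asymptotic-expansion-plus-boundary-corrector decomposition of \cite{melenk_book} (outer expansion via $1/\zeta$, boundary-fitted corrector, exponentially small remainder handled by the a priori estimate), with the only genuinely new inputs being the coercivity of Lemma~\ref{lemma:sing_is_elliptic} and the uniform bound $\Re\sqrt{\zeta/A_{22}}\geq\alpha>0$ coming from $\zeta\in\widetilde{\mathcal S}$, which is exactly the paper's argument. Only a cosmetic remark: the constancy of $\Im(\zeta)$ is used in the paper to obtain the rotated coercivity of Lemma~\ref{lemma:sing_is_elliptic) (choice of a single rotation angle), not to make the stretched-variable ODE constant-coefficient, and the construction of $w_\varepsilon$ needs only analyticity of the data, not the bounds of Lemma~\ref{lemma:sing_apriori}; neither point affects the validity of your plan.
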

\begin{proof}
  We focus on the 2D case
  by adapting \cite[Theorem~{2.3.4}]{melenk_book} to the case of smooth geometries and complex data.
  The 1D case follows from adapting~\cite[Lemma~{7.1.1}]{melenk_book} instead.

  While the somewhat technical proof from~\cite{melenk_book} only considers real data $\zeta> 0$,
  it can be adapted to our setting in a mostly straight forward way. We make some comments on
  how to read the proof and how to make the required modifications.

  The construction is laid out in~\cite[Section~7]{melenk_book}. The smooth part is constructed inductively:
  \begin{align*}
    u_0:=\frac{1}{\zeta} f, \quad u_{2j+2}:=\frac{1}{\zeta} \fdiv(A \nabla u_{j}), \quad u_{2j+1}:=0 \qquad \forall j \in \N_0, \qquad
    w_{\varepsilon}:=\sum_{j=0}^{2M+1}{\varepsilon^{j} u_j}.
  \end{align*}
  The estimate (\ref{it:estimate_smooth_part}) then follows as in~\cite[Lemma 7.2.1]{melenk_book} from Cauchy's integral theorem.
  The fact that $\zeta$ is complex does not require modifications, we only need that the function $\frac{1}{\zeta}$ has an analytic extension
  to a neighborhood of $\Omega$. This is guaranteed by the assumption $\abs{\zeta} \geq \zeta_0 >0$.

  In order to construct the boundary layer function and prove~(\ref{it:estimate_bl}), one works in boundary adapted coordinates.
  Proceeding as in \cite[Section 7.3.1]{melenk_book}, $\ubl$ is defined via
  \begin{align*}
    \ubl(\theta,\rho):=\sum_{i=0}^{2M+1}{\varepsilon^{i} \widehat{U}_{i}(\theta,\widehat{\rho})}=\sum_{i=0}^{2M+1}{\varepsilon^{i} \widehat{U}_{i}(\theta,\rho/\varepsilon)},
  \end{align*}
  where $\widehat{U}_i$, using $\lambda:=\sqrt{\frac{\zeta(\theta,0)}{A_{22}(\theta,0)}}$, solves an ODE of the form
  \begin{align}    
    \label{eq:bl_ode_system}
  - \widehat{U}_i'' + \lambda^2 \widehat{U}_i = f_i, \quad \widehat{U}_{i}(0)=g, \quad \lim_{\widehat{\rho} \to \infty}{\widehat{U}_{i}} = 0.
  \end{align}
  The necessary estimates of \cite[Section~7]{melenk_book} to conclude~(\ref{it:estimate_bl}) all rely 
  on~\cite[Lemma~{7.3.6}]{melenk_book} which gives exponential decay
  for problems of the form~\eqref{eq:bl_ode_system}. It is already formulated for complex parameters $\lambda$,
  we only point out that due to the assumption that $\zeta \in \widetilde{ \mathcal{S}}$, we get $\Re(\lambda)>0$
  (using the principal branch of the complex square root, satisfying $\Re(z) \geq 0\;\; \forall z \in \C$).
  The requirement $\Re(\lambda^2) > 0$ made in 
  \cite[Lemma~{7.3.6}]{melenk_book} is not satisfied, but inspection of the proof 
  reveals that it is only needed to get unique solvability 
  of~\eqref{eq:bl_ode_system}. As seen in 
  Lemma~\ref{lemma:sing_is_elliptic}, this is also guaranteed in the 
  current setting.

  Finally, (\ref{it:estimate_remainder}) follows form the fact that  $r_{\varepsilon}:=u_{\varepsilon} - w_{\varepsilon} - \ubl$,
  solves $L_{\varepsilon} r_{\varepsilon}=f_{\varepsilon}$ where $\norm{f_\varepsilon}_{L^{\infty}(\Omega)}$ is exponentially small (see~\cite[Eqn (7.4.37)]{melenk_book}
  and $r_{\varepsilon}|_{\Gamma}=0$.
  The stated estimate then again follows from standard \textsl{a priori} estimates, most notably Lemma~\ref{lemma:sing_is_elliptic}.
\end{proof}

\begin{lemma}
  \label{lemma:approx_for_sing_pert}
  Let Assumption~\ref{ass:sing-perturb-complex} be valid.
  Let $u_\varepsilon$ solve~\eqref{eq:sing_pert_model},
  let $\TT^L_{\Omega}$ be an anisotropic geometric mesh refined towards $\partial \Omega$ as in Definition~\ref{def:anisotropic_geometric_mesh}
  and $\HHx$ be the space of continuous piecewise polynomials of degree $p$ (see~\eqref{eq:def_hhx_for_fem}).
  Assume that $\sigma^L \leq \varepsilon\leq 1$.
  
  Then there exist constants $C$, $b>0$ such that
  for all $p \in \N$
  \begin{align*}
  \inf_{v_h \in \HHx}
    \varepsilon^2 \norm{\nabla u- \nabla v_h}_{L^2(\Omega)} +\norm{u - v_h}^2_{L^2(\Omega)}&\leq C e^{- b p}.
  \end{align*}
\end{lemma}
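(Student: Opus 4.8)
The plan is to combine the regularity decomposition from Lemma~\ref{lemma:oned_sing_decomposition} with element-wise polynomial approximation estimates, treating the smooth part, the boundary layer, and the remainder separately. First I would invoke Lemma~\ref{lemma:oned_sing_decomposition} to write $u_\varepsilon = w_\varepsilon + \chi \ubl + r_\varepsilon$. The remainder $r_\varepsilon$ is already exponentially small in the energy norm, $\norm{r_\varepsilon}_\varepsilon \lesssim e^{-b/\varepsilon}$, and since $\sigma^L \le \varepsilon$ we have $1/\varepsilon \gtrsim L \log(1/\sigma) \gtrsim$ (a constant times) $p$ when $L \sim p$ on the geometric mesh; so this term contributes $\lesssim e^{-bp}$ without any approximation — we simply take the zero approximant on that piece (or rather, approximate $u_\varepsilon - r_\varepsilon$ and keep $r_\varepsilon$ as part of the error).

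Next I would handle the analytic part $w_\varepsilon$: by part~(\ref{it:estimate_smooth_part}) of Lemma~\ref{lemma:oned_sing_decomposition}, $w_\varepsilon$ is analytic on $\overline\Omega$ with derivative bounds $\norm{\nabla^p w_\varepsilon}_{L^\infty(\Omega)} \le C\gamma^p p!$ uniform in $\varepsilon$. Standard $hp$-approximation theory on the (shape-regular part of the) mesh — e.g. the tensor-product polynomial approximation results underlying \cite{MS98,melenk_book} — gives a piecewise polynomial $w_h$ of degree $p$ with $\norm{w_\varepsilon - w_h}_{H^1(\Omega)} \lesssim e^{-bp}$, hence also $\varepsilon^2\norm{\nabla(w_\varepsilon - w_h)}^2 + \norm{w_\varepsilon - w_h}^2 \lesssim e^{-bp}$ using $\varepsilon \le 1$. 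Note one must be a little careful that the anisotropic refinement near $\partial\Omega$ does not destroy the approximation of a globally analytic function; but since anisotropic refinement only subdivides elements, an analytic function is approximated at least as well on the refined mesh as on the reference mesh, so this is routine.

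The main work — and the main obstacle — is the boundary layer term $\chi\ubl$. Here I would use the anisotropic structure of $\TT^L_\Omega$: in boundary-fitted coordinates $(\rho,\theta)$, the layers $\widehat S^\ell$ have width $\sim \sigma^\ell$ in the $\rho$-direction, and by part~(\ref{it:estimate_bl}) the function $\ubl$ decays like $e^{-\alpha\rho/\varepsilon}$ with $\partial_\rho^n$ costing a factor $\varepsilon^{-n}$ and $\partial_\theta^m$ costing $\gamma^m m!$. On the innermost element (width $\sigma^L \le \varepsilon$) one uses a scaled analytic-type estimate in $\rho/\varepsilon$; on the elements at distance $\sim\sigma^\ell$ from the boundary the factor $e^{-\alpha\sigma^\ell/\varepsilon}$ combined with the element width gives geometric decay, and on the element-width-to-$\varepsilon$ ratio one uses either the exponential smallness of $\ubl$ there or a $p$-version estimate for analytic functions. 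Summing the $L^2$- and weighted-$H^1$-errors over the $O(L)$ layers, choosing $L \sim p$, and using $\sigma^L \le \varepsilon$ to control the worst (innermost) layer, yields the bound $e^{-bp}$. This is exactly the construction in \cite[Theorem~2.3.4 / Section~7]{melenk_book} and its 1D analogue \cite[Lemma~7.1.1]{melenk_book}; the only new point is that $\zeta$ (and hence $\lambda$) is complex, but Lemma~\ref{lemma:oned_sing_decomposition} has already absorbed that by providing the decomposition with $\varepsilon$-uniform constants, so the approximation argument is verbatim. I would therefore state that the result "follows verbatim from the arguments in \cite[Section~7]{melenk_book}" once the decomposition of Lemma~\ref{lemma:oned_sing_decomposition} is in hand, and spell out only the summation over layers and the choice $L\sim p$ together with the role of the hypothesis $\sigma^L \le \varepsilon$.
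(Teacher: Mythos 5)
There is a genuine gap in your treatment of the remainder $r_{\varepsilon}$, and it affects the structure of the whole argument. You claim that $\sigma^L \leq \varepsilon$ implies $1/\varepsilon \gtrsim L\log(1/\sigma) \sim p$, so that $\norm{r_{\varepsilon}}_{\varepsilon} \lesssim e^{-b/\varepsilon} \lesssim e^{-bp}$. The inequality runs the other way: $\sigma^L \leq \varepsilon$ gives $1/\varepsilon \leq \sigma^{-L}$, i.e., it bounds $1/\varepsilon$ from \emph{above} (its role is to guarantee that the mesh resolves the scale $\varepsilon$, not that $\varepsilon$ is small), and it permits, for instance, $\varepsilon = 1$, for which $e^{-b/\varepsilon} = e^{-b}$ does not decay in $p$ at all. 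Hence the decomposition of Lemma~\ref{lemma:oned_sing_decomposition} by itself cannot produce the bound $C e^{-bp}$ uniformly in $\varepsilon \in [\sigma^L,1]$; it is the right tool only in the regime $p\,\varepsilon \lesssim 1$, where indeed $e^{-b/\varepsilon} \leq e^{-b' p}$.

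The missing piece is the complementary regime $p\,\varepsilon \gtrsim 1$, which is handled not by the decomposition but by the $\varepsilon$-explicit analytic regularity of Lemma~\ref{lemma:sing_apriori}: since there $\varepsilon^{-1} \lesssim p$, the bounds $\norm{\nabla^{q+2} u_{\varepsilon}}_{L^2(\Omega)} \leq C K^{q}\max(q+1,\varepsilon^{-1})^{q+2}$ show that $u_{\varepsilon}$ can be approximated like an analytic function directly, with no boundary-layer splitting. This two-regime case distinction is exactly the structure of \cite[Theorem~3.14]{MS98} and \cite[Theorem~{3.4.8}]{melenk_book}, which the paper's proof invokes after observing that, because $\sigma^L \leq \varepsilon$, the anisotropic geometric mesh $\TT^L_{\Omega}$ contains an admissible boundary-layer mesh with needle elements of width $\sim \lambda p \varepsilon$; the complex parameter $\zeta$ enters only through Lemmas~\ref{lemma:sing_apriori} and~\ref{lemma:oned_sing_decomposition}. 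Your handling of the smooth part and of the boundary layer is in the right spirit and matches that reference, but as written your argument never uses Lemma~\ref{lemma:sing_apriori}, and without the case distinction the claimed uniform bound fails already for $\varepsilon$ bounded away from zero.
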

\begin{proof}
  Analogously to~\cite[Theorem 7.7]{tensor_fem}, we note that the mesh $\TT^{L}_{\Omega}$ contains a so-called
  admissible boundary layer mesh,
  i.e. a mesh containing one layer of ``needle elements'' of size $ \lambda p \varepsilon$ (see\cite{MS98} or\cite{melenk_book} for the precise definition).

  The proof then follows completely analogously to \cite[Theorem 3.14]{MS98} (or also \cite[Theorem 3.4.8]{melenk_book}).
  The necessary ingredients to generalize to complex parameters $\zeta$ (as described in Assumption~\ref{ass:sing-perturb-complex}) are given by Lemmas~\ref{lemma:sing_apriori}
  and~\ref{lemma:oned_sing_decomposition}.
\end{proof}

\section{Polynomial liftings and interpolation spaces}
\label{sect:liftings_and_interpolation}
In this section, we investigate under which conditions we can lift discrete functions from $\HHx$ to 
functions in $\HHxy$ in a stable way. This question is deeply related to the theory of interpolation of 
discrete polynomial spaces. This can be seen in the following proposition:
\begin{proposition}[{\cite[Lemma~{40.1}]{tartar}}]
  \label{prop:general_liftings}
  Let $X_1\subseteq X_1$ be Banach spaces with continuous embedding. 
  For $\theta \in (0,1)$, denote the interpolation space by $X_\theta:=[X_0,X_1]_{\theta,2}$.
  Then the following statements hold:
  \begin{enumerate}[(i)]
    \item If $v$ is a $X_0$-valued function such that 
      $v(t) \in X_1$ and $\dot{v}(t) \in X_0$ for all $t>0$ and
      $t^{1-\theta} \norm{\dot{v}(t)}_{X_0} \in L^2(\R_+,\frac{dt}{t})$,
      $t^{1-\theta} \norm{v(t)}_{X_1} \in L^2(\R_+,\frac{dt}{t})$
      then $v(0) \in X_\theta$
      with 
      $$\norm{v(0)}^2_{X_\theta}\lesssim \int_{0}^\infty{t^{1-2\theta} \left[\norm{\dot{v}(t)}^2_{X_0} + \norm{{v}}^{2}_{X_1}\right] \,dt}.$$
    \item If $v_0 \in X_\theta$, there exists a function $v:\R_+ \to X_1$ such that
      $v(0)=v_0$ and
      \begin{align*}
        \int_{0}^\infty{t^{1-2\theta} \left[\norm{\dot{v}(t)}^2_{X_0} + \norm{{v}}^2_{X_1}\right] \,dt} \lesssim \norm{v_0}_{X_\theta}^2.       
      \end{align*}     
  \end{enumerate}
\end{proposition}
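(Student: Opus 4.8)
The plan is to argue directly from the $K$-functional recalled in the statement. Write $K(t,u):=\inf_{w\in X_1}\big(\|u-w\|_{X_0}+t\|w\|_{X_1}\big)$, so that $\|u\|_{X_\theta}^2=\int_0^\infty\big(t^{-\theta}K(t,u)\big)^2\,\frac{dt}{t}$. The only non-elementary ingredient is the weighted Hardy inequality
\[
\int_0^\infty t^{-2\theta}\Big(\int_0^t g(s)\,ds\Big)^2\,\frac{dt}{t}\;\lesssim\;\int_0^\infty t^{2-2\theta}\,g(t)^2\,\frac{dt}{t}\qquad(\theta>0,\ g\ge0),
\]
together with the elementary facts that $K(\cdot,u)$ is non-decreasing and $K(\lambda t,u)\le\max(1,\lambda)\,K(t,u)$.

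For part (i), I would first note that $v(0)$ is a well-defined element of $X_0$: since $\theta>0$, Cauchy--Schwarz bounds $\int_0^t\|\dot v(s)\|_{X_0}\,ds$ by $\big(\int_0^t s^{2\theta-1}\,ds\big)^{1/2}\big(\int_0^t s^{1-2\theta}\|\dot v(s)\|_{X_0}^2\,ds\big)^{1/2}<\infty$, so $\dot v$ is Bochner-integrable near $0$ and $v(t')$ is Cauchy in $X_0$ as $t'\to0^+$; its limit $v(0)$ satisfies $v(0)=v(t)-\int_0^t\dot v(s)\,ds$ for every $t$. For fixed $t$ the decomposition $v(0)=\big(v(0)-v(t)\big)+v(t)$ is admissible in $K(t,v(0))$, since $v(0)-v(t)=-\int_0^t\dot v\in X_0$ and $v(t)\in X_1$; hence $K(t,v(0))\le\int_0^t\|\dot v(s)\|_{X_0}\,ds+t\|v(t)\|_{X_1}$. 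Squaring, multiplying by $t^{-2\theta-1}$, integrating, and applying the Hardy inequality to the first summand (the second contributes only $t^{-2\theta-1}(t\|v(t)\|_{X_1})^2=t^{1-2\theta}\|v(t)\|_{X_1}^2$) yields the claimed bound on $\|v(0)\|_{X_\theta}^2$.

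For part (ii), the plan is the mollified trace construction of Lions. Given $v_0\in X_\theta$, for each dyadic $t=2^n$ choose $w_n\in X_1$ with $\|v_0-w_n\|_{X_0}+2^n\|w_n\|_{X_1}\le2K(2^n,v_0)$ and extend it to the measurable step function $w(t):=w_n$ on $[2^n,2^{n+1})$; monotonicity of $K$ gives $\|v_0-w(t)\|_{X_0}+t\|w(t)\|_{X_1}\lesssim K(t,v_0)$ for all $t>0$. Fix $\phi\in C_c^\infty((0,\infty))$ with $\int_0^\infty\phi(s)\,\frac{ds}{s}=1$ and set $v(t):=\int_0^\infty w(ts)\,\phi(s)\,\frac{ds}{s}=\int_0^\infty w(s)\,\phi(s/t)\,\frac{ds}{s}$. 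Because $\operatorname{supp}\phi$ is a compact subset of $(0,\infty)$, this Bochner integral is well defined with values in $X_1$, depends smoothly on $t$ (only through $\phi(s/t)$), and $\dot v(t)=-t^{-1}\int_0^\infty\big(w(ts)-v_0\big)\phi'(s)\,ds$ after subtracting the vanishing term $t^{-1}v_0\int_0^\infty\phi'(s)\,ds$. Using $\|w(ts)\|_{X_1}\lesssim K(ts,v_0)/(ts)$, $\|w(ts)-v_0\|_{X_0}\lesssim K(ts,v_0)$, and $K(ts,v_0)\lesssim K(t,v_0)$ for $s\in\operatorname{supp}\phi$, one obtains the pointwise bounds $t\|v(t)\|_{X_1}\lesssim K(t,v_0)$ and $t\|\dot v(t)\|_{X_0}\lesssim K(t,v_0)$; dividing by $t$ and integrating against $t^{-2\theta-1}\,dt$ gives $\int_0^\infty t^{1-2\theta}\big[\|\dot v(t)\|_{X_0}^2+\|v(t)\|_{X_1}^2\big]\,dt\lesssim\|v_0\|_{X_\theta}^2$. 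Here, thanks to the compact support of $\phi$, no second Hardy inequality is needed.

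It remains to check $v(0)=v_0$, for which I would use that every $v_0\in X_\theta$ automatically satisfies $\operatorname{dist}_{X_0}(v_0,X_1)=0$: the non-decreasing function $K(\cdot,v_0)$ tends to $\operatorname{dist}_{X_0}(v_0,X_1)$ at $0$, and a positive limit would force $\int_0^1 t^{-2\theta-1}K(t,v_0)^2\,dt=\infty$, contradicting $v_0\in X_\theta$. Hence $\|w(ts)-v_0\|_{X_0}\lesssim K(ts,v_0)\to0$ as $t\to0^+$ for each $s$, and dominated convergence (with dominant $\sim K(\sup\operatorname{supp}\phi,v_0)\,|\phi(s)|/s$) gives $v(t)\to v_0$ in $X_0$. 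The only genuinely delicate points are the measurable near-optimal selection $w(\cdot)$ and the legitimacy of differentiating under the integral sign — both are classical (see \cite{tartar}), and the dyadic step-function device above reduces the first to a triviality; I expect the careful bookkeeping of constants in the mollification step of (ii) to be the most error-prone part, but it is entirely routine.
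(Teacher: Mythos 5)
Your proof is correct, but it is genuinely different in character from what the paper does: the paper's ``proof'' is essentially a citation of \cite[Lemma~40.1]{tartar}, supplemented only by the observation that the roles of $X_0$ and $X_1$ are exchanged there, which is harmless because $[X_1,X_0]_{\theta,2}=[X_0,X_1]_{1-\theta,2}$ (\cite[Lemma~25.4]{tartar}). You instead reprove the equivalence of the trace method and the $K$-method from scratch, working directly with the $K$-functional in the exact normalization the paper uses, so no index swap is needed. Your part (i) — writing $v(0)=\bigl(v(0)-v(t)\bigr)+v(t)$ as an admissible decomposition, estimating $K(t,v(0))\le\int_0^t\|\dot v(s)\|_{X_0}\,ds+t\|v(t)\|_{X_1}$, and invoking the weighted Hardy inequality (valid since $2\theta+1>1$) — is the standard argument and is complete, including the preliminary check that $v(0)$ exists as an $X_0$-limit. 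Your part (ii), via dyadic near-minimizers of the $K$-functional glued into a step function and then mollified multiplicatively with a compactly supported $\phi$, is also sound: the pointwise bounds $t\|v(t)\|_{X_1}\lesssim K(t,v_0)$ and $t\|\dot v(t)\|_{X_0}\lesssim K(t,v_0)$ follow as you say (the subtraction of $v_0$ using $\int\phi'=0$ is the key trick), and your argument that $K(t,v_0)\to0$ as $t\to0^+$ for $v_0\in X_\theta$ correctly justifies $v(t)\to v_0$ in $X_0$, which is the right reading of ``$v(0)=v_0$'' since $v_0$ need not lie in $X_1$. What each approach buys: the paper's route is short and defers all work to the literature, at the cost of the translation step between conventions; yours is self-contained, makes the constants' dependence on $\theta$ (through Hardy) transparent, and sidesteps the reindexing — a perfectly acceptable, if longer, substitute for the paper's citation-based proof.
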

\begin{proof}
  This is just a special case of~\cite[Lemma 40.1]{tartar}. We note that in comparison to the statement in the book we
  changed the roles of $X_0$ and $X_1$. But since
  $\displaystyle  [X_1,X_0]_{\theta,2}=[X_0,X_1]_{1-\theta,2}
  $
  by~\cite[Lemma~{25.4}]{tartar}, the theorem holds in the stated form.
\end{proof}

The case of lifting a polynomial on a single element $[0,1]$ to the unit square  was addressed in~\cite{bernardi:hal-00153795}. Namely, the
following holds:
\begin{proposition}[{\cite{bernardi:hal-00153795}}]
  \label{prop:lifting_to_polynomials}
  For $d=1$, let $\QQ^p:=\operatorname{span}\{x_i, \,0\leq i \leq p\}$ denote the space of polynomials. 
  Then the following statements hold:
  \begin{enumerate}[(i)]
    \item The interpolation norm coincides with the Sobolev norm, i.e., for all $\theta \in (0,1)$
      $$\left[\left(\QQ^p,\norm{\cdot}_{L^2([0,1])}\right),\left(\QQ^p,\norm{\cdot}_{H^1([0,1])}\right)\right]_{\theta,2}
      =\left(\QQ^p,\norm{\cdot}_{{H}^{\theta}([0,1])}\right)$$
      with equivalent norms. The implied constant depends only on $\theta$.
    \item For all $u \in \QQ^p$, there exists a polynomial $\UU \in \mathcal{Q}^p([0,1]^2):=\operatorname{span}_{0\leq i,j\leq p}\big\{x^{i}  \, y^{j}\big\}$ such that
      $\trace{\UU}=u$,  $\UU(\cdot,y) \in \QQ^p $ for all $y \in [0,1]$.
      For $y>1$, $\UU$ can be extended by $0$  to $\R_+$ such that
      $\norm{\UU}_{\HHnd} \lesssim \norm{u}_{{H}^s([0,1])}$.
  \end{enumerate}
\end{proposition}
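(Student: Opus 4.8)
The plan is to treat~(i) as the structural core and then to deduce~(ii) from it with Proposition~\ref{prop:general_liftings}; the statement is essentially that of~\cite{bernardi:hal-00153795}, and the point of the sketch below is to indicate where the degree‑robustness enters.

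For~(i), one inequality is soft: since $\QQ^p\subseteq H^1([0,1])\subseteq L^2([0,1])$ with continuous embeddings, the $K$-functional for the pair $\big((\QQ^p,\|\cdot\|_{L^2}),(\QQ^p,\|\cdot\|_{H^1})\big)$ dominates pointwise the one for $\big(L^2([0,1]),H^1([0,1])\big)$ (the infimum runs over a smaller set), so $\|u\|_{H^\theta([0,1])}\sim\|u\|_{[L^2,H^1]_{\theta,2}}\lesssim\|u\|_{[(\QQ^p,L^2),(\QQ^p,H^1)]_{\theta,2}}$ by the classical identification $[L^2(I),H^1(I)]_{\theta,2}=H^\theta(I)$ (valid for every $\theta\in(0,1)$ since no boundary condition is imposed). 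For the reverse inequality I would introduce a linear operator $\pi_p\colon L^2(I)\to\QQ^p$ with $\pi_p|_{\QQ^p}=\mathrm{id}$ and $\|\pi_p\|_{L^2(I)\to L^2(I)}+\|\pi_p\|_{H^1(I)\to H^1(I)}\lesssim1$ uniformly in $p$ (in $1$D such an operator exists; it can be built from the $H^1$-conforming projection-based interpolant matching endpoint values, whose $L^2$-stability in one dimension is classical). By interpolation $\pi_p$ is then bounded on $H^\theta(I)$ with a constant depending only on $\theta$. Given $u\in\QQ^p$ and $t>0$, pick $w_t\in H^1(I)$ with $\|u-w_t\|_{L^2}+t\|w_t\|_{H^1}\lesssim K(t,u)$; then $\pi_p w_t\in\QQ^p$ and $\|u-\pi_p w_t\|_{L^2}=\|\pi_p(u-w_t)\|_{L^2}\lesssim\|u-w_t\|_{L^2}$, $\|\pi_p w_t\|_{H^1}\lesssim\|w_t\|_{H^1}$, so the $\QQ^p$-$K$-functional is $\lesssim K(t,u)$; integrating against $t^{-2\theta}\,dt/t$ and invoking $[L^2,H^1]_{\theta,2}=H^\theta$ once more gives $\|u\|_{[(\QQ^p,L^2),(\QQ^p,H^1)]_{\theta,2}}\lesssim\|u\|_{H^\theta(I)}$, with constant depending only on $\theta$.

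For~(ii), I would first invoke~(i) so that $u\in H^s([0,1])$ with $\|u\|_{H^s}\sim\|u\|_{[(\QQ^p,L^2),(\QQ^p,H^1)]_{s,2}}$, and then apply Proposition~\ref{prop:general_liftings}(ii) with $X_0=(\QQ^p,\|\cdot\|_{L^2(I)})$, $X_1=(\QQ^p,\|\cdot\|_{H^1(I)})$ and $\theta=s$, obtaining a path $v\colon\R_+\to\QQ^p$ with $v(0)=u$ and $\int_0^\infty y^{1-2s}\big(\|\dot v(y)\|_{L^2(I)}^2+\|v(y)\|_{H^1(I)}^2\big)\,dy\lesssim\|u\|_{H^s(I)}^2$. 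Since $\alpha=1-2s$ and $\|v(y)\|_{H^1(I)}^2=\|\partial_x v(y)\|_{L^2(I)}^2+\|v(y)\|_{L^2(I)}^2$, the function $V(x,y):=v(y)(x)$ already has $V(\cdot,0)=u$, $V(\cdot,y)\in\QQ^p$ and $\|V\|_{\HHnd}\lesssim\|u\|_{H^s(I)}$ on $D=(0,1)\times\R_+$; multiplying by a fixed $\phi(y)$ equal to $1$ near $0$ and $0$ for $y\ge1$ confines the support to $(0,1)\times[0,1)$ without changing the trace. The remaining task is to make the $y$-dependence polynomial of degree $\le p$: I would apply, slicewise in $x$, a degree-robust weighted interpolation operator in $y$ onto polynomials of degree $\le p$ on $[0,1]$ reproducing the values at $y=0,1$ and bounded in the $y^\alpha$-weighted $H^1(0,1)$-norm (the single-element analogue of the operators $\Pi^{\mathbf r}_{y,\{\YY\}}$ from~\cite{tensor_fem} used in Lemma~\ref{lemma:approx_in_y}); alternatively, decouple the $x$-variable through a basis of $\QQ^p$ simultaneously diagonalising the $L^2$- and $H^1$-forms (as in~\eqref{eq:EWP}) and pick, for each decoupled mode, a degree-$p$ polynomial in $y$ of near-minimal weighted energy. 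Either way $\UU$ lies in $\mathcal{Q}^p([0,1]^2)$, has trace $u$, vanishes at $y=1$ (so the zero-extension stays in $H^1(y^\alpha,\R_+)$), and inherits $\|\UU\|_{\HHnd}\lesssim\|u\|_{H^s([0,1])}$.

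The hard part, in both steps, is the same: the abstract interpolation machinery only delivers constants that may degenerate as $p\to\infty$ (or as $\theta\to\tfrac12$), so the real work is to construct the auxiliary operators $\pi_p$ and the weighted $y$-interpolant with bounds uniform in $p$. This is the technical heart of~\cite{bernardi:hal-00153795}, and in the one-dimensional situation at hand it is precisely the polynomial projection/extension estimates available there that have to be imported (for the $1$D space $\SS^{p,1}(\TT_{\Omega})=\QQ^p$; cf.\ the remark following Lemma~\ref{lemma:inital_condition_in_right_space}).
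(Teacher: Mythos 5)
The paper does not actually prove this proposition: its ``proof'' is the single line ``See Corollary 4.4 and Corollary 3.3 in Chapter II of \cite{bernardi:hal-00153795}'', i.e.\ both (i) (the interpolation identity for polynomial spaces, uniformly in $p$) and (ii) (the polynomial trace lifting on the square) are imported wholesale. Your sketch of (i) is the standard mechanism behind such identities (soft direction by comparing $K$-functionals over the smaller set, hard direction via a projection $\pi_p$ onto $\QQ^p$ that is simultaneously $L^2$- and $H^1$-stable uniformly in $p$), and you correctly flag that the uniform stability of $\pi_p$ is exactly what must be taken from the reference; as an account of where the degree-robustness sits, that half is acceptable, though note that the ``projection-based interpolant matching endpoint values'' is not obviously $L^2$-stable with $p$-independent constant, so even there the citation is doing the work.

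Part (ii), however, has a genuine gap. The Tartar lifting of Proposition~\ref{prop:general_liftings}(ii) yields a path $v:\R_+\to\QQ^p$ that is only piecewise linear in $y$, whereas the proposition requires $\UU\in\mathcal{Q}^p([0,1]^2)$, i.e.\ a single polynomial of degree $p$ in $y$, and neither device you propose closes that distance. The operators $\Pi^{\mathbf r}_{y,\{\YY\}}$ of \cite{tensor_fem} map into piecewise polynomials on a geometric mesh (not into degree-$p$ polynomials in $y$), and their stability/approximation theory is tailored to functions analytic in $y$ with specific weighted derivative bounds, not to a merely $H^1(y^\alpha)$, piecewise-linear-in-$y$ path; the ``modewise near-minimal-energy degree-$p$ polynomial'' alternative requires precisely the statement that the minimal $y^\alpha$-weighted energy over degree-$p$ polynomials with prescribed value at $y=0$ is comparable, uniformly in $p$ and in the mode, to the minimum over all of $H^1(y^\alpha,(0,1))$ --- which is the one-dimensional polynomial extension estimate whose proof is the content of \cite{bernardi:hal-00153795}, so the argument is circular unless that result is imported, at which point the detour through Tartar's lemma is superfluous. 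The intended (and the paper's) route is to take the polynomial lifting on the square directly from Chapter II, Corollary 3.3 of the reference, together with the interpolation identity of Corollary 4.4; when only a piecewise-polynomial-in-$y$ lifting is needed, the paper instead uses Lemma~\ref{lemma:lifting_of_polynomials}, which deliberately keeps the Tartar lifting piecewise linear in $y$ rather than attempting to polynomialize it.
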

\begin{proof}
  See Corollary 4.4 and Corollary 3.3 in Chapter II of~\cite{bernardi:hal-00153795}. 
\end{proof}

When working in 2d, it is not sufficient to work only with global polyomials. It is possible to generalize
Proposition~\ref{prop:lifting_to_polynomials} to the case of piecewise polynomials on a shape regular mesh.
In 2d, this is worked out in~\cite{our_interpolation_space_paper}:
\begin{proposition}[\cite{our_interpolation_space_paper}]
  \label{prop:lifting_to_polynomials2}
  Let $\Omega \subseteq \R^2$ and $\SS^{p,1}(\TT_{\Omega})$ denote the space of piecewise polynomials
  on a shape regular grid of quadrilaterals (see Definition~\ref{def:reference-mesh}).
 
  Then the following statements hold:
  \begin{enumerate}[(i)]
    \item The interpolation norm coincides with the Sobolev norm, i.e., for all $\theta \in (0,1)$
      $$\left[\left(\SS^{p,1}(\TT_{\Omega}),\norm{\cdot}_{L^2(\Omega)}\right),\left(\SS^{p,1}(\TT_{\Omega}),\norm{\cdot}_{H^1(\Omega)}\right)\right]_{\theta,2}
      =\left(\SS^{p,1}(\TT_{\Omega}),\norm{\cdot}_{{H}^{\theta}(\Omega)}\right)$$
      with equivalent norms. The implied constant depends only on $\theta$.
    \item For all $u \in \SS^{p,1}(\TT_{\Omega})$, there exists a function
      $\UU \in C\big(\Omega \times [0,1]\big)$ such that
      $\trace{\UU}=u$,  $\UU(\cdot,y) \in \SS^{p,1}(\TT_{\Omega}) $ for all $y \in [0,1]$.
      For $y>1$, $\UU$ can be extended by $0$  to $\R_+$ such that
      $\norm{\UU}_{\HHnd} \lesssim \norm{u}_{{H}^s(\Omega)}$.
  \end{enumerate}
\end{proposition}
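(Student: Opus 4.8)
The plan is to establish~(i) first --- this is the substantive part --- and then to deduce the lifting statement~(ii) from~(i) by a soft argument based on Proposition~\ref{prop:general_liftings}. For~(i), one inclusion is immediate: since $\SS^{p,1}(\TT_\Omega)$ carries the restrictions of the $L^2(\Omega)$- and $H^1(\Omega)$-norms, interpolating the norm-one embeddings $\SS^{p,1}(\TT_\Omega)\hookrightarrow L^2(\Omega)$ and $\SS^{p,1}(\TT_\Omega)\hookrightarrow H^1(\Omega)$ gives $[(\SS^{p,1}(\TT_\Omega),\norm{\cdot}_{L^2}),(\SS^{p,1}(\TT_\Omega),\norm{\cdot}_{H^1})]_{\theta,2}\hookrightarrow[L^2(\Omega),H^1(\Omega)]_{\theta,2}=H^\theta(\Omega)$ with constant $1$. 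The content is the reverse estimate with a constant depending only on $\theta$ (and not on $p$). I would argue at the level of the $K$-functional: with $K(t,u):=\inf_{v\in\SS^{p,1}(\TT_\Omega)}(\norm{u-v}_{L^2}+t\norm{v}_{H^1})$ and $K_\ast(t,u):=\inf_{v\in H^1(\Omega)}(\norm{u-v}_{L^2}+t\norm{v}_{H^1})$, it suffices to show $K(t,u)\lesssim K_\ast(t,u)$ for $u\in\SS^{p,1}(\TT_\Omega)$, uniformly in $t$ and $p$, after which one integrates against $t^{-2\theta}\,dt/t$. Given a near-minimiser $w\in H^1(\Omega)$ for $K_\ast(t,u)$, I set $v:=\Pi w\in\SS^{p,1}(\TT_\Omega)$, where $\Pi\colon H^1(\Omega)\to\SS^{p,1}(\TT_\Omega)$ reproduces $\SS^{p,1}(\TT_\Omega)$ and is simultaneously $L^2$- and $H^1$-stable with $p$-independent constants. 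Then $\Pi u=u$ yields $\norm{u-v}_{L^2}=\norm{\Pi(u-w)}_{L^2}\lesssim\norm{u-w}_{L^2}$ and $t\norm{v}_{H^1}\lesssim t\norm{w}_{H^1}$, hence $K(t,u)\lesssim K_\ast(t,u)$.

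The main obstacle is the construction of such a $p$-robust, polynomial-preserving, simultaneously $L^2$- and $H^1$-stable operator $\Pi$ on the patch-structured mesh $\TT_\Omega$. On the reference square $\widehat S=(0,1)^2$ I would take $\Pi=\widehat\pi_p\otimes\widehat\pi_p$, where $\widehat\pi_p\colon H^1(0,1)\to\QQ^p$ reproduces $\QQ^p$, is $p$-uniformly stable in both $L^2(0,1)$ and $H^1(0,1)$, and preserves the endpoint values $v(0),v(1)$; endpoint preservation is exactly what makes the element-wise definitions glue across interelement edges under condition~(M4) of Definition~\ref{def:reference-mesh}, so that $\Pi w\in C(\overline\Omega)$. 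Pulling back through the analytic element maps $F_K$ only costs constants depending on the fixed reference mesh. The $p$-uniform one-dimensional simultaneous stability (in particular the matching of edge traces, where Gauss--Lobatto-type stability estimates analogous to those used in Lemma~\ref{lemma:cutoff_operator} enter) is the delicate point; this is precisely the generalization of the single-element Proposition~\ref{prop:lifting_to_polynomials} to patchwise polynomials on shape-regular quadrilateral meshes, and is the content deferred to~\cite{our_interpolation_space_paper} (building on~\cite{bernardi-maday97,bernardi:hal-00153795}).

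Granted~(i), part~(ii) is then soft, and the $p$-uniformity is inherited since no further $p$-dependent constants are introduced. Taking $\theta=s$ and recalling $\alpha=1-2s$, part~(i) gives $u\in\SS^{p,1}(\TT_\Omega)\subseteq H^1(\Omega)\subseteq H^s(\Omega)=[(\SS^{p,1}(\TT_\Omega),\norm{\cdot}_{L^2}),(\SS^{p,1}(\TT_\Omega),\norm{\cdot}_{H^1})]_{s,2}$ with $p$-uniformly equivalent norms. Applying Proposition~\ref{prop:general_liftings}(ii) with $X_0=(\SS^{p,1}(\TT_\Omega),\norm{\cdot}_{L^2})$ and $X_1=(\SS^{p,1}(\TT_\Omega),\norm{\cdot}_{H^1})$ produces $v\colon\R_+\to\SS^{p,1}(\TT_\Omega)$ with $v(0)=u$ and
\begin{align*}
  \int_0^\infty y^{\alpha}\big(\norm{\dot v(y)}_{L^2(\Omega)}^2+\norm{v(y)}_{H^1(\Omega)}^2\big)\,dy\;\lesssim\;\norm{u}_{[\SS_{L^2},\SS_{H^1}]_{s,2}}^2\;\lesssim\;\norm{u}_{H^s(\Omega)}^2.
\end{align*}
Fixing a smooth cut-off $\chi$ with $\chi\equiv1$ on $[0,1/2]$ and $\operatorname{supp}\chi\subseteq[0,1]$, I set $\UU(x,y):=\chi(y)\,v(x,y)$ and extend by $0$ for $y>1$. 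Then $\trace{\UU}=u$, $\UU(\cdot,y)\in\SS^{p,1}(\TT_\Omega)$ for every $y$, and since $\SS^{p,1}(\TT_\Omega)$ is a fixed finite-dimensional space of continuous functions on which $y\mapsto v(y)$ is continuous (because $\dot v\in L^1_{\mathrm{loc}}$), $\UU\in C(\overline\Omega\times[0,1])$. Expanding $\partial_y\UU=\chi'v+\chi\dot v$ and using that $\chi'$ is supported where $y\sim1$, so that there $y^\alpha\sim1$ and $\int_{1/2}^1\norm{v(y)}_{L^2}^2\,dy\le\int_{1/2}^1\norm{v(y)}_{H^1}^2\,dy\lesssim\norm{u}_{H^s}^2$, one obtains $\norm{\UU}_{\HHnd}\lesssim\norm{u}_{H^s(\Omega)}$, which is the asserted bound. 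The only genuinely nontrivial ingredient throughout is the $p$-robust projection of the second paragraph; everything after that is bookkeeping with the weight $y^\alpha$ on the bounded interval $(0,1)$.
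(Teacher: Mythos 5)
First, note that the paper does not prove Proposition~\ref{prop:lifting_to_polynomials2} at all: it is quoted from the unpublished companion work \cite{our_interpolation_space_paper} (see also the remark following Lemma~\ref{lemma:inital_condition_in_right_space}), so there is no in-paper argument to compare yours against. Within your proposal, the soft parts are correct: the embedding of the discrete interpolation space into $H^{\theta}(\Omega)$ with constant $1$ by exactness of the interpolation method; the reduction of the reverse inequality to a comparison of $K$-functionals once a polynomial-reproducing operator $\Pi$ that is simultaneously $L^2$- and $H^1$-stable with $p$-independent constants is available; and the deduction of (ii) from (i) via Proposition~\ref{prop:general_liftings} with $\theta=s$ (the weight $t^{1-2s}=y^{\alpha}$ indeed matches the $\HHnd$-norm, and your cutoff and continuity bookkeeping is fine).

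The gap is that the entire difficulty of the proposition is concentrated in the existence of that operator $\Pi$, and at exactly that point you cite the same reference \cite{our_interpolation_space_paper} to which the proposition itself is attributed, so as an independent proof the argument is circular. Concretely, what is needed is a one-dimensional operator $\widehat\pi_p:H^1(0,1)\to\QQ^p$ that reproduces $\QQ^p$, preserves the endpoint values, and is stable in $L^2(0,1)$ and $H^1(0,1)$ \emph{simultaneously} with constants independent of $p$; the Gauss--Lobatto estimates you point to (as in Lemma~\ref{lemma:cutoff_operator}) hold only for polynomial inputs $u\in\QQ_q$ and degrade with $q/p$, and the usual $H^1$-projection with endpoint constraints is not $p$-uniformly $L^2$-stable, so neither is a substitute. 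In addition, the tensorized, elementwise definition must be shown to glue: for a general $w\in H^1(\widehat S)$ one has to verify that the edge trace of $\widehat\pi_p\otimes\widehat\pi_p\,w$ depends only on the edge trace of $w$ (endpoint preservation for a.e.\ line plus a Fubini argument), which is precisely the patchwise extension of Proposition~\ref{prop:lifting_to_polynomials} that the companion paper supplies. So your write-up is a sensible reduction of the proposition to its technical core, but it does not prove that core; in effect it does what the paper does, namely defer part (i), and with it the $p$-uniformity of the constants, to \cite{our_interpolation_space_paper}.
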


The previous propositions give a lifting to the space of either polynomials or continuous functions in the extended variable $y$.
Since we will be working with piecewise polynomials with a linear degree vector neither is sufficient for our needs.
We need the following variation of the previous result:
\begin{lemma}
  \label{lemma:lifting_of_polynomials}
  Let $u \in \QQ^p$ in $1d$ or $u \in \SS^{p,1}(\TT_\Omega)$,
    where $\TT_{\Omega}$ is a shape regular mesh of quadrilaterals as defined in Definition~\ref{def:reference-mesh}.
  Assume that the triangulation $\TT^L_{(\mathbf{0},\mathcal{Y})}$ satisfies $\operatorname{diam}(K_0)\leq h_{x}p^{-2}$,
  where $K_0$ is the element at $0$ and $h_x:=\min_{K \in \TT_{\Omega}}{\operatorname{diam}(K)}$.
  Then there exists a lifting $\UU_h \in  \SS^{p,1}(\TT_\Omega) \otimes \HHy$ such that
\begin{align*}
  \norm{\UU_h}_{\HHnd} \leq C \norm{u}_{{H}^{s}(\Omega)} \quad \text{ and } \quad \trace{\UU_h}=u.
\end{align*}
 The constant $C$ depends only on $s$ and the mesh grading parameter $\sigma$.
 The lifting can be chosen to be piecewise linear with respect to $y$. 
\end{lemma}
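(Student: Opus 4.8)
The plan is to reduce the lifting, via the abstract characterisation of interpolation spaces in Proposition~\ref{prop:general_liftings}, to constructing a stable ``piecewise-linearisation in $y$'' operator on the geometric mesh $\TT^{L}_{(\mathbf{0},\YY)}$. Write $X:=\SS^{p,1}(\TT_{\Omega})$ (respectively $X:=\QQ^{p}$ in $1d$). By Proposition~\ref{prop:lifting_to_polynomials2}(i) (respectively Proposition~\ref{prop:lifting_to_polynomials}(i)), the $H^{s}(\Omega)$-norm on the finite-dimensional space $X$ is equivalent, with a constant depending only on $s$, to the norm of $X_{s}:=\big[(X,\norm{\cdot}_{L^{2}(\Omega)}),(X,\norm{\cdot}_{H^{1}(\Omega)})\big]_{s,2}$. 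Hence Proposition~\ref{prop:general_liftings}(ii) with $\theta=s$ produces $v:\R_{+}\to X$ with $v(0)=u$ and
\begin{align*}
  \int_{0}^{\infty}{t^{1-2s}\big(\norm{\dot{v}(t)}_{L^{2}(\Omega)}^{2}+\norm{v(t)}_{H^{1}(\Omega)}^{2}\big)\,dt}\lesssim\norm{u}_{H^{s}(\Omega)}^{2}.
\end{align*}
After multiplying $v$ by a fixed smooth cut-off supported in an interval $[0,c_{0}]$ with $c_{0}$ a fixed fraction of $\YY$ (which only costs a fixed constant, $\YY\gtrsim1$ in our construction), set $\UU(x,y):=v(y)(x)$. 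Since the weight exponent $\alpha=1-2s$ matches $t^{1-2s}$, this gives a tensor lifting $\UU\in\HHnd$ with $\trace\UU=u$, $\UU(\cdot,y)\in X$ for all $y$, $\UU\equiv0$ for $y\ge c_{0}$, and $\norm{\UU}_{\HHnd}\lesssim\norm{u}_{H^{s}(\Omega)}$.

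Let $0=z_{0}<z_{1}<z_{2}<\dots$ be the nodes of $\TT^{L}_{(\mathbf{0},\YY)}$. I would define $\UU_{h}$ as the function, affine in $y$ on each element, whose nodal value at $z_{0}$ is $u$ (the exact trace), at $z_{j}$, $j\ge1$, is the $L^{2}$-average of $\UU(\cdot,\cdot)$ over a patch $\omega_{j}$ made up of the element to the \emph{right} of $z_{j}$ (and, for $j\ge2$, also the one to its left) — so that $\omega_{j}$ never reaches into the first element $K_{0}=(0,z_{1})$ — and at the outermost nodes is $0$, where $\UU$ already vanishes. All nodal values lie in $X$, hence $\UU_{h}\in X\otimes\HHy$ is piecewise linear in $y$, vanishes at $y=\YY$, and $\trace\UU_{h}=u$. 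On the interior elements this is a standard averaged (Cl\'ement-type) interpolation in $y$, which commutes with $\nabla_{x}$ and — because the geometric mesh is locally quasi-uniform (ratio $\sigma$) and $y^{\alpha}$ is comparable to a constant on each such element — is $L^{2}(y^{\alpha})$- and $H^{1}(y^{\alpha})$-stable with constants depending only on $s$ and $\sigma$. Summing the local estimates over all elements but $K_{0}$ (finite overlap of patches) yields a contribution $\lesssim\norm{\UU}_{\HHnd}^{2}\lesssim\norm{u}_{H^{s}(\Omega)}^{2}$; on $K_{0}$ the $\partial_{y}$- and $L^{2}(\Omega)$-parts, as well as the part of $\nabla_{x}\UU_{h}$ coming from the $z_{1}$-average, are bounded — again because $\alpha=1-2s$ and because $\norm{\UU(\cdot,t)-u}_{L^{2}(\Omega)}\lesssim t^{s}(\,\cdots\,)^{1/2}$ — by $\int_{0}^{z_{2}}{t^{1-2s}\big(\norm{\partial_{y}\UU(\cdot,t)}_{L^{2}(\Omega)}^{2}+\norm{\UU(\cdot,t)}_{L^{2}(\Omega)}^{2}\big)\,dt}+\int_{K_{1}}{y^{\alpha}\norm{\nabla_{x}\UU(\cdot,y)}_{L^{2}(\Omega)}^{2}\,dy}\lesssim\norm{u}_{H^{s}(\Omega)}^{2}$.

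The only term left is the part of $\int_{0}^{z_{1}}{y^{\alpha}\norm{\nabla_{x}\UU_{h}(\cdot,y)}_{L^{2}(\Omega)}^{2}\,dy}$ proportional to $\nabla u$, which is $\lesssim z_{1}^{1+\alpha}\norm{\nabla u}_{L^{2}(\Omega)}^{2}$, and this is exactly where the hypothesis $\operatorname{diam}(K_{0})=z_{1}\le h_{x}p^{-2}$ enters. I would bound $\norm{\nabla u}_{L^{2}(\Omega)}$ by the \emph{fractional} inverse estimate $\norm{\nabla v_{h}}_{L^{2}(\Omega)}\lesssim (p^{2}/h_{x})^{1-s}\norm{v_{h}}_{H^{s}(\Omega)}$ on $X$, obtained by interpolating the classical $hp$-inverse estimate $\norm{\nabla v_{h}}_{L^{2}(\Omega)}\lesssim (p^{2}/h_{x})\norm{v_{h}}_{L^{2}(\Omega)}$ with the trivial bound at $s=1$ and invoking Proposition~\ref{prop:lifting_to_polynomials2}(i) once more to identify the interpolation norm with $\norm{\cdot}_{H^{s}(\Omega)}$. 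Since $1+\alpha=2(1-s)$, this gives $z_{1}^{1+\alpha}\norm{\nabla u}_{L^{2}(\Omega)}^{2}\lesssim\big(z_{1}p^{2}/h_{x}\big)^{2(1-s)}\norm{u}_{H^{s}(\Omega)}^{2}\lesssim\norm{u}_{H^{s}(\Omega)}^{2}$, and collecting everything yields $\norm{\UU_{h}}_{\HHnd}\lesssim\norm{u}_{H^{s}(\Omega)}$ with a constant depending only on $s$ and $\sigma$. The main obstacle is designing the $y$-operator so that it is simultaneously piecewise linear, reproduces the trace at $y=0$, is $L^{2}(y^{\alpha})$- and $H^{1}(y^{\alpha})$-stable on the graded mesh, and keeps the first-element contribution under control; the spatial direction enters only through the fractional inverse estimate on $K_{0}$, which is precisely balanced by the mesh condition.
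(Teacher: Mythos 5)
Your proof is correct, and it shares the paper's skeleton (Tartar's characterization of the interpolation space via Proposition~\ref{prop:general_liftings} combined with the discrete interpolation identity of Propositions~\ref{prop:lifting_to_polynomials}/\ref{prop:lifting_to_polynomials2}, then a piecewise-linearization in $y$ with the first element $K_0$ handled by balancing an inverse estimate against the hypothesis $\operatorname{diam}(K_0)\leq h_x p^{-2}$), but the two key technical steps are executed differently. The paper does not build a new $y$-interpolant: it inspects Tartar's construction to see that the lifting is already piecewise linear in $y$ on a geometric grid, rescales it to match $\sigma^n$, and only replaces it on $K_0$ by the linear interpolant between $\UU(0)=u$ and $\UU(\sigma^L)$; the $\nabla_x$-bound on $K_0$ is then obtained from the $L^2$-inverse estimate applied to $\UU_h-\UU$ together with Hardy's inequality, with $h_0\le h_xp^{-2}$ absorbing the factor $h_x^{-2}p^4h_0^2$. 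You instead run a one-sided Cl\'ement-type averaging in $y$ over the whole geometric mesh (which is self-contained and avoids relying on the internal structure of Tartar's proof — the paper's argument is slightly fragile on exactly this point), and on $K_0$ you isolate the term $z_1^{1+\alpha}\norm{\nabla u}_{L^2(\Omega)}^2$ and kill it with a \emph{fractional} inverse estimate $\norm{\nabla u}_{L^2(\Omega)}\lesssim (p^2/h_x)^{1-s}\norm{u}_{H^s(\Omega)}$, obtained by interpolating the standard $hp$-inverse estimate and invoking the norm identification once more; since $1+\alpha=2(1-s)$, the mesh condition enters in exactly the same balancing role. Both routes are sound; yours buys independence from the particulars of \cite{tartar} and a slightly cleaner accounting on $K_0$, while the paper's avoids introducing an averaging operator and the fractional inverse estimate. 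Only minor bookkeeping remains on your side (weight comparability on the one-sided patches, which holds with constants depending on $\sigma$, and the harmless smooth cutoff near $\YY$ before interpolating), and, as in the paper, the constant also inherits a dependence on the shape regularity of $\TT_\Omega$ through the inverse estimate.
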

\begin{proof}
  We focus on the 2d case.
  By Propositions~\ref{prop:general_liftings} and~\ref{prop:lifting_to_polynomials2}
  , there exists a lifting
   $\UU \in C(\R_+, \SS^{p,1}(\TT_{\Omega}))$ such that 
  $$\norm{\UU}_{\HHnd} \leq C \norm{u}_{{H}^{s}(\Omega)}.$$
  Inspecting the proof of Proposition~\ref{prop:general_liftings}, as given in \cite{tartar},
  one can see that  the lifting $\UU$ is piecewise linear on the grid $\big(e^{n}\big)_{n \in \Z}$.
  By a simple rescaling, we may choose
  $\UU$ as piecewise linear in $y$ on the geometric mesh $\sigma^n$ for $n \in \Z$.
  To get a function which is in the space $\SS^{1,1}(\TT^L_{(\mathbf{0},\mathcal{Y})})$, we need to 
  make two modifications: modify $\UU$ on the element $K_0:=(0,\sigma^L)$ to also be linear and
  cut the function off at $\YY$. We define $h_0:=\operatorname{diam}(K_0)=\sigma^L$.
  
  We define $\UU_h(\cdot,t)$ as the linear interpolation between $u=\UU(0)$ and $\UU(\sigma^L)$ on $K_0$ 
  and $\UU_h=\UU$ otherwise. We need to show:
  \begin{align}    
    \int_{K_0}{y^\alpha \norm{\partial_y \UU_h(y)}^2_{L^2(\Omega)} \,dy} &\lesssim \norm{\UU}_{\HH}^2,  \label{eq:lifting_proof_l2_est} \\
    \int_{K_0}{y^\alpha \norm{\nabla_x \UU_h(y)}^2_{L^2(\Omega)} \,dy} &\lesssim \norm{\UU}_{\HH}^2. \label{eq:lifting_proof_h1_est}
  \end{align}

  We start with the first inequality. Since $\UU_h$ is the linear interpolant of $\UU$, we can write $\partial_y \UU_h= h_0^{-1} \int_{0}^{h_0}{\partial_y \UU(\tau) \,d\tau}$.
  This gives:
  \begin{multline*}    
    \int_{K_0}{y^\alpha \norm{\partial_y \UU_h(y)}^2_{L^2(\Omega)} \,dy}
    \lesssim h_0^{-2} \int_{K_0}{y^\alpha \left(\int_{0}^{h_0}{\norm{\partial_y \UU(\tau)}_{L^2(\Omega)} \,d\tau} \right)^2 \,dy} \\
    \lesssim \underbrace{h_0^{-2} \int_{K_0}{y^\alpha \left(\int_{0}^{y}{\norm{\partial_y \UU(\tau)}_{L^2(\Omega)} \,d\tau} \right)^2 \,dy}}_{=:I_1} 
      +\underbrace{ h_0^{-2} \int_{K_0}{y^\alpha \left(\int_{y}^{h_0}{\norm{\partial_y \UU(\tau)}_{L^2(\Omega)} \,d\tau} \right)^2 \,dy}}_{=:I_2}.
  \end{multline*}

  We first investigate the term $I_1$. Using the fact that $y \leq h_0$ and therefore $h_0^{-2} \leq y^{-2}$, we get
  \begin{align*}
    I_1 &\leq \int_{0}^{h_0}{y^\alpha \left(y^{-1} \int_{0}^{y}{\norm{\partial_y \UU(\tau)}_{L^2(\Omega)} \,d\tau} \right)^2 \,dy}
    \leq \int_{0}^{h_0}{y^\alpha \norm{\partial_y \UU(\tau)}^2_{L^2(\Omega)}  \,dy}
  \end{align*}
  by Hardy's inequality(see~\cite[page 28]{grisvard85a}).

  When investigating $I_2$, we distinguish $\alpha \geq 0$ and $\alpha\leq 0$.
  For $\alpha \geq 0$ we have $y^\alpha \leq \tau^\alpha$ for $y\leq \tau$ and thus after applying Cauchy Schwarz to
  get the square into the integral:
  \begin{align*}
    h_0^{-2} \int_{K_0}{y^\alpha \left(\int_{y}^{h_0}{\norm{\partial_y \UU(\tau)}_{L^2(\Omega)} \,d\tau} \right)^2 \,dy }
    &\leq  h_0^{-2} \int_{K_0}{ \left(\int_{y}^{h_0}{\tau^\alpha \norm{\partial_y \UU(\tau)}_{L^2(\Omega)} \,d\tau} \right)^2 \,dy}\\
    &\leq  \int_{y}^{h_0}{\tau^\alpha \norm{ \partial_y \UU(\tau)}^2_{L^2(\Omega)} \,d\tau}
    \leq  \norm{\UU}_{\HHnd}^2.
  \end{align*}
  For $\alpha \leq 0$, we have $h_0^{\alpha}\leq \tau^\alpha$ and get:
  \begin{multline*}
    h_0^{-2}  \int_{K_0}{y^\alpha \Big(\!\int_{y}^{h_0}{\!\norm{\partial_y \UU(\tau)}_{L^2(\Omega)} \,d\tau} \Big)^2 \!dy }
    \leq h_0^{-2} \int_{K_0}{y^\alpha \left(\int_{0}^{h_0}{h_0^{-\alpha}\tau^{\alpha} \norm{\partial_y \UU(\tau)}_{L^2(\Omega)} \,d\tau} \right)^2 \,dy } \\
    \lesssim h_0^{-2} h_0^{-\alpha} h_0^{\alpha +1}  \left(\int_{0}^{h_0}{\tau^{\alpha} \norm{\partial_y \UU(\tau)}_{L^2(\Omega)} \,d\tau} \right)^2  
    \leq  \norm{\UU}_{\HHnd}^2,
  \end{multline*}
  which proves~\eqref{eq:lifting_proof_l2_est}.

  We now show \eqref{eq:lifting_proof_h1_est}. The proof relies on an inverse estimate and the fact that $\UU_h$ approximates $\UU$.
  We estimate:
  \begin{align*}
    \int_{K_0}{y^\alpha \norm{\nabla_x \UU_h(y)}^2_{L^2(\Omega)} \,dy}
    &\lesssim \int_{K_0}{y^\alpha \norm{\nabla_x \UU_h(y)- \nabla_x \UU(y)}^2_{L^2(\Omega)} \,dy} + \int_{K_0}{y^\alpha \norm{\nabla_x \UU(y)}^2_{L^2(\Omega)} \,dy} \\
    &\lesssim \int_{K_0}{y^\alpha \norm{\nabla_x \UU_h(y)- \nabla_x \UU(y)}^2_{L^2(\Omega)} \,dy} + \norm{\UU}_{\HHnd}.
  \end{align*}
  Since $\UU_h(\cdot,y)$ and $\UU(\cdot,y)$ are (piecewise) polynomials  for all fixed $y$, we can use an inverse estimate~\cite[Theorem 3.91]{schwab_p_fem}
  to get:
  \begin{align*}
    \int_{K_0}{y^\alpha \norm{\nabla_x \UU_h(y)- \nabla_x \UU(y)}^2_{L^2(\Omega)} \,dy}
    \lesssim h_{x}^{-2}p^{4} \int_{K_0}{y^\alpha \norm{\UU_h(y)- \UU(y)}^2_{L^2(\Omega)} \,dy}.
  \end{align*}

  Since $\UU_h - \UU$ vanishes at $y=0$, we can write it as
  $
    \UU_h(y)- \UU(y)=\int_{0}^{y}{\partial_y{\UU}_h(\tau) - \partial_y {\UU}(\tau) \,d\tau}
  $
  and further estimate:
  \begin{align*}
    \int_{K_0}{y^\alpha \norm{\UU_h(y)- \UU(y)}^2_{H^1(\Omega)} \,dy}
    \lesssim h_{x}^{-2}\,p^{4}
      \int_{K_0}{y^\alpha \left( \int_{0}^{y} { \norm{\partial_y {\UU}_h(\tau)}_{L^2(\Omega)} + \norm{\partial_y \UU(\tau)}_{L^2(\Omega)} \,d\tau}\right)^2 \,dy}\\
    \lesssim 
      \underbrace{ h_{x}^{-2}\,p^{4}\int_{K_0}{y^\alpha \left( \int_{0}^{y} { \norm{\partial_y {\UU}_h(\tau)}_{L^2(\Omega)}  \,d\tau}\right)^2 \,dy} }_{=:I_3} 
    +
      \underbrace{ h_{x}^{-2}\,p^{4}\int_{K_0}{y^\alpha \left( \int_{0}^{y} { \norm{\partial_y {\UU}(\tau)}_{L^2(\Omega)}  \,d\tau}\right)^2 \,dy}}_{=:I_4}.
  \end{align*}

  The term $I_3$ is structurally analogous to the term~\eqref{eq:lifting_proof_h1_est} and can be estimated using the same techniques. The 
  extra integration in $\tau$ gives an additional power of $h_0^2$, and we get
  \begin{align*}
    I_3\leq h_{x}^{-2} p^4 h_0^2 \norm{\UU}_{\HHnd}.
  \end{align*}
  For the term $I_4$, we apply Hardy's inequality and the estimate $h_0^{-2} \leq y^{-2}$ to get:
  \begin{align*}
    I_4&=h_{x}^{-2}\,p^{4}\int_{K_0}{y^\alpha \left( \int_{0}^{y} { \norm{\partial_y {\UU}(\tau)}_{L^2(\Omega)}  \,d\tau}\right)^2 \,dy}
    \lesssim h_{x}^{-2}\, p^4 h_0^2  \int_{K_0}{y^\alpha \left( \frac{1}{y} \int_{0}^{y} { \norm{\partial_y {\UU}(\tau)}_{L^2(\Omega)}  \,d\tau}\right)^2 \,dy} \\
    &\lesssim h_{x}^{-2}\,  p^4 h_0^2  \int_{K_0}{y^\alpha  \norm{\partial_y {\UU}(\tau)}^2_{L^2(\Omega)} \,dy} =p^4 h_0^2 \norm{\UU}_{\HHnd}.
  \end{align*}  
  Overall, since we assumed $h_0\leq h_{x}\, p^{-2}$, we get the stability of the modified lifting.

  In order to get $\support{\UU_h} \subset [0,\YY]$, we pick the cutoff function $\varphi \in \SS^{1,1}(\TT^L_{(\mathbf{0},\mathcal{Y})})$ such 
  that $\varphi|_{K_i}=1$ on $K_i$ for $i=0,\dots, |\TT^L_{(\mathbf{0},\mathcal{Y})}|- 1$ and $\varphi(\YY)=0$. 
  We note that the element where $\varphi$ is non-constant has size $\bigO(1)$,  
  and it can be easily checked that $\varphi \cdot \UU_h$ is also a stable lifting of $u$.
  In order to get a function in $\SS^{1,1}\big(\TT^L_{\YY},\SS^{p,1}(\TT_{\Omega})\big)$ we interpolate the function in the grid points.
  Since $\UU_h \cdot \varphi$ is a polynomial of degree at most $2$, interpolating it down to degree 1 is stable in the $L^2$ and $H^1$ norm
  (see~\cite[Rem.~{13.5} and (13.27)]{bernardi-maday97}). Away from $0$, the weighted norms are equivalent to the standard ones.  This
  shows that the ``cutoff and interpolation''-procedure is stable in $\HH$.
\end{proof}

\section{Proof of Proposition~\ref{prop:error_up_to_ritz}}
\label{sect:proof_error_up_to_ritz}
  The following proof consists of condensed and restated results from \cite[Chapter 3]{thomee_book}.
  We fix $t_0>0$ and consider the discrete backward problem
  \begin{align}
    \label{eq_ode_for_zh}
    - \dot{z}_h + \LL^s_h z_h &= \theta,  \quad \text{ in $(0,t_0)$,} \qquad  \text{and } \qquad
    z_h(t_0) = 0.
  \end{align}
  
  For $\tau \in (0,t_0)$, we get by testing~\eqref{eq_ode_for_zh} with $\theta$ in the $L^2$-inner product
  and using~\eqref{eq:sd_ode_for_theta} and~\eqref{eq:def_ritz}:
  \begin{align*}
    \norm{\theta}_{L^2(\Omega)}^2
    &= - \ltwoprodX{\dot{z}_h(\tau)}{\theta(\tau)} + \ltwoprodX{\LL^s_h z_h(\tau)}{\theta(\tau)} \\
    &= - \frac{d}{dt} \ltwoprodX{z_h(\tau)}{u(\tau)-u_h(\tau)} +  (\rho(\tau),\dot{z}_h(\tau))_{L^2(\Omega)}. 
  \end{align*}
  For $0<\varepsilon<t_0$ we get by integrating, since $z_h(t_0)=0$:
  \begin{align}
    \int_{\varepsilon}^{t_0}{\norm{\theta(\tau)}^2_{L^2(\Omega)} \,d\tau}
    &\leq  \ltwoprodX{z_h(\varepsilon)}{u(\varepsilon)-u_h(\varepsilon)}
      + \int_{\varepsilon}^{t_0}{\norm{\rho(\tau)}_{L^2(\Omega)} \norm{\dot{z}_h(\tau)}_{L^2(\Omega)} \,d\tau} \nonumber\\
    &\leq \ltwoprodX{z_h(\varepsilon)}{u(\varepsilon)-u_h(\varepsilon)} + 
      \left(\int_{\varepsilon}^{t_0}{\norm{\rho(\tau)}^2 _{L^2(\Omega)}\,d\tau}\right)^{1/2}
      \left(\int_{\varepsilon}^{t_0}{\norm{\dot{z}_h(\tau)}^2 _{L^2(\Omega)}\,d\tau}\right)^{1/2}. \label{eq:est_for_theta_proof1}
  \end{align}
  In the limit $\varepsilon \to 0$, the first term converges due to Lemma~\ref{lemma:apriori}~(\ref{it:sg_is_l2_continuous}) to
  $$\ltwoprodX{z_h(\varepsilon)}{u(\varepsilon)-u_h(\varepsilon)} \to \ltwoprodX{z_h(0)}{u_0-u_{h,0}}.$$ 
  The following stability estimate holds for $z_h$ by Lemma~\ref{lemma:apriori}~(\ref{it:sg_apriori_stability_inh1}):
  \begin{align*}
    \int_{0}^{t_0}{\norm{\dot{z}_h(\tau)}^2_{L^2(\Omega)} \,d\tau} + t_0^{-1}\norm{z_h(0)}_{L^2(\Omega)}^2
    &\lesssim \int_{0}^{t_0}{\norm{\theta(\tau)}_{L^2(\Omega)}^2 \,d\tau}.
  \end{align*}
  Combining this estimate with \eqref{eq:est_for_theta_proof1} completes the proof of~\eqref{eq:error_up_to_ritz_l2}.

  Proof of~\eqref{eq:error_up_to_ritz_pointwise_and_hone}:
  For fixed $t>0$, testing the equation~\eqref{eq:sd_ode_for_theta} with $v:=t \, \theta(t)$ and integrating over $\Omega$ gives:
  \begin{align*}
    \frac{1}{2}\frac{d}{dt} \big( t\norm{\theta(t)}_{L^2(\Omega)}^2 \big) + t \ltwoprodX{\LL_h^s \theta(t)}{\theta(t)}
    &= t \ltwoprodX{\dot{\rho}(t)}{\theta(t)} + \frac{1}{2}\norm{\theta(t)}_{L^2(\Omega)}^2.
  \end{align*}
  We integrate in $t$ from $\varepsilon>0$ to $t$ and get:
  \begin{multline*}
    \frac{1}{2} t \norm{\theta(t)}_{L^2(\Omega)}^2 \big) + \int_{\varepsilon}^{\tau} {\ltwoprodX{\LL_h^s \theta(\tau)}{\theta(t)}\,d\tau} \\
    \leq \frac{1}{2} \varepsilon \norm{\theta(\varepsilon)}^2_{L^2(\Omega)} + 
      \sqrt{\int_{\varepsilon}^{t}{\tau^2 \norm{\dot{\rho}(\tau)}_{L^2(\Omega)}^2 \,d\tau}} \sqrt{\int_{\varepsilon}^{t}{ \norm{\theta(\tau)}_{L^2(\Omega)}^2\,d\tau}}
      + \frac{1}{2}\int_{\varepsilon}^{t}{\norm{\theta(\tau)}_{L^2(\Omega)}^2\,d\tau}.
  \end{multline*}
  We need to bound $\lim_{\varepsilon\to 0}{\varepsilon \norm{\theta(\varepsilon)}^2_{L^2(\Omega)}}$.
  Writing $\theta=\Pi_h u - u_h = \rho + u-u_h$, we use the fact that $u_h$ and $u$ are bounded by Lemma~\ref{lemma:apriori}~(\ref{it:sg_is_l2_continuous}).
  This gives:
  \begin{align*}
    \lim_{\varepsilon\to 0}{\varepsilon \norm{\theta(\varepsilon)}_{L^2(\Omega)}^2}
    &\leq \limsup_{\varepsilon\to 0}{\varepsilon \norm{\rho(\varepsilon)}_{L^2(\Omega)}^2} +  \limsup_{\varepsilon\to 0}{\varepsilon \norm{u(\varepsilon)-u_h(\varepsilon)}_{L^2(\Omega)}^2}
    &\leq  \sup_{\tau \in (0,t)}{\tau \norm{\rho(\tau)}_{L^2(\Omega)}^2}.
  \end{align*}
  By using Young's inequality and \eqref{eq:error_up_to_ritz_l2}, we easily obtain~\eqref{eq:error_up_to_ritz_pointwise_and_hone}
  from the fact that $\norm{\theta(t)}^2_{\widetilde{H}^s(\Omega)}\lesssim \ltwoprodX{\LL_h^s \theta(t)}{\theta(t)}$ by Lemma~\ref{lemma_llh_is_elliptic}.
  \clearpage

\textbf{Acknowledgments:} Financial support by the Austrian Science Fund (FWF) through
the research program ``Taming complexity in partial differential systems'' (grant SFB F65, A.R.).

\bibliographystyle{amsalpha}
\bibliography{literature}

\newcommand{\etalchar}[1]{$^{#1}$}
\providecommand{\bysame}{\leavevmode\hbox to3em{\hrulefill}\thinspace}
\providecommand{\MR}{\relax\ifhmode\unskip\space\fi MR }
\providecommand{\MRhref}[2]{%
  \href{http://www.ams.org/mathscinet-getitem?mr=#1}{#2}
}
\providecommand{\href}[2]{#2}
\begin{thebibliography}{LMWZ10}

\bibitem[BDM07]{bernardi:hal-00153795}
Christine Bernardi, Monique Dauge, and Yvon Maday, \emph{{Polynomials in the
  Sobolev World}}, working paper or preprint, June 2007.

\bibitem[BLP17]{bonito_pasciak_parabolic}
Andrea Bonito, Wenyu Lei, and Joseph~E. Pasciak, \emph{Numerical
  {A}pproximation of {S}pace-{T}ime {F}ractional {P}arabolic {E}quations},
  Comput. Methods Appl. Math. \textbf{17} (2017), no.~4, 679--705.

\bibitem[BM97]{bernardi-maday97}
C.~Bernardi and Y.~Maday, \emph{Spectral methods}, Handbook of Numerical
  Analysis, Vol. 5 (P.G. Ciarlet and J.L. Lions, eds.), North Holland,
  Amsterdam, 1997.

\bibitem[BMN{\etalchar{+}}18]{tensor_fem}
Lehel Banjai, Jens~M. Melenk, Ricardo~H. Nochetto, Enrique Ot{\'a}rola,
  Abner~J. Salgado, and Christoph Schwab, \emph{Tensor fem for spectral
  fractional diffusion}, Foundations of Computational Mathematics (2018).

\bibitem[BMS19]{tensor_fem_on_polygons}
Lehel Banjai, Jens~Markus Melenk, and Christoph Schwab, \emph{exponential
  convergence of $hp$-fem for spectral fractional diffusion in polygons}, in
  preparation (2019).

\bibitem[BO19]{bo_waves}
Lehel Banjai and Enrique Ot{\'a}rola, \emph{A pde approach to fractional
  diffusion: a space-fractional wave equation}, Numerische Mathematik (2019).

\bibitem[Gri85]{grisvard85a}
P.~Grisvard, \emph{Elliptic problems in nonsmooth domains}, Pitman, 1985.

\bibitem[GVL96]{golub_van_loan}
Gene~H. Golub and Charles~F. Van~Loan, \emph{Matrix computations}, third ed.,
  Johns Hopkins Studies in the Mathematical Sciences, Johns Hopkins University
  Press, Baltimore, MD, 1996.

\bibitem[HS03]{HS03}
Norbert Heuer and Ernst~P. Stephan, \emph{An overlapping domain decomposition
  preconditioner for high order {BEM} with anisotropic elements}, Adv. Comput.
  Math. \textbf{19} (2003), no.~1-3, 211--230, Challenges in computational
  mathematics (Pohang, 2001).

\bibitem[LMWZ10]{lmwz10}
Jingzhi Li, Jens~Markus Melenk, Barbara Wohlmuth, and Jun Zou, \emph{Optimal a
  priori estimates for higher order finite elements for elliptic interface
  problems}, Appl. Numer. Math. \textbf{60} (2010), no.~1-2, 19--37.

\bibitem[LR82]{luskin_rannacher_smoothing}
Mitchell Luskin and Rolf Rannacher, \emph{On the smoothing property of the
  {G}alerkin method for parabolic equations}, SIAM J. Numer. Anal. \textbf{19}
  (1982), no.~1, 93--113.

\bibitem[McL00]{McLean2000}
William McLean, \emph{Strongly elliptic systems and boundary integral
  equations}, Cambridge University Press, Cambridge, 2000.

\bibitem[Mel97]{melenk97}
Jens~Markus Melenk, \emph{On the robust exponential convergence of {$hp$}
  finite element method for problems with boundary layers}, IMA J. Numer. Anal.
  \textbf{17} (1997), no.~4, 577--601.

\bibitem[Mel02]{melenk_book}
Jens~M. Melenk, \emph{{$hp$}-finite element methods for singular
  perturbations}, Lecture Notes in Mathematics, vol. 1796, Springer-Verlag,
  Berlin, 2002.

\bibitem[MKR19]{our_interpolation_space_paper}
Jens Markus~Melenk Michael~Karkulik and Alexander Rieder, \emph{On
  interpolation spaces of piecewise polynomials on mixed meshes}, in
  preparation (2019).

\bibitem[MPSV18]{mpsv17}
Dominik Meidner, Johannes Pfefferer, Klemens Sch\"{u}rholz, and Boris Vexler,
  \emph{{$hp$}-finite elements for fractional diffusion}, SIAM J. Numer. Anal.
  \textbf{56} (2018), no.~4, 2345--2374.

\bibitem[MS98]{MS98}
J.~M. Melenk and C.~Schwab, \emph{{$HP$} {FEM} for reaction-diffusion
  equations. {I}. {R}obust exponential convergence}, SIAM J. Numer. Anal.
  \textbf{35} (1998), no.~4, 1520--1557.

\bibitem[NOS15]{pde_approach_apriori}
Ricardo~H. Nochetto, Enrique Ot\'arola, and Abner~J. Salgado, \emph{A {PDE}
  approach to fractional diffusion in general domains: a priori error
  analysis}, Found. Comput. Math. \textbf{15} (2015), no.~3, 733--791.

\bibitem[NOS16]{pde_frac_parabolic}
\bysame, \emph{A {PDE} approach to space-time fractional parabolic problems},
  SIAM J. Numer. Anal. \textbf{54} (2016), no.~2, 848--873.

\bibitem[Paz83]{pazy}
A.~Pazy, \emph{Semigroups of linear operators and applications to partial
  differential equations}, Applied Mathematical Sciences, vol.~44,
  Springer-Verlag, New York, 1983.

\bibitem[Sch98]{schwab_p_fem}
Christoph Schwab, \emph{{$p$}- and {$hp$}-finite element methods}, Numerical
  Mathematics and Scientific Computation, The Clarendon Press, Oxford
  University Press, New York, 1998, Theory and applications in solid and fluid
  mechanics.

\bibitem[Sch14]{ngsolve}
Joachim Sch{\"o}berl, \emph{C++11 implementation of finite elements in
  ngsolve}, ASC Report 30/2014, Institute for Analysis and Scientific
  Computing, Vienna University of Technology, 2014.

\bibitem[Sch17]{ngsolve2}
\bysame, \emph{Ngsolve}, \url{ngsolve.org}, 2017.

\bibitem[SS00]{schoetzau_schwab}
Dominik Sch\"otzau and Christoph Schwab, \emph{Time discretization of parabolic
  problems by the {$hp$}-version of the discontinuous {G}alerkin finite element
  method}, SIAM J. Numer. Anal. \textbf{38} (2000), no.~3, 837--875.

\bibitem[Tar07]{tartar}
Luc Tartar, \emph{An introduction to {S}obolev spaces and interpolation
  spaces}, Lecture Notes of the Unione Matematica Italiana, vol.~3, Springer,
  Berlin; UMI, Bologna, 2007.

\bibitem[Tho06]{thomee_book}
Vidar Thom\'ee, \emph{Galerkin finite element methods for parabolic problems},
  second ed., Springer Series in Computational Mathematics, vol.~25,
  Springer-Verlag, Berlin, 2006.

\bibitem[Tri06]{triebel3}
Hans Triebel, \emph{Theory of function spaces. {III}}, Monographs in
  Mathematics, vol. 100, Birkh\"auser Verlag, Basel, 2006.

\end{thebibliography}

\end{document}